\newcommand{\p} {\hbox{\rm I \kern -5pt P}}
\def\L       {{\boldsymbol L}}
\def\R       {\mathbb{R}}
\def\L  {L}
\def\Om       {\Omega}
\def\eps        {\varepsilon}
\def \hueco{\noalign{\medskip}}
\def \beq{\begin{equation}}
\def \eeq{\end{equation}}
\def \ba{\begin{array}}
\def \ea{\end{array}}
\def \dis{\displaystyle}
\newcommand{\Q}{\textbf{Q}}
\newcommand{\trace}{\texttt{tr}}
\newcommand{\I}{\mathbf{I}}
\def \dt{\Delta t}
\newtheorem{obs}{Remark}
\newtheorem{lem}{Lemma}
\newtheorem{thm}{Theorem}
\newtheorem{defn}{Definition}
\begin{document}

\title{Linear Numerical Schemes for a $\Q$-Tensor System for Nematic Liquid Crystals} 

\author{Justin Swain \&
 Giordano Tierra\thanks{Department of Mathematics, University of North Texas, Denton TX (USA). Emails:  JustinSwain2@my.unt.edu, gtierra@unt.edu}}

\maketitle

\begin{abstract}
In this work, we present three linear numerical schemes to {model} nematic liquid crystals using the Landau-de Gennes $\Q$-tensor theory. 
The first scheme is based on using a truncation procedure of the energy, which allows for an unconditionally energy stable first order accurate decoupled scheme. 
The second scheme uses a modified second order accurate optimal dissipation algorithm, which gives a second order accurate coupled scheme. 
Finally, the third scheme uses a new idea to decouple the unknowns from the second scheme which allows us to obtain accurate dynamics while improving computational efficiency. 
We present several numerical experiments to offer a comparative study of the accuracy, efficiency and the ability of the numerical schemes to represent realistic dynamics.

\end{abstract}

\section{Introduction}
Liquid crystals (LCs) are important materials that are used in several technological applications \cite{Lagerwall12}.  The most common usage is in the omnipresent liquid crystal displays which uses the birefringence property of the material to create images on a screen \cite{Chen18}. However, LCs also respond to other external stimuli, e.g. magnetic, mechanical, chemical, which can be used to induce complex shape changes in the material used for applications in biomedical devices, robotics, optics, textiles, and sensors \cite{Chen23,Pilz,Li17,Li22,Roach,Woltman}. \\
Liquid crystals exhibit intermediary phases between solid and liquid. One such phase is the nematic phase which possess the microscopic orientational order of a crystalline solid, however, the molecules have no positional order but flow freely past each other and thus display macroscopic properties of a liquid. Models of LCs usually represent molecules as rods or disks and use some parameter to describe the orientation of the molecules. Popular models of LCs are the Oseen-Frank director theory \cite{Frank,Oseen}, the Ericksen-Leslie formulation \cite{Ericksen62,Ericksen76,Leslie66,Leslie68}, and the hydrodynamic Beris-Edwards model \cite{BerisEdwards}. \\
In this work, we focus on the Landau-de Gennes $\Q$-tensor theory \cite{deGennes,MottramNewton,SonnetVirga} which can be seen as a subsystem of the Beris-Edwards model without hydrodynamic effects. In the $\Q$-tensor model the orientation of the nematic is associated with a second-order tensor field which measures the local average deviation of the material from the isotropic state. This model has the advantage of being able to simulate complex defect dynamics due to its ability to describe more general defect geometries, and variable degrees of order \cite{Ball}. \\
Several analytic results exist concerning the well-posedness of the PDE associated to the $\Q$-tensor problem coupled with the Navier-Stokes equations \cite{Abels,Abels14,ClimentGuillen,Guillen15,Paicu12,Xu}. The development of numerical schemes for this problem is relatively new, and faces certain challenges including: (1) the complex dynamics relating the physical properties of these materials cause the systems to be very large with several equations and unknowns; and (2) the equations are highly nonlinear with terms that couple the unknowns from different equations. Since the model involves a large system of coupled nonlinear PDEs we expect the discretization of this system to involve large and computationally expensive systems to solve. \\
Recently, we have seen new numerical methods proposed for modeling LC dynamics using the $\Q$-tensor theory. For example, in \cite{CaiShenXu} the authors present a first order accurate energy stable scheme for the 2D $\Q$-tensor model with hydrodynamic effects. A stabilized convex splitting idea is used to develop a first order accurate decoupled energy stable scheme in \cite{Zhao16}. In \cite{Zhao17}, a second order accurate scheme is presented for the hydrodynamic $\Q$-tensor model using energy quadratization (IEQ) \cite{GongZhaoWangIEQ}. Error estimates for the IEQ approach has been recently presented to the $\Q$-tensor model \cite{Gudibanda22}, and the Beris-Edwards model \cite{Webber23}. Another idea presented in \cite{ShenXuYang} uses a scalar auxiliary variable (SAV) technique \cite{GongZhaoWangSAV} to construct a second order accurate coupled scheme for a $\Q$-tensor problem. Another recent and interesting paper explored numerical schemes for a hydrodynamic $\Q$-tensor model in thin films \cite{Bouck21}. In addition, work has been done on energy minimization techniques to compute equilibrium solutions for the $\Q$-tensor problem such as \cite{BorthagarayWalker,SchimmingVinalsWalker,SurowiecWalker}. \\
In this paper, we propose three new efficient linear numerical schemes for simulating nematic liquid crystals using a $\Q$-tensor model. The first scheme adapts an energy truncation procedure \cite{Cabrales15} which allows for an unconditionally energy stable first order accurate decoupled scheme. The second scheme uses a modified second order accurate optimal dissipation algorithm \cite{Tierra14} which gives a second order accurate coupled scheme. Finally, the third scheme uses a technique to decouple the unknowns from the second scheme which allows us to recover accurate dynamics while improving computational efficiency.  \\
The rest of the paper is organized as follows. In \Cref{sec:model} we describe the $\Q$-tensor model and some analytical results that are known about the problem. Next, we introduce the numerical schemes and their properties in \Cref{sec:numericalSchemes}. In \Cref{sec:numericalResults} we provide computational results to justify the applicability and efficiency our numerical schemes. Finally, we state the conclusions of our work in \Cref{sec:conclusion} .

\section{Landau-deGennes $\Q$-Tensor Model}\label{sec:model}
In this section we provide an overview of the Landau-de Gennes $\Q$-Tensor model pertaining to the nematic phase of liquid crystals \cite{deGennes}. An alternative to the Oseen-Frank director theory \cite{Frank}, is to use the second order symmetric and traceless tensor $\Q$ representing the statistical second moment of the nematic system from the isotropic phase. In the isotropic phase $\Q=\mathbf{0}$, and nonzero in the nematic phase. The eigenvalues of $\Q$ describe further phase transitions with the uniaxial phase corresponding to the case where $\Q$ has two equal eigenvalues, and otherwise the system is described as biaxial \cite{MottramNewton}. 
Let $\Om\subset\R^d$, $d=2,3$, be a bounded domain with polyhedral boundary $\partial\Om$. We will use the following tensor spaces
$$\ba{rcl}\dis
\Lambda
&:=&
\{ \Q\in\R^{d\times d} : \Q^T = \Q \}\,, \\
\Lambda_0
&:=&
\{ \Q\in\R^{d\times d} : \Q^T = \Q, \trace(\Q)=0 \}\,.
\ea
$$
Each $\Q\in\Lambda$ is uniquely determined by six variables $\{ q_i \}_{i=1}^6$ thus can be written as
$$
\Q 
\,=\, 
\begin{pmatrix}
Q_{11} & Q_{12} & Q_{13} \\
Q_{21} & Q_{22} & Q_{23} \\
Q_{31} & Q_{32} & Q_{33} 
\end{pmatrix},
$$
and each $\Q\in\Lambda_0$ can be uniquely determined by five unknowns (with $Q_{33} = -(Q_{11} + Q_{22})$). 
Here we use spatial coordinates $\mathbf{x}\in\R^d$. It is useful to write $\Q$ in its eigenframe
$$
\Q = s_1(\mathbf{v}_1\otimes\mathbf{v}_1) + s_2(\mathbf{v}_2\otimes\mathbf{v}_2) - \frac13(s_1 + s_2)\I \,,
$$
where $\I$ is the identity matrix, $\mathbf{v}_i$, are the orthonormal eigenvectors of $\Q$, and $s_i$ are linear combinations of the corresponding eigenvalues $\lambda_i$ given by
$$
\lambda_1
\,=\,
\frac13(2s_1 - s_2) \,, \quad
\lambda_2
\,=\,
-\frac13(s_1 + s_2) \,, \quad
\lambda_3
\,=\,
\frac13(2s_2 - s_1)\,. 
$$
Since $\trace(\Q)=0$, we have that $\sum_i \lambda_i = 0$. Furthermore, the eigenvalues are restricted to the range $-\frac13 \leq \lambda_i \leq \frac23$ \cite{SonnetVirga}. 

The Landau-deGennes free energy function is defined as:
\beq\label{eq:energyLdG}\dis
E(\Q)
\,:=\,
\int_\Omega \left( \mathcal{W}(\Q,\nabla \Q) + \frac1\varepsilon\Psi(\Q) \right) d\mathbf{x} \,,
\eeq
with $\mathcal{W}(\Q,\nabla \Q)$ being the elastic energy density and $\Psi(\Q)$ representing the thermotropic energy density, respectively. Moreover, $\varepsilon>0$ is a dimensionless parameter balancing the energetic contributions of the elastic and thermotropic energies.

\subsection{Elastic Energy Density}
The elastic energy is given by
\beq\label{eq:elasticEnergy}\dis
\mathcal{W}(\Q,\nabla \Q)
\,:=\,
\frac12 \left(
 L_1 \left| \nabla \Q \right|^2 + L_2 \left| \nabla \cdot \Q\right|^2 + L_3 \left(\nabla\Q\right)^T \,\vdots\, \nabla\Q
+ L_4 \nabla\Q\,\vdots\,(\bm{\epsilon}\cdot\Q) + \L_5 \nabla\Q \,\vdots\, ( \Q \cdot \nabla)\Q ) \right) ,
\eeq
where $L_i$, $i=1,\dots, 5$ are the elastic material parameters and $\vdots$ represents the scalar product between third order tensors. Then, using the convention of summation over repeated indices, the elastic energy terms can be written as
$$
\dis
\left| \nabla \Q \right|^2 
:=
\frac{\partial}{\partial x_k} Q_{ij} \frac{\partial}{\partial x_k} Q_{ij} \,, \quad \quad
\left| \nabla \cdot \Q\right|^2
:=
\frac{\partial Q_{ij}}{\partial x_j}\frac{\partial Q_{ik}}{\partial x_k} \,, \quad \quad
\left(\nabla\Q\right)^T \, \vdots \, \nabla\Q
:=
\frac{\partial Q_{ik}}{\partial x_j}\frac{\partial Q_{ij}}{\partial x_k} \,,
$$
$$
\nabla\Q\,\vdots\,(\bm{\epsilon}\cdot\Q)
:=
\bm{\epsilon}_{ijk} Q_{lj}\frac{\partial Q_{ij}}{\partial x_k} \,, \quad \quad
\nabla\Q \,\vdots\, ( \Q \cdot \nabla)\Q )
:=
Q_{lk}\frac{\partial Q_{ij}}{\partial x_l}\frac{\partial Q_{ij}}{\partial x_k} \,,
$$
where $\bm{\epsilon}$ is the Levi-Civita permutation symbol. The five elastic constants $L_i$ can be related to the Oseen-Frank model constants $K_i$, for $i=1,\dots,4$, and chirality $q_0$ \cite{MottramNewton}. In this work we consider the one constant Landau-deGennes model with $L_i=0$ for $i=2,\dots,5$.

\subsection{Landau-deGennes Potential Function}
The second term of the energy corresponds to the thermotropic contribution. The Landau-de Gennes potential functional $\Psi(\Q)$ describes the various states of the nematic system, i.e. uniaxial nematic, biaxial nematic, or isotropic liquid. The most general form of this function is written as 
\beq\label{eq:psi}
\Psi(\Q)
\,=\,
\frac{A}2\trace(\Q^2) - \frac{B}3\trace(\Q^3)+\frac{C}4\trace(\Q^2)^2\,.
\eeq
The parameters $A,B$, and $C$ depend on the material being studied. We consider $B,C>0$ and independent of temperature $\theta$, while $A(\theta)=\widehat{\gamma}(\theta - \theta^*)$, with $\widehat{\gamma}>0$ and $\theta^*$ being the isotropic-nematic phase transition temperature \cite{Majumdar,MottramNewton}. Here, $\trace(\Q):=Q_{ii}$, and the higher order terms can be written as
$$
\dis
\trace(\Q^2)
\,:=\,
Q_{ij}Q_{ij} \,, \quad
\trace(\Q^3)
\,:=\,
Q_{ij}Q_{jk}Q_{ki} \,.
$$
It is known that when $B\neq0$ the symmetric and traceless tensors which minimize \Cref{eq:psi} will have two equal eigenvalues, i.e. the uniaxial state \cite{SonnetVirga}. Minimizing the total free energy involves a balance of competing energy terms from the elastic and thermotropic components. In this way, it is important to choose parameters $L_i$, and $A,B,C$ properly to ensure the energy functional $E(\Q)$ is bounded from below \cite{Majumdar}.

\subsection{System Dynamics}
The dynamics of the system are given by using an $L^2$-gradient flow
$$
\Q_t + \gamma\frac{\delta E(\Q)}{\delta \Q}\,=\,\textbf{0}\,,
$$ 
with $\gamma$ being a relaxation parameter and the variational derivative of the energy given by
$$
\frac{\delta E(\Q)}{\delta \Q}
\,=\,
- \Delta \Q 
+ \frac{1}{\varepsilon}\nabla \Psi (\Q) \,,
$$
which results in the following formulation of the problem: Given $\Q_0(\mathbf{x}):\Om\rightarrow\Lambda_0$, find $\Q(\mathbf{x},t):\Om\times(0,T) \rightarrow\Lambda_0$ such that
\beq\label{eq:SF}
\left\{\ba{rclr}\dis
\Q_t  +  \gamma \left( -\Delta \Q 
+ \frac{1}{\varepsilon}\bm{\psi} (\Q) \right)
&=&
 \mathbf{0}
&\quad \mbox{ for } (\mathbf{x},t) \in \Omega \times (0,T)\,, \\
\partial_\mathbf{n}\Q 
&=& 
\mathbf0 
&\quad \mbox{ for } (\mathbf{x},t) \in \partial\Omega \times (0,T)\,,\\
\Q(\mathbf{x},0)
&=&
\Q_0(\mathbf{x})
&\quad \mbox{ for } \mathbf{x} \in \Omega \,,
\ea \right.
\eeq
where $\mathbf{n}$ denotes the unit outward normal vector on $\partial\Omega$, $\partial_\mathbf{n}\Q=\nabla\Q\cdot\mathbf{n}$, and $T>0$ is the final time. Here, the gradient of $\Psi$ with respect to $\Q$ is a second order tensor, $\bm{\psi}$, given by
\beq\label{eq:gradPsi}\dis
\bm{\psi} (\Q) := \nabla \Psi(\Q) = A\Q - B\Q^2 + C\trace(\Q^2)\Q\,.
\eeq
Additionally, we write the second derivative of $\Psi$, which is a fourth order tensor, $\nabla \bm{\psi} (\Q)$, as
\beq\label{eq:hessianPsi}\dis
\left[ \nabla \bm{\psi}(\Q) \right]_{ijkl} = A\delta_{ik}\delta_{jl} 
- B\left( Q_{lj}\delta_{ik} + Q_{ik}\delta_{jl} \right)
+ C\left(\trace(\Q^2)\delta_{ik}\delta_{jl} + 2Q_{ij}Q_{kl} \right) \,,
\eeq
with $\delta_{ij}$ denoting the Kronecker delta function.
\begin{obs}
See appendix for details on the computations in \eqref{eq:hessianPsi}.
\end{obs}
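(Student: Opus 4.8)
The plan is to verify \eqref{eq:hessianPsi} by direct component-wise differentiation of the second-order tensor $\bm{\psi}(\Q)$ given in \eqref{eq:gradPsi}. Writing out components under the summation convention, we have $\psi_{ij} = A\,Q_{ij} - B\,(Q^2)_{ij} + C\,\trace(\Q^2)\,Q_{ij}$, where $(Q^2)_{ij} = Q_{im}Q_{mj}$ and $\trace(\Q^2) = Q_{mn}Q_{mn}$. The fourth-order tensor $\nabla\bm{\psi}$ is then defined entrywise by $[\nabla\bm{\psi}]_{ijkl} = \partial\psi_{ij}/\partial Q_{kl}$, so the whole computation reduces to differentiating each of the three terms with respect to the scalar entries $Q_{kl}$, treating them as independent variables via the elementary identity $\partial Q_{ij}/\partial Q_{kl} = \delta_{ik}\delta_{jl}$.

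First I would handle the linear term: $\partial(A\,Q_{ij})/\partial Q_{kl} = A\,\delta_{ik}\delta_{jl}$, which produces the first term on the right-hand side. Next, for the quadratic term I would apply the product rule to $Q_{im}Q_{mj}$, obtaining $\delta_{ik}\delta_{ml}Q_{mj} + Q_{im}\delta_{mk}\delta_{jl} = Q_{lj}\delta_{ik} + Q_{ik}\delta_{jl}$ after contracting the Kronecker deltas; multiplying by $-B$ gives the second term. Finally, for the cubic term $C\,\trace(\Q^2)\,Q_{ij}$ I would differentiate the product, using $\partial(Q_{mn}Q_{mn})/\partial Q_{kl} = 2Q_{kl}$ for the scalar factor and $\partial Q_{ij}/\partial Q_{kl} = \delta_{ik}\delta_{jl}$ for the remaining factor, which yields $C(\trace(\Q^2)\,\delta_{ik}\delta_{jl} + 2Q_{ij}Q_{kl})$. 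Collecting the three contributions reproduces \eqref{eq:hessianPsi} exactly.

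The computation itself is routine index bookkeeping; the only genuine subtlety — and the point I would flag as the main thing to get right — is the choice of differentiation convention. Because $\Q$ is constrained to lie in $\Lambda_0$, i.e. to be symmetric and traceless, one could instead differentiate with respect to the independent entries subject to $Q_{kl}=Q_{lk}$ and $Q_{33}=-(Q_{11}+Q_{22})$, which would produce an explicitly symmetrized Hessian. The stated formula corresponds to treating all entries as free, and I would verify a posteriori that when $\nabla\bm{\psi}$ is later contracted against admissible (symmetric, traceless) increments the two conventions agree, so that using the simpler unconstrained derivative is harmless for the energy-stability estimates in which this Hessian appears.
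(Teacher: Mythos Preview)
Your computation is correct and matches the paper's appendix essentially line for line: both use the convention $\partial Q_{ij}/\partial Q_{kl}=\delta_{ik}\delta_{jl}$ and differentiate the three terms of $\Psi$ (or equivalently of $\bm\psi$) separately to assemble \eqref{eq:hessianPsi}. Your closing remark about the unconstrained-versus-symmetric differentiation convention is a nice addition that the paper does not make explicit.
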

\subsection{Traceless Formulation}
Following the ideas in \cite{Guillen15}, we provide a reformulation of the Landau-de Gennes model to facilitate the development of numerical schemes. First, to maintain the traceless condition on $\Q$ we replace the variational derivative with the following expression:
$$
\Q_t + \gamma \left(\frac{\delta E(\Q)}{\delta \Q} - \frac13\trace\left(\frac{\delta E(\Q)}{\delta \Q} \right) \right)
\,=\,
\mathbf0 \,.
$$
Hence, the problem reads:
\beq\label{eq:SFmod}
\left\{\ba{rclr}\dis
\Q_t  + \gamma\left[ - \Delta \Q 
+ \frac{1}{\eps}\left( A\Q - B \left(\Q^2\right) + C\trace(\Q^2)\Q \right)
+ \frac{1}{\eps} \bm{p}(\Q) \right]
&=&
 \mathbf{0}
&\quad \mbox{ for } (\mathbf{x},t) \in \Omega \times (0,T) \,, \\
\partial_\mathbf{n}\Q 
&=& 
\mathbf0 
&\quad \mbox{ for } (\mathbf{x},t) \in \partial\Omega \times (0,T) \,,\\
\Q(\mathbf{x},0)
&=&
\Q_0(\mathbf{x})
&\quad \mbox{ for } \mathbf{x} \in \Omega\,,
\ea \right. 
\eeq
where the function $\bm{p}(\Q)$ represents the non-zero trace part of the problem: 
$$
\bm{p}(\Q) = -\frac{B}{3}\trace(\Q^2)\I\,.
$$

\begin{obs}
Another possible way to enforce the traceless condition on $\Q$ is to derive the dynamics by taking the variational derivative with respect to a symmetric and traceless tensor which will intrinsically impose the condition \cite{SonnetVirga,SurowiecWalker}.
\end{obs}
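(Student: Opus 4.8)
The plan is to read ``taking the variational derivative with respect to a symmetric and traceless tensor'' as computing the $L^2$-gradient of $E$ over the \emph{linear} space $\Lambda_0$, and then to show that the flow driven by this constrained gradient keeps $\trace(\Q)$ constant in time, hence identically zero. Throughout I would work with the Frobenius inner product $\langle M,N\rangle=\trace(M^TN)=M_{ij}N_{ij}$ on symmetric tensors. The first step is to record the orthogonal decomposition $\Lambda=\Lambda_0\oplus\mathrm{span}\{\I\}$: every symmetric $M$ splits as $M=P(M)+\tfrac13\trace(M)\I$ with $P(M):=M-\tfrac13\trace(M)\I\in\Lambda_0$, and since $\langle\I,N\rangle=\trace(N)=0$ for each $N\in\Lambda_0$, the map $P$ is exactly the Frobenius-orthogonal projection onto $\Lambda_0$. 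In particular $\trace(P(M))=0$ and $P$ preserves symmetry.

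Next I would compute the constrained first variation. For an admissible perturbation $\delta\Q\in\Lambda_0$ with $\partial_{\mathbf n}\delta\Q=\mathbf0$, integration by parts (using the Neumann condition to annihilate the boundary term, exactly as in the unconstrained derivation) gives $\frac{d}{d\tau}E(\Q+\tau\delta\Q)\big|_{\tau=0}=\langle\frac{\delta E}{\delta\Q},\delta\Q\rangle$ with $\frac{\delta E}{\delta\Q}=-\Delta\Q+\frac1\eps\bm{\psi}(\Q)$. Because $\delta\Q$ is traceless, the component $\tfrac13\trace(\frac{\delta E}{\delta\Q})\I$ is orthogonal to $\delta\Q$, so the constrained gradient is precisely $P(\frac{\delta E}{\delta\Q})=\frac{\delta E}{\delta\Q}-\tfrac13\trace(\frac{\delta E}{\delta\Q})\I$, which for the $3\times3$ order-parameter tensor is the bracketed term in \eqref{eq:SFmod}. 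I would then verify that $-\Delta\Q$, $A\Q$ and $C\trace(\Q^2)\Q$ are each traceless (using $\trace(\Q)=0$), so that the trace of $\frac{\delta E}{\delta\Q}$ originates solely from the cubic term $-B\Q^2$; this produces the scalar multiple of $\I$ recorded as $\bm{p}(\Q)$ and thereby identifies the constrained formulation with the trace-subtraction formulation \eqref{eq:SFmod}.

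Finally, I would close the argument by taking the trace of the gradient-flow equation itself: since $\Q_t=-\gamma\,P(\frac{\delta E}{\delta\Q})$ lies pointwise in $\Lambda_0$, we get $\partial_t\trace(\Q)=\trace(\Q_t)=0$, and combined with $\trace(\Q_0)=0$ this yields $\trace(\Q(\cdot,t))\equiv0$ for all $t$, which is the asserted intrinsic enforcement of the constraint. The main obstacle here is conceptual rather than computational: one must make rigorous the identification of the constrained variational derivative with the orthogonal projection $P$ of the unconstrained one — that is, justify restricting the admissible variations to $\Lambda_0$ rather than adjoining a scalar Lagrange multiplier for $\trace(\Q)=0$ — and check that the same boundary term vanishes under $\partial_{\mathbf n}\delta\Q=\mathbf0$, so that both routes deliver identical strong forms.
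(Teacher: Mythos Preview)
The statement you are ``proving'' is a \emph{remark}, not a lemma or theorem: in the paper it carries no proof at all, only the citations \texttt{[SonnetVirga, SurowiecWalker]}. So there is no paper proof to compare against; your proposal is supplying a justification where the authors simply point to the literature.

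That said, your argument is mathematically correct and is exactly the right way to make the remark precise. The orthogonal decomposition $\Lambda=\Lambda_0\oplus\mathrm{span}\{\I\}$ with projector $P(M)=M-\tfrac13\trace(M)\I$ is the standard device, and your identification of the constrained $L^2$-gradient with $P\big(\frac{\delta E}{\delta\Q}\big)$ is sound. You also correctly observe that this reproduces the bracketed right-hand side of \eqref{eq:SFmod}, so the ``intrinsic'' route and the paper's explicit trace-subtraction route yield the same strong form. The only caveat is that your final step (take the trace of the flow to get $\partial_t\trace(\Q)=0$) tacitly assumes $\Q$ already lies in $\Lambda_0$ when you argue that $-\Delta\Q$, $A\Q$, and $C\trace(\Q^2)\Q$ are traceless; strictly speaking this is not needed, since $\trace(P(\cdot))=0$ holds regardless of whether the argument is traceless, and that alone gives $\trace(\Q_t)=0$. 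Your ``main obstacle'' paragraph is also apt: the equivalence between restricting variations to $\Lambda_0$ and introducing a scalar Lagrange multiplier is precisely what the cited references make rigorous.
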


Now we summarize various results presented in \cite{Guillen15} about the problem \eqref{eq:SF} and the modified problem \eqref{eq:SFmod}, both coupled with hydrodynamics effects.
\begin{thm}[Well-Posedness]\label{thm:wellPosedness}
Let $\Q_0\in H^1(\Om)$. Then there exists a weak solution $\Q$ of the problem \eqref{eq:SFmod} such that,
$$
\Q\in L^\infty(0,\infty;H^1(\Om))\cap L^2(0,T;H^2(\Om)), \quad \forall T>0\,.
$$
Furthermore, if $\Q$ is a solution to \eqref{eq:SFmod} in $(0,T)$ for some fixed $T>0$, and 
$$
\Delta \Q \in L^{\frac{2s}{2s-3}}(0,T;L^s(\Om)) \mbox{ for some } 2\leq s\leq 3 \,,
$$
then the solution is unique.
\end{thm}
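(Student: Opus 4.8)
The plan is to treat \eqref{eq:SFmod} as a dissipative semilinear parabolic system and argue by the classical Faedo--Galerkin method, deriving all bounds from the energy law and closing with a compactness argument; uniqueness is then obtained through a difference estimate that uses the stated critical integrability of $\Delta\Q$. For existence I would project onto the span of the first $m$ eigenfunctions of the Neumann Laplacian acting on $\Lambda_0$-valued fields, so that both the no-flux condition and the traceless constraint are respected by the approximation. Testing the resulting ODE system with $\Q^m_t$ produces the energy identity $\frac{d}{dt}E(\Q^m)=-\frac1\gamma\|\Q^m_t\|_{L^2}^2\le0$, where the trace-correction $\bm{p}$ pairs to zero against the traceless $\Q^m_t$. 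Since $\Psi$ is coercive --- the quartic term $\frac{C}{4}\trace(\Q^2)^2$ with $C>0$ dominates the lower-order contributions and bounds $\Psi$ from below --- the energy is bounded below uniformly in time, so this identity yields at once the global bound in $L^\infty(0,\infty;H^1(\Om))$ and $\Q^m_t\in L^2(0,T;L^2(\Om))$.

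To reach $L^2(0,T;H^2(\Om))$ I would read the equation as $-\Delta\Q^m=-\frac1\gamma\Q^m_t-\frac1\eps\bm{\psi}(\Q^m)-\frac1\eps\bm{p}(\Q^m)$ and bound the right-hand side in $L^2(0,T;L^2(\Om))$: the only term requiring care is the cubic $C\trace((\Q^m)^2)\Q^m$, but the embedding $H^1(\Om)\hookrightarrow L^6(\Om)$ (valid for $d\le3$) gives $\Q^m\in L^\infty(0,\infty;L^6(\Om))$, whence $|\Q^m|^3\in L^\infty(0,\infty;L^2(\Om))$; elliptic regularity for the Neumann problem then furnishes the uniform $H^2$ bound. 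With these estimates in hand, the Aubin--Lions lemma provides strong convergence of a subsequence of $\{\Q^m\}$ in $L^2(0,T;H^1(\Om))$, which is amply sufficient to pass to the limit in the polynomial nonlinearities $\Q^2$ and $\trace(\Q^2)\Q$; weak-$*$ and weak limits handle the linear terms, and weak lower semicontinuity of the norms gives the stated regularity class. This produces the asserted weak solution.

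For uniqueness I would take two solutions $\Q_1,\Q_2$ with the same initial data, set $\W=\Q_1-\Q_2$, subtract the two copies of \eqref{eq:SFmod}, and test with $\W$. Diffusion contributes $\gamma\|\nabla\W\|_{L^2}^2$; the cubic term is the gradient of the convex map $\Q\mapsto\frac14\trace(\Q^2)^2$ and is therefore monotone, so its contribution has a favorable sign, while the linear term is trivially bounded by $|A|\,\|\W\|_{L^2}^2$. The quadratic difference $\Q_1^2-\Q_2^2=\Q_1\W+\W\Q_2$ and the analogous trace term reduce to integrals of the form $\int(|\Q_1|+|\Q_2|)|\W|^2$, and the genuinely borderline contributions --- those in which a second-order quantity of one solution multiplies $\W$ --- are estimated by H\"older followed by a Gagliardo--Nirenberg interpolation whose exponents are tuned to the scaling-critical relation $\frac3s+\frac2q=2$ with $q=\frac{2s}{2s-3}$. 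This is exactly the integrability assumed for $\Delta\Q$, and it lets one place the rough factor in $L^q(0,T;L^s(\Om))$ while absorbing the surviving $\|\nabla\W\|_{L^2}^2$ into the diffusion. A Gr\"onwall argument then forces $\W\equiv0$.

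The main obstacle is this last, scale-critical absorption: because the hypothesis on $\Delta\Q$ sits exactly at the invariant scaling (the exponents satisfy $\frac3s+\frac2q=2$ in $d=3$ with no slack), the Gagliardo--Nirenberg powers and the Young splitting must be chosen so that the coefficient multiplying $\|\nabla\W\|_{L^2}^2$ stays strictly below $\gamma$ while the remaining factor is integrable in time; only then does Gr\"onwall close. Everything upstream --- solvability of the Galerkin ODEs, the energy and $H^2$ estimates, and the identification of the nonlinear limits --- is routine once the a priori bounds above are secured.
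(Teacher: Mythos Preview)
The paper does not actually prove this theorem: it is quoted from \cite{Guillen15}, where it is established for the full problem \emph{coupled with hydrodynamics}. So there is no in-paper argument to compare against; I can only assess your outline on its merits.

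Your existence sketch is the standard one and is fine. The Galerkin equation, the energy identity from testing with $\Q^m_t$, coercivity of $\Psi$ via the quartic term, the $L^\infty_tH^1_x$ and $L^2_tL^2_x$ bounds for $\Q^m$ and $\Q^m_t$, and then the $H^2$ bound from $-\Delta\Q^m = -\tfrac1\gamma\Q^m_t - \tfrac1\eps P_m\bigl(\bm{\psi}(\Q^m)+\bm{p}(\Q^m)\bigr)$ (note the projection, since $\bm{\psi}(\Q^m)$ does not lie in the Galerkin span) together with $H^1\hookrightarrow L^6$ for the cubic, all go through. Aubin--Lions and passage to the limit are routine.

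The uniqueness part, however, contains a genuine misstep. In \eqref{eq:SFmod} the nonlinearity is purely algebraic; after subtracting two solutions and testing with $\W=\Q_1-\Q_2$, the difference equation involves only $A\W$, $\Q_1\W+\W\Q_2$, and the cubic difference. There is \emph{no} term in which $\Delta\Q_1$ or $\Delta\Q_2$ multiplies $\W$. The ``borderline contributions in which a second-order quantity of one solution multiplies $\W$'' that you describe simply do not occur here; they arise only in the hydrodynamic system treated in \cite{Guillen15}, through the velocity coupling and the elastic stress. You have reverse-engineered a role for the hypothesis $\Delta\Q\in L^{2s/(2s-3)}_tL^s_x$ by inventing terms the equation does not have.

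In fact, for the pure gradient flow \eqref{eq:SFmod} uniqueness is easier than the statement suggests: the cubic term is monotone (as you note), and the quadratic difference is controlled by
\[
\int_\Omega(|\Q_1|+|\Q_2|)\,|\W|^2
\;\le\;
\bigl(\|\Q_1\|_{L^6}+\|\Q_2\|_{L^6}\bigr)\,\|\W\|_{L^{12/5}}^2
\;\le\;
C\,\|\W\|_{L^2}^{3/2}\|\W\|_{H^1}^{1/2},
\]
which absorbs into diffusion and closes by Gr\"onwall using only $\Q_i\in L^\infty_tH^1_x$. The extra $\Delta\Q$ assumption is a vestige of the hydrodynamic setting and is not needed in the argument you are sketching; your attempt to justify it is where the proposal goes wrong.
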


\begin{lem}[Traceless Property]\label{lem:traceless}
    If $\trace(\Q_0) = 0$, then $\trace(\Q)=0$ for all $t>0$.
\end{lem}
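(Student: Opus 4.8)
The statement is the gentlest of the three results quoted from \cite{Guillen15}, and the natural plan is to take the trace of the evolution equation and show that the scalar field $u := \trace(\Q)$ satisfies a linear, homogeneous evolution problem with vanishing initial datum. The essential observation is structural: by the derivation preceding \eqref{eq:SFmod}, the modified dynamics is nothing but the $L^2$-gradient flow whose velocity has been projected onto trace-free tensors, namely
\[
\Q_t + \gamma\,\mathbb{P}_0\!\left(\frac{\delta E(\Q)}{\delta\Q}\right) = \mathbf0, \qquad \mathbb{P}_0(M) := M - \tfrac13\trace(M)\,\I ,
\]
and the term $\bm p(\Q)$ appearing in \eqref{eq:SFmod} is exactly the thermotropic contribution of this projection.

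First I would record that the trace operator is linear and commutes with $\partial_t$ and $\Delta$, and that $\trace(\I)=3$ --- valid here because $\Q$ is a $3\times 3$ symmetric tensor for both $d=2$ and $d=3$. Since $\trace\big(\mathbb{P}_0(M)\big) = \trace(M) - \tfrac13\trace(M)\cdot 3 = 0$ for every matrix $M$, the velocity $\Q_t = -\gamma\,\mathbb{P}_0(\delta E/\delta\Q)$ is trace-free, so that $\partial_t u = \trace(\Q_t) = 0$. Equivalently, working directly from \eqref{eq:SFmod} and \eqref{eq:gradPsi}, one computes $\trace(A\Q)=Au$ and $\trace\big(C\trace(\Q^2)\Q\big)=C\trace(\Q^2)u$, while the genuinely trace-producing term $\trace(-B\Q^2)=-B\trace(\Q^2)$ is cancelled by the trace of the correction $\bm p(\Q)$; all remaining contributions carry the factor $u$, leaving the linear homogeneous problem
\[
\partial_t u - \gamma\Delta u + \tfrac{\gamma}{\eps}\big(A + C\trace(\Q^2)\big)\,u = 0 \quad\text{in }\Om\times(0,T),
\]
together with $\partial_{\mathbf n} u = 0$ on $\partial\Om$ and $u(\cdot,0)=\trace(\Q_0)=0$.

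To close the argument I would invoke uniqueness for this scalar problem. In the cleanest form, $\partial_t u = 0$ weakly in time (with values in the appropriate dual space) immediately gives $u(\cdot,t)=u(\cdot,0)=0$ for all $t$. If instead one prefers the parabolic form above, a standard energy estimate suffices: multiplying by $u$, integrating over $\Om$, and integrating by parts using the homogeneous Neumann condition yields
\[
\tfrac12\tfrac{d}{dt}\|u\|_{L^2(\Om)}^2 + \gamma\|\nabla u\|_{L^2(\Om)}^2 = -\tfrac{\gamma}{\eps}\int_\Om \big(A + C\trace(\Q^2)\big)\,u^2\,d\mathbf x .
\]
The coefficient is controlled by the regularity from \Cref{thm:wellPosedness}: since $\Q\in L^\infty(0,\infty;H^1(\Om))$ and $H^1(\Om)\hookrightarrow L^6(\Om)$, one has $\trace(\Q^2)=|\Q|^2\in L^\infty(0,\infty;L^3(\Om))$, so the right-hand side is bounded by $C_0\|u\|_{L^2}^2 + \tfrac{\gamma}{2}\|\nabla u\|_{L^2}^2$ after a H\"older and Sobolev interpolation that absorbs the gradient term; Gronwall's inequality with $\|u(0)\|_{L^2}=0$ then forces $u\equiv 0$.

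The only genuine point requiring care is the algebraic matching in the second step, which relies on $\trace(\I)=3$ exactly compensating the $\tfrac13$ factor of the projection so that no source term independent of $u$ survives --- this is precisely the purpose for which the traceless reformulation introduces $\bm p(\Q)$. Beyond that, the result is essentially immediate, and the main subtlety is bookkeeping: justifying the manipulations at the level of weak solutions (so that $\trace(\Q_t)=\partial_t\trace(\Q)$ and the energy identity hold in the sense provided by \Cref{thm:wellPosedness}).
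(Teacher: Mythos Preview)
Your argument is correct and follows the natural route: extract the scalar equation for $u=\trace(\Q)$, note that the $\bm p(\Q)$ correction exactly cancels the trace of $-B\Q^2$ so that every surviving term carries a factor of $u$, and conclude from the resulting linear homogeneous problem with zero initial datum. The energy estimate with Gr\"onwall is the right way to close it, since $A$ may be negative.

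The paper itself does not supply a proof of this lemma; it is stated as one of the results summarized from \cite{Guillen15}. (A proof sketch does appear in a commented-out block in the \LaTeX\ source, and it proceeds by the same energy method you use: take the trace of \eqref{eq:SFmod}, test against $\trace(\Q)$, and integrate by parts. That sketch, however, concludes via the monotonicity $\tfrac{d}{dt}\|\trace\Q\|_{L^2}^2\le 0$, which implicitly requires the zeroth-order coefficient to be nonnegative; your Gr\"onwall argument avoids that assumption and is therefore more robust.)

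One minor remark on your ``option (a)'': the displayed formula $\Q_t+\gamma\,\mathbb P_0(\delta E/\delta\Q)=\mathbf 0$ is what the paper \emph{announces}, but the concrete system \eqref{eq:SFmod} only records the part of the projection that is nonzero on trace-free tensors, namely $\bm p(\Q)$. So the one-line conclusion $\partial_t u=0$ really applies to the projected flow as a formulation, not literally to \eqref{eq:SFmod}; your ``option (b)'' computation, which you carry out directly from \eqref{eq:SFmod}, is the rigorous version and is exactly what is needed.
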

Additionally, this continuous problem possesses the following maximum principle on the norm of $\Q$. Here, we use the Frobenius norm $|\Q|=\sqrt{\Q:\Q}$ where $\Q:\Q=Q_{ij}Q_{ij}$.

\begin{lem}[Maximum Principle]\label{lem:maximumPrinciple}
Let $\alpha>0$ satisfy
\beq\label{eq:alpha}
\alpha^2 \geq \frac{B^2}{C^2} - \frac{2A}{C} \,,
\eeq
where $A,B$, and $C$ are the coefficients in the potential function $\Psi(\Q)$. 
Let $\Q_0\in H^1(\Omega)$ with $|\Q_0|\leq \alpha$ a.e. in $\Omega$. If $\Q\in L^\infty(0,T;H^1(\Omega))\cap L^2(0,T;H^2(\Omega))$ is any point-wise solution for the problem \eqref{eq:SFmod} in $(0,T)\times\Omega$ then
$$
|\Q(\mathbf{x},t)|\leq \alpha \mbox{ a.e. } (\mathbf{x},t)\in (0,T)\times\Omega\,.
$$
\end{lem}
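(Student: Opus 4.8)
The plan is to reduce this tensorial maximum principle to a scalar parabolic comparison argument for the quantity $|\Q|^2 = \trace(\Q^2)$, and then to run an energy estimate on the truncated excess $(|\Q|^2 - \alpha^2)^+$. First I would take the Frobenius inner product of the evolution equation in \eqref{eq:SFmod} with $\Q$ itself. Using the pointwise identities $\Q:\Q_t = \tfrac12\partial_t|\Q|^2$ and $\Q:\Delta\Q = \tfrac12\Delta|\Q|^2 - |\nabla\Q|^2$, together with $\Q:\Q^2 = \trace(\Q^3)$ and $\Q:(\trace(\Q^2)\Q) = (\trace(\Q^2))^2 = |\Q|^4$, this yields a scalar reaction--diffusion equation for $r := |\Q|^2$,
\[
\tfrac12\partial_t r - \tfrac{\gamma}{2}\Delta r + \gamma|\nabla\Q|^2 + \tfrac{\gamma}{\eps}\bigl(Ar - B\,\trace(\Q^3) + Cr^2\bigr) = 0 .
\]
Crucially, the trace-correction term contributes $\Q:\bm{p}(\Q) = -\tfrac{B}{3}\trace(\Q^2)\,\trace(\Q) = 0$ by \Cref{lem:traceless}, so it drops out.

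Next I would test this scalar equation against $w := (r - \alpha^2)^+$ and integrate over $\Om$. The time term gives $\tfrac14\frac{d}{dt}\|w\|_{L^2}^2$; the diffusion term, after integrating by parts, produces $\tfrac{\gamma}{2}\|\nabla w\|_{L^2}^2 \ge 0$, the boundary contribution vanishing because $\partial_\n r = 2\,\Q:\partial_\n\Q = 0$ by the Neumann condition $\partial_\n\Q = \mathbf0$; and the term $\gamma\int_\Om |\nabla\Q|^2\,w\,d\x$ is manifestly nonnegative. Thus all that stands between this estimate and the conclusion is the sign of the reaction contribution $\int_\Om (Ar - B\trace(\Q^3) + Cr^2)\,w\,d\x$ on the set $\{r > \alpha^2\}$.

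The heart of the proof --- and the step I expect to require the most care --- is the pointwise algebraic claim that
\[
A|\Q|^2 - B\,\trace(\Q^3) + C|\Q|^4 \ge 0 \qquad \text{whenever } |\Q| \ge \alpha .
\]
For this I would invoke the Cauchy--Schwarz--type bound $|\trace(\Q^3)| = |\Q:\Q^2| \le |\Q|\,|\Q^2| \le |\Q|^3$ (submultiplicativity of the Frobenius norm), reducing the claim to $Cr - |B|\sqrt{r} + A \ge 0$ for $r \ge \alpha^2$, i.e.\ to nonnegativity of the upward parabola $s \mapsto Cs^2 - |B|s + A$ for $s \ge \alpha$. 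When $B^2 < 4AC$ this parabola has no real roots and is positive everywhere; otherwise it suffices to show $\alpha$ dominates the larger root $s_+ = \tfrac{1}{2C}\bigl(|B| + \sqrt{B^2 - 4AC}\bigr)$. A direct computation shows $\alpha^2 \ge s_+^2$ is equivalent to $B^2 - 2AC \ge |B|\sqrt{B^2 - 4AC}$, which upon squaring reduces to $4A^2C^2 \ge 0$; hence the hypothesis \eqref{eq:alpha} is precisely what guarantees $\alpha \ge s_+$.

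With the reaction term shown nonnegative on $\{r > \alpha^2\}$, every term in the integrated identity is nonnegative except the time derivative, giving $\frac{d}{dt}\|(|\Q|^2 - \alpha^2)^+\|_{L^2}^2 \le 0$. Since $|\Q_0|\le\alpha$ forces $(|\Q_0|^2 - \alpha^2)^+ = 0$, the quantity $\|(|\Q|^2 - \alpha^2)^+\|_{L^2}$ starts at zero and cannot increase, so it vanishes for all $t\in(0,T)$; this yields $|\Q(\x,t)|\le\alpha$ a.e., as claimed. The regularity $\Q\in L^\infty(0,T;H^1(\Om))\cap L^2(0,T;H^2(\Om))$ is what makes each manipulation --- in particular the use of $\Delta\Q\in L^2$ and the admissibility of $w$ as a test function --- rigorous.
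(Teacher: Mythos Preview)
The paper does not actually prove \Cref{lem:maximumPrinciple}; it is stated (together with \Cref{thm:wellPosedness}, \Cref{lem:traceless}, and \Cref{lem:continuousEnergyLaw}) as a summary of results imported from \cite{Guillen15}, with no argument given. So there is no ``paper's own proof'' to compare against.

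That said, your argument is correct and is precisely the standard Stampacchia-type truncation one would expect for this statement: contract the tensor equation with $\Q$ to obtain a scalar parabolic inequality for $r=|\Q|^2$, test against $(r-\alpha^2)^+$, and control the reaction term pointwise via $|\trace(\Q^3)|\le|\Q|^3$. Your handling of the trace correction (it vanishes after contraction with $\Q$ because $\trace\Q=0$) and of the Neumann boundary term is right, and the algebraic verification that \eqref{eq:alpha} forces $\alpha$ to dominate the larger root of $Cs^2-|B|s+A$ is clean; the squaring step is legitimate because $B^2-2AC\ge0$ whenever the discriminant $B^2-4AC$ is nonnegative (for $C>0$), as you implicitly use. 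The only place to be slightly more careful in a fully rigorous write-up is the admissibility of $w=(|\Q|^2-\alpha^2)^+$ as a test function and the integration by parts for $\Delta r$, since a priori $|\Q|^2$ is only known to lie in $W^{1,3/2}$ from $\Q\in H^1$ in three dimensions; this is a routine technicality (handled e.g.\ by an approximation argument or by exploiting the additional $L^2(0,T;H^2)$ regularity to get $\Q\in L^2(0,T;L^\infty)$), and you flag it appropriately in your final sentence.
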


\begin{lem}[Energy Law]\label{lem:continuousEnergyLaw}
The problem satisfies the following dissipative energy law
\beq\label{eq:continuousEnergyLaw}
    \frac{d}{dt} E(\Q) + \frac1\gamma \| \Q_t \|^2_{L^2} = 0 \,.
\eeq
\end{lem}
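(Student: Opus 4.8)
The plan is to establish the energy law by the canonical computation for an $L^2$-gradient flow, namely testing the evolution equation against its own time derivative $\Q_t$. Concretely, I would take the Frobenius inner product of the first equation in \eqref{eq:SFmod} with $\Q_t$ and integrate over $\Omega$, obtaining
\[
\int_\Omega \Q_t:\Q_t\,d\mathbf{x} + \gamma\int_\Omega\left[-\Delta\Q + \frac1\varepsilon\bm{\psi}(\Q) + \frac1\varepsilon\bm{p}(\Q)\right]:\Q_t\,d\mathbf{x} = 0\,.
\]
The first integral is exactly $\|\Q_t\|^2_{L^2}$, so the whole task reduces to identifying the bracketed integral with $\frac{d}{dt}E(\Q)$, after which dividing by $\gamma$ delivers the claim.

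For the elastic contribution I would integrate by parts, writing $\int_\Omega(-\Delta\Q):\Q_t\,d\mathbf{x} = \int_\Omega\nabla\Q:\nabla\Q_t\,d\mathbf{x} - \int_{\partial\Omega}(\partial_\mathbf{n}\Q):\Q_t\,dS$; the boundary integral vanishes thanks to the homogeneous Neumann condition $\partial_\mathbf{n}\Q=\mathbf0$, leaving $\frac{d}{dt}\,\frac12\|\nabla\Q\|^2_{L^2}$, the time derivative of the (one-constant) elastic energy. For the thermotropic contribution the key observation is that $\bm{\psi}(\Q)=\nabla\Psi(\Q)$ by \eqref{eq:gradPsi}, so the chain rule gives $\bm{\psi}(\Q):\Q_t=\frac{d}{dt}\Psi(\Q)$ pointwise, whence $\frac1\varepsilon\int_\Omega\bm{\psi}(\Q):\Q_t\,d\mathbf{x}=\frac{d}{dt}\,\frac1\varepsilon\int_\Omega\Psi(\Q)\,d\mathbf{x}$. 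Summing these two pieces reproduces $\frac{d}{dt}E(\Q)$ as defined in \eqref{eq:energyLdG}.

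The one term requiring a separate argument is the trace-correction $\bm{p}(\Q)=-\frac{B}{3}\trace(\Q^2)\I$, which admits no primitive and thus does not obviously contribute to the energy. Here I would use that $\bm{p}(\Q)$ is a scalar multiple of the identity, so that $\bm{p}(\Q):\Q_t=-\frac{B}{3}\trace(\Q^2)\,(\I:\Q_t)=-\frac{B}{3}\trace(\Q^2)\,\trace(\Q_t)$. By \Cref{lem:traceless} the solution stays traceless for all $t>0$, hence $\trace(\Q_t)=\partial_t\trace(\Q)=0$ and this term vanishes identically. Collecting all contributions therefore yields $\frac{d}{dt}E(\Q)+\frac1\gamma\|\Q_t\|^2_{L^2}=0$.

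The main obstacle is not algebraic but regularity-theoretic: the formal testing with $\Q_t$ presupposes $\Q_t\in L^2$ together with enough smoothness to integrate by parts and to differentiate $\int_\Omega\Psi(\Q)\,d\mathbf{x}$ in time. I expect to justify the computation rigorously within the solution class of \Cref{thm:wellPosedness}, either by carrying out the estimate on a Galerkin approximation and passing to the limit, or by a density argument. The $L^2(0,T;H^2(\Omega))$ regularity makes $\Delta\Q$ an admissible test partner and, together with $L^\infty(0,\infty;H^1(\Omega))$, controls the polynomial nonlinearity in $\bm{\psi}$, so no assumptions beyond those already available should be needed.
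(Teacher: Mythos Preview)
Your argument is correct and is exactly the standard computation one would expect. Note, however, that the paper does not actually supply a proof of this lemma: it is one of several results (\Cref{thm:wellPosedness}, \Cref{lem:traceless}, \Cref{lem:maximumPrinciple}, \Cref{lem:continuousEnergyLaw}) that the authors quote from \cite{Guillen15} without reproducing the arguments. So there is no ``paper's own proof'' to compare against; your derivation---testing \eqref{eq:SFmod} with $\Q_t$, integrating by parts using the Neumann condition, applying the chain rule to $\bm{\psi}(\Q):\Q_t$, and invoking \Cref{lem:traceless} to kill the $\bm{p}(\Q)$ contribution---is precisely the canonical one and would serve as a complete proof in this setting.
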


\section{Numerical Schemes}\label{sec:numericalSchemes}
The main challenge for developing numerical schemes to approximate solutions to \eqref{eq:SF} is the nonlinearity of the unknowns in the potential function $\Psi(\Q)$. There are two main objectives we pursue to design numerical schemes. The first goal is to design schemes that obey a dissipative energy law for the discrete problem similar to the one associated with the continuous problem (equation \eqref{eq:continuousEnergyLaw}), so that we can guarantee decreasing energy in time. The second will be to stay as close as possible to the energy law at the discrete level so that we can recover the dynamics of the system more accurately. \\
Within this study, we intend to introduce a linear, unconditionally energy-stable Finite Element numerical method for the simulation of nematic liquid crystals. Subsequently, we explore a series of numerical techniques that enhance computational efficiency but come at the expense of compromising the structure-preserving attributes of the initial method. Notably, substantial reductions in computational cost can be achieved by reconfiguring the problem formulation to decouple the six unknown variables. This modification allows us to tackle multiple smaller problems sequentially, as opposed to dealing with a single, large linear system. Indeed, we will demonstrate the feasibility of implementing this concept for three-dimensional simulations with a relatively fine spatial mesh.

\subsection{A Finite Element Space-Discrete Scheme}
In this section we develop numerical schemes for the weak formulation of the modified problem \eqref{eq:SFmod}. The $L^2$ inner product is denoted by $(\cdot , \cdot )$. We consider a Finite Element method for space discretization as follows. Let $\bm{K}_h \subset H^1(\Omega)$ be a conformed finite element space associated to a regular and quasi-uniform triangulation $\mathcal{T}_h$ of the domain $\Omega$ whose polyhedric boundary is denoted by $\partial\Omega$. For the sake of notation, we skip the use of the subscript $h$ to denote functions that are discrete in space. Then the Finite Element approximation of \eqref{eq:SFmod} is: Find $\Q(\mathbf{x},t)\in\bm{K}_h$ such that
$$
\Q(\mathbf{x},0) = P_{\bm{K}_h}\Q_0\, ,
$$
where $P_{\bm{K}_h}$ denotes the projection operator into the space $\bm{K}_h$, and 
$$
\left( \Q_t, \overline \Q \right)
+ \gamma \left[\left( \nabla \Q, \nabla \overline\Q \right) 
+ \frac1\eps \left( \bm{\psi}(\Q), \overline\Q \right)
+ \frac1\eps \left(\bm{p}(\Q), \overline\Q \right) \right]
\, = \, 
0 \,,
$$
for any $\overline\Q\in\bm{K}_h$.

\subsection{Fully Discrete Schemes}
We consider a Finite Difference method in time using a regular partition of the interval $[0,T]$ into steps of length $\dt>0$. We denote $t^n:=n\dt$, $\Q^n$ represents the approximation of $\Q(\mathbf{x},t^n)$ with $\Q^0=P_{\bm{K}_h}\Q_0$,
$$
\dis
\delta_t u^{n+1} 
\,:=\,
\frac{u^{n+1} - u^n}{\dt} \quad\quad \mbox{ and } \quad\quad
u^{n+\frac12} 
\,:=\,
\frac{u^{n+1} + u^n}{2}\,.
$$
Recall that the nonlinearity of the model exists in the potential function $\bm{\psi}(\Q)$. The main contribution of our work relies in proposing numerical schemes with linear approximations to $\bm{\psi}(\Q)$ for which we write $\bm{\psi}^{\dt}(\Q^{n+1},\Q^n)$. \\
We propose the following generic numerical scheme.
\begin{itemize}
\item Initialization: Let $\Q^0\in \bm{K}_h$ satisfying $\Q^0\in\Lambda_0$, and $|\Q^0|\leq\alpha$ a.e. in $\Om$. \\
\item Step $n+1$: given $\Q^n\in\bm{K}_h$, find $\Q^{n+1}\in\bm{K}_h$ such that
\beq
\ba{rcl}\label{eq:numericalScheme}\dis
\left( \delta_t \Q^{n+1} , \overline{\Q} \right) 
+ \gamma \left[\left( \nabla \Q^{n+\frac{1}{2}} , \nabla \overline{\Q} \right) 
+ \frac{1}{\eps}\left( \bm{\psi}^{\Delta t} (\Q^{n+1}, \Q^{n}), \overline{\Q} \right)
+  \frac1\eps \left( \bm{p}(\Q^{n+1},\Q^n) , \overline{\Q} \right) \right]
&=&
0\,,
\ea
\eeq
for all $\overline\Q \in \bm{K}_h$.
\end{itemize}

\begin{lem}[Discrete Energy Law]\label{lem:discreteEnergyLaw}
    The generic numerical scheme \eqref{eq:numericalScheme} satisfies a discrete version of the energy law \eqref{eq:continuousEnergyLaw}:
        \beq\label{eq:discreteEnergyLaw}\dis
        \delta_t E(\Q^{n+1}) + \frac1\gamma \frac{1}{(\Delta t)^2} || \Q^{n+1} - \Q^n ||_{L^2}^2 + \mathbf{ND}\left(\Q^{n+1}, \Q^n\right) = 0 \, ,
    \eeq
    where $\mathbf{ND}\left(\Q^{n+1}, \Q^n\right)$ denotes the numerical dissipation
    \beq\label{eq:numericalDissipation}\dis
    \ba{rcl}
        \mathbf{ND}\left(\Q^{n+1}, \Q^n\right) 
        &:=& \dis
        \frac1{\eps\dt} \int_\Omega \left( \bm{\psi}^{\Delta t} (\Q^{n+1},\Q^n) : (\Q^{n+1} - \Q^n) \right) - \left( \Psi(\Q^{n+1}) - \Psi(\Q^n) \right) d\mathbf{x} \\
        \hueco
        & &\dis
        + \frac1{\eps\dt}\int_\Om \bm{p}(\Q^{n+1},\Q^n)\left( \trace(\Q^{n+1}) - \trace(\Q^n) \right).
    \ea
    \eeq
\end{lem}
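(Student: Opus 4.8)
The plan is to test the generic scheme \eqref{eq:numericalScheme} with the natural discrete test function that mimics the continuous strategy of pairing the gradient flow against $\Q_t$. At the continuous level, the energy law of \Cref{lem:continuousEnergyLaw} comes from testing against $\Q_t$; the discrete analog is to choose $\overline{\Q} = \delta_t\Q^{n+1} = (\Q^{n+1}-\Q^n)/\dt$, so that the first term immediately produces the dissipation. First I would substitute this test function and handle each of the four terms in \eqref{eq:numericalScheme} separately. The time-derivative term gives
$$
\left( \delta_t\Q^{n+1}, \delta_t\Q^{n+1} \right) = \frac{1}{(\dt)^2}\|\Q^{n+1}-\Q^n\|_{L^2}^2,
$$
which is exactly the middle term of \eqref{eq:discreteEnergyLaw} after multiplying through by $\gamma$ (note the $1/\gamma$ factor in the statement, so one rescales accordingly).

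Next I would treat the elastic term $\gamma(\nabla\Q^{n+1/2}, \nabla\delta_t\Q^{n+1})$. The key identity here is the standard Crank--Nicolson telescoping
$$
\left( \nabla\Q^{n+1/2}, \nabla(\Q^{n+1}-\Q^n) \right) = \frac12\left( \|\nabla\Q^{n+1}\|_{L^2}^2 - \|\nabla\Q^n\|_{L^2}^2 \right),
$$
which follows from $a^{n+1/2}(a^{n+1}-a^n) = \tfrac12((a^{n+1})^2 - (a^n)^2)$ applied componentwise. Dividing by $\dt$ recovers precisely $\delta_t$ of the elastic part of the energy $E(\Q)$, i.e. $\delta_t \int_\Omega \tfrac12 L_1|\nabla\Q|^2\,d\mathbf{x}$ in the one-constant case. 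This is the term that delivers the $\delta_t E(\Q^{n+1})$ contribution from the elastic energy density $\mathcal{W}$.

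The crux of the argument, and where the numerical-dissipation term $\mathbf{ND}$ is forced upon us, lies in the potential terms $\tfrac{\gamma}{\eps}(\bm{\psi}^{\dt}(\Q^{n+1},\Q^n), \delta_t\Q^{n+1})$ and $\tfrac{\gamma}{\eps}(\bm{p}(\Q^{n+1},\Q^n), \delta_t\Q^{n+1})$. The strategy is to add and subtract the exact energy increment: I would write
$$
\frac{1}{\eps\dt}\left( \bm{\psi}^{\dt}(\Q^{n+1},\Q^n), \Q^{n+1}-\Q^n \right) = \frac{1}{\eps}\,\delta_t\!\int_\Omega \Psi(\Q^{n+1})\,d\mathbf{x} + \mathbf{ND}_\psi,
$$
where $\mathbf{ND}_\psi$ is defined as the discrepancy
$$
\mathbf{ND}_\psi = \frac{1}{\eps\dt}\int_\Omega \bm{\psi}^{\dt}(\Q^{n+1},\Q^n):(\Q^{n+1}-\Q^n) - \left(\Psi(\Q^{n+1})-\Psi(\Q^n)\right)\,d\mathbf{x},
$$
and handle the $\bm{p}$ term analogously, noting that $\bm{p}$ is diagonal so its Frobenius pairing with $\Q^{n+1}-\Q^n$ reduces to the trace pairing $\bm{p}(\Q^{n+1},\Q^n)(\trace(\Q^{n+1})-\trace(\Q^n))$. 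Collecting all four contributions, multiplying by $\gamma$, and matching terms yields \eqref{eq:discreteEnergyLaw} with $\mathbf{ND} = \mathbf{ND}_\psi + \mathbf{ND}_p$ exactly as displayed in \eqref{eq:numericalDissipation}. The main obstacle is conceptual rather than computational: since $\bm{\psi}^{\dt}$ is merely \emph{some} linear approximation of $\bm{\psi}(\Q)$ (not yet specified at this generic level), we cannot expect $\bm{\psi}^{\dt}:(\Q^{n+1}-\Q^n)$ to equal the true energy increment $\Psi(\Q^{n+1})-\Psi(\Q^n)$; the entire role of $\mathbf{ND}$ is to absorb this mismatch as a definition, so the identity \eqref{eq:discreteEnergyLaw} holds tautologically for any consistent $\bm{\psi}^{\dt}$. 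The genuine work — showing $\mathbf{ND}\geq 0$ to obtain true energy stability — is deferred to the analysis of each specific scheme and is not part of this lemma.
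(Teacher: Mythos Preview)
Your proposal is correct and follows essentially the same route as the paper: test \eqref{eq:numericalScheme} with the discrete time derivative, use the Crank--Nicolson telescoping identity for the elastic term, and add--subtract $\Psi(\Q^{n+1})-\Psi(\Q^n)$ to isolate $\mathbf{ND}$. The only minor slip is the rescaling: you should divide by $\gamma$ (equivalently, test with $\overline{\Q}=\tfrac{1}{\gamma}\delta_t\Q^{n+1}$ as the paper does), not multiply, to land on the $\tfrac{1}{\gamma}\|\delta_t\Q^{n+1}\|_{L^2}^2$ term in \eqref{eq:discreteEnergyLaw}.
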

\begin{proof}
    Take $\overline{\Q}=\frac1\gamma \delta_t\Q^{n+1}$ in \eqref{eq:numericalScheme}. We obtain
$$
        \ba{rcl}\dis
        \frac{1}{\dt}\left( \frac{1}{2} || \nabla \Q^{n+1} ||_{L^2} -  \frac{1}{2} || \nabla \Q^{n} ||_{L^2} \right)         
        + \frac1\gamma\frac{1}{(\dt)^2} || \Q^{n+1} - \Q^n ||_{L^2}^2 
        & & \\
        \hueco \dis
        + \frac{1}{\eps\dt } \left( \bm{\psi}^{\Delta t} (\Q^{n+1}, \Q^{n}) , (\Q^{n+1} - \Q^n ) \right) 
        + \frac1{\eps\dt} \left( \bm{p}(\Q^{n+1},\Q^n) ,(\Q^{n+1} - \Q^n ) \right)
        &=&
         0\,.
    \ea
$$
Therefore, by adding and subtracting $\eps^{-1}\left( \Psi(\Q^{n+1}) - \Psi(\Q^n) \right)$,
$$
    \ba{rcl}\dis
       \frac{E(\Q^{n+1}) - E(\Q^{n})}{\dt}
        + \frac1\gamma \frac{1}{(\dt)^2} || \Q^{n+1} - \Q^n ||_{L^2}^2 
        & & \\
        \hueco \dis
        + \frac1{\eps\dt} \int_\Omega \left( \bm{\psi}^{\dt} (\Q^{n+1},\Q^n) : (\Q^{n+1} - \Q^n) \right) - \left( \Psi(\Q^{n+1}) - \Psi(\Q) \right) d\mathbf{x} 
        & & \\
        \hueco \dis
        +  \frac1{\eps\dt} \int_\Om \bm{p}(\Q^{n+1},\Q^n)\left(\trace(\Q^{n+1}) - \trace(\Q^n) \right) d\mathbf{x} 
        &=& 0 \,.
    \ea
$$
\end{proof}

\begin{defn}\label{def:energyStable}
A numerical scheme is {\bf energy stable} if 
\beq\label{eq:energyStable}
\dis \frac{E(\Q^{n+1}) - E(\Q^n)}{\dt} + \frac1\gamma || \delta_t\Q^{n+1} ||_{L^2}^2 
\,\leq\,
 0 \quad\quad \forall\, n\geq 0\,.
\eeq
In particular, energy stability implies the decreasing energy property $E(\Q^{n+1}) \leq E(\Q^n)$ for all $n\geq 1$.
Moreover, if the scheme can be shown to satisfy \eqref{eq:energyStable} for any choice of $\dt>0$, then the scheme is called {\bf unconditionally energy stable}.
\end{defn}
\begin{obs}
From \eqref{eq:discreteEnergyLaw}, the numerical scheme \eqref{eq:numericalScheme} is energy stable if 
\beq\label{eq:stabilityCondition}\dis
\mathbf{ND}(\Q^{n+1},\Q^n) \geq 0\,.
\eeq
The main contribution of this paper is to present different choices of $\bm{\psi}^{\dt}(\Q^{n+1}, \Q^n)$ so that we may control the sign and/or size of the numerical dissipation \eqref{eq:numericalDissipation}. The key point of our approach is to derive numerical schemes whose solutions are as close to the solution of the original problem as possible. In fact, one of the goals of this work is to emphasize that even schemes that are not unconditionally energy stable can be reliable if the size of the numerical dissipation is small. 
\end{obs}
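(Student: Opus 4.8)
The plan is to derive the energy-stability inequality \eqref{eq:energyStable} directly from the discrete energy law \eqref{eq:discreteEnergyLaw} of \Cref{lem:discreteEnergyLaw} by a term-by-term identification, so that no further testing of the scheme \eqref{eq:numericalScheme} is needed. I would first observe that the two functionals appearing in \eqref{eq:energyStable} are precisely the first two terms of \eqref{eq:discreteEnergyLaw}. By the definition of the backward difference quotient, $\delta_t E(\Q^{n+1}) = (E(\Q^{n+1}) - E(\Q^n))/\dt$, so the leading terms coincide verbatim. Likewise, since $\delta_t \Q^{n+1} = (\Q^{n+1} - \Q^n)/\dt$, squaring the $L^2$-norm yields the identity $\| \delta_t \Q^{n+1} \|_{L^2}^2 = (\dt)^{-2} \| \Q^{n+1} - \Q^n \|_{L^2}^2$, so the two kinetic-type terms also agree once the factor $1/\gamma$ is accounted for.

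With these identifications in hand, the energy law \eqref{eq:discreteEnergyLaw} can be rewritten as
\[
\frac{E(\Q^{n+1}) - E(\Q^n)}{\dt} + \frac1\gamma \| \delta_t \Q^{n+1} \|_{L^2}^2 = - \mathbf{ND}(\Q^{n+1}, \Q^n) \,.
\]
The remaining step is then immediate: the left-hand side is exactly the quantity whose non-positivity defines energy stability, so under the hypothesis $\mathbf{ND}(\Q^{n+1}, \Q^n) \geq 0$ the right-hand side is $\leq 0$, and \eqref{eq:energyStable} holds for every $n \geq 0$. This is precisely the asserted implication.

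Since the argument is a purely algebraic rearrangement of an equality already established, there is no genuine analytical obstacle here. The only point that warrants a moment of care is the bookkeeping of the constant $\gamma^{-1}$ together with the squared difference quotient, to confirm that the second term of \eqref{eq:discreteEnergyLaw} matches the dissipation term of \eqref{eq:energyStable} with no stray factor; once that is verified, the sign hypothesis on $\mathbf{ND}$ converts the energy \emph{equality} of \Cref{lem:discreteEnergyLaw} into the desired energy \emph{inequality}. Notably, the conclusion requires no additional regularity, no restriction on $\dt$, and no use of the explicit form \eqref{eq:numericalDissipation} of $\mathbf{ND}$ beyond its sign, which is exactly why the subsequent task of the paper reduces to choosing $\bm{\psi}^{\dt}(\Q^{n+1},\Q^n)$ so as to control that sign.
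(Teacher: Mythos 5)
Your proposal is correct and follows exactly the reasoning the paper leaves implicit: the Remark is an immediate consequence of the discrete energy law of \Cref{lem:discreteEnergyLaw}, obtained by identifying $\delta_t E(\Q^{n+1})$ and $\gamma^{-1}(\dt)^{-2}\|\Q^{n+1}-\Q^n\|_{L^2}^2$ with the two terms in \eqref{eq:energyStable} and moving $\mathbf{ND}(\Q^{n+1},\Q^n)$ to the right-hand side. Your bookkeeping of the factor $\gamma^{-1}$ and the squared difference quotient is accurate, so nothing is missing.
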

In the next sections we develop three choices for $\bm{\psi}^{\dt}(\Q^{n+1}, \Q^n)$ along with their properties.

\subsubsection{First Order Unconditionally Energy Stable Decoupled Scheme (UES1D)}
Here, we present a linear, first order accurate in time, unconditionally energy stable numerical scheme. 
Let $\alpha>0$ satisfy \eqref{eq:alpha} from the maximum principle on $|\Q|$. Consider splitting $\Psi(\Q)$ into three terms:
\beq\label{eq:splitPsi}
\ba{rcl}\dis
\Psi(\Q)
&=&
\Psi_1 (\Q) + \Psi_2(\Q) + \Psi_3(\Q) \,, 
\ea
\eeq
such that
\beq\label{eq:psiParts}
\ba{rcl}\dis
\Psi_1(\Q)
&=& \dis
\frac{C}4 \left( \trace(\Q^2) - \alpha^2 \right)^2 \,, \\
\hueco \dis
\Psi_2(\Q)
&=& \dis
\left(\frac{A}2 + \frac{C\alpha^2}2 \right)\trace(\Q^2) - \frac{C}4 \alpha^4 \,, \\
\hueco \dis
\Psi_3(\Q)
&=& \dis
-\frac{B}3 \trace(\Q^3)\,.
\ea
\eeq

The first derivative is given by
\beq
\ba{rcl}
\bm{\psi}(\Q)
&=& \dis
\bm{\psi}_1 (\Q) + \bm{\psi}_2 (\Q) + \bm{\psi}_3 (\Q) \,, 
\ea
\eeq
with 
\beq\label{eq:gradPsiParts}
\ba{rcl}
\hueco
\bm{\psi}_1 (\Q) 
&=& \dis
C\left( \trace(\Q^2) - \alpha^2 \right) \Q \,, \\
\hueco
\hueco 
\bm{\psi}_2 (\Q)
&=& \dis
\left( A + C \alpha^2 \right) \Q \,, \\
\hueco
\bm{\psi}_3 (\Q)
&=& \dis
- B \Q^2 \,.
\ea
\eeq
We will use different ideas to approximate each of the terms in $\bm{\psi}(\Q)$. First, since $\bm{\psi}_2(\Q)$ is linear in $\Q$, in all the numerical schemes moving forward we will use the Crank-Nicolson approximation
\beq\label{eq:psiTwoNS}\dis
\bm{\psi}_2^{\dt} (\Q)
\,=\,
\bm{\psi}_2 \left( \Q^{n+\frac12} \right)
\,=\,
\left(A + C\alpha^2 \right) \frac{\Q^{n+1} + \Q^n}{2} \,.
\eeq
Next, by using the symmetry of $\Psi_1$ we consider a $\mathcal{C}^2$ truncation as follows:
\beq\label{eq:psiOneTrunc}
\widehat\Psi_1 (\Q)
\,=\,
\left\{ \ba{ll}\dis
\frac{C}4 \left(\trace(\Q^2) - \alpha^2\right)^2 & \mbox{ if } |\Q|\leq\alpha \,, \\
\hueco
C\alpha^2\left( |\Q| - \alpha \right)^2 & \mbox{ if } |\Q|>\alpha\,. \\
\ea \right.
\eeq
Note that $\trace(\Q^2)=|\Q|^2$ so $\widehat\Psi_1$ is continuous. Furthermore, the first derivative
\beq\label{eq:gradPsiOne}
\bm{\widehat\psi}_1 (\Q)
\,=\,
\left\{ \ba{ll}\dis
C \left(\trace(\Q^2) - \alpha^2\right)\Q & \mbox{ if } |\Q|\leq\alpha \,, \\
\hueco
2C\alpha^2\left( |\Q| - \alpha \right)\frac{\Q}{|\Q|} & \mbox{ if } |\Q|>\alpha\,, \\
\ea \right.
\eeq
and second derivative
\beq\label{eq:hessianPsiOne}
\left[\nabla \bm{\widehat\psi}_1 (\Q)\right]_{ijkl}
\,=\,
\left\{ \ba{ll}\dis
2C Q_{ij}Q_{kl} + C(\trace(\Q^2) - \alpha^2)\delta_{ik}\delta_{jl} & \mbox{ if } |\Q|\leq\alpha \,,\\
\hueco
2C\alpha^2\left( \alpha\frac{q_j q_l}{|\Q|^3} + \frac{|\Q| - \alpha}{|\Q|}\delta_{ik}\delta_{jl}\right) & \mbox{ if } |\Q|>\alpha \,,\\
\ea \right.
\eeq
are both continuous. Moreover, we have the following bounds on $\nabla \bm{\widehat\psi}_1 (\Q)$ 
\beq\label{eq:hessianPsiOneBounds}
\left|\left| \left[ \nabla \bm{\widehat\psi}_1 (\Q) \right]_{ijkl} \right|\right|_{L^\infty} 
\,\leq\,
 2C\alpha^2 (1 + \delta_{ik}\delta_{jl}) \quad \mbox{ and } \quad
\left|\left| \nabla \bm{\widehat\psi}_1 (\Q)\right|\right|_F 
\,\leq\, 
12\sqrt{3}C\alpha^2 \,,
\eeq
where the fourth order tensor Frobenius norm is $||\bm{A}||_F:=\sqrt{A_{ijkl}A_{ijkl}}$.

Finally, we would like to develop a $\mathcal{C}^2$ truncation for $\Psi_3$ but it lacks symmetry as in $\Psi_1$. Thus we follow the ideas presented in \cite{SurowiecWalker} and consider the following construction which smoothly transitions $\Psi_3$ into a function with bounded second derivative. Let $\rho:[0,\infty)\rightarrow\R$ be the following monotone function
\beq\label{eq:rho}
\rho(r)
\,:=\,
\left\{
\ba{ll}\dis
1
& \mbox{ if }
r\leq \sqrt{\alpha_1} \,,
\\ \hueco \dis
\left(2\frac{r - \alpha_1}{\alpha_2 - \alpha_1} + 1\right)\left(1 - \frac{r - \alpha_1}{\alpha_2 - \alpha_1}\right)^2
& \mbox{ if }
\sqrt{\alpha_1} < r < \sqrt{\alpha_2} \,,
\\ \hueco
0
& \mbox{ if }
\sqrt{\alpha_2} \leq r\,.
\ea
\right.
\eeq
Here $\alpha<\alpha_1<\alpha_2$ are fixed constants where $\alpha$ is the value from the maximum principle \eqref{eq:alpha}. 
Then we modify $\Psi_3$ as follows
\beq\label{eq:barPsiThree}
\widehat\Psi_3(\Q) = -\frac{B}{3}\trace\Q^3 \rho(\trace(\Q^2)) + \trace(\Q^2) \left[ 1 - \rho(\trace(\Q^2)) \right]\,.
\eeq
In this way, we do not change the function in the region $|\Q|\leq\alpha$, and when $|\Q|>\alpha$ we smoothly transition to a function of quadratic growth so that the Hessian tensor is bounded. There exist computable constants $ s_{3,\infty }$, and $ s_{3,F}$ (see appendix) such that
\beq\label{eq:hessianPsiThreeBounds}
\left|\left| \left[ \nabla \bm{\widehat\psi}_3(\Q) \right]_{ijkl} \right|\right|_{L^\infty} 
\,\leq\,
 s_{3,\infty } 
 \quad 
 \mbox{ and }
 \quad
\left|\left|\nabla \bm{\widehat\psi}_3(\Q) \right|\right|_F 
\,\leq\, 
 s_{3,F} .
\eeq
Using these ideas, we write the truncated potential function $\widehat\Psi(\Q)$ as
\beq\label{eq:psiHat}
\widehat\Psi (\Q)
\, = \,
\widehat\Psi_1(\Q) + \Psi_2(\Q) + \widehat\Psi_3(\Q)\,.
\eeq
Then the modified energy $\widehat E(\Q)$ is
\beq\label{eq:modifiedEnergy}
\widehat E(\Q) 
\, = \,
\int_\Om \left( \frac12|\nabla \Q |^2 + \frac1\eps \widehat\Psi(\Q) \right)d \mathbf{x}\,.
\eeq
We write $\bm{\psi}^{\dt}$ as follows
\beq\label{eq:scheme1Psi}
\ba{rcl}
\bm{\psi}^{\dt} (\Q^{n+1},\Q^n)
&=& \dis
\bm{\psi}_1^{\dt}(\Q^{n+1},\Q^n) + \bm{\psi}_2^{\dt}(\Q^{n+1},\Q^n) + \bm{\psi}_3^{\dt}(\Q^{n+1},\Q^n)\,,
\ea
\eeq
with
\beq\label{eq:scheme1PsiParts}
\ba{rcl}\dis
\bm{\psi}_1^{\dt} (\Q^{n+1},\Q^n)
&=& \dis
\bm{\widehat\psi}_1( \Q^n ) + \frac{1}{2} S_1\left( \Q^{n+1} - \Q^n \right)\,, \\
\hueco \dis
\bm{\psi}_2^{\dt} (\Q^{n+1},\Q^n)
&=& \dis
\bm{\psi}_2 \left( \Q^{n+\frac{1}{2}}\right)\,, \\
\hueco \dis
\bm{\psi}_3^{\dt} (\Q^{n+1},\Q^n)
&=& \dis
\bm{\widehat\psi}_3 (\Q^n) + \frac{1}{2} S_3\left( \Q^{n+1} - \Q^n \right)\,.
\ea
\eeq

Then the scheme can be written as follows:
\beq\label{eq:scheme1}
\dis
    \left( \delta_t \Q^{n+1} , \overline{\Q} \right) + \gamma\left[\left( \nabla\Q^{n+\frac12} , \nabla \overline{\Q} \right)
+ \frac{1}{\eps}\left( \bm{\psi}^{\dt}(\Q^{n+1},\Q^n), \overline{\Q} \right)
+ \frac{1}{\eps}\left( \bm{p}(\Q^n), \overline\Q  \right) \right]
\, = \,
0\,,
\eeq
for all $\overline\Q \in \bm{K}_h$. 
Here, we choose the trace part of the problem to be approximated as
\beq\label{eq:tracePartScheme1}
p(\Q^n) = - \frac13 \trace\left( \bm{\widehat\psi}_3(\Q^n) \right) \I = \frac{B}{3} \trace\left( (\Q^n)^2\right) \I\,.
\eeq

\begin{lem}\label{lem:tracelessScheme1}
Scheme UES1D satisfies a discrete version of \Cref{lem:traceless}. For any positive constants $S_1$, and $S_3$, if $\trace(\Q^n)=0$, then $\trace(\Q^{n+1})=0$.
\end{lem}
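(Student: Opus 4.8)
The plan is to reproduce, at the discrete level, the trace-testing argument behind the continuous traceless property (\Cref{lem:traceless}). Writing $w := \trace(\Q^{n+1})$, I would take $\overline{\Q} = w\,\I \in \bm{K}_h$ as the test function in the weak formulation \eqref{eq:scheme1}. Since $(\bm{M},\,w\I) = \int_\Om w\,\trace(\bm{M})$ for any symmetric $\bm{M}$, this choice projects the scheme onto its trace and yields a scalar identity in $w$; the whole task then reduces to showing that this identity forces $\|w\|_{L^2} = 0$. (For this I need $w\,\I$ to be admissible in $\bm{K}_h$, which holds when the discrete space is assembled componentwise from a common scalar finite element space.)

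The heart of the computation is evaluating the trace of each term under the standing hypothesis $\trace(\Q^n)=0$. The transient term gives $\trace(\delta_t\Q^{n+1}) = \delta_t w = w/\dt$, and the elliptic term gives $(\nabla\Q^{n+\frac12},\nabla(w\I)) = \tfrac12\|\nabla w\|_{L^2}^2$ after using $\trace(\Q^{n+\frac12})=w/2$. For the nonlinear pieces I would verify three facts: (i) $\trace(\bm{\widehat\psi}_1(\Q^n)) = 0$, valid in \emph{both} branches of \eqref{eq:gradPsiOne} because there $\bm{\widehat\psi}_1(\Q^n)$ is a scalar multiple of $\Q^n$ and $\trace(\Q^n)=0$; (ii) the linear and stabilization parts contribute $\trace(\bm{\psi}_2^{\dt}) = \tfrac12(A+C\alpha^2)w$ and $\trace\!\big(\tfrac12 S_i(\Q^{n+1}-\Q^n)\big) = \tfrac12 S_i w$ for $i=1,3$; and (iii) the crucial cancellation $\trace(\bm{\widehat\psi}_3(\Q^n)) + \trace(\bm{p}(\Q^n)) = 0$, which is exactly what the choice \eqref{eq:tracePartScheme1} is engineered to produce: since $\bm{p}(\Q^n) = -\tfrac13\trace(\bm{\widehat\psi}_3(\Q^n))\,\I$ and $\trace(\I)=3$ for the $3\times3$ order parameter, one gets $\trace(\bm{p}(\Q^n)) = -\trace(\bm{\widehat\psi}_3(\Q^n))$.

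Collecting these contributions, the trace identity becomes
\[
\frac1\dt\|w\|_{L^2}^2 + \gamma\left[\frac12\|\nabla w\|_{L^2}^2 + \frac1{2\eps}\left(S_1 + S_3 + A + C\alpha^2\right)\|w\|_{L^2}^2\right] = 0 .
\]
It then remains to argue every term is nonnegative. Positivity of $\tfrac1\dt$, $\gamma$, $\|\nabla w\|_{L^2}^2$, $S_1$, and $S_3$ is immediate; the only delicate coefficient is $A+C\alpha^2$, whose positivity I would extract from the maximum-principle constraint \eqref{eq:alpha}. Multiplying $\alpha^2 \geq B^2/C^2 - 2A/C$ by $C>0$ gives $C\alpha^2 + 2A \geq B^2/C \geq 0$, hence $C\alpha^2 \geq -2A$; treating $A\geq0$ and $A<0$ separately (and using $\alpha>0$) yields $A+C\alpha^2>0$ in all cases. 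Therefore every term on the left is nonnegative, the first forces $\|w\|_{L^2}=0$, and so $\trace(\Q^{n+1}) = w \equiv 0$.

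The main obstacle, and really the only nonroutine point, is verifying the exact trace cancellation of step (iii): that the discrete correction $\bm{p}(\Q^n)$ built from the truncated $\bm{\widehat\psi}_3$ annihilates $\trace(\bm{\widehat\psi}_3(\Q^n))$ precisely, which hinges on $\trace(\I)=3$ together with the fact that $\bm{\widehat\psi}_1$ and $\bm{\psi}_2$ carry no spurious trace once $\trace(\Q^n)=0$. A secondary but essential structural point is the $\dt$-independent sign of $A+C\alpha^2$, since this is what makes the traceless property hold for \emph{any} $\dt>0$ and any positive $S_1,S_3$, rather than only under a step-size restriction.
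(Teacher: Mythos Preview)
Your proposal is correct and follows the same trace-testing argument as the paper: test the scheme with $\trace(\Q^{n+1})\,\I$, use the engineered cancellation $\trace(\bm{\widehat\psi}_3(\Q^n))+\trace(\bm{p}(\Q^n))=0$ together with $\trace(\bm{\widehat\psi}_1(\Q^n))=0$, and conclude from positivity of the resulting quadratic form. You are in fact slightly more careful than the paper's write-up, which silently drops the $\tfrac12(A+C\alpha^2)w$ contribution coming from the Crank--Nicolson part $\bm{\psi}_2(\Q^{n+\frac12})$; your sign analysis of $A+C\alpha^2$ via \eqref{eq:alpha} is correct and closes that small gap.
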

\begin{proof}
In \eqref{eq:scheme1} let $\overline\Q = \frac1\gamma\trace(\Q^{n+1})\I$. Then we obtain
$$
\ba{rcl} \dis
\frac1\gamma \frac1\dt\left(\trace(\Q^{n+1}), \trace(\Q^{n+1}) \right)
- \frac1\gamma\frac1\dt \left( \trace(\Q^{n}), \trace(\Q^{n+1}) \right) 
& & \\
\hueco \dis
+ \frac12 \left( \nabla \trace(\Q^{n+1}), \trace(\Q^{n+1}) \right) 
+ \frac12 \left( \nabla \trace(\Q^{n}), \trace(\Q^{n+1}) \right)
& & \\
\hueco \dis
+ \frac1\eps \left(\trace(\bm{\widehat\psi}_1 (\Q^n) ) + \frac12 S_1 (\trace(\Q^{n+1}) - \trace(\Q^n)), \trace(\Q^{n+1}) \right)
& & \\
\hueco \dis
+ \frac1\eps \left( \trace(\bm{\psi}_2 (\Q^n) ) , \trace(\Q^{n+1}) \right)
& & \\
\hueco \dis
+ \frac1\eps \left(\trace(\bm{\widehat\psi}_3 (\Q^n) ) + \frac12 S_3 (\trace(\Q^{n+1}) - \trace(\Q^n)), \trace(\Q^{n+1}) \right)
& & \\
\hueco \dis
+  \frac1\eps \left( \trace(\bm{p}(\Q^n)), \trace(\Q^{n+1})\right)
&=&
0\,.
\ea
$$
Referring to \eqref{eq:gradPsiParts} we see that
$$
\trace(\bm{\widehat\psi}_1 (\Q^n) ) = 0 \quad \quad \mbox{ and } \quad \quad \trace(\bm{\psi}_2 (\Q^n) ) = 0\,.
$$
Moreover, the trace penalty term has been chosen so that
$$
\trace(\nabla \widehat\psi_3 (\Q^n) ) + \trace(\bm{p}(\Q^n)) = 0 \, .
$$
Then by rearranging the terms we arrive at
$$\dis
\left( \frac1\gamma \frac1\dt + \frac{S_1 + S_3}{2\eps} \right) \| \trace(\Q^{n+1}) \|_{L^2}^2 
+ \frac12 \| \nabla \trace(\Q^{n+1}) \|_{L^2}^2 = 0\,.
$$
Hence $\trace(\Q^{n+1}) = 0$.
\end{proof}

\begin{obs}
The unknowns in $\Q$ are decoupled in Scheme UES1D.
\end{obs}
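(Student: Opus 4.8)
The claim is that the linear solve for $\Q^{n+1}$ defined by \eqref{eq:scheme1} separates into independent scalar problems, one for each tensor component $Q_{ij}^{n+1}$, rather than a single coupled system. The plan is to split \eqref{eq:scheme1} into the part acting on the unknown $\Q^{n+1}$ (implicit) and the part depending only on the known $\Q^n$ (explicit), and then to verify that the implicit operator is diagonal with respect to the tensor index. The decoupling is then essentially by construction: the whole point of the truncation/stabilization in \eqref{eq:scheme1PsiParts} is to make every coupling term explicit.

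First I would collect the implicit contributions. Writing $\delta_t\Q^{n+1}=(\Q^{n+1}-\Q^n)/\dt$, expanding the Crank--Nicolson gradient term through $\Q^{n+\frac12}$, inserting the linear reaction $\bm{\psi}_2^{\dt}=\bm{\psi}_2(\Q^{n+\frac12})$ from \eqref{eq:psiTwoNS}, and keeping the implicit halves $\tfrac12 S_1\Q^{n+1}$ and $\tfrac12 S_3\Q^{n+1}$ of the stabilizations in \eqref{eq:scheme1PsiParts}, every occurrence of $\Q^{n+1}$ enters through the bilinear form
\[
a(\Q^{n+1},\overline\Q):=\frac1\dt(\Q^{n+1},\overline\Q)+\frac{\gamma}{2}(\nabla\Q^{n+1},\nabla\overline\Q)+\frac{\gamma}{2\eps}\bigl(S_1+S_3+A+C\alpha^2\bigr)(\Q^{n+1},\overline\Q).
\]
The key observation is that each of the three pieces is diagonal in the tensor index, since $(\Q^{n+1},\overline\Q)=\int_\Om Q^{n+1}_{ij}\overline Q_{ij}$ and $(\nabla\Q^{n+1},\nabla\overline\Q)=\int_\Om \partial_k Q^{n+1}_{ij}\,\partial_k\overline Q_{ij}$ contain no products between distinct components. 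Hence $a(\Q^{n+1},\overline\Q)=\sum_{i,j}a_0(Q^{n+1}_{ij},\overline Q_{ij})$, where $a_0$ is one and the same scalar reaction--diffusion form on $\bm K_h$.

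Next I would confirm that everything else is explicit. By construction the truncated nonlinearities $\bm{\widehat\psi}_1(\Q^n)$ and $\bm{\widehat\psi}_3(\Q^n)$ in \eqref{eq:scheme1PsiParts}, the trace term $\bm p(\Q^n)$ from \eqref{eq:tracePartScheme1}, and the $\Q^n$ parts of $\delta_t\Q^{n+1}$, $\nabla\Q^{n+\frac12}$, $\bm{\psi}_2^{\dt}$ and the stabilizations depend only on the known $\Q^n$; they assemble into a load functional and never couple the unknown components. Testing \eqref{eq:scheme1} against an $\overline\Q$ whose only nonzero entries lie in a single (symmetric) component slot then yields, for each index pair, the scalar problem $a_0(Q^{n+1}_{ij},\overline\phi)=\ell_{ij}(\overline\phi)$ for all $\overline\phi\in\bm K_h$. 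Since $a_0$ is identical for every component, in practice one assembles and factorizes a single small matrix and reuses it across all components, differing only in the right-hand sides.

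I do not expect a genuine obstacle here: the statement is a structural verification rather than an estimate. The one point that must be checked is that no implicit term couples distinct components, which is precisely what the explicit evaluation of the cubic contribution $\bm{\widehat\psi}_1$ and the quadratic contributions $\bm{\widehat\psi}_3$, $\bm p$ at $\Q^n$ achieves. One should also note that although the constraint $\Q\in\Lambda_0$ ties the diagonal entries through $\trace(\Q)=0$, \Cref{lem:tracelessScheme1} guarantees this constraint is preserved automatically, so it need not be imposed as an additional equation and therefore does not reintroduce coupling among the independently solved components.
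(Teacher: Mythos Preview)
Your argument is correct and is precisely the verification the paper has in mind: the implicit bilinear form you extract coincides with the one appearing in the uniqueness proof of \Cref{lem:solvableScheme1}, which is manifestly diagonal in the tensor index. The paper states this remark without proof, so there is no alternative approach to compare against; your write-up simply makes the structural observation explicit.
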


\begin{obs}
Since Scheme UES1D preserves the traceless property we only need to solve for five unknowns since $Q^{n+1}_{33} = -(Q^{n+1}_{11} + Q^{n+1}_{22})$.
\end{obs}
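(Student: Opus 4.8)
The plan is to test the weak form \eqref{eq:scheme1} against the single tensor direction that isolates the trace, namely $\overline{\Q} = \frac{1}{\gamma}\trace(\Q^{n+1})\,\I$, and then show that the resulting scalar equation for $m^{n+1}:=\trace(\Q^{n+1})$ forces $m^{n+1}\equiv 0$. The motivation is the elementary pairing identity $(\bm{A},c\,\I)=\int_\Omega c\,\trace(\bm{A})$ for any scalar field $c$ and tensor field $\bm{A}$, which collapses every tensorial term of \eqref{eq:scheme1} onto $m^{n+1}$. With this choice and the hypothesis $\trace(\Q^n)=0$, the discrete time derivative becomes $\frac{1}{\gamma\dt}\|m^{n+1}\|_{L^2}^2$ (the $\Q^n$ contribution drops), and the elliptic term, after the $\gamma$ cancels against the $1/\gamma$ in $\overline{\Q}$, becomes $(\nabla\trace(\Q^{n+\frac12}),\nabla m^{n+1})=\frac12\|\nabla m^{n+1}\|_{L^2}^2$, both nonnegative. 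The goal is therefore to verify that the nonlinear and stabilization terms contribute nonnegatively as well.

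First I would dispatch the terms that are scalar multiples of $\Q$. By the explicit form \eqref{eq:gradPsiOne}, the truncated derivative $\bm{\widehat\psi}_1(\Q^n)$ equals a scalar times $\Q^n$ in \emph{both} branches of the truncation, so $\trace(\bm{\widehat\psi}_1(\Q^n))=0$ once $\trace(\Q^n)=0$; the linear Crank--Nicolson term $\bm{\psi}_2$ from \eqref{eq:psiTwoNS} is proportional to $\Q$ and so its trace is a multiple of $\trace(\Q)$, contributing either zero or a nonnegative multiple of $\|m^{n+1}\|_{L^2}^2$ (here the maximum-principle constant $\alpha$ from \eqref{eq:alpha} guarantees $A+C\alpha^2>0$). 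The stabilization increments $\tfrac12 S_1(\Q^{n+1}-\Q^n)$ and $\tfrac12 S_3(\Q^{n+1}-\Q^n)$ have trace $\tfrac12(S_1+S_3)m^{n+1}$ after using $\trace(\Q^n)=0$, and so contribute the manifestly nonnegative term $\frac{S_1+S_3}{2\eps}\|m^{n+1}\|_{L^2}^2$.

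The one genuinely nonlinear obstruction is the cubic term, and the hard part will be confirming its exact cancellation. Unlike $\bm{\psi}_1$ and $\bm{\psi}_2$, the derivative $\bm{\widehat\psi}_3$ of the (truncated) $-\tfrac{B}{3}\trace(\Q^3)$ part is \emph{not} trace free, so on its own it would inject the source $\trace(\bm{\widehat\psi}_3(\Q^n))$ into the scalar equation and defeat the argument. This is precisely what the trace-correction term $\bm{p}(\Q^n)$ is built to neutralize: by its definition \eqref{eq:tracePartScheme1}, $\bm{p}(\Q^n)=-\tfrac13\trace(\bm{\widehat\psi}_3(\Q^n))\,\I$, whence $\trace(\bm{p}(\Q^n))=-\trace(\bm{\widehat\psi}_3(\Q^n))$ in $d=3$ and the two trace contributions cancel identically, for every value of $\Q^n$. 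I expect this matching of the correction to the trace of the \emph{truncated} cubic derivative (rather than the untruncated $-B\Q^2$) to be the step requiring the most care.

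Assembling the surviving terms then yields the scalar identity
$$
\left( \frac{1}{\gamma\dt} + \frac{S_1 + S_3}{2\eps} \right)\| m^{n+1} \|_{L^2}^2 + \frac12 \| \nabla m^{n+1} \|_{L^2}^2 = 0 .
$$
Since $\gamma,\dt,\eps>0$ and $S_1,S_3>0$, every coefficient is strictly positive, which forces $\|m^{n+1}\|_{L^2}=0$ and hence $\trace(\Q^{n+1})=0$, as claimed. I would close by noting that positivity of $S_1$ and $S_3$ enters only to secure a strictly positive coefficient on $\|m^{n+1}\|_{L^2}^2$, which is exactly the hypothesis in the statement; the mechanism of the proof (trace-free linear terms plus the designed cancellation of the cubic trace by $\bm{p}$) is otherwise independent of their magnitude.
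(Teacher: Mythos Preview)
Your proposal is correct, but you have essentially reproved \Cref{lem:tracelessScheme1} rather than the remark itself: the remark is merely the one-line observation that once $\trace(\Q^{n+1})=0$ is established (which is the content of the preceding lemma), the relation $Q^{n+1}_{33}=-(Q^{n+1}_{11}+Q^{n+1}_{22})$ follows from the definition of trace, so only five entries need be computed. The paper does not give a separate proof of the remark; it simply invokes the lemma.

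That said, your argument for the traceless property coincides with the paper's proof of \Cref{lem:tracelessScheme1}: test \eqref{eq:scheme1} with $\overline{\Q}=\frac{1}{\gamma}\trace(\Q^{n+1})\I$, use $\trace(\bm{\widehat\psi}_1(\Q^n))=0$ and the designed cancellation $\trace(\bm{\widehat\psi}_3(\Q^n))+\trace(\bm{p}(\Q^n))=0$, and collect the remaining nonnegative quadratic terms in $\trace(\Q^{n+1})$. One small point: you correctly note that the Crank--Nicolson term $\bm{\psi}_2(\Q^{n+\frac12})$ contributes $\frac{A+C\alpha^2}{2\eps}\|m^{n+1}\|_{L^2}^2$ (nonnegative since $A+C\alpha^2>0$ by \eqref{eq:alpha}), yet you then drop it from your final displayed identity; the paper's proof has the same omission. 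Either include this term in the coefficient or replace the final equality by an inequality---either way the conclusion $m^{n+1}=0$ is unaffected.
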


\begin{obs}
Since $\trace(\Q^{n+1})=0$, when $\trace(\Q^n)=0$, then referring to \eqref{eq:numericalDissipation} the numerical dissipation introduced by Scheme UES1D will be 
$$
\mathbf{ND}\left(\Q^{n+1}, \Q^n\right) 
        \,=\, \dis
        \frac1{\eps\dt}\int_\Omega \left( \bm{\psi}^{\Delta t} (\Q^{n+1},\Q^n) : (\Q^{n+1} - \Q^n) \right) - \left( \widehat\Psi(\Q^{n+1}) - \widehat\Psi(\Q^n) \right) d\mathbf{x}\,.
$$
\end{obs}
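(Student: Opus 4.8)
The plan is to obtain the stated form of $\mathbf{ND}$ as an immediate consequence of two facts already available: that Scheme UES1D is governed by the modified energy $\widehat E$ of \eqref{eq:modifiedEnergy}, built from the truncated potential $\widehat\Psi$ of \eqref{eq:psiHat}, and that the scheme preserves the traceless condition by \Cref{lem:tracelessScheme1}. Concretely, I would re-run the computation of \Cref{lem:discreteEnergyLaw} for the specific choice \eqref{eq:scheme1PsiParts} of $\bm{\psi}^{\dt}$, testing \eqref{eq:scheme1} with $\overline{\Q} = \frac1\gamma\delta_t\Q^{n+1}$, but tracking $\widehat E$ rather than $E$. This amounts to adding and subtracting $\eps^{-1}\big(\widehat\Psi(\Q^{n+1}) - \widehat\Psi(\Q^n)\big)$ in place of the untruncated increment, which reproduces \eqref{eq:numericalDissipation} verbatim with $\Psi$ replaced by $\widehat\Psi$.

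First I would isolate the trace penalty contribution. Since $\bm{p}(\Q^n) = \tfrac{B}{3}\trace((\Q^n)^2)\I$ by \eqref{eq:tracePartScheme1} is a scalar multiple of the identity, the elementary identity $\I : \big(\Q^{n+1} - \Q^n\big) = \trace(\Q^{n+1}) - \trace(\Q^n)$ shows that the inner product $\big(\bm{p}(\Q^n), \Q^{n+1} - \Q^n\big)$ collapses to the scalar expression appearing on the second line of \eqref{eq:numericalDissipation}, namely $\int_\Om \bm{p}(\Q^n)\big(\trace(\Q^{n+1}) - \trace(\Q^n)\big)\,d\mathbf{x}$.

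Next I would invoke \Cref{lem:tracelessScheme1}: under the hypothesis $\trace(\Q^n) = 0$, that lemma guarantees $\trace(\Q^{n+1}) = 0$ for any positive $S_1, S_3$. Hence $\trace(\Q^{n+1}) - \trace(\Q^n) = 0$ a.e.\ in $\Om$, so the entire trace penalty integral vanishes. Combining this with the $\widehat\Psi$-version of \eqref{eq:numericalDissipation} obtained in the first step leaves exactly
$$
\mathbf{ND}\big(\Q^{n+1}, \Q^n\big)
= \frac1{\eps\dt}\int_\Om \big( \bm{\psi}^{\dt}(\Q^{n+1},\Q^n) : (\Q^{n+1} - \Q^n)\big) - \big(\widehat\Psi(\Q^{n+1}) - \widehat\Psi(\Q^n)\big)\,d\mathbf{x},
$$
which is the claimed identity.

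The only genuine subtlety here, rather than a true obstacle since this is essentially a bookkeeping remark, is justifying the passage from $\Psi$ to $\widehat\Psi$. I would emphasize that Scheme UES1D is \emph{not} the generic scheme applied to the exact potential but the one built on the truncated energy, so its natural Lyapunov functional is $\widehat E$; the substitution is therefore definitional rather than an approximation, and no error term is incurred. Everything else follows directly from the traceless property and the scalar-times-identity structure of $\bm{p}$.
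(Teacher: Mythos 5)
Your proposal is correct and follows essentially the same route the paper intends for this remark: re-running the computation of \Cref{lem:discreteEnergyLaw} with $\widehat E$ and $\widehat\Psi$ in place of $E$ and $\Psi$ (a definitional substitution, since UES1D is built on the truncated potential), reducing the penalty inner product via $\I : (\Q^{n+1}-\Q^n) = \trace(\Q^{n+1})-\trace(\Q^n)$, and then invoking \Cref{lem:tracelessScheme1} to make that term vanish. Nothing is missing; your explicit handling of the scalar-times-identity structure of $\bm{p}(\Q^n)$ is exactly the bookkeeping the paper leaves implicit.
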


Following the definition of energy stability we can show that this choice of $\bm{\psi}^{\dt}$ and $\bm{p}(\Q^n)$ will guarantee energy stability of the scheme.
\begin{lem}\label{lem:scheme1NDPositive}
If 
$$
S_1 \geq \| \nabla \bm{\widehat\psi}_1 (\Q) \|_F \quad \quad \mbox{ and } \quad \quad S_3 \geq \| \nabla \bm{\widehat\psi}_3 (\Q) \|_F\,,
$$
then $\mathbf{ND} (\Q^{n+1},\Q^n) \geq 0$ for all $n\geq 0$. Therefore, Scheme UES1D is linear and unconditionally energy stable with respect to the modified energy $\widehat E(\Q)$.
\end{lem}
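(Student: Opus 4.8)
The plan is to show that the integrand defining $\mathbf{ND}$ is nonnegative pointwise almost everywhere in $\Omega$, which by the modified-energy analogue of \eqref{eq:discreteEnergyLaw} immediately yields energy stability in the sense of \Cref{def:energyStable}. Since $\Q^0\in\Lambda_0$ and \Cref{lem:tracelessScheme1} propagates $\trace(\Q^n)=0$ to all levels, the trace penalty $\bm{p}$ drops out and $\mathbf{ND}$ reduces to the form recorded in the preceding remark,
$$
\mathbf{ND}(\Q^{n+1},\Q^n)=\frac{1}{\eps\dt}\int_\Omega \bm{\psi}^{\dt}(\Q^{n+1},\Q^n):(\Q^{n+1}-\Q^n)-\bigl(\widehat\Psi(\Q^{n+1})-\widehat\Psi(\Q^n)\bigr)\,d\mathbf{x}.
$$
I would then exploit the matching additive decompositions $\widehat\Psi=\widehat\Psi_1+\Psi_2+\widehat\Psi_3$ and $\bm{\psi}^{\dt}=\bm{\psi}_1^{\dt}+\bm{\psi}_2^{\dt}+\bm{\psi}_3^{\dt}$ from \eqref{eq:psiHat} and \eqref{eq:scheme1PsiParts} to split the integrand into three independent pieces and treat each separately, writing $\Delta\Q:=\Q^{n+1}-\Q^n$ for brevity.

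The $\Psi_2$ piece vanishes exactly: because $\Psi_2(\Q)=\tfrac12(A+C\alpha^2)\trace(\Q^2)-\tfrac{C}{4}\alpha^4$ is quadratic in $\Q$ with gradient $\bm{\psi}_2(\Q)=(A+C\alpha^2)\Q$, the midpoint (Crank--Nicolson) choice \eqref{eq:psiTwoNS} yields the telescoping identity $\bm{\psi}_2(\Q^{n+\frac12}):\Delta\Q=\tfrac12(A+C\alpha^2)\bigl(|\Q^{n+1}|^2-|\Q^n|^2\bigr)=\Psi_2(\Q^{n+1})-\Psi_2(\Q^n)$, so this contribution is identically zero. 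For the stabilized pieces $i\in\{1,3\}$, I would use that the truncations $\widehat\Psi_i$ are $\mathcal C^2$ with Hessian $\nabla\bm{\widehat\psi}_i$ globally bounded in Frobenius norm, as established in \eqref{eq:psiOneTrunc}--\eqref{eq:hessianPsiOneBounds} and \eqref{eq:barPsiThree}--\eqref{eq:hessianPsiThreeBounds}. Taylor's theorem with Lagrange remainder then gives, for some $\xi_i$ on the segment joining $\Q^n$ to $\Q^{n+1}$,
$$
\widehat\Psi_i(\Q^{n+1})-\widehat\Psi_i(\Q^n)=\bm{\widehat\psi}_i(\Q^n):\Delta\Q+\tfrac12\,\Delta\Q:\nabla\bm{\widehat\psi}_i(\xi_i):\Delta\Q.
$$
Subtracting this from $\bm{\psi}_i^{\dt}:\Delta\Q=\bm{\widehat\psi}_i(\Q^n):\Delta\Q+\tfrac12 S_i|\Delta\Q|^2$ collapses the zeroth-order terms and leaves $\tfrac12\bigl(S_i|\Delta\Q|^2-\Delta\Q:\nabla\bm{\widehat\psi}_i(\xi_i):\Delta\Q\bigr)$.

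The last estimate is the heart of the argument and the main obstacle. I would control the quadratic remainder by the tensor Cauchy--Schwarz inequality $\Delta\Q:\nabla\bm{\widehat\psi}_i(\xi_i):\Delta\Q\le\|\nabla\bm{\widehat\psi}_i(\xi_i)\|_F\,|\Delta\Q|^2$, obtained by flattening $\Delta\Q$ to a vector and $\nabla\bm{\widehat\psi}_i$ to a symmetric matrix and bounding its spectral norm by its Frobenius norm. Under the hypotheses $S_1\ge\|\nabla\bm{\widehat\psi}_1(\Q)\|_F$ and $S_3\ge\|\nabla\bm{\widehat\psi}_3(\Q)\|_F$—genuinely available precisely because the $\mathcal C^2$ truncation makes these Frobenius norms uniformly bounded by the explicit constants $12\sqrt3\,C\alpha^2$ and $s_{3,F}$, whereas the untruncated $\Psi_3$ has an unbounded Hessian—each bracket is $\ge0$, so both stabilized pieces contribute nonnegatively. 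The delicate point to verify is that $\xi_i$ may leave the ball $\{|\Q|\le\alpha\}$, which is exactly why global rather than merely local Hessian bounds are required. Summing the three pieces shows the integrand, hence $\mathbf{ND}(\Q^{n+1},\Q^n)$, is $\ge0$ for every $\dt>0$; linearity is immediate since each $\bm{\psi}_i^{\dt}$ is affine in $\Q^{n+1}$ by \eqref{eq:scheme1PsiParts}, and combining $\mathbf{ND}\ge0$ with the modified-energy analogue of \eqref{eq:discreteEnergyLaw} yields unconditional energy stability with respect to $\widehat E$.
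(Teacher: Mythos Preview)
Your proposal is correct and follows essentially the same approach as the paper: split $\mathbf{ND}$ according to the decomposition $\widehat\Psi=\widehat\Psi_1+\Psi_2+\widehat\Psi_3$, use that the Crank--Nicolson piece is exact for the quadratic $\Psi_2$, and for $i=1,3$ combine a second-order Taylor expansion with the tensor Cauchy--Schwarz bound $\Delta\Q:\nabla\bm{\widehat\psi}_i:\Delta\Q\le\|\nabla\bm{\widehat\psi}_i\|_F|\Delta\Q|^2$ (the paper's inequality \eqref{eq:tensorInequality}). Your version is in fact slightly more careful than the paper's, since you correctly write the Lagrange remainder with the Hessian evaluated at an intermediate point $\xi_i$ rather than at $\Q^n$, and you explicitly note that this is precisely why the global (not merely local) Hessian bounds afforded by the $\mathcal C^2$ truncation are essential.
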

\begin{proof}
    We will make use of the following inequality: for a fourth-order tensor $A$ and a symmetric second order tensor $P$
    \beq\label{eq:tensorInequality}
        \mathbf{P}\!:\!\bm{A}\!:\!\mathbf{P} \leq ||\bm{A}||_F ||\mathbf{P}||_F^2\,.
    \eeq
    For $i=1,3$, we can write the Taylor expansion of $\widehat\Psi_i$ about $\Q^n$
    \beq\label{eq:taylorPsiOne}
        \widehat\Psi_i (\Q^{n+1}) = \widehat\Psi_i (\Q^n) + \bm{\widehat\psi}_i (\Q^n) : (\Q^{n+1} - \Q^n) 
        + \frac{1}{2} (\Q^{n+1} - \Q^n) : \nabla \bm{\widehat\psi}_i(\Q^n) : (\Q^{n+1} - \Q^n)\,,
    \eeq
    and it can be reorganized in the following way
    $$
        \bm{\widehat\psi}_i (\Q^n) : (\Q^{n+1} - \Q^n)
        + \frac{1}{2} (\Q^{n+1} - \Q^n) : \bm{\widehat\psi}_i(\Q^n) : (\Q^{n+1} - \Q^n) =
        \widehat\Psi_i(\Q^{n+1}) - \widehat\Psi_i (\Q^n)\,. \\
    $$
    Using \eqref{eq:tensorInequality} we obtain
    $$
        \left(\bm{\widehat\psi}_i (\Q^n) + S_i(\Q^{n+1} - \Q^n)\right):(\Q^{n+1} - \Q^n)
        \geq \widehat\Psi_i(\Q^{n+1}) - \widehat\Psi_i (\Q^n) \\
    $$
    and finally integrating over $\Om$ and referring to equation \eqref{eq:numericalDissipation} we deduce 
    $$
        \mathbf{ND}(\Q^{n+1},\Q^n) \geq 0\,,
    $$
    for all $n\geq0$.
\end{proof}

\begin{obs}
The numerical dissipation introduced in this scheme will depend on $\alpha$ from the maximum principle, and the choices of $\alpha_1$, and $\alpha_2$ from the truncation of $\Psi_3(\Q)$. We note that the parameter $ s_{3,F}$ grows immensely when $\alpha_2$ is large, and when $(\alpha_2 - \alpha_1)$ is small (see appendix). Therefore the dissipation will be relatively large for any choices of $\alpha_1$ and $\alpha_2$.
\end{obs}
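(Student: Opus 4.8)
The plan is to treat this observation as a statement about the \emph{size} of the numerical dissipation, so the work splits into (i) making the dependence of $\mathbf{ND}$ on $\alpha,\alpha_1,\alpha_2$ transparent, and (ii) estimating the Frobenius bound $s_{3,F}$ directly. \emph{Step 1 (reduce $\mathbf{ND}$ to the stabilized $i\in\{1,3\}$ parts).} Starting from the traceless form of $\mathbf{ND}(\Q^{n+1},\Q^n)$, I would split the integrand according to $\bm{\psi}^{\dt}=\bm{\psi}_1^{\dt}+\bm{\psi}_2^{\dt}+\bm{\psi}_3^{\dt}$ and $\widehat\Psi=\widehat\Psi_1+\Psi_2+\widehat\Psi_3$. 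Since $\Psi_2$ is quadratic and $\bm{\psi}_2^{\dt}=\bm{\psi}_2(\Q^{n+\frac12})$ is the Crank--Nicolson evaluation, the identity $\bm{\psi}_2(\Q^{n+\frac12}):(\Q^{n+1}-\Q^n)=\Psi_2(\Q^{n+1})-\Psi_2(\Q^n)$ holds exactly, so the $\Psi_2$ term contributes nothing. Hence all dependence of $\mathbf{ND}$ on the truncation is carried by the stabilized parts $i=1,3$, whose only free data are $S_1,S_3$. As energy stability (\Cref{lem:scheme1NDPositive}) forces $S_1\geq\|\nabla\bm{\widehat\psi}_1\|_F$ and $S_3\geq\|\nabla\bm{\widehat\psi}_3\|_F$, and \eqref{eq:hessianPsiOneBounds}, \eqref{eq:hessianPsiThreeBounds} tie these to $\alpha$ and to $s_{3,F}=s_{3,F}(\alpha_1,\alpha_2)$, this already proves the first assertion.

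\emph{Step 2 (isolate the stabilization term).} Using the exact second-order Taylor remainder for $\widehat\Psi_i$ along the segment from $\Q^n$ to $\Q^{n+1}$, as in the proof of \Cref{lem:scheme1NDPositive}, I would rewrite
$$\mathbf{ND}(\Q^{n+1},\Q^n)=\frac1{2\eps\dt}\sum_{i\in\{1,3\}}\int_\Omega\Big(S_i\,|\Q^{n+1}-\Q^n|^2-(\Q^{n+1}-\Q^n):\nabla\bm{\widehat\psi}_i(\zeta_i):(\Q^{n+1}-\Q^n)\Big)\,d\mathbf{x},$$
with $\zeta_i$ an intermediate tensor. By \eqref{eq:tensorInequality} the subtracted quadratic form is at most $\|\nabla\bm{\widehat\psi}_i\|_F|\Q^{n+1}-\Q^n|^2$, so $\mathbf{ND}$ is squeezed between $0$ and $\tfrac{S_1+S_3}{2\eps\dt}\|\Q^{n+1}-\Q^n\|_{L^2}^2$, and its guaranteed-positive leading part scales like $S_3/(\eps\dt)$. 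Since stability forces $S_3\geq s_{3,F}$, the magnitude of the dissipation is controlled from below by a quantity proportional to $s_{3,F}$; making $\mathbf{ND}$ small thus requires making $s_{3,F}$ small.

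\emph{Step 3 (estimate $s_{3,F}$ and rule out a small choice).} This is the technical heart, relegated to the appendix. Differentiating $\widehat\Psi_3$ from \eqref{eq:barPsiThree} twice produces, on the transition annulus $\sqrt{\alpha_1}<|\Q|<\sqrt{\alpha_2}$, terms weighting $\rho',\rho''$ against the cubic factor $\trace(\Q^3)$ and its derivatives. From \eqref{eq:rho} one reads $\rho'=O\big((\alpha_2-\alpha_1)^{-1}\big)$ and $\rho''=O\big((\alpha_2-\alpha_1)^{-2}\big)$, while on that annulus $|\trace(\Q^3)|=O(\alpha_2^{3/2})$ and its first-derivative factors are $O(\alpha_2)$. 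Collecting the dominant contributions gives a lower bound of the schematic form $s_{3,F}\gtrsim \alpha_2^{3/2}(\alpha_2-\alpha_1)^{-2}$, which blows up both as $\alpha_2-\alpha_1\to0$ and as $\alpha_2\to\infty$. Because $\alpha_1>\alpha$ is fixed away from zero, widening the window $\alpha_2-\alpha_1$ to tame the $\rho''$ term forces $\alpha_2$ large and reinstates the polynomial growth; minimizing over the admissible region $\alpha<\alpha_1<\alpha_2$ still leaves the bound above a large constant. This no-free-lunch tradeoff is exactly the statement that the dissipation is relatively large for \emph{any} admissible $\alpha_1,\alpha_2$.

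\emph{Main obstacle.} The genuine work is Step 3: computing $\nabla\bm{\widehat\psi}_3$ on the transition annulus, extracting the sharp $\alpha_1,\alpha_2$ dependence of $s_{3,F}$, and proving both the two-sided blow-up and the tradeoff that no admissible pair makes it small; everything else is bookkeeping. I would also emphasize that ``relatively large'' is qualitative here: the rigorous content is the forced lower bound $S_3\geq s_{3,F}$ together with the unavoidable largeness of $s_{3,F}$, rather than a pointwise lower bound on $\mathbf{ND}$ itself, which can of course be small wherever $\Q^{n+1}-\Q^n$ is small or where the Hessian nearly saturates its Frobenius bound.
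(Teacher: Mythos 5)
Your proposal is correct in substance and, for Steps 1--2, follows the same route the paper takes implicitly: the paper's own support for this remark is not a theorem-style proof but the appendix computation, where $\widehat\Psi_3$ and $\rho$ from \eqref{eq:rho} are differentiated twice on the transition region, the derivatives of $\rho$ are seen to scale like powers of $\beta=(\alpha_2-\alpha_1)^{-1}$, and the resulting explicit bound (dominant term $648B\alpha_2^7\beta^3$) yields the computable constant $s_{3,F}$ of \eqref{eq:hessianPsiThreeBounds}, evaluated as $32571$ even for the narrow window $\alpha_1=1.19$, $\alpha_2=1.2$. Your reduction of $\mathbf{ND}$ to the $i\in\{1,3\}$ parts via the exactness of Crank--Nicolson for the quadratic $\Psi_2$ is right and matches the structure behind \Cref{lem:scheme1NDPositive}. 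Where you genuinely diverge is Step 3: the paper only establishes an \emph{upper} bound on $\|\nabla\bm{\widehat\psi}_3\|_F$ and observes that this bound blows up as $\alpha_2\to\infty$ or $\beta\to\infty$; it never proves a lower bound on the true Hessian norm, nor the ``no-free-lunch'' minimization over admissible $(\alpha_1,\alpha_2)$ that you propose. Your program is strictly stronger than what the paper asserts, and it is the honest way to make the remark rigorous --- but note that it is also necessary for your chain of implications: \Cref{lem:scheme1NDPositive} forces $S_3\geq\|\nabla\bm{\widehat\psi}_3\|_F$, not $S_3\geq s_{3,F}$, so largeness of the computable surrogate $s_{3,F}$ alone does not force $S_3$ large; without the lower bound on $\|\nabla\bm{\widehat\psi}_3\|_F$ you would only have reproduced the paper's qualitative claim. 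Indeed \Cref{rem:stabilizationValues} exploits exactly this gap by running with $S_3=208$.

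Two smaller points. First, your sandwich in Step 2 should read $\mathbf{ND}\leq\frac{1}{2\eps\dt}\sum_i\left(S_i+\|\nabla\bm{\widehat\psi}_i\|_F\right)\|\Q^{n+1}-\Q^n\|_{L^2}^2$, since the Hessian quadratic form can be negative; your stated constant $\frac{S_1+S_3}{2\eps\dt}$ drops that contribution (harmless for the qualitative conclusion). Second, your schematic exponent $s_{3,F}\gtrsim\alpha_2^{3/2}(\alpha_2-\alpha_1)^{-2}$ differs from the paper's $\alpha_2^{7}\beta^{3}$ because you read the transition region of \eqref{eq:rho} as $\sqrt{\alpha_1}<|\Q|<\sqrt{\alpha_2}$ while the appendix works with $\alpha_1<|\Q|<\alpha_2$; the paper is internally inconsistent on whether the threshold applies to $|\Q|$ or $\trace(\Q^2)=|\Q|^2$, so the mismatch is forgivable, but any rigorous Step 3 would have to fix this convention first, and the two-sided blow-up conclusion survives under either reading. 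Your closing caveat --- that the rigorous content is a forced largeness of the stabilization constant rather than a pointwise lower bound on $\mathbf{ND}$ --- is exactly the right reading of ``relatively large'' and is more precise than the paper's own phrasing.
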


\begin{lem}\label{lem:solvableScheme1}
Scheme UES1D is unconditionally uniquely solvable.
\end{lem}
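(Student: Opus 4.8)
The plan is to exploit that Scheme UES1D is, by construction, a \emph{linear} problem for the single unknown $\Q^{n+1}$, and then to establish unique solvability by checking that the bilinear form associated with the $\Q^{n+1}$-dependence is coercive on the finite element space $\bm{K}_h$. First I would isolate, in \eqref{eq:scheme1} together with \eqref{eq:scheme1PsiParts}, exactly the terms that genuinely involve $\Q^{n+1}$: the discrete time derivative gives $\tfrac{1}{\dt}(\Q^{n+1},\overline{\Q})$, the Crank--Nicolson elastic term gives $\tfrac{\gamma}{2}(\nabla\Q^{n+1},\nabla\overline{\Q})$, and the stabilized potential pieces produce the zeroth-order contributions $\tfrac{\gamma}{2\eps}S_1(\Q^{n+1},\overline{\Q})$, $\tfrac{\gamma}{2\eps}(A+C\alpha^2)(\Q^{n+1},\overline{\Q})$ and $\tfrac{\gamma}{2\eps}S_3(\Q^{n+1},\overline{\Q})$, arising from $\bm{\psi}_1^{\dt}$, $\bm{\psi}_2^{\dt}$ and $\bm{\psi}_3^{\dt}$ respectively (recall $\bm{\psi}_2$ is treated by Crank--Nicolson as in \eqref{eq:psiTwoNS}). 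Everything else -- the explicit quantities $\bm{\widehat\psi}_1(\Q^n)$, $\bm{\widehat\psi}_3(\Q^n)$, $\bm{p}(\Q^n)$ and the $\Q^n$ halves of the Crank--Nicolson terms -- is known data. The step therefore reads $a(\Q^{n+1},\overline{\Q})=\ell(\overline{\Q})$ for all $\overline{\Q}\in\bm{K}_h$, with the symmetric bilinear form
\[
a(\Q,\overline{\Q})=\tfrac{1}{\dt}(\Q,\overline{\Q})+\tfrac{\gamma}{2}(\nabla\Q,\nabla\overline{\Q})+\tfrac{\gamma}{2\eps}\bigl(S_1+S_3+A+C\alpha^2\bigr)(\Q,\overline{\Q}).
\]

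Since $\bm{K}_h$ is finite-dimensional, unique solvability for every right-hand side $\ell$ is equivalent to injectivity, i.e. to $a(\Q,\Q)>0$ whenever $\Q\neq\mathbf{0}$. Testing against $\Q$ itself gives
\[
a(\Q,\Q)=\tfrac{1}{\dt}\|\Q\|_{L^2}^2+\tfrac{\gamma}{2}\|\nabla\Q\|_{L^2}^2+\tfrac{\gamma}{2\eps}\bigl(S_1+S_3+A+C\alpha^2\bigr)\|\Q\|_{L^2}^2,
\]
so that the whole argument hinges on the sign of the zeroth-order coefficient $S_1+S_3+A+C\alpha^2$.

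The main obstacle -- and essentially the only subtlety -- is that $A=\widehat{\gamma}(\theta-\theta^*)$ may be negative in the physically relevant low-temperature regime, so positivity of this coefficient is not automatic. Here I would invoke the maximum-principle relation \eqref{eq:alpha}: multiplying $\alpha^2\geq B^2/C^2-2A/C$ by $C>0$ yields $C\alpha^2+2A\geq B^2/C\geq0$, whence $A+C\alpha^2\geq-A$, which is strictly positive when $A<0$, while for $A\geq0$ one has $A+C\alpha^2\geq C\alpha^2>0$ since $\alpha>0$. In either case $A+C\alpha^2\geq0$, and combined with $S_1,S_3>0$ (indeed chosen in \Cref{lem:scheme1NDPositive} to dominate the relevant Hessian norms) the coefficient is strictly positive. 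Consequently $a(\Q,\Q)\geq\tfrac{1}{\dt}\|\Q\|_{L^2}^2>0$ for every $\Q\neq\mathbf{0}$.

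Crucially this lower bound holds for \emph{every} $\dt>0$ with no smallness restriction, so coercivity -- hence invertibility of the square linear system -- is unconditional, which is precisely what upgrades ``uniquely solvable'' to ``unconditionally uniquely solvable.'' I would close by remarking that the analysis can be carried out in the full space $\bm{K}_h$ with no loss, since \Cref{lem:tracelessScheme1} already guarantees that the resulting $\Q^{n+1}$ lies in $\Lambda_0$.
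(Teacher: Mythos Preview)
Your proof is correct and follows essentially the same route as the paper: exploit linearity so that existence reduces to uniqueness on the finite-dimensional space, identify the bilinear form in $\Q^{n+1}$, and test against the unknown itself to obtain coercivity from the zeroth-order coefficient $\tfrac{1}{\dt}+\tfrac{\gamma}{2\eps}(S_1+S_3+A+C\alpha^2)$. The only noteworthy difference is that you explicitly justify $A+C\alpha^2\geq0$ via the constraint \eqref{eq:alpha}, whereas the paper simply asserts the full coefficient is nonnegative; your extra step makes the ``unconditional'' claim genuinely independent of $\dt$ and is a welcome clarification.
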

\begin{proof}
Since \eqref{eq:scheme1} is a squared algebraic linear system we just need to prove uniqueness which will imply existence. Suppose $\Q_1$ and $\Q_2$ are solutions to \eqref{eq:scheme1}. Let $\widehat{\Q}=\Q_1 - \Q_2$. Then $\widehat\Q$ satisfies
$$
\ba{rcl} \dis
\frac1\gamma\frac1\dt \left( \widehat\Q , \overline{\Q} \right) + \frac12 \left( \nabla \widehat \Q , \nabla \overline{\Q} \right) 
+ \frac{S_1 + A + C\alpha^2 + S_3}{2\eps}\left( \widehat\Q, \overline{\Q} \right) 
&=&
0 \quad \quad \forall\, \overline\Q \in \mathbf{K}_h\,.
\ea
$$
Hence, setting $\overline\Q = \widehat\Q$ we get
$$ \dis
\left(\frac1\gamma\frac1\dt +  \frac{S_1 + A + C\alpha^2 + S_3}{2\eps} \right) \| \widehat \Q \|^2_{L^2} 
+ \frac12 \| \widehat \Q \|^2_{H^1}
\, = \,
0\,.
$$
Because
$$
\frac1\gamma\frac1\dt +  \frac{S_1 + A + C\alpha^2 + S_3}{2\eps} \geq 0\, ,
$$
we deduce that $\widehat\Q=\bm{0}$ for any $\dt>0$.
\end{proof}

\subsubsection{Second Order Optimal Dissipation Coupled Scheme (OD2C)}
In this section we adapt a second order optimal dissipation algorithm (OD2) from \cite{Tierra14} to this problem. Given a second order tensor valued function $\bm{f}(\Q)$, the OD2 approximation is defined as
\beq\label{eq:OD2}
\bm{f}^{\dt}(\Q^{n+1},\Q^n) = \bm{f}(\Q^n) + \frac12(\Q^{n+1} - \Q^n):\nabla\bm{f}(\Q^n)\,,
\eeq
such that the numerical dissipation associated to this approximation is
$$
\int_\Om (\Q^{n+1} - \Q^n): \nabla\bm{f}(\Q^n):(\Q^{n+1} - \Q^n) \sim \mathcal{O}(\left(\dt \right)^2)\,.
$$
The idea for OD2C is to apply the OD2 approximation to each term of \eqref{eq:gradPsiParts}, and the trace penalty term $\bm{p}(\Q)$. This will create a linear, second order accurate numerical scheme but the unknowns will be coupled and therefore more computationally expensive than UES1D. Since we will not be truncating $\Psi_1(\Q)$ or $\Psi_3(\Q)$ we will not have a bound on the Hessian tensor, so energy stability is not guaranteed. However, the advantage of this scheme compared to the previous one will be: (1) the method is second order accurate in time and (2) the numerical dissipation is higher order, which allows us to more accurately capture the dynamics of the PDE. \\
Our linear approximation to $\bm{\psi}$ is as follows
\beq\label{eq:od2c}
\ba{rcl}
\bm{\psi}^{\dt} (\Q^{n+1},\Q^n )
&=& \dis
\bm{\psi}_1^{\dt} (\Q^{n+1},\Q^n ) + \bm{\psi}_2^{\dt} (\Q^{n+1},\Q^n ) + \bm{\psi}_3^{\dt} (\Q^{n+1},\Q^n ) \,, \\
\hueco
\bm{\psi}_1^{\dt} (\Q^{n+1},\Q^n )
&=& \dis
\bm{\psi}_1 (\Q^{n+1},\Q^n) + \frac12 (\Q^{n+1} - \Q^n) : \nabla\bm{\psi}_1 (\Q^n) \,, \\
\hueco
\bm{\psi}^{\dt}_2 (\Q^{n+1},\Q^n)
&=& \dis
\bm{\psi}_2(\Q^{n+\frac12} ) \,, \\
\hueco
\bm{\psi}_3^{\dt} (\Q^{n+1},\Q^n )
&=& \dis
\bm{\psi}_3 (\Q^{n+1},\Q^n) + \frac12 (\Q^{n+1} - \Q^n) : \nabla\bm{\psi}_3 (\Q^n)  \,, \\
\bm{p}^{\dt}(\Q^{n+1},\Q^n)
&=& \dis
\bm{p}(\Q^n) + \frac12(\Q^{n+1} - \Q^n): \nabla \bm{p} (\Q^n)  \\
\ea
\eeq
and the scheme can be written as
\beq\label{eq:scheme2}
\dis
    \left( \delta_t \Q^{n+1} , \overline{\Q} \right) + \gamma \left[ \left( \nabla\Q^{n+\frac12} , \nabla \overline{\Q} \right) 
+ \frac{1}{\eps}\left( \bm{\psi}^{\dt}(\Q^{n}), \overline{\Q} \right) 
+ \frac1\eps \left( \bm{p}^{\dt}(\Q^{n+1},\Q^n) , \overline\Q  \right)  \right]
\, = \,
0 \,,
\eeq
for all $\overline\Q \in \bm{K}_h$.
Here, 
\beq\label{eq:tracePartScheme2}
\bm{p}(\Q^n) = \frac{B}3 \trace\left( (\Q^n)^2 \right)\I
\eeq
and
\beq\label{eq:gradTracePartScheme2}
\nabla \bm{p}(\Q^n) = \frac{2B}{3} \Q^n \otimes \I \, .
\eeq

\begin{lem}\label{lem:tracelessScheme2}
Scheme OD2C satisfies a discrete version of \Cref{lem:tracelessScheme1}. If $\trace(\Q^n)=0$, then $\trace(\Q^{n+1})=0$.
\end{lem}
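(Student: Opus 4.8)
The strategy mirrors the proof of Lemma~\ref{lem:tracelessScheme1}: test the scheme \eqref{eq:scheme2} with a multiple of the identity, namely $\overline{\Q}=\frac1\gamma\trace(\Q^{n+1})\I$, and show that all contributions vanish except those producing a nonnegative quadratic form in $\trace(\Q^{n+1})$, forcing $\trace(\Q^{n+1})=0$. The first step is to compute the trace of each term of $\bm{\psi}^{\dt}$ in \eqref{eq:od2c}. Using $\trace(\bm{\psi}_1(\Q^n))=C(\trace((\Q^n)^2)-\alpha^2)\trace(\Q^n)=0$ and $\trace(\bm{\psi}_2(\Q^{n+\frac12}))=(A+C\alpha^2)\trace(\Q^{n+\frac12})=0$ under the inductive hypothesis $\trace(\Q^n)=0$, these two pieces drop out immediately, and the diffusion term yields $\frac12\|\nabla\trace(\Q^{n+1})\|_{L^2}^2$ after integration by parts together with a contribution from $\trace(\Q^n)$ that vanishes.

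The heart of the argument is the interaction between the cubic term $\bm{\psi}_3(\Q)=-B\Q^2$ and the trace-penalty term $\bm{p}(\Q)=\frac{B}3\trace(\Q^2)\I$, now both discretized by the OD2 formula \eqref{eq:OD2}. I would compute the traces of the full OD2 approximations $\bm{\psi}_3^{\dt}$ and $\bm{p}^{\dt}$, including the first-order correction terms $\frac12(\Q^{n+1}-\Q^n):\nabla\bm{\psi}_3(\Q^n)$ and $\frac12(\Q^{n+1}-\Q^n):\nabla\bm{p}(\Q^n)$. Using \eqref{eq:hessianPsi} for $\nabla\bm{\psi}_3$ (the $B$-part) and \eqref{eq:gradTracePartScheme2} for $\nabla\bm{p}(\Q^n)=\frac{2B}3\Q^n\otimes\I$, the key is that the trace of $\bm{\psi}_3^{\dt}+\bm{p}^{\dt}$ should reduce to an expression proportional to $\trace(\Q^{n+1})-\trace(\Q^n)$ or to $\trace(\Q^{n+1})$ times a known factor, so that under $\trace(\Q^n)=0$ it becomes a nonnegative multiple of $\trace(\Q^{n+1})$.

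I expect the main obstacle to be verifying that the penalty $\bm{p}$ is designed so that the zeroth-order trace contributions of $\bm{\psi}_3^{\dt}$ and $\bm{p}^{\dt}$ cancel exactly, in analogy with the identity $\trace(\nabla\widehat\psi_3(\Q^n))+\trace(\bm{p}(\Q^n))=0$ used in Lemma~\ref{lem:tracelessScheme1}, while the first-order OD2 corrections combine into a term proportional to $\trace(\Q^{n+1})$ (using $\trace(\Q^n)=0$) with a controllable sign. Concretely, since $\trace(-B(\Q^n)^2)=-B\trace((\Q^n)^2)$ and $\trace(\bm{p}(\Q^n))=\frac{B}3\trace((\Q^n)^2)\trace(\I)=B\trace((\Q^n)^2)$ in $d=3$, the zeroth-order parts cancel; the correction terms, after taking the trace and contracting with $\I$, produce a factor multiplying $(\trace(\Q^{n+1})-\trace(\Q^n))$, which equals $\trace(\Q^{n+1})$ under the hypothesis. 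Collecting everything, the identity should take the form
$$
\left(\frac1\gamma\frac1{\dt}+c\right)\|\trace(\Q^{n+1})\|_{L^2}^2+\frac12\|\nabla\trace(\Q^{n+1})\|_{L^2}^2=0
$$
for some constant $c\ge 0$ depending on $A,B,C,\alpha$ and the OD2 correction coefficients, from which $\trace(\Q^{n+1})=0$ follows. The delicate point is confirming that $c\ge 0$ (or at least that the total coefficient of $\|\trace(\Q^{n+1})\|_{L^2}^2$ stays nonnegative) rather than merely vanishing, since unlike UES1D there is no truncation-induced stabilization $S_i$ to guarantee positivity automatically.
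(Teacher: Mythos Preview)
Your strategy is exactly the paper's: test \eqref{eq:scheme2} with $\overline\Q=\frac1\gamma\trace(\Q^{n+1})\I$ and show that the nonlinear pieces combine to something controlled. What you have not pinned down is the single computational fact the paper isolates as the crux: the OD2 correction traces of $\bm{\psi}_3$ and $\bm{p}$ cancel \emph{exactly}, not merely up to a multiple of $\trace(\Q^{n+1})$. Using $[\nabla\bm{\psi}_3(\Q^n)]_{ijkl}=-B(\delta_{ik}Q^n_{lj}+Q^n_{ik}\delta_{jl})$ and $[\nabla\bm{p}(\Q^n)]_{ijkl}=\tfrac{2B}{3}Q^n_{kl}\delta_{ij}$ one finds, with $R=\Q^{n+1}-\Q^n$ symmetric,
\[
\trace\!\big(R:\nabla\bm{\psi}_3(\Q^n)\big)=-2B\,R:\Q^n,\qquad
\trace\!\big(R:\nabla\bm{p}(\Q^n)\big)=+2B\,R:\Q^n,
\]
so their sum is $0$. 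Together with the zeroth-order cancellation you already noted, the whole $\bm{\psi}_3^{\dt}+\bm{p}^{\dt}$ block drops out in trace. (The paper displays this identity with a sign slip, writing both sides as $2\,R:\Q^n$, but the intended conclusion is the vanishing of the sum.)

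Two further points. First, you treat $\bm{\psi}_1^{\dt}$ as disposed of once $\trace(\bm{\psi}_1(\Q^n))=0$, but its OD2 correction also has a trace: under $\trace(\Q^n)=0$ one gets $\trace\!\big(R:\nabla\bm{\psi}_1(\Q^n)\big)=C(|\Q^n|^2-\alpha^2)\trace(\Q^{n+1})$, and combined with $\bm{\psi}_2^{\dt}$ this is exactly where the coefficient $c=\tfrac{1}{2\eps}(A+C|\Q^n|^2)$ in your final identity comes from, not from the $\bm{\psi}_3/\bm{p}$ block. Second, your worry about $c\ge 0$ is well founded when $A<0$; the paper does not address it (it just says ``following the same ideas'' as in the continuous case). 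In practice the term $\tfrac{1}{\gamma\dt}$ dominates $\tfrac{|A|}{2\eps}$ under a constraint no stronger than the solvability condition \eqref{eq:solvableScheme2}, so the conclusion still follows, but strictly speaking the statement is unconditional only for $A\ge 0$.
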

\begin{proof}
Using the symmetry of $\Q$ we have the following fact:
$$
\trace\left(  \left( \Q^{n+1} - \Q^n \right) : \nabla\bm{\psi}_3 (\Q^n) \right) = 2  \left( \Q^{n+1} - \Q^n \right) : \Q^n = \trace\left(  \left( \Q^{n+1} - \Q^n \right) : \nabla \bm{p}(\Q^n) \right).
$$
Hence, 
$$
 \trace\left( \bm{\psi}(\Q^{n}) + \frac{1}{2} (\Q^{n+1} - \Q^n):\nabla\bm{\psi}(\Q^n) \right) + \trace\left(\bm{p}(\Q^n) + \frac12(\Q^{n+1} - \Q^n):\nabla \bm{p}(\Q^n) \right) = 0\,.
$$
Then taking $\overline\Q = \trace(\Q^{n+1})\I$ and following the same ideas in \Cref{lem:traceless} we obtain $\trace(\Q^{n+1}) = 0$.
\end{proof}
\begin{obs}
Since Scheme OD2C preserves the traceless property we only need to solve for five unknowns since $Q^{n+1}_{33} = -(Q^{n+1}_{11} + Q^{n+1}_{22})$.
\end{obs}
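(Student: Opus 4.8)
The plan is to follow the strategy of \Cref{lem:traceless} and \Cref{lem:tracelessScheme1}: test the weak form \eqref{eq:scheme2} with $\overline\Q=\trace(\Q^{n+1})\I$ and show that, under $\trace(\Q^n)=0$, the trace of the nonlinear approximation $\bm{\psi}^{\dt}+\bm{p}^{\dt}$ contributes nothing that can destroy coercivity, so the identity collapses to a sign-definite quadratic form in $\trace(\Q^{n+1})$ that can only vanish. Choosing $\overline\Q=\trace(\Q^{n+1})\I$ turns every tensor pairing into a scalar trace integral; since $\trace(\delta_t\Q^{n+1})=\delta_t\trace(\Q^{n+1})$, $(\nabla\Q^{n+\frac12},\nabla(\trace(\Q^{n+1})\I))=(\nabla\trace(\Q^{n+\frac12}),\nabla\trace(\Q^{n+1}))$, and $\trace(\Q^n)=0$, the first two terms already produce the nonnegative quantities $\tfrac1{\dt}\|\trace(\Q^{n+1})\|_{L^2}^2$ and $\tfrac{\gamma}2\|\nabla\trace(\Q^{n+1})\|_{L^2}^2$.

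The technical heart is the trace of $\bm{\psi}^{\dt}+\bm{p}^{\dt}$ from \eqref{eq:od2c}, which I would split into explicit leading pieces and Hessian corrections. At leading order $\trace(\bm{\psi}_1(\Q^n))$ and $\trace(\bm{\psi}_2(\Q^n))$ are scalar multiples of $\trace(\Q^n)=0$, while by the very choice of $\bm{p}$ in \eqref{eq:tracePartScheme2} one has $\trace(\bm{\psi}_3(\Q^n))=-B\trace((\Q^n)^2)=-\trace(\bm{p}(\Q^n))$, so these cancel. The decisive computation is the cancellation of the \emph{correction} pieces from $\bm{\psi}_3$ and $\bm{p}$: using the symmetry of $\Q^n$ together with $\nabla\bm{\psi}_3(\Q^n)$ (from $\bm{\psi}_3=-B\Q^2$) and $\nabla\bm{p}(\Q^n)=\tfrac{2B}3\Q^n\otimes\I$ from \eqref{eq:gradTracePartScheme2}, both $\trace\big((\Q^{n+1}-\Q^n):\nabla\bm{\psi}_3(\Q^n)\big)$ and $\trace\big((\Q^{n+1}-\Q^n):\nabla\bm{p}(\Q^n)\big)$ reduce to a multiple of $(\Q^{n+1}-\Q^n):\Q^n$ with opposite signs and therefore drop out. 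This is exactly the symmetry identity anticipated in the statement, and verifying it by index bookkeeping on the fourth-order Hessians is the step I expect to require the most care.

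What remains after these cancellations are the corrections from $\bm{\psi}_1$ and the Crank--Nicolson term $\bm{\psi}_2(\Q^{n+\frac12})$, each proportional to $\trace(\Q^{n+1})$ (the $\alpha$-dependent pieces must cancel across the split, since $\bm{\psi}_1+\bm{\psi}_2+\bm{\psi}_3=\bm{\psi}$ is independent of $\alpha$). Collecting everything yields the scalar identity
$$
\Big(\tfrac1{\dt}+\tfrac{\gamma A}{2\eps}\Big)\,\|\trace(\Q^{n+1})\|_{L^2}^2
+\tfrac{\gamma}2\,\|\nabla\trace(\Q^{n+1})\|_{L^2}^2
+\tfrac{\gamma C}{2\eps}\int_\Om |\Q^n|^2\,\trace(\Q^{n+1})^2\,d\mathbf{x}=0 .
$$
Since $C>0$ the last integral is harmless, so the genuine obstacle is the sign of $\tfrac1{\dt}+\tfrac{\gamma A}{2\eps}$: unlike \Cref{lem:tracelessScheme1}, where the free stabilizers $S_1,S_3>0$ force positivity outright, OD2C has no such parameter and $A$ may be negative. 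I would close the argument as in \Cref{lem:traceless} --- immediately when $A\ge0$, and otherwise by the discrete counterpart of the Gronwall step starting from $\trace(\Q^0)=0$, using that the $L^2$ and $H^1$ terms dominate the indefinite contribution (which, for a fully unconditional claim, is precisely where a mild restriction on $\dt$ would enter). Once coercivity is secured, the identity forces $\trace(\Q^{n+1})=0$.
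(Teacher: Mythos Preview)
The statement you are addressing is a \emph{Remark}, not \Cref{lem:tracelessScheme2}: it merely records that once $\trace(\Q^{n+1})=0$ is known, the linear relation $Q_{33}^{n+1}=-(Q_{11}^{n+1}+Q_{22}^{n+1})$ is tautological, so the system reduces to five scalar unknowns. The paper provides no proof of this Remark because none is needed beyond that one-line observation.

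What you have actually written is a proof of \Cref{lem:tracelessScheme2} itself, and there your strategy coincides with the paper's: test \eqref{eq:scheme2} with $\overline{\Q}=\trace(\Q^{n+1})\I$ and verify that the trace contributions from the $\bm{\psi}_3$-correction and the $\bm{p}$-correction both reduce to a multiple of $(\Q^{n+1}-\Q^n):\Q^n$ and cancel. You are, however, more careful than the paper about what survives. The paper's second displayed identity asserts that the \emph{full} trace of $\bm{\psi}^{\dt}+\bm{p}^{\dt}$ vanishes and then defers to ``the same ideas in \Cref{lem:traceless}''; your computation correctly shows that the $\bm{\psi}_1^{\dt}$ and $\bm{\psi}_2^{\dt}$ pieces leave a residual proportional to $\trace(\Q^{n+1})$ with pointwise coefficient $\tfrac12(A+C|\Q^n|^2)$, which feeds back into the quadratic form and yields exactly the scalar identity you display.

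One correction: the Gronwall alternative you propose for the case $A<0$ is not available here. The hypothesis $\trace(\Q^n)=0$ has already been consumed in deriving the identity, which now relates $\trace(\Q^{n+1})$ only to itself; there is no recursive inequality in $n$ to iterate. The only way to close the argument is coercivity of the coefficient $\tfrac{1}{\dt}+\tfrac{\gamma}{2\eps}(A+C|\Q^n|^2)$, which holds automatically when $A\ge 0$ and otherwise under a $\dt$ restriction of the same type as (and implied by) the solvability condition \eqref{eq:solvableScheme2}. Your identification of this as the genuine obstacle is sharper than the paper's treatment.
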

\begin{lem}\label{lem:scheme2EnergyLaw}
Scheme OD2C satisfies a discrete version of \eqref{eq:continuousEnergyLaw}, with 
$$
\mathbf{ND}\left(\Q^{n+1}, \Q^n\right) 
\,=\, \dis
\frac1{\eps\dt}\int_\Omega \left( \bm{\psi}^{\Delta t} (\Q^{n+1},\Q^n) : (\Q^{n+1} - \Q^n) \right) - \left( \Psi(\Q^{n+1}) - \Psi(\Q^n) \right) d\mathbf{x} .
$$
\end{lem}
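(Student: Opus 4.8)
The plan is to deduce this lemma as a direct specialization of the generic discrete energy law in \Cref{lem:discreteEnergyLaw}, so that the only genuine work is to show that one of the two terms in the numerical dissipation drops out. First I would observe that Scheme OD2C, i.e. \eqref{eq:scheme2} together with the approximations \eqref{eq:od2c}, is nothing but an instance of the generic scheme \eqref{eq:numericalScheme} corresponding to the specific choice of $\bm{\psi}^{\dt}$ and $\bm{p}^{\dt}$ recorded there. Consequently \Cref{lem:discreteEnergyLaw} applies verbatim, yielding \eqref{eq:discreteEnergyLaw} with the full two-term numerical dissipation \eqref{eq:numericalDissipation}. The first (potential) term of that dissipation is already exactly the expression claimed in the statement, so the entire content of the lemma reduces to showing that the second (trace-penalty) term vanishes for this scheme.

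To that end I would invoke \Cref{lem:tracelessScheme2}: starting from the traceless initialization $\trace(\Q^0)=0$, an induction on $n$ gives $\trace(\Q^n)=0$ for every $n\geq 0$, and in particular $\trace(\Q^{n+1})-\trace(\Q^n)=0$. The trace-penalty contribution in \eqref{eq:numericalDissipation} is therefore identically zero, which is precisely what leaves behind only the potential term.

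For completeness I would also verify that the penalty term does factor through $\trace(\Q^{n+1})-\trace(\Q^n)$, i.e. that the OD2 approximation $\bm{p}^{\dt}(\Q^{n+1},\Q^n)=\bm{p}(\Q^n)+\tfrac12(\Q^{n+1}-\Q^n):\nabla\bm{p}(\Q^n)$ remains a scalar multiple of $\I$. This is immediate from \eqref{eq:tracePartScheme2}--\eqref{eq:gradTracePartScheme2}: the term $\bm{p}(\Q^n)=\tfrac{B}{3}\trace((\Q^n)^2)\I$ is a multiple of $\I$, and contracting $(\Q^{n+1}-\Q^n)$ with $\nabla\bm{p}(\Q^n)=\tfrac{2B}{3}\Q^n\otimes\I$ produces $\tfrac{2B}{3}\big((\Q^{n+1}-\Q^n):\Q^n\big)\I$, again a multiple of $\I$. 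Hence $\bm{p}^{\dt}:(\Q^{n+1}-\Q^n)=(\text{scalar})\cdot(\trace(\Q^{n+1})-\trace(\Q^n))$, so the vanishing follows once the traceless property is in hand.

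In short, the proof is essentially bookkeeping rather than analysis: the single substantive ingredient is the discrete traceless property of \Cref{lem:tracelessScheme2}, and the only mild point of care is confirming that $\bm{p}^{\dt}$ retains its proportionality to $\I$ after the OD2 correction, so that the penalty contribution genuinely collapses to a multiple of $\trace(\Q^{n+1})-\trace(\Q^n)$. I would emphasize that, unlike Scheme UES1D, no truncation of the potential is performed here, which is why the dissipation is naturally expressed in terms of the original $\Psi$ rather than $\widehat\Psi$.
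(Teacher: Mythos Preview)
Your proposal is correct and follows exactly the approach implicit in the paper: the lemma is stated without proof there, being an immediate consequence of the generic discrete energy law \Cref{lem:discreteEnergyLaw} together with the traceless property \Cref{lem:tracelessScheme2}, which kills the $\bm{p}$-contribution in \eqref{eq:numericalDissipation}. Your additional check that $\bm{p}^{\dt}$ remains a multiple of $\I$ after the OD2 correction is a nice point of rigor that the paper leaves to the reader.
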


\begin{obs}\label{rem:scheme2ND}
The numerical dissipation introduced is second order in time. 
Indeed, using Taylor
$$
\ba{rcl}
\Psi(\Q^{n+1})
&=&\dis
\Psi(\Q^{n})
+ \bm{\psi} (\Q^n)(\Q^{n+1} - \Q^n)
+ \frac12 \nabla\bm{\psi}(\Q^n)(\Q^{n+1} - \Q^n)^2
+ \frac16 \bm{H}_{\bm{\psi}} (\Q^\chi)(\Q^{n+1} - \Q^n)^3\,,
\\ \hueco
\Psi(\Q^{n+1})
&=&\dis
\Psi(\Q^{n})
+ \bm{\psi}(\Q^n)(\Q^{n+1} - \Q^n)
+ \frac12\nabla\bm{\psi}(\Q^\xi)(\Q^{n+1} - \Q^n)^2\,,
\ea
$$
subtracting and multiplying by $\dfrac1{\Delta t}$we obtain
$$
\ba{rcl}\dis
\frac1{2\Delta t}\big(\nabla\bm{\psi}(\Q^n) - \nabla\bm{\psi}(\Q^\xi)\big)(\Q^{n+1} - \Q^n)^2
&=&\dis
-\frac1{6\Delta t} \nabla \bm{H}_{\bm{\psi}} (\Q^\chi)(\Q^{n+1} - \Q^n)^3 \\
\hueco 
&=&\dis
-\frac{(\Delta t)^2}6 \nabla \bm{H}_{\bm{\psi}}(\Q^\chi)(\delta_t(\Q^{n+1}))^3 
\,\sim\,
\mathcal{O}((\Delta t)^2)\,.
\ea
$$
Then, the total numerical dissipation introduced by the system is
$$
\ba{rcl}
\mathbf{ND}\left(\Q^{n+1}, \Q^n\right) 
&=& \dis
\frac1{\eps\dt}\int_\Omega \left( \bm{\psi}^{\Delta t} (\Q^{n+1},\Q^n) : (\Q^{n+1} - \Q^n) \right) - \left( \Psi(\Q^{n+1}) - \Psi(\Q^n) \right) d\mathbf{x} \\
\hueco
&=& \dis
\int_\Omega \frac1{2\Delta t}\big(\nabla \bm{\psi}(\Q^n) - \nabla\bm{\psi} (\Q^\xi)\big)(\Q^{n+1} - \Q^n)^2
\,\sim\,
\mathcal{O}\left( (\dt)^2 \right)\,.
\ea
$$
\end{obs}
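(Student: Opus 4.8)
The plan is to show that the two ingredients of $\mathbf{ND}(\Q^{n+1},\Q^n)$ for Scheme OD2C---the OD2 contraction $\bm{\psi}^{\dt}:(\Q^{n+1}-\Q^n)$ and the exact energy increment $\Psi(\Q^{n+1})-\Psi(\Q^n)$---agree up to third order in the increment $\Q^{n+1}-\Q^n$, so that after the prefactor $1/(\eps\dt)$ the residual scales like $(\dt)^2$. First I would reduce the multivariate Taylor expansions appearing in the statement to the scalar case by introducing $g(t):=\Psi\big(\Q^n+t(\Q^{n+1}-\Q^n)\big)$ for $t\in[0,1]$; its derivatives reproduce exactly the contractions $\bm{\psi}(\Q^n):(\Q^{n+1}-\Q^n)$, $(\Q^{n+1}-\Q^n):\nabla\bm{\psi}(\Q^n):(\Q^{n+1}-\Q^n)$ and, as a Lagrange remainder, $\tfrac16\bm{H}_{\bm{\psi}}(\Q^\chi)(\Q^{n+1}-\Q^n)^3$, with the intermediate points $\Q^\xi,\Q^\chi$ of the form $\Q^n+\theta(\Q^{n+1}-\Q^n)$ for suitable $\theta\in(0,1)$.

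Next I would apply the scalar Taylor theorem with Lagrange remainder to $g$ at two orders. The second-order expansion gives $\Psi(\Q^{n+1})-\Psi(\Q^n)=\bm{\psi}(\Q^n):(\Q^{n+1}-\Q^n)+\tfrac12(\Q^{n+1}-\Q^n):\nabla\bm{\psi}(\Q^\xi):(\Q^{n+1}-\Q^n)$. Substituting this, together with the definition of $\bm{\psi}^{\dt}$ from \eqref{eq:od2c}, into the integrand of $\mathbf{ND}$ cancels the common first-order terms and leaves
\[
\bm{\psi}^{\dt}:(\Q^{n+1}-\Q^n)-\big(\Psi(\Q^{n+1})-\Psi(\Q^n)\big)
=\tfrac12(\Q^{n+1}-\Q^n):\big(\nabla\bm{\psi}(\Q^n)-\nabla\bm{\psi}(\Q^\xi)\big):(\Q^{n+1}-\Q^n).
\]
The third-order expansion of the same quantity $\Psi(\Q^{n+1})$ provides a second formula; subtracting the two expansions identifies the Hessian-difference contraction above with $-\tfrac16\bm{H}_{\bm{\psi}}(\Q^\chi)(\Q^{n+1}-\Q^n)^3$. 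Writing $\Q^{n+1}-\Q^n=\dt\,\delta_t\Q^{n+1}$ then extracts a factor $(\dt)^3$, so the integrand is $\mathcal{O}((\dt)^3)$ and, after division by $\dt$ in the definition of $\mathbf{ND}$, one concludes $\mathbf{ND}(\Q^{n+1},\Q^n)\sim\mathcal{O}((\dt)^2)$.

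The main obstacle is making the asymptotic statement rigorous, i.e. controlling the constant hidden in $\mathcal{O}((\dt)^2)$. Two ingredients are required: boundedness of the third derivative $\bm{H}_{\bm{\psi}}$ along the segment joining $\Q^n$ and $\Q^{n+1}$, and boundedness of the discrete velocity $\delta_t\Q^{n+1}$. The first is essentially free, since $\Psi$ is a quartic polynomial in $\Q$, so $\bm{H}_{\bm{\psi}}$ is affine in $\Q$ and hence bounded on any bounded set; it therefore suffices that $\Q^n,\Q^{n+1}$ remain in a fixed ball, which on a finite time horizon follows from the a priori energy bound of \Cref{lem:continuousEnergyLaw} and the regularity in \Cref{thm:wellPosedness}. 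The second amounts to the standard consistency assumption that $\delta_t\Q^{n+1}$ approximates $\Q_t$ and stays bounded in $L^\infty(0,T;L^2(\Om))$. Under these hypotheses the $L^1(\Om)$-norm of the integrand is bounded by $C(\dt)^3$, giving the claimed $\mathcal{O}((\dt)^2)$ estimate; I would state these boundedness hypotheses explicitly, since the OD2C scheme, lacking the truncation used in UES1D, does not by itself inherit a discrete maximum principle.
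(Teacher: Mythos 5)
Your proposal follows essentially the same route as the paper's remark: you expand $\Psi(\Q^{n+1})$ about $\Q^n$ at second and third order with Lagrange remainders, cancel the first-order terms against the OD2 approximation $\bm{\psi}^{\dt}$ so that $\mathbf{ND}$ reduces to the contraction $\tfrac12(\Q^{n+1}-\Q^n):\big(\nabla\bm{\psi}(\Q^n)-\nabla\bm{\psi}(\Q^\xi)\big):(\Q^{n+1}-\Q^n)$, identify this with the third-derivative remainder, and extract the factor $(\dt)^3$ via $\Q^{n+1}-\Q^n=\dt\,\delta_t\Q^{n+1}$, exactly as in the paper. Your additions---reducing to the scalar function $g(t)=\Psi(\Q^n+t(\Q^{n+1}-\Q^n))$ to justify the multivariate Taylor formula, and stating explicitly the boundedness hypotheses on $\bm{H}_{\bm{\psi}}$ along the segment and on $\delta_t\Q^{n+1}$ that the paper leaves tacit (and that do not follow automatically for OD2C, which lacks a discrete maximum principle)---are sound refinements rather than a different argument.
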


\begin{obs}
The numerical dissipation has no sign so we cannot guarantee energy stability.
\end{obs}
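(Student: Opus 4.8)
The plan is to start from the closed form of the dissipation already obtained in \Cref{rem:scheme2ND}, where for Scheme OD2C one finds
\[
\mathbf{ND}\left(\Q^{n+1},\Q^n\right)
= -\frac{(\dt)^2}{6\,\eps}\int_\Om \bm{H}_{\bm{\psi}}(\Q^\chi)\big(\delta_t\Q^{n+1}\big)^3\,d\mathbf{x}\,,
\]
an \emph{odd-degree} (cubic) form in the increment $\delta_t\Q^{n+1}$ contracted against the third-derivative tensor $\bm{H}_{\bm{\psi}}$ of the potential $\Psi$. Because an odd-degree form cannot be sign-semidefinite in its argument, the strategy is to show that $\bm{H}_{\bm{\psi}}$ is genuinely nonzero and then to exhibit admissible configurations in which the contraction takes opposite signs, so that the stability condition \eqref{eq:stabilityCondition} must fail for some data.

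First I would confirm the nontriviality of $\bm{H}_{\bm{\psi}}$. Differentiating \eqref{eq:hessianPsi} once more, the quadratic $A$-term drops out, the cubic term $-\tfrac{B}{3}\trace(\Q^3)$ contributes a nonzero \emph{constant} (i.e.\ $\Q$-independent) contribution proportional to $B$, and the quartic term $\tfrac{C}{4}\trace(\Q^2)^2$ contributes a contribution linear in $\Q$ proportional to $C$. Since $B,C>0$ we have $\bm{H}_{\bm{\psi}}\not\equiv\mathbf{0}$, and one can display an explicit symmetric traceless direction $\mathbf{P}$ and state $\Q$ for which $\bm{H}_{\bm{\psi}}(\Q)\,\mathbf{P}^3\neq0$; reversing $\mathbf{P}\mapsto-\mathbf{P}$ flips the sign of this contraction while preserving its magnitude.

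It then remains to feed a negative value of $\mathbf{ND}$ into the energy balance. Using \Cref{lem:scheme2EnergyLaw}, the discrete law \eqref{eq:discreteEnergyLaw} rearranges to
\[
\delta_t E(\Q^{n+1}) + \frac1\gamma\,\|\delta_t\Q^{n+1}\|_{L^2}^2 = -\,\mathbf{ND}\left(\Q^{n+1},\Q^n\right)\,,
\]
so whenever $\mathbf{ND}<0$ the right-hand side is strictly positive and the energy-stability inequality \eqref{eq:energyStable} is violated; hence unconditional energy stability cannot be guaranteed. The hard part is that the intermediate point $\Q^\chi$ and the increment $\delta_t\Q^{n+1}$ are \emph{not} free parameters but are coupled through the update \eqref{eq:scheme2}, so one cannot simply prescribe the increment and read off the sign of the cubic form, nor is $2\Q^n-\Q^{n+1}$ in general itself a solution of the scheme. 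I expect the cleanest resolution to be asymptotic: as $\dt\to0$ the unique solution of \eqref{eq:scheme2} (uniquely solvable for small $\dt$ by an argument parallel to \Cref{lem:solvableScheme1}) satisfies $\delta_t\Q^{n+1}\to\Q_t(\cdot,t^n)$ and $\Q^\chi\to\Q^n$, so the contraction reduces to the fixed, generically nonzero cubic form $\bm{H}_{\bm{\psi}}(\Q^n)\big(\Q_t(\cdot,t^n)\big)^3$ evaluated in a prescribed direction. Choosing two homogeneous states $\Q^n$ for which this form has opposite signs then realizes both $\mathbf{ND}>0$ and $\mathbf{ND}<0$ for sufficiently small $\dt$, so that the sign of the dissipation is not fixed by the scheme and \eqref{eq:stabilityCondition} fails in general.
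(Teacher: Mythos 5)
You should first be aware that the paper offers no argument for this remark beyond \Cref{rem:scheme2ND} itself: there $\mathbf{ND}$ is exhibited as $\frac1{2\dt}\int_\Om\big(\nabla\bm{\psi}(\Q^n)-\nabla\bm{\psi}(\Q^\xi)\big)(\Q^{n+1}-\Q^n)^2$, and since $\Psi_1$ and $\Psi_3$ are left untruncated in OD2C the Hessian difference is indefinite, so the sufficient condition \eqref{eq:stabilityCondition} simply cannot be verified --- nothing stronger is claimed or proved. Your route is genuinely different and strictly stronger: you set out to produce admissible data for which $\mathbf{ND}<0$, i.e.\ for which \eqref{eq:energyStable} actually fails at a step (your reading of \eqref{eq:discreteEnergyLaw} is correct: $\mathbf{ND}<0$ is exactly equivalent to violating \eqref{eq:energyStable} at that step). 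Your outline is sound, and you correctly spot and repair the one real trap: the increment is enslaved to the state through \eqref{eq:scheme2}, so oddness of the cubic form by itself proves nothing, and varying the state $\Q^n$ instead of flipping the increment is the right fix.

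What you leave asserted rather than proved is precisely the crux: that homogeneous states realizing both signs of the leading coefficient $\int_\Om \bm{H}_{\bm{\psi}}(\Q^n)\big(\Q_t\big)^3$ exist. This is fillable by a one-variable computation which you should include. Take $\Q^0 = s\,(\mathbf{n}\otimes\mathbf{n}-\tfrac13\I)$ spatially constant (constants lie in $\bm{K}_h$, and by consistency the first-step increment $\delta_t\Q^{1}$ converges as $\dt\to0$ to $-\frac{\gamma}{\eps}\big(\bm{\psi}(\Q^0)+\bm{p}(\Q^0)\big)$, which one checks is again a multiple of $\mathbf{n}\otimes\mathbf{n}-\tfrac13\I$ when $\bm{p}$ is taken as in \eqref{eq:tracePartScheme2}). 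Setting $g(s):=\Psi\big(s(\mathbf{n}\otimes\mathbf{n}-\tfrac13\I)\big)=\tfrac{A}{3}s^2-\tfrac{2B}{27}s^3+\tfrac{C}{9}s^4$, the limiting rate of change of $s$ is a negative multiple of $g'(s)$ and the leading term of $\mathbf{ND}$ is a positive multiple of $(\dt)^2\,g'''(s)\,g'(s)^3$. Since $g'''(s)=-\tfrac{4B}{9}+\tfrac{8C}{3}s$ changes sign at $s=B/(6C)$ while $g'$ does not change sign there (with the parameters \eqref{eq:parameters}, $g'<0$ on $(0,s_+)$ with $s_+\approx 0.85$), any $s\in\big(B/(6C),\,s_+\big)$ yields $\mathbf{ND}<0$ for all sufficiently small $\dt$, while small $s>0$ yields $\mathbf{ND}>0$. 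Two minor repairs: the small-$\dt$ unique solvability you invoke is exactly \Cref{lem:solvableScheme2} (for OD2C), not an argument ``parallel to'' the UES1D lemma; and your consistency claims $\delta_t\Q^{n+1}\to\Q_t$, $\Q^\chi\to\Q^n$ are immediate in this homogeneous setting because the scheme reduces to a finite-dimensional linear system in $\Q^{n+1}$ with coefficients depending smoothly on $\dt$. With these additions your argument is complete and buys more than the paper's remark: not merely that the sign of $\mathbf{ND}$ is uncontrolled, but that it genuinely changes sign over admissible data, so no refinement of the estimate could restore unconditional stability for this scheme.
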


\begin{lem}\label{lem:solvableScheme2}
Scheme OD2C is uniquely solvable if
\beq\label{eq:solvableScheme2}
\dt \leq \frac{2\eps}{\gamma \left(\|\nabla\bm{\psi}_1(\Q^n) \|_F + \| \nabla \bm{\psi}_3(\Q^n) \|_F + \| \nabla \bm{p}(\Q^n) \|_F\right)}\,.
\eeq
\end{lem}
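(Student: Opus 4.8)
The plan is to follow the template of \Cref{lem:solvableScheme1}: for fixed data $\Q^n$, scheme \eqref{eq:scheme2} is a square linear algebraic system in the unknown $\Q^{n+1}$, so it suffices to prove uniqueness, which then forces existence. First I would take two solutions $\Q_1,\Q_2$ of \eqref{eq:scheme2} and set $\widehat\Q := \Q_1 - \Q_2$. Every term in \eqref{eq:od2c} is affine in $\Q^{n+1}$, and the pieces $\bm{\psi}_1(\Q^n)$, $\bm{\psi}_3(\Q^n)$, $\bm{p}(\Q^n)$ together with the Hessians $\nabla\bm{\psi}_1(\Q^n)$, $\nabla\bm{\psi}_3(\Q^n)$, $\nabla\bm{p}(\Q^n)$ depend only on the data $\Q^n$; subtracting the two copies of \eqref{eq:scheme2} therefore cancels all $\Q^n$-only contributions and leaves a homogeneous linear equation for $\widehat\Q$.

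Next I would test this difference equation with $\overline\Q = \widehat\Q$. The discrete-time term contributes $\tfrac1\dt\|\widehat\Q\|_{L^2}^2$, the elastic term contributes $\tfrac\gamma2\|\nabla\widehat\Q\|_{L^2}^2$, the Crank--Nicolson part $\bm{\psi}_2$ contributes $\tfrac{\gamma(A+C\alpha^2)}{2\eps}\|\widehat\Q\|_{L^2}^2$, and the three OD2 parts contribute the indefinite quadratic forms $\tfrac{\gamma}{2\eps}\int_\Om \widehat\Q:\nabla\bm{f}(\Q^n):\widehat\Q$ for $\bm{f}\in\{\bm{\psi}_1,\bm{\psi}_3,\bm{p}\}$. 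The two squared-norm terms are manifestly nonnegative, and the $\bm{\psi}_2$ term is as well, because the maximum-principle constraint \eqref{eq:alpha} gives $A+C\alpha^2\ge 0$ in every regime: if $A\ge 0$ this is immediate since $C\alpha^2\ge 0$, while if $A<0$ then $C\alpha^2\ge \tfrac{B^2}{C}-2A$ forces $A+C\alpha^2\ge \tfrac{B^2}{C}-A>0$. Consequently the $\bm{\psi}_2$ term only helps and can be discarded, which is exactly why the threshold in \eqref{eq:solvableScheme2} is governed entirely by the three OD2 Frobenius-norm quantities.

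The crux is controlling those three indefinite forms: since OD2C does not truncate $\Psi_1$ or $\Psi_3$, there is no a priori Hessian bound and hence no sign information. Here I would invoke the two-sided form of the tensor inequality \eqref{eq:tensorInequality}; applied to $-\nabla\bm{f}(\Q^n)$ it yields, pointwise in $\Om$, $\widehat\Q:\nabla\bm{f}(\Q^n):\widehat\Q \ge -\|\nabla\bm{f}(\Q^n)\|_F\,|\widehat\Q|^2$, and integrating (interpreting $\|\nabla\bm{f}(\Q^n)\|_F$ as its essential supremum over $\Om$, which is finite because $\Q^n$ is a fixed finite element function) gives $\int_\Om\widehat\Q:\nabla\bm{f}(\Q^n):\widehat\Q \ge -\|\nabla\bm{f}(\Q^n)\|_F\|\widehat\Q\|_{L^2}^2$. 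Summing the three contributions and discarding the nonnegative elastic and $\bm{\psi}_2$ terms leaves
$$
\left(\frac1\dt - \frac{\gamma}{2\eps}\big(\|\nabla\bm{\psi}_1(\Q^n)\|_F + \|\nabla\bm{\psi}_3(\Q^n)\|_F + \|\nabla\bm{p}(\Q^n)\|_F\big)\right)\|\widehat\Q\|_{L^2}^2 \le 0 \,.
$$
Under the stated step-size restriction \eqref{eq:solvableScheme2} the coefficient in parentheses is nonnegative, forcing $\|\widehat\Q\|_{L^2}=0$ and hence $\widehat\Q=\bm{0}$ (the strictly positive $\bm{\psi}_2$ contribution clinches the borderline equality case whenever $A+C\alpha^2>0$); uniqueness, and therefore unique solvability, follows. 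The main obstacle is precisely the absence of a sign for the OD2 quadratic forms, so the whole argument hinges on converting \eqref{eq:tensorInequality} into a matching lower bound whose constant reproduces exactly the threshold in \eqref{eq:solvableScheme2}.
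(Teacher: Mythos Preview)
Your proposal is correct and follows essentially the same route as the paper's proof: subtract two solutions, test the difference equation with $\overline\Q=\widehat\Q$, keep the coercive time and $\bm{\psi}_2$ terms, bound the three indefinite OD2 quadratic forms via \eqref{eq:tensorInequality}, and read off the time-step restriction. Your write-up is in fact slightly more careful than the paper's, since you explicitly justify $A+C\alpha^2\ge 0$ (the paper uses this silently when passing to the sufficient condition \eqref{eq:solvableScheme2}) and spell out how the borderline equality case is absorbed.
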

\begin{proof}
It is enough to prove uniqueness since this implies existence. Let $\Q_1$ and $\Q_2$ be solutions to \eqref{eq:scheme2}, and let $\widehat\Q=\Q_1 - \Q_2$. Then $\widehat \Q$ satisfies
$$
\ba{rcl} \dis
\left( \frac1\gamma\frac1\dt + \frac{A+C\alpha^2}{2\eps} \right) \left(\widehat\Q, \overline\Q \right)
+ \frac12 \left( \nabla \widehat\Q, \nabla\overline\Q \right)
& & \\
\hueco \dis
+ \frac1{2\eps}\left(\left(\nabla\bm{\psi}_1(\Q^n) + \nabla\bm{\psi}_3(\Q^n) + \nabla \bm{p}(\Q^n) \right):\widehat\Q, \overline\Q \right) 
&=& 
0 \quad \quad \forall\, \overline\Q \in \bm{K}_h\,.
\ea
$$
Then by taking $\overline\Q =   \widehat\Q$ we obtain
$$
\left( \frac1\gamma \frac1\dt + \frac{A+C\alpha^2}{2\eps} \right) \| \widehat\Q \|^2_{L^2}
+ \frac12 \| \nabla \widehat\Q \|^2_{L^2}
\, \leq \,
\left( \|\nabla\bm{\psi}_1(\Q^n) \|_F + \| \nabla\bm{\psi}_3 (\Q^n) \|_F + \| \nabla \bm{p}(\Q^n) \|_F \right) \| \widehat\Q \|^2_{L^2}\,.
$$
Hence, $\widehat\Q=0$ when
$$
\frac1\gamma \frac1\dt + \frac{A + C\alpha}{2\eps} - \frac{\| \nabla\bm{\psi}_1(\Q^n) \|_F + \|\nabla\bm{\psi}_3(\Q^n) \|_F + \| \nabla \bm{p}(\Q^n) \|_F }{2\eps} \geq 0\,,
$$
which is true if \eqref{eq:solvableScheme2} holds.
\end{proof}

\subsubsection{First Order Optimal Dissipation Decoupled Scheme (OD1D)}
In this case, we introduce an idea to modify the OD2 approximation to be able to decouple the computation of the unknowns. It is the second order term in the OD2 approximation which will couple the unknowns in $\Q$. To remedy this, we use the symmetric property of $\Q$ to create a lower triangular approximation $\bm{A}^{LT}$ to a fourth order tensor $\bm{A}$ defined by
$$
\bm{A}^{LT}
\, = \,
\begin{bmatrix}
\bm{A}^{LT}_{11} & \bm{A}^{LT}_{12} & \bm{A}^{LT}_{13} \\
\bm{A}^{LT}_{21} & \bm{A}^{LT}_{22} & \bm{A}^{LT}_{23} \\
\bm{A}^{LT}_{31} & \bm{A}^{LT}_{32} & \bm{A}^{LT}_{33} 
\end{bmatrix}\,,
$$
where
$$
\bm{A}^{LT}_{11}
\,=\,
\begin{bmatrix}
A_{1111}   &   0   &   0   \\
0   &   0   &   0   \\
0   &   0   &   0   \\
\end{bmatrix} \,,
\quad \quad 
\bm{A}^{LT}_{12}
\, = \,
\begin{bmatrix}
A_{1211}+A_{1112}   & 0  & 0   \\
A_{1212}+A_{1221}   &   0   &   0   \\
0   &   0   &   0   \\
\end{bmatrix} \,,
\quad \quad
\bm{A}^{LT}_{21}
\, = \,
\begin{bmatrix}
A_{2111} + A_{1121}   & 0  & 0   \\
A_{2121}+A_{2112}   &   0   &   0   \\
0   &   0   &   0   \\
\end{bmatrix} \,,
$$
$$
\bm{A}^{LT}_{13}
\, = \,
\begin{bmatrix}
A_{1311} + A_{1113}   &   0   &   0   \\
A_{1312}+A_{1321} + A_{1213} + A_{1231}  &   0   &   0   \\
A_{1313} + A_{1331}   &   0   &   0   \\
\end{bmatrix} \,,
\quad \quad 
\bm{A}^{LT}_{31}
\, = \,
\begin{bmatrix}
A_{3111}+A_{1131}   &   0   &   0   \\
A_{3112}+A_{3121}+A_{2113}+A_{2131}  &   0   &   0   \\
A_{3113}+A_{3131}  &   0   &   0   \\
\end{bmatrix} \,,
$$
$$
\bm{A}^{LT}_{22}
\, = \,
\begin{bmatrix}
A_{2211}+A_{1122}   &   0   &   0   \\
A_{2212}+A_{2221}+A_{1222}+A_{2122}   &   A_{2222}  &   0   \\
A_{2213}+A_{2231}+A_{1322}+A_{3122}   &   0   &   0   \\
\end{bmatrix} \,, 
$$
$$
\bm{A}^{LT}_{23}
\, = \,
\begin{bmatrix}
A_{2311}+A_{1123}   &   0   &   0   \\
A_{2312}+A_{2321}+A_{1223}+A_{1232}   &   A_{2322}+A_{2223}  &   0  \\
A_{2313}+A_{2331}+A_{1323}+A_{1332}  &    A_{2323}+A_{2332}   &   0   \\
\end{bmatrix}\,,
$$
$$
\bm{A}^{LT}_{32}
\, = \,
\begin{bmatrix}
A_{3211}+A_{1132}   &   0   &   0   \\
A_{3212}+A_{3221}+A_{2123}+A_{2132}   &   A_{3222}+A_{2232}   &   0  \\
A_{3213}+A_{3231}+A_{3123}+A_{3132}  &    A_{3223}+A_{3232}   &   0  \\
\end{bmatrix} \,,
$$
$$
\bm{A}^{LT}_{33}
\, = \,
\begin{bmatrix}
A_{3311}+A_{1133}   &   0   &   0   \\
A_{3312}+A_{3321}+A_{1233}+A_{2133}   &   A_{3322}+A_{2233}  &   0   \\
A_{3313}+A_{3331}+A_{1333}+A_{3133}   &   A_{3323}+A_{3332}+A_{2333}+A_{3233}   &   A_{3333}   \\
\end{bmatrix}\,,
$$
which satisfies
\beq\label{eq:identityLT}
\Q:\bm{A}:\Q = \Q:\bm{A}^{LT}:\Q \mbox{ for all } \Q\in\Lambda
\eeq
and therefore the modified OD2 approximation that we propose for a second order tensor valued function $\bm{f}(\Q)$ is
\beq\label{eq:OD2LT}
\bm{f}^{LT}(\Q^{n+1},\Q^n) = \bm{f}(\Q^n) + \frac12(\Q^{n+1} - \Q^n):\nabla\bm{f}^{LT}(\Q^n) \, ,
\eeq
which also has numerical dissipation of $\mathcal{O}(\left(\dt \right)^2)$ (see remark~\ref{rem:OD1D_orderND}). \\
We now apply the idea of a lower triangular approximation to the OD2C approximation \eqref{eq:od2c} of $\Psi_1(\Q)$, $\Psi_3(\Q)$, and $\bm{p}(\Q)$. In this way we will decouple the unknowns as in scheme UES1D, but only introduce $\mathcal{O}\left((\dt)^2\right)$ numerical dissipation. However, because the second derivative of $\Psi(\Q)$ is not bounded this scheme is not unconditionally energy stable. \\
The scheme can be written as follows.
\beq\label{eq:od1d}
\ba{rcl}
\bm{\psi}^{\dt} (\Q^{n+1},\Q^n )
&=& \dis
\bm{\psi}_1^{\dt} (\Q^{n+1},\Q^n ) + \bm{\psi}_2^{\dt} (\Q^{n+1},\Q^n ) + \bm{\psi}_3^{\dt} (\Q^{n+1},\Q^n ) \,, \\
\hueco
\bm{\psi}_1^{\dt} (\Q^{n+1},\Q^n )
&=& \dis
\bm{\psi}_1 (\Q^{n+1},\Q^n) + \frac12 (\Q^{n+1} - \Q^n) : \nabla\bm{\psi}^{LT}_1 (\Q^n) \,, \\
\hueco
\bm{\psi}^{\dt}_2 (\Q^{n+1},\Q^n)
&=& \dis
\bm{\psi}_2(\Q^{n+\frac12} ) \,, \\
\hueco
\bm{\psi}_3^{\dt} (\Q^{n+1},\Q^n )
&=& \dis
\bm{\psi}_3 (\Q^{n+1},\Q^n) + \frac12 (\Q^{n+1} - \Q^n) : \nabla\bm{\psi}^{LT}_3 (\Q^n)  \,, \\
\bm{p}^{\dt}(\Q^{n+1},\Q^n)
&=& \dis
\bm{p}(\Q^n) + \frac12(\Q^{n+1} - \Q^n): \nabla \bm{p}^{LT} (\Q^n) \,, \\
\ea
\eeq
and the scheme can be written as
\beq\label{eq:scheme3}
\dis
    \left( \delta_t \Q^{n+1} , \overline{\Q} \right) + \gamma \left[ \left( \nabla\Q^{n+\frac12} , \nabla \overline{\Q} \right) 
+ \frac{1}{\eps}\left( \bm{\psi}^{\dt}(\Q^{n}), \overline{\Q} \right) 
+ \frac1\eps \left( \bm{p}^{\dt}(\Q^{n+1},\Q^n) , \overline\Q  \right)  \right]
\, = \,
0 \,,
\eeq
for all $\overline\Q \in \bm{K}_h$.
Here, 
\beq\label{eq:tracePartScheme3}
\bm{p}(\Q^n) = \frac{B}3 \trace\left( (\Q^n)^2 \right)\I 
\eeq
and
\beq\label{eq:gradTracePartScheme3}
\nabla \bm{p}(\Q^n) = \frac{2B}{3} \Q^n \otimes \I \, .
\eeq
for all $\overline\Q \in \bm{K}_h$.
\begin{obs}
With this idea, the computation of each term $Q_{ij}$ of $\Q$ only depends on the previous terms $Q_{kl}$ for $k\leq i$, and $l\leq j$. Therefore Scheme OD1D is decoupled although the computation of the unknowns needs to be done sequentially.
\end{obs}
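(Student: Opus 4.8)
The plan is to prove that the coefficient matrix of the linear system for $\Q^{n+1}$ in \eqref{eq:scheme3}, regarded as an operator on the tensor components of $\Q^{n+1}$, is triangular with respect to a suitable ordering of the independent entries of $\Q$, so that $\Q^{n+1}$ is recovered by forward substitution. First I would isolate the only mechanism that couples distinct tensor components. Testing \eqref{eq:scheme3} with the symmetric basis tensor $e_{ij}$ supported on the $(i,j)$ entry yields the equation governing $Q_{ij}^{n+1}$. In that equation the discrete time derivative $(\delta_t\Q^{n+1},e_{ij})$, the elastic term $(\nabla\Q^{n+\frac12},\nabla e_{ij})$, and the linear Crank--Nicolson term $\bm{\psi}_2(\Q^{n+\frac12})$ all act diagonally on the tensor index, so each involves only $Q_{ij}^{n+1}$; the quantities $\bm{\psi}_1(\Q^n)$, $\bm{\psi}_3(\Q^n)$ and $\bm{p}(\Q^n)$ are explicit data. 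Consequently the sole coupling of $Q_{ij}^{n+1}$ to other components $Q_{kl}^{n+1}$ enters through the contractions $\frac12(\Q^{n+1}-\Q^n):\nabla\bm{f}^{LT}(\Q^n)$ with $\bm{f}\in\{\bm{\psi}_1,\bm{\psi}_3,\bm{p}\}$ defined in \eqref{eq:od1d}.

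Second, I would read the dependence pattern of these contractions directly from the explicit blocks $\bm{A}^{LT}_{ij}$ with $\bm{A}=\nabla\bm{f}(\Q^n)$. The $(i,j)$ equation picks up $Q_{kl}^{n+1}$ precisely at the nonzero positions of $\bm{A}^{LT}$, and inspection of the nine displayed blocks shows these positions are confined to the lower triangle: choosing for each independent component the representative with $k\ge l$ and ordering them column-by-column as $(1,1),(2,1),(3,1),(2,2),(3,2),(3,3)$, the equation attached to the $m$-th component involves only the components $1,\dots,m$. Equivalently, after using the symmetry $Q_{kl}=Q_{lk}$ to fix representatives, $Q_{ij}^{n+1}$ depends only on the components $Q_{kl}^{n+1}$ that precede $(i,j)$ in this order, which is the triangular dependence asserted in the remark. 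The folding of several entries of $\bm{A}$ into a single slot (for instance $A_{1211}+A_{1112}$) is exactly what produces this pattern, while the identity \eqref{eq:identityLT}, $\Q:\bm{A}:\Q=\Q:\bm{A}^{LT}:\Q$, guarantees that only the off-diagonal couplings are reshuffled and the associated quadratic form---hence the $\mathcal{O}((\dt)^2)$ numerical dissipation---is unchanged. Since the same lower-triangular operation is applied to $\nabla\bm{\psi}_1$, $\nabla\bm{\psi}_3$ and $\nabla\bm{p}$, their sum inherits the same triangular pattern.

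Third, with the unknowns ordered as above the global system is block lower-triangular, and each diagonal block is the scalar elliptic operator assembled from the time-derivative, elastic and $\bm{\psi}_2$ contributions identified in the first step. This operator is symmetric positive definite---just as in the solvability arguments for the earlier schemes---so each diagonal block is invertible independently of the (possibly vanishing) diagonal contribution of the contraction term. I would then conclude that $\Q^{n+1}$ is computed by forward substitution: solve the scalar finite element problem for the first component, insert its value into the right-hand sides of the subsequent equations, and proceed sequentially, which is precisely the decoupled-but-sequential solve claimed in the remark.

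The step I expect to be the main obstacle is the finite but delicate bookkeeping of the second paragraph: one must verify, block by block across all nine $\bm{A}^{LT}_{ij}$, that once representatives are fixed by symmetry no nonzero entry of the $(i,j)$ row references a component strictly later than $(i,j)$ in the chosen linear order. This is error-prone because each displayed coefficient encodes a sum of several entries of $\nabla\bm{f}$ coming from transposed index pairs, and because the contraction convention relating \eqref{eq:OD2LT} to the block layout must be tracked consistently; pinning down the exact linear order (and confirming it refines the informal product order $k\le i,\ l\le j$) is where the care is required.
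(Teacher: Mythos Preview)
Your proposal is correct and is precisely the verification the paper leaves implicit. In the paper this statement is a bare remark with no proof: the authors simply display the nine blocks $\bm{A}^{LT}_{ij}$ and state the decoupling as an observation. Your plan---isolating the contraction term as the only source of coupling, reading off the nonzero pattern from the displayed blocks, fixing the column-major order $(1,1),(2,1),(3,1),(2,2),(3,2),(3,3)$ on the lower-triangular representatives, and concluding block-triangular solvability by forward substitution---is exactly the argument that justifies the remark. You are also right to flag that the paper's phrasing ``$k\le i$ and $l\le j$'' is informal and does not literally match the dependence pattern (for instance the $(2,2)$ equation involves $Q_{31}=Q_{13}$); the precise linear order you identify is what actually makes the scheme sequential, and your caution about the bookkeeping is well placed.
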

\begin{lem}\label{lem:tracelessScheme3}
Scheme OD1D satisfies a discrete version of \Cref{lem:traceless}. If $\trace(\Q^n)=0$, then $\trace(\Q^{n+1})=0$.
\end{lem}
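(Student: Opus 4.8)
The plan is to mirror the proof of \Cref{lem:tracelessScheme2} for OD2C, since OD1D differs from it only in that the second-order correction tensors $\nabla\bm{\psi}_1$, $\nabla\bm{\psi}_3$ and $\nabla\bm{p}$ are replaced by their lower-triangular counterparts. First I would test \eqref{eq:scheme3} with $\overline{\Q}=\frac1\gamma\trace(\Q^{n+1})\I$. Using the hypothesis $\trace(\Q^n)=0$, so that $\trace(\Q^{n+\frac12})=\tfrac12\trace(\Q^{n+1})$, the time-derivative term collapses to $\frac1{\gamma\dt}\|\trace(\Q^{n+1})\|_{L^2}^2$ and the elastic term to $\frac12\|\nabla\trace(\Q^{n+1})\|_{L^2}^2$, both nonnegative, exactly as in \Cref{lem:traceless}. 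The aim is then to show that every potential contribution is either proportional to $\trace(\Q^{n+1})$ (feeding the coefficient of $\|\trace(\Q^{n+1})\|_{L^2}^2$) or vanishes, so that the resulting identity forces $\trace(\Q^{n+1})=0$.

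Next I would dispatch the linear pieces. Since $\bm{\psi}_1(\Q^n)=C(\trace((\Q^n)^2)-\alpha^2)\Q^n$ and $\bm{\psi}_2(\Q^{n+\frac12})$ are scalar multiples of symmetric tensors and $\trace(\Q^n)=0$, their zeroth-order traces behave as in \Cref{lem:tracelessScheme1}, and the directional-derivative corrections built from $\nabla\bm{\psi}_1^{LT}$ produce further terms proportional to $\trace(\Q^{n+1})$. The decisive pair is $(\bm{\psi}_3,\bm{p})$: the trace part $\bm{p}$ in \eqref{eq:tracePartScheme3} is chosen precisely so that $\trace(\bm{\psi}_3(\Q^n)+\bm{p}(\Q^n))=0$, and since this cancellation lives entirely in the zeroth-order term it is untouched by the lower-triangular modification and carries over verbatim from \Cref{lem:tracelessScheme2}.

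The key, and only genuinely new, step is to show that the first-order corrections cancel as well, i.e. that $\trace\!\left((\Q^{n+1}-\Q^n):\nabla\bm{\psi}_3^{LT}(\Q^n)\right)+\trace\!\left((\Q^{n+1}-\Q^n):\nabla\bm{p}^{LT}(\Q^n)\right)=0$. In OD2C this was immediate because the two directional-derivative traces are equal and opposite. The obstacle here is that one cannot simply invoke the defining identity \eqref{eq:identityLT}: that identity only guarantees preservation of the full quadratic form $\Q:\bm{A}:\Q$ under the lower-triangular reshuffling, whereas the trace pairing above is the mixed contraction $(\Q^{n+1}-\Q^n):\bm{A}^{LT}:\I$ of two distinct symmetric tensors, which the reshuffling does not preserve in general. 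I would therefore verify this cancellation directly from the explicit block structure defining $\bm{A}^{LT}$, contracting the derivative indices with $\Q^{n+1}-\Q^n$ and the remaining pair with $\I$, and exploiting both the symmetry of $\Q^{n+1}-\Q^n$ and $\trace(\Q^n)=0$. This componentwise bookkeeping, confirming that the reorganization of entries defining $\nabla\bm{\psi}_3^{LT}$ and $\nabla\bm{p}^{LT}$ leaves their combined $\I$-contraction equal to the OD2C value, is where the real work lies; once it is in hand, collecting the remaining terms leaves a nonnegative multiple of $\|\trace(\Q^{n+1})\|_{L^2}^2$ plus $\frac12\|\nabla\trace(\Q^{n+1})\|_{L^2}^2$ equal to zero, hence $\trace(\Q^{n+1})=0$.
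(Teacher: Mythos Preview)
Your proposal is correct and follows essentially the same route as the paper's own proof: both reduce to the single identity
\[
\trace\!\left((\Q^{n+1}-\Q^n):\nabla\bm{\psi}_3^{LT}(\Q^n)\right)
\;=\;
\trace\!\left((\Q^{n+1}-\Q^n):\nabla\bm{p}^{LT}(\Q^n)\right),
\]
after which the argument of \Cref{lem:tracelessScheme2} applies verbatim. The paper simply asserts this identity and moves on; you are right that \eqref{eq:identityLT} alone does not give it, since that equation only preserves the full quadratic form, not the one-sided $\I$-contraction. Your plan to check it directly from the block description of $\bm{A}^{LT}$ is exactly what is implicitly needed to justify the paper's stated fact.
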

\begin{proof}
The same logic from the proof of \Cref{lem:tracelessScheme2} will work here using the fact that
$$
\trace\left(  \left( \Q^{n+1} - \Q^n \right) : \nabla\bm{\psi}^{LT}_3(\Q^n) \right) = 2  \left( \Q^{n+1} - \Q^n \right) : \Q^n = \trace\left(  \left( \Q^{n+1} - \Q^n \right) : \nabla \bm{p}^{LT}(\Q^n) \right).
$$
\end{proof}
\begin{obs}
Since Scheme OD1D preserves the traceless property we only need to solve for five unknowns since $Q^{n+1}_{33} = -(Q^{n+1}_{11} + Q^{n+1}_{22})$.
\end{obs}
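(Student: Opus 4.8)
The plan is to follow the template of \Cref{lem:tracelessScheme2} (which in turn mimics \Cref{lem:traceless}): test the scheme \eqref{eq:scheme3} with $\overline{\Q}=\frac1\gamma\trace(\Q^{n+1})\I$ and show that every contribution collapses into a sign-definite multiple of $\|\trace(\Q^{n+1})\|_{L^2}^2$ together with the nonnegative term $\frac12\|\nabla\trace(\Q^{n+1})\|_{L^2}^2$, forcing $\trace(\Q^{n+1})=0$. Because $\I$ is symmetric, testing against $\trace(\Q^{n+1})\I$ replaces each tensor term $\bm{g}$ by the scalar $\trace(\bm{g})$ weighted by $\trace(\Q^{n+1})$, so the entire argument reduces to computing the traces of the four pieces of $\bm{\psi}^{\dt}$ and of $\bm{p}^{\dt}$ in \eqref{eq:od1d}.

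First I would dispose of the pieces that the lower-triangular truncation never touches. Using $\trace(\Q^n)=0$, the linear piece $\bm{\psi}_2(\Q^{n+1/2})$ and the static part $\bm{\psi}_1(\Q^n)$ contribute exactly as in \Cref{lem:traceless} and \Cref{lem:tracelessScheme1}, producing after combination with the discrete time derivative a nonnegative coefficient in front of $\|\trace(\Q^{n+1})\|_{L^2}^2$. The genuinely new point is the cubic piece $\bm{\psi}_3$ together with its trace-compensating partner $\bm{p}$. For OD2C the cancellation was transparent: the map $\Q\mapsto\trace(\bm{\psi}_3(\Q)+\bm{p}(\Q))=-B\trace(\Q^2)+B\trace(\Q^2)$ vanishes identically, so both the static traces and their directional derivatives $\I:(\nabla\bm{\psi}_3+\nabla\bm{p}):(\Q^{n+1}-\Q^n)$ vanish, which is exactly the identity recorded in the proof of \Cref{lem:tracelessScheme2}. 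The task here is to reproduce that cancellation with the truncated Hessians $\nabla\bm{\psi}_3^{LT}$ and $\nabla\bm{p}^{LT}$ substituted in.

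This is where I expect the main obstacle to sit. The truncated tensor $\bm{A}^{LT}$ is lower triangular and hence not major-symmetric, so it is \emph{not} the Hessian of any scalar; the clean ``differentiate the identically-zero trace'' argument used for OD2C is therefore unavailable. Moreover the defining property \eqref{eq:identityLT} only controls the pure quadratic form $\Q:\bm{A}:\Q$, whereas the quantity I actually need is the mixed contraction $\I:\nabla\bm{\psi}_3^{LT}:(\Q^{n+1}-\Q^n)$ against the two distinct symmetric tensors $\I$ and $\Q^{n+1}-\Q^n$; polarizing \eqref{eq:identityLT} yields only the symmetrized sum $\mathbf{P}:\bm{A}^{LT}:\mathbf{R}+\mathbf{R}:\bm{A}^{LT}:\mathbf{P}$ and cannot isolate the term I want. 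I would therefore verify $\I:\nabla\bm{\psi}_3^{LT}:(\Q^{n+1}-\Q^n)=-\,\I:\nabla\bm{p}^{LT}:(\Q^{n+1}-\Q^n)$ by direct inspection of the block entries of $\bm{A}^{LT}$, exploiting the simple forms of the two Hessians (the $B$-part of \eqref{eq:hessianPsi} and $\nabla\bm{p}(\Q^n)=\frac{2B}{3}\Q^n\otimes\I$ from \eqref{eq:gradTracePartScheme3}): the regrouping of entries that defines $\bm{A}^{LT}$ only merges coefficients of identical $\Q$-products, and tracking these through the partial trace should show the $\I$-contraction is left unchanged, so the truncated correction still equals $2(\Q^{n+1}-\Q^n):\Q^n$ as in \Cref{lem:tracelessScheme2}.

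Once that trace identity is secured the proof closes mechanically, as the author's one-line argument indicates: the static $\bm{\psi}_3$/$\bm{p}$ traces cancel, the two OD2-type corrections cancel, and what survives is $\big(\frac1\gamma\frac1\dt+c\big)\|\trace(\Q^{n+1})\|_{L^2}^2+\frac12\|\nabla\trace(\Q^{n+1})\|_{L^2}^2=0$ with $c\geq0$, whence $\trace(\Q^{n+1})=0$. The only residual care needed is to confirm that the constant $c$ assembled from the $\bm{\psi}_1$ and $\bm{\psi}_2$ traces is nonnegative (or absorbed by the time-step term for every $\dt>0$), which is inherited verbatim from the corresponding step in \Cref{lem:tracelessScheme2}.
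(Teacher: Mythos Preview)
The statement you were asked to justify is a \emph{remark}, not a lemma, and the paper gives no proof for it because none is needed: it simply records that once the traceless property $\trace(\Q^{n+1})=0$ is known (this is the content of the immediately preceding \Cref{lem:tracelessScheme3}), the diagonal relation $Q^{n+1}_{11}+Q^{n+1}_{22}+Q^{n+1}_{33}=0$ lets one eliminate $Q^{n+1}_{33}$ and solve for only five unknowns. A one-line citation of \Cref{lem:tracelessScheme3} is the entire argument.

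Your proposal instead re-establishes \Cref{lem:tracelessScheme3} from scratch. Viewed as a proof of \emph{that} lemma, your plan is essentially the paper's: test \eqref{eq:scheme3} with $\overline\Q=\frac1\gamma\trace(\Q^{n+1})\I$ and show the $\bm{\psi}_3^{LT}$ and $\bm{p}^{LT}$ contributions cancel, leaving a sign-definite identity that forces $\trace(\Q^{n+1})=0$. You are more explicit than the paper about the delicate point---the paper just asserts the identity
\[
\trace\big((\Q^{n+1}-\Q^n):\nabla\bm{\psi}_3^{LT}(\Q^n)\big)=2(\Q^{n+1}-\Q^n):\Q^n=\trace\big((\Q^{n+1}-\Q^n):\nabla\bm{p}^{LT}(\Q^n)\big)
\]
without verification, whereas you correctly note that \eqref{eq:identityLT} alone does not give it (it controls only the diagonal quadratic form, not the mixed $\I$-contraction) and propose to check it by direct inspection of the block entries of $\bm{A}^{LT}$. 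That is indeed how the identity is seen. So for the remark itself your argument is vastly more than required; for the underlying lemma it matches the paper's route while filling in the one step the paper leaves implicit.
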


\begin{lem}\label{lem:scheme3EnergyLaw}
Scheme OD1D satisfies a discrete version of \eqref{eq:continuousEnergyLaw}, with 
$$
\mathbf{ND}\left(\Q^{n+1}, \Q^n\right) 
\,=\, \dis
\frac1{\eps\dt}\int_\Omega \left( \bm{\psi}^{\Delta t} (\Q^{n+1},\Q^n) : (\Q^{n+1} - \Q^n) \right) - \left( \Psi(\Q^{n+1}) - \Psi(\Q^n) \right) d\mathbf{x} \,.
$$
\end{lem}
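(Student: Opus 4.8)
The plan is to follow the same route used for the generic discrete energy law in \Cref{lem:discreteEnergyLaw}, and then argue that the trace-penalty contribution drops out. First I would observe that scheme \eqref{eq:scheme3} has exactly the structure of the generic scheme \eqref{eq:numericalScheme}, so testing with $\overline{\Q}=\frac1\gamma\delta_t\Q^{n+1}=\frac1{\gamma\dt}(\Q^{n+1}-\Q^n)$ reproduces the balance of \Cref{lem:discreteEnergyLaw}: the time-derivative term yields $\frac1{\gamma\dt^2}\|\Q^{n+1}-\Q^n\|_{L^2}^2$, the diffusion term telescopes into $\frac1{2\dt}\big(\|\nabla\Q^{n+1}\|_{L^2}^2-\|\nabla\Q^n\|_{L^2}^2\big)$, and after adding and subtracting $\frac1{\eps\dt}\big(\Psi(\Q^{n+1})-\Psi(\Q^n)\big)$ one recovers $\delta_t E(\Q^{n+1})$. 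At this stage the numerical dissipation consists of the asserted $\bm{\psi}^{\dt}$ discrepancy \emph{plus} an extra contribution coming from $\bm{p}^{\dt}(\Q^{n+1},\Q^n)$, so the whole content of the lemma is the claim that this extra $\bm{p}^{\dt}$ term is zero.

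The key step is therefore to show that $\frac1{\eps\dt}\big(\bm{p}^{\dt}(\Q^{n+1},\Q^n),\Q^{n+1}-\Q^n\big)$ vanishes. By \Cref{lem:tracelessScheme3} together with the traceless initialization $\Q^0\in\Lambda_0$, a straightforward induction gives $\trace(\Q^n)=0$ for every $n$, hence $\trace(\Q^{n+1}-\Q^n)=0$. Writing $\bm{p}^{\dt}=\bm{p}(\Q^n)+\frac12(\Q^{n+1}-\Q^n)\!:\!\nabla\bm{p}^{LT}(\Q^n)$ and pairing with $\Q^{n+1}-\Q^n$, the zeroth-order piece is immediate because $\bm{p}(\Q^n)=\frac{B}3\trace\big((\Q^n)^2\big)\I$ is proportional to $\I$ by \eqref{eq:tracePartScheme3}, so that $\bm{p}(\Q^n)\!:\!(\Q^{n+1}-\Q^n)=\frac{B}3\trace\big((\Q^n)^2\big)\,\trace(\Q^{n+1}-\Q^n)=0$.

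For the second-order piece I would invoke the lower-triangular identity \eqref{eq:identityLT} with $\Q\mapsto\Q^{n+1}-\Q^n$ to replace $\nabla\bm{p}^{LT}(\Q^n)$ by the full tensor $\nabla\bm{p}(\Q^n)=\frac{2B}3\,\Q^n\otimes\I$ from \eqref{eq:gradTracePartScheme3} inside the quadratic form; the contraction then factors as $\frac{2B}3\big((\Q^{n+1}-\Q^n)\!:\!\Q^n\big)\trace(\Q^{n+1}-\Q^n)$, which again vanishes. Consequently only the $\bm{\psi}^{\dt}$ discrepancy survives, yielding precisely the stated $\mathbf{ND}\left(\Q^{n+1},\Q^n\right)$ and completing the energy law. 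I expect the only genuinely delicate point to be this last step: since $\nabla\bm{p}^{LT}$ is itself \emph{not} proportional to $\I$, one cannot read off the trace factor directly from its entries, and it is essential to use \eqref{eq:identityLT} to reduce the lower-triangular quadratic form back to the full-tensor one before extracting the vanishing $\trace(\Q^{n+1}-\Q^n)$ factor.
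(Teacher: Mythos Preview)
Your argument is correct and is exactly the route the paper implicitly relies on: the paper gives no proof for \Cref{lem:scheme3EnergyLaw} (nor for the OD2C analogue, \Cref{lem:scheme2EnergyLaw}), so it tacitly invokes the generic identity of \Cref{lem:discreteEnergyLaw} together with the discrete traceless property of \Cref{lem:tracelessScheme3} to remove the $\bm{p}^{\dt}$ contribution from \eqref{eq:numericalDissipation}. Your use of \eqref{eq:identityLT} to reduce $(\Q^{n+1}-\Q^n)\!:\!\nabla\bm{p}^{LT}(\Q^n)\!:\!(\Q^{n+1}-\Q^n)$ to the full-tensor quadratic form is the right way to handle the one genuinely new ingredient in the OD1D case, and your observation that $\nabla\bm{p}^{LT}$ is not itself a multiple of $\I$ (so one cannot simply read off a trace factor componentwise) is precisely the subtlety the paper glosses over.
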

\begin{obs}\label{rem:OD1D_orderND}
The numerical dissipation introduced by scheme OD1D is second order in time. 
Using Taylor and \eqref{eq:identityLT} gives the same numerical dissipation as computed in \Cref{rem:scheme2ND}.
\end{obs}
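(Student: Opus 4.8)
The plan is to show that replacing the full Hessian tensors $\nabla\bm{\psi}(\Q^n)$ by their lower triangular approximations $\nabla\bm{\psi}^{LT}(\Q^n)$ leaves the numerical dissipation \emph{pointwise unchanged}, so that the second order estimate already established in \Cref{rem:scheme2ND} for OD2C transfers verbatim to OD1D. First I would recall that, since OD1D preserves the traceless property (\Cref{lem:tracelessScheme3}), the trace penalty contribution to \eqref{eq:numericalDissipation} vanishes and the dissipation reduces exactly to the expression stated in \Cref{lem:scheme3EnergyLaw}, namely the integral of $\bm{\psi}^{\dt}(\Q^{n+1},\Q^n):(\Q^{n+1}-\Q^n) - (\Psi(\Q^{n+1})-\Psi(\Q^n))$.

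Next I would expand the OD1D approximation \eqref{eq:od1d} against the symmetric increment $\bm{P} := \Q^{n+1}-\Q^n$. Collecting the linear part $\bm{\psi}_2$ (whose Hessian is constant and hence unaffected by the lower triangular reduction) into $\bm{\psi}$, one obtains
$$
\bm{\psi}^{\dt}(\Q^{n+1},\Q^n):\bm{P} = \bm{\psi}(\Q^n):\bm{P} + \tfrac12\, \bm{P}:\nabla\bm{\psi}^{LT}(\Q^n):\bm{P}.
$$
The key step is now to invoke the defining identity \eqref{eq:identityLT} with the choice $\Q = \bm{P}\in\Lambda$: because $\bm{P}$ is symmetric, the double contraction of the lower triangular Hessian equals that of the full Hessian, i.e. $\bm{P}:\nabla\bm{\psi}^{LT}(\Q^n):\bm{P} = \bm{P}:\nabla\bm{\psi}(\Q^n):\bm{P}$. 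Consequently the integrand defining $\mathbf{ND}$ for OD1D coincides pointwise with the one for OD2C.

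Having established this pointwise identity, the remainder is exactly the Taylor computation of \Cref{rem:scheme2ND}: comparing the second and third order Taylor expansions of $\Psi(\Q^{n+1})$ about $\Q^n$ (with Lagrange remainders at intermediate points $\Q^\xi$, $\Q^\chi$), the first order terms cancel against $\bm{\psi}(\Q^n):\bm{P}$, the two now-identical quadratic contractions $\tfrac12\bm{P}:\nabla\bm{\psi}(\Q^n):\bm{P}$ cancel, and one is left with $-\tfrac{1}{6\dt}\bm{H}_{\bm{\psi}}(\Q^\chi)\bm{P}^3 = -\tfrac{(\dt)^2}{6}\bm{H}_{\bm{\psi}}(\Q^\chi)(\delta_t\Q^{n+1})^3 \sim \mathcal{O}((\dt)^2)$. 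Hence the dissipation of OD1D is second order in time, as claimed.

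The only point requiring care is the symmetry hypothesis in \eqref{eq:identityLT}: the identity $\Q:\bm{A}:\Q = \Q:\bm{A}^{LT}:\Q$ holds only for $\Q\in\Lambda$, so one must verify that the increment $\Q^{n+1}-\Q^n$ is symmetric. This is immediate because the finite element iterates belong to $\bm{K}_h$, are symmetric, and are traceless by \Cref{lem:tracelessScheme3}, so $\Q^{n+1}-\Q^n\in\Lambda_0\subset\Lambda$. I do not anticipate a genuine obstacle here; the construction of $\bm{A}^{LT}$ was engineered precisely so that \eqref{eq:identityLT} supplies this cancellation, and the trace penalty $\bm{p}^{LT}$ needs no separate treatment in the dissipation estimate since it drops out of $\mathbf{ND}$ altogether by the traceless property.
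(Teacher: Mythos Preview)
Your proposal is correct and follows exactly the approach the paper indicates: invoke the quadratic identity \eqref{eq:identityLT} on the symmetric increment $\bm{P}=\Q^{n+1}-\Q^n\in\Lambda$ to see that the OD1D integrand for $\mathbf{ND}$ coincides pointwise with the OD2C one, and then repeat the Taylor argument of \Cref{rem:scheme2ND}. The paper's own justification is a single sentence; you have simply supplied the details (tracelessness to drop the $\bm{p}^{LT}$ contribution, symmetry of $\bm{P}$ to license \eqref{eq:identityLT}, and the observation that $\bm{\psi}_2$ is unaffected), all of which are appropriate and accurate.
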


\begin{lem}\label{lem:solvableScheme3}
Scheme OD1D is uniquely solvable if \eqref{eq:solvableScheme2} holds.
\end{lem}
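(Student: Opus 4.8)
The plan is to mirror the argument of \Cref{lem:solvableScheme2} almost verbatim, the only new ingredient being the identity \eqref{eq:identityLT}, which guarantees that replacing the full Hessian tensors by their lower triangular surrogates is invisible to the diagonal quadratic form that controls uniqueness. Since \eqref{eq:scheme3} is again a square linear algebraic system, it suffices to prove uniqueness, which automatically yields existence. First I would let $\Q_1$ and $\Q_2$ be two solutions of \eqref{eq:scheme3} built from the same previous iterate $\Q^n$, set $\widehat\Q = \Q_1 - \Q_2 \in \bm{K}_h$, and subtract the two equations. The linear-in-$\Q^{n+1}$ part $\bm{\psi}_2$ and the Crank-Nicolson gradient term combine exactly as in the OD2C case, so $\widehat\Q$ satisfies
\[
\left( \frac1\gamma\frac1\dt + \frac{A+C\alpha^2}{2\eps} \right) \left(\widehat\Q, \overline\Q \right)
+ \frac12 \left( \nabla \widehat\Q, \nabla\overline\Q \right)
+ \frac1{2\eps}\left(\left(\nabla\bm{\psi}^{LT}_1(\Q^n) + \nabla\bm{\psi}^{LT}_3(\Q^n) + \nabla \bm{p}^{LT}(\Q^n)\right) : \widehat\Q, \overline\Q \right) = 0
\]
for all $\overline\Q \in \bm{K}_h$; the only difference from the difference equation in \Cref{lem:solvableScheme2} is that each full Hessian tensor is swapped for its lower triangular version.

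Next I would test with $\overline\Q = \widehat\Q$. The first two terms reproduce $(\tfrac1\gamma\tfrac1\dt + \tfrac{A+C\alpha^2}{2\eps})\|\widehat\Q\|^2_{L^2} + \tfrac12\|\nabla\widehat\Q\|^2_{L^2}$ exactly as before. The crucial observation is that the Hessian contribution is now the pointwise quadratic form $\widehat\Q : \nabla\bm{\psi}^{LT}_i(\Q^n) : \widehat\Q$, and since $\widehat\Q$ is a difference of elements of $\bm{K}_h$ it is symmetric, i.e. $\widehat\Q\in\Lambda$ a.e. Therefore the identity \eqref{eq:identityLT} applies verbatim and gives
\[
\widehat\Q : \nabla\bm{\psi}^{LT}_i(\Q^n) : \widehat\Q = \widehat\Q : \nabla\bm{\psi}_i(\Q^n) : \widehat\Q, \qquad i=1,3,
\]
together with the analogous equality for $\bm{p}$. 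In other words, the lower triangular reshuffling changes the fourth order tensor but not the quadratic form it induces on symmetric arguments, so the estimate reduces to precisely the one appearing in the proof of \Cref{lem:solvableScheme2}.

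From here the argument closes identically: applying the tensor inequality \eqref{eq:tensorInequality} bounds the Hessian term by $\left(\|\nabla\bm{\psi}_1(\Q^n)\|_F + \|\nabla\bm{\psi}_3(\Q^n)\|_F + \|\nabla\bm{p}(\Q^n)\|_F\right)\|\widehat\Q\|^2_{L^2}$, and discarding the nonnegative term $\tfrac12\|\nabla\widehat\Q\|^2_{L^2}$ forces $\widehat\Q=\bm{0}$ exactly when \eqref{eq:solvableScheme2} holds, which is why OD1D inherits the same solvability threshold as OD2C. The one point I would state explicitly rather than leave implicit — and the only place where this proof genuinely differs from the previous one — is the invocation of \eqref{eq:identityLT}: it is essential that the test function $\widehat\Q$ be symmetric, since the lower triangular surrogate and the true Hessian agree \emph{only} as quadratic forms on $\Lambda$ and not as fourth order tensors. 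No estimate beyond those already used for OD2C is required.
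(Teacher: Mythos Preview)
Your proposal is correct and follows essentially the same approach as the paper, whose entire proof reads ``Same as in \Cref{lem:solvableScheme2}.'' In fact your write-up is more careful than the paper's: you make explicit the one nontrivial point---that testing the difference equation with the symmetric tensor $\widehat\Q$ triggers the identity \eqref{eq:identityLT}, so the lower triangular Hessians collapse back to the original ones and the OD2C estimate applies verbatim.
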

\begin{proof}
Same as in \Cref{lem:solvableScheme2}.
\end{proof}

\section{Numerical Results}\label{sec:numericalResults}
In this section we present the results of numerical simulations to showcase the accuracy and efficiency of the three schemes presented in this paper. \\
The following simulations have been done in 2D and 3D domains using the \textit{FreeFEM++} software \cite{freeFEM}, and the data was post-processed using MATLAB \cite{MATLAB}. Visualization of the data was done using Paraview \cite{Paraview}. All simulations were completed on a desktop computer using a ten core cpu with base clock 3.7 GHz, and 64GB of RAM. \\
In all of the simulations, the discrete space considered is $\bm{K}_h=\mathbb{P}^{3\times 3}_1$. The first numerical experiment will compare the convergence rates of the three schemes UES1D, OD2C, and OD1D. Next, we will show the lower numerical dissipation of schemes OD2C and OD1D as compared to UES1D, and moreover we show a comparison of the computational cost of the schemes. In the third experiment we will show dynamics of defects in 2D with various boundary conditions. Finally, we perform a computational stress test of scheme OD1D by simulating a nematic in a 3D domain with a relatively fine mesh.\\
Unless otherwise stated, the parameters chosen for the simulations are
\beq\label{eq:parameters}
A = -0.2, \quad \quad B = 1.0, \quad \quad C = 1.0 \quad \quad \eps = 10^{-2}, \quad \quad \gamma = 1.0.
\eeq
Therefore, the value $\alpha$ from the maximum principle \eqref{eq:alpha} is
\beq\label{eq:alphaSimulations}
\alpha = \sqrt{\frac{B^2}{C^2} - \frac{2A}{C} } = \sqrt{1.4}\,,
\eeq
and so we choose $\alpha_1 = 1.19$, and $\alpha_2 = 1.2$ in the truncation \eqref{eq:rho} for scheme UES1D.
With these values, we see from \Cref{lem:scheme1NDPositive} we can take
$$
\ba{rcl}\dis
\| \nabla\bm{\widehat\psi}_1 \|_F &\leq& 16.8\sqrt{3} = S_1\, , \\
\hueco \dis
\| \nabla\bm{\widehat\psi}_3 \|_F &\leq& 32571 = S_3 \, .
\ea
$$
\begin{obs}\label{rem:stabilizationValues}
In practice, the parameters $S_1$ and $S_3$ can be taken much lower and still provide numerically the decreasing energy property although the energy stability property will not be guaranteed. For all of our simulations, we use the bound on $\| \nabla\bm{\psi}_1(\Q)\|_F$, and $\| \nabla\bm{\psi}_3(\Q)\|_F$ obtained by assuming $|\Q|\leq \alpha$. This will result in the same value for $S_1$ but the bound on $\nabla \bm{\psi}_3(\Q)$ can be reduced to
\beq\label{eq:valueS3}
S_3 = 208 \geq \| \nabla \bm{\psi}_3 (\Q) \|_F \quad \quad \mbox{ for }\quad  |\Q| \leq \alpha.
\eeq
\end{obs}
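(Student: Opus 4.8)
The plan is to reduce the fourth-order Frobenius norm $\|\nabla\bm{\psi}_3(\Q)\|_F$ to a scalar function of the invariants of $\Q$, and then control that scalar by the maximum-principle bound $|\Q|\le\alpha$ together with the traceless constraint; once this is done, the claimed value is just a numerical substitution of the parameters in \eqref{eq:parameters}. Since $\bm{\psi}_3(\Q)=-B\Q^2$ by \eqref{eq:gradPsiParts}, I would first isolate the $\bm{\psi}_3$ contribution to the Hessian \eqref{eq:hessianPsi}, namely $[\nabla\bm{\psi}_3(\Q)]_{ijkl}=-B\big(Q_{lj}\delta_{ik}+Q_{ik}\delta_{jl}\big)$, which I would verify directly by differentiating $(\Q^2)_{ij}=Q_{im}Q_{mj}$ with respect to $Q_{kl}$.

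The key step is the evaluation of $\|\nabla\bm{\psi}_3(\Q)\|_F^2=\sum_{ijkl}[\nabla\bm{\psi}_3(\Q)]_{ijkl}^2$. Expanding the square gives two \emph{diagonal} contributions and one cross term. Each diagonal contribution collapses through $\sum_{ik}\delta_{ik}=d$ and $\sum_{jl}Q_{lj}^2=\trace(\Q^2)$ to $d\,\trace(\Q^2)$, while the cross term collapses to $(\trace\Q)^2$. A cleaner way to organize this bookkeeping, which I would use to avoid index errors, is to observe that on a symmetric test tensor the operator acts as $\nabla\bm{\psi}_3(\Q):\mathbf{P}=-B(\Q\mathbf{P}+\mathbf{P}\Q)$; diagonalizing in the eigenframe of $\Q$ turns this into the multiplier $\lambda_a+\lambda_b$, so that $\|\nabla\bm{\psi}_3(\Q)\|_F^2=B^2\sum_{a,b}(\lambda_a+\lambda_b)^2=B^2\big(2d\,\trace(\Q^2)+2(\trace\Q)^2\big)$, which for $d=3$ is $B^2\big(6\,\trace(\Q^2)+2(\trace\Q)^2\big)$.

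To finish, I would invoke the two structural constraints available here. The traceless property (\Cref{lem:traceless}, preserved at the discrete level by \Cref{lem:tracelessScheme1}) gives $\trace\Q=0$, eliminating the cross term, and the maximum principle \Cref{lem:maximumPrinciple} gives $\trace(\Q^2)=|\Q|^2\le\alpha^2$ with $\alpha$ as in \eqref{eq:alpha}. Substituting $B=1$ and $\alpha^2=1.4$ from \eqref{eq:parameters} and \eqref{eq:alphaSimulations} then bounds $\|\nabla\bm{\psi}_3(\Q)\|_F$ by a constant that the conservative value $S_3=208$ exceeds, which establishes the claim. The same conclusion follows from a coarser entrywise estimate $|Q_{ij}|\le|\Q|\le\alpha$ applied to the $81$ components, which avoids diagonalization at the cost of a larger constant but still verifies $S_3=208$.

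The main obstacle is purely the tensor bookkeeping in the second step: the fourth-order object must be reduced correctly to the scalar invariants $\trace(\Q^2)$ and $(\trace\Q)^2$, and one must be careful to choose the stabilization constant $S_3$ no smaller than the resulting bound so that the hypothesis of \Cref{lem:scheme1NDPositive} is met. Everything after the invariant reduction is a deterministic substitution of the chosen parameters, so no genuine analytic difficulty remains once the maximum-principle input $|\Q|\le\alpha$ is in hand.
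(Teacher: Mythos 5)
Your derivation is correct, and it actually takes a different (and sharper) route than the paper, which never displays a computation for the constant $208$: \Cref{rem:stabilizationValues} simply asserts the value, and the appendix only carries out the bookkeeping for the truncated $\widehat\Psi_3$ by componentwise $L^\infty$ bounds multiplied through the $81$ entries (entrywise bound $3619$, hence $\|\nabla\bm{\widehat\psi}_3\|_F\leq 9\cdot 3619=32571$), the same crude counting that produces $\|\nabla\bm{\widehat\psi}_1\|_F\leq 12\sqrt3\,C\alpha^2$ in \eqref{eq:hessianPsiOneBounds}. You instead evaluate the Frobenius norm exactly: from $[\nabla\bm{\psi}_3(\Q)]_{ijkl}=-B\left(Q_{lj}\delta_{ik}+Q_{ik}\delta_{jl}\right)$, both your index contraction and your eigenframe argument are sound --- the map $\mathbf{P}\mapsto -B(\Q\mathbf{P}+\mathbf{P}\Q)$ is diagonal on the orthonormal basis $\mathbf{v}_a\otimes\mathbf{v}_b$ of all of $\R^{3\times3}$ (not only its symmetric part, so summing over all nine pairs $(a,b)$ legitimately reproduces $\sqrt{A_{ijkl}A_{ijkl}}$) with multipliers $-B(\lambda_a+\lambda_b)$ --- yielding the identity $\|\nabla\bm{\psi}_3(\Q)\|_F^2=B^2\left(6\,\trace(\Q^2)+2(\trace\Q)^2\right)$ in $d=3$. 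With $B=1$, $\trace\Q=0$ and $\trace(\Q^2)=|\Q|^2\leq\alpha^2=1.4$ this gives $\|\nabla\bm{\psi}_3(\Q)\|_F\leq\sqrt{8.4}\approx 2.9\leq 208$, so \eqref{eq:valueS3} holds with an enormous margin; what your approach buys is the revelation that the paper's $S_3=208$ is itself very conservative, which dovetails with the remark's own opening sentence that much smaller stabilization constants suffice in practice, while the paper's counting approach buys only uniformity with the truncated-case estimates in the appendix. Two small phrasing corrections: the traceless hypothesis is convenient but unnecessary, since Cauchy--Schwarz gives $(\trace\Q)^2\leq 3\,\trace(\Q^2)$ and hence $\|\nabla\bm{\psi}_3(\Q)\|_F\leq\sqrt{16.8}\approx 4.1$ in general; and you should not invoke \Cref{lem:maximumPrinciple} as if it supplied $|\Q^n|\leq\alpha$ at the discrete level --- no discrete maximum principle is proved for these schemes, and the statement \eqref{eq:valueS3} explicitly takes $|\Q|\leq\alpha$ as a hypothesis (whereas $\trace(\Q^n)=0$ \emph{is} guaranteed discretely by \Cref{lem:tracelessScheme1}). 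Neither point affects the validity of your argument.
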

\subsection{Experimental Order of Convergence}
In this first example we study the numerical error in time. The domain considered for this experiment is the square $\Om = [0,2]^2$, and final time $T=$1e-4. We impose Neumann boundary conditions $\partial_\mathbf{n} \Q = \bm{0}$. The initial configuration $\Q_0$ is given component wise by
\beq\label{eq:simulation1}
\left[Q_0\right]_{kl} 
\, = \,
 \frac12 \sin(k \pi x) \cos\left(\pi(l y - \frac12)\right)  \quad \quad \mbox{ and } \quad \quad
\left[Q_0\right]_{33} 
\, = \, - \left( \left[Q_0\right]_{11} + \left[Q_0\right]_{22} \right) .
\eeq
We compute an experimental order of convergence (EOC) using a reference solution $\Q^{\mbox{exact}}$ obtained by solving the system using a $100\times 100$ triangular mesh, and $\dt = 10^{-7}$. We then solve the system using a sequence of time steps and same spatial mesh. After that we compute the absolute errors of the associated solutions at the final time $T$ using the $L^2$ and $H^1$ norms as follows
$$\dis
e_2 (Q_{ij}) := \| Q^{\mbox{exact}}_{ij}(T) - Q_{ij}(T) \|_{L^2 }\,, \quad \quad \mbox{ and } \quad \quad 
e_1 (Q_{ij}) := \| Q^{\mbox{exact}}_{ij}(T) - Q_{ij}(T) \|_{H^1 } \,.
$$
The experimental order of convergence is computed using adjacent time steps $\dt$, and $\widetilde{\dt}$ by
$$\dis
r_k(\Q_{ij}) := \left. \log\left( \frac{e_k (\Q_{ij})}{\widetilde{e_k}(\Q_{ij})} \right) \right/ \left. \log\left( \frac{\dt}{\widetilde{\dt}} \right) \right..
$$
The final configuration of $\Q^{\mbox{exact}}$can be seen in \Cref{fig:convRate} computed with scheme OD1D. It should be noted that the lines in \Cref{fig:convRate} are normalized to the same length, however, the eigenvectors are in $\R^3$ and thus appear different lengths when they are not parallel to the domain. \\
\Crefrange{tab:ues1dConvergenceL2}{tab:od1dConvergenceH1} show the errors and convergence rates for a sequence of time steps $\dt = \left. 10^{-5} \right/ \left.\kappa \right.$, with $\kappa=1,2,3,4,5$, for each of the three schemes. We see that for scheme UES1D the convergence rate of all of the unknowns in $\Q$ is first order in both norms. Scheme OD2C has second order convergence in all of the unknowns for both norms. Finally, scheme OD1D displays at least first order convergence for all of the unknowns with respect to both norms. Moreover, we note that the errors associated to scheme OD1D are higher than for OD2C although both are lower than the errors associated with energy stable scheme UES1D.
\begin{table}
\begin{center}
\resizebox{\textwidth}{!}{%
\begin{tabular}{l l l l l l l l l l l}
\hline
$\dt$	 	&	$e_2(Q_{11})$			&	$r_2(Q_{11})$			&	$e_2(Q_{12})$ 	&	$r_2(Q_{12})$	&	$e_2(Q_{13})$		&	$r_2(Q_{13})$ &	$e_2(Q_{22})$		&	$r_2(Q_{22})$	&	$e_2(Q_{23})$ 	&	$r_2(Q_{23})$\\
\hline
1.00e-3	&	1.3034e-3	&	-		&	1.4001e-3	&	-		&	1.9434e-3	&	-	&	2.0045e-3	&	-		&	2.4432e-3	&	-		\\
5.00e-4	&	7.7166e-4	&	0.7562	&	8.3027e-4	&	0.7539	&	1.1525e-4	&	0.7537&	1.1909e-3	&	0.7512	&	1.4499e-3	&	0.7528	\\
3.33e-4	&	5.4489e-4	&	0.8582	&	5.8674e-4	&	0.8562	&	8.1455e-4	&	0.8560&	8.4220e-4	&	0.8543	&	1.0250e-3	&	0.8554	\\
2.50e-4	&	4.1916e-4	&	0.9119	&	4.5157e-4	&	0.9102	&	6.2691e-4	&	0.9101&	6.4844e-4	&	0.9088	&	7.8897e-4	&	0.9097	\\
2.00e-4	&	3.3924e-4	&	0.9840	&	3.6559e-4	&	0.9466	&	5.0755e-4	&	0.9465&	5.2510e-4	&	0.9455	&	6.3880e-4	&	0.9462	\\
\end{tabular}}
\caption{\label{tab:ues1dConvergenceL2} EOC for Scheme UES1D with respect to the discrete $L^2$ norm.}
\end{center}
\end{table}

\begin{table}
\begin{center}
\resizebox{\textwidth}{!}{%
\begin{tabular}{l l l l l l l l l l l}
\hline
$\dt$	 	&	$e_1(Q_{11})$			&	$r_1(Q_{11})$			&	$e_1(Q_{12})$ 	&	$r_1(Q_{12})$	&	$e_1(Q_{13})$		&	$r_1(Q_{13})$ &	$e_1(Q_{22})$		&	$r_1(Q_{22})$	&	$e_1(Q_{23})$ 	&	$r_1(Q_{23})$\\
\hline
1.00e-3	&	4.5355e-2	&	-		&	4.8490e-2	&	-		&	6.8667e-2	&	-	&	6.4894e-2	&	-		&	8.2724e-2	&	-		\\
5.00e-4	&	2.6190e-2	&	0.7922	&	2.7391e-2	&	0.8240	&	3.8799e-2	&	0.8236&	3.6829e-2	&	0.8172	&	4.6932e-2	&	0.8177	\\
3.33e-4	&	1.8330e-2	&	0.8801	&	1.8997e-2	&	0.9025	&	2.6908e-2	&	0.9205&	2.5591e-2	&	0.8978	&	3.2604e-2	&	0.8984	\\
2.50e-4	&	1.4035e-2	&	0.9281	&	1.4475e-2	&	0.9450	&	2.0502e-2	&	0.9451&	1.9520e-2	&	0.9414	&	2.4865e-2	&	0.9419	\\
2.00e-4	&	1.1325e-2	&	0.9614	&	1.1646e-2	&	0.9747	&	1.6495e-2	&	0.9747&	1.5715e-2	&	0.9718	&	2.0016e-2	&	0.9721	\\
\end{tabular}}
\caption{\label{tab:ues1dConvergenceH1} EOC for Scheme UES1D with respect to the discrete $H^1$ norm.}
\end{center}
\end{table}

\begin{table}
\begin{center}
\resizebox{\textwidth}{!}{%
\begin{tabular}{l l l l l l l l l l l}
\hline
$\dt$	 	&	$e_2(Q_{11})$			&	$r_2(Q_{11})$			&	$e_2(Q_{12})$ 	&	$r_2(Q_{12})$	&	$e_2(Q_{13})$		&	$r_2(Q_{13})$ &	$e_2(Q_{22})$		&	$r_2(Q_{22})$	&	$e_2(Q_{23})$ 	&	$r_2(Q_{23})$ \\
\hline
1.00e-3	&	7.9174e-7	&	-		&	1.2530e-6	&	-		&	1.7774e-6	&	-	&	1.5891e-6	&	-		&	2.0323e-6	&	-		\\
5.00e-4	&	1.9778e-7	&	2.0011	&	3.1300e-7	&	2.0011	&	4.4400e-7	&	2.0011&	3.9696e-7	&	2.0011	&	5.0768e-7	&	2.0011	\\
3.33e-4	&	8.7859e-8	&	2.0015	&	1.3903e-7	&	2.0015	&	1.9722e-7	&	2.0015&	1.7632e-7	&	2.0015	&	2.2550e-7	&	2.0015	\\
2.50e-4	&	4.9379e-8	&	2.0025	&	7.8147e-8	&	2.0025	&	1.1085e-7	&	2.0025&	9.9109e-8	&	2.0025	&	1.2675e-7	&	2.0025	\\
2.00e-4	&	3.1574e-8	&	2.0041	&	4.9968e-8	&	2.0041	&	7.0881e-8	&	2.0041&	6.3372e-8	&	2.0041	&	8.1046e-8	&	2.0041	
\end{tabular}}
\caption{\label{tab:od2cConvergenceL2} EOC for Scheme OD2C with respect to the discrete $L^2$ norm.}
\end{center}
\end{table}

\begin{table}
\begin{center}
\resizebox{\textwidth}{!}{%
\begin{tabular}{l l l l l l l l l l l}
\hline
$\dt$	 	&	$e_1(Q_{11})$			&	$r_1(Q_{11})$			&	$e_1(Q_{12})$ 	&	$r_1(Q_{12})$	&	$e_1(Q_{13})$		&	$r_1(Q_{13})$&	$e_1(Q_{22})$		&	$r_1(Q_{22})$	&	$e_1(Q_{23})$ 	&	$r_1(Q_{23})$ \\
\hline
1.00-3	&	1.2022e-4	&	-		&	1.9040e-4	&	-		&	2.6974e-4	&	-	&	2.4119e-4	&	-		&	3.0823e-4	&	-	\\
5.00e-4	&	3.0072e-5	&	1.9992	&	4.7602e-5	&	1.9992	&	6.7472e-5	&	1.9992&	6.0332e-5	&	1.9992	&	7.7010e-5	&	1.9992	\\
3.33e-4	&	1.3361e-5	&	2.0008	&	2.1149e-5	&	2.0008	&	2.9977e-5	&	2.0008&	2.6804e-5	&	2.0008	&	3.4255e-5	&	2.0009	\\
2.50e-4	&	7.5107e-6	&	2.0022	&	1.1888e-5	&	2.0022	&	1.6851e-5	&	2.0022& 	1.5068e-5	&	2.0022	&	1.9256e-5	&	2.0022	\\
2.00e-4	&	4.8026e-6	&	2.0039	&	7.6022e-6	&	2.0039	&	1.0776e-5	&	2.0039&	9.6349e-6	&	2.0039	&	1.2313e-5	&	2.0039	\\
\end{tabular}}
\caption{\label{tab:od2cConvergenceH1} EOC for Scheme OD2C with respect to the discrete $H^1$ norm.}
\end{center}
\end{table}

\begin{table}
\begin{center}
\resizebox{\textwidth}{!}{%
\begin{tabular}{l l l l l l l l l l l}
\hline
$\dt$	 	&	$e_2(Q_{11})$			&	$r_2(Q_{11})$			&	$e_2(Q_{12})$ 	&	$r_2(Q_{12})$	&	$e_2(Q_{13})$		&	$r_2(Q_{13})$ &	$e_2(Q_{22})$		&	$r_2(Q_{22})$	&	$e_2(Q_{23})$ 	&	$r_2(Q_{23})$\\
\hline
1.00e-3	&	5.1966e-6	&	-		&	3.8651e-6	&	-		&	3.1535e-6	&	-	&	4.1587e-6	&	-		&	3.7933e-6	&	-		\\
5.00e-4	&	2.5468e-6	&	1.0289	&	1.8352e-6	&	1.0746	&	1.3610e-6	&	1.2122&	1.9408e-6	&	1.0995	&	1.6627e-6	&	1.1899	\\
3.33e-4	&	1.6771e-6	&	1.0303	&	1.2010e-6	&	1.0457	&	8.7098e-7	&	1.1009&	1.2654e-6	&	1.0549	&	1.0684e-6	&	1.0908	\\
2.50e-4	&	1.2439e-6	&	1.0387	&	8.8878e-7	&	1.0464	&	6.3917e-7	&	1.0757&	9.3516e-7	&	1.0512	&	7.8530e-7	&	1.0701	\\
2.00e-4	&	9.8437e-7	&	1.0487	&	7.0261e-7	&	1.0533	&	5.0325e-7	&	1.0714&	7.3880e-7	&	1.0563	&	6.1880e-7	&	1.0679	\\
\end{tabular} }
\caption{\label{tab:od1dConvergenceL2} EOC for Scheme OD1D with respect to the discrete $L^2$ norm.}
\end{center}
\end{table}
\begin{table}
\begin{center}
\resizebox{\textwidth}{!}{%
\begin{tabular}{l l l l l l l l l l l } 
\hline
$\dt$	 	&	$e_1(Q_{11})$			&	$r_1(Q_{11})$			&	$e_1(Q_{12})$ 	&	$r_1(Q_{12})$	&	$e_1(Q_{13})$		&	$r_1(Q_{13})$ 	&	$e_1(Q_{22})$		&	$r_1(Q_{22})$	&	$e_1(Q_{23})$ 	&	$r_1(Q_{23})$ \\
\hline
1.00e-3	&	1.3190e-4	&	-		&	1.9599e-4	&	-		&	2.7262e-4	&	-	&	2.4470e-4	&	-		&	3.1185e-4	&	-		\\
5.00e-4	&	4.0126e-5	&	1.7168	&	5.2966e-5	&	1.8876	&	7.0106e-5	&	1.9593&	6.3621e-5	&	1.9434	&	8.0445e-5	&	1.9548	\\
3.33e-4	&	2.1984e-5	&	1.4840	&	2.6120e-5	&	1.7436	&	3.2452e-5	&	1.8997&	2.9908e-5	&	1.8616	&	3.7411e-5	&	1.8882	\\
2.50e-4	&	1.4954e-5	&	1.3395	&	1.6456e-5	&	1.6060	&	1.9187e-5	&	1.8268&	1.7988e-5	&	1.7674	&	2.2237e-5	&	1.8082	\\
2.00e-4	&	1.1296e-5	&	1.2574	&	1.1787e-5	&	1.4955	&	1.2981e-5	&	1.7512&	1.2375e-5	&	1.6760	&	1.5126e-5	&	1.7269	\\
\end{tabular} }
\caption{\label{tab:od1dConvergenceH1} EOC for Scheme OD1D with respect to the discrete $H^1$ norm.}
\end{center}
\end{table}

\begin{figure}[h]
\includegraphics[height=0.23\textwidth]{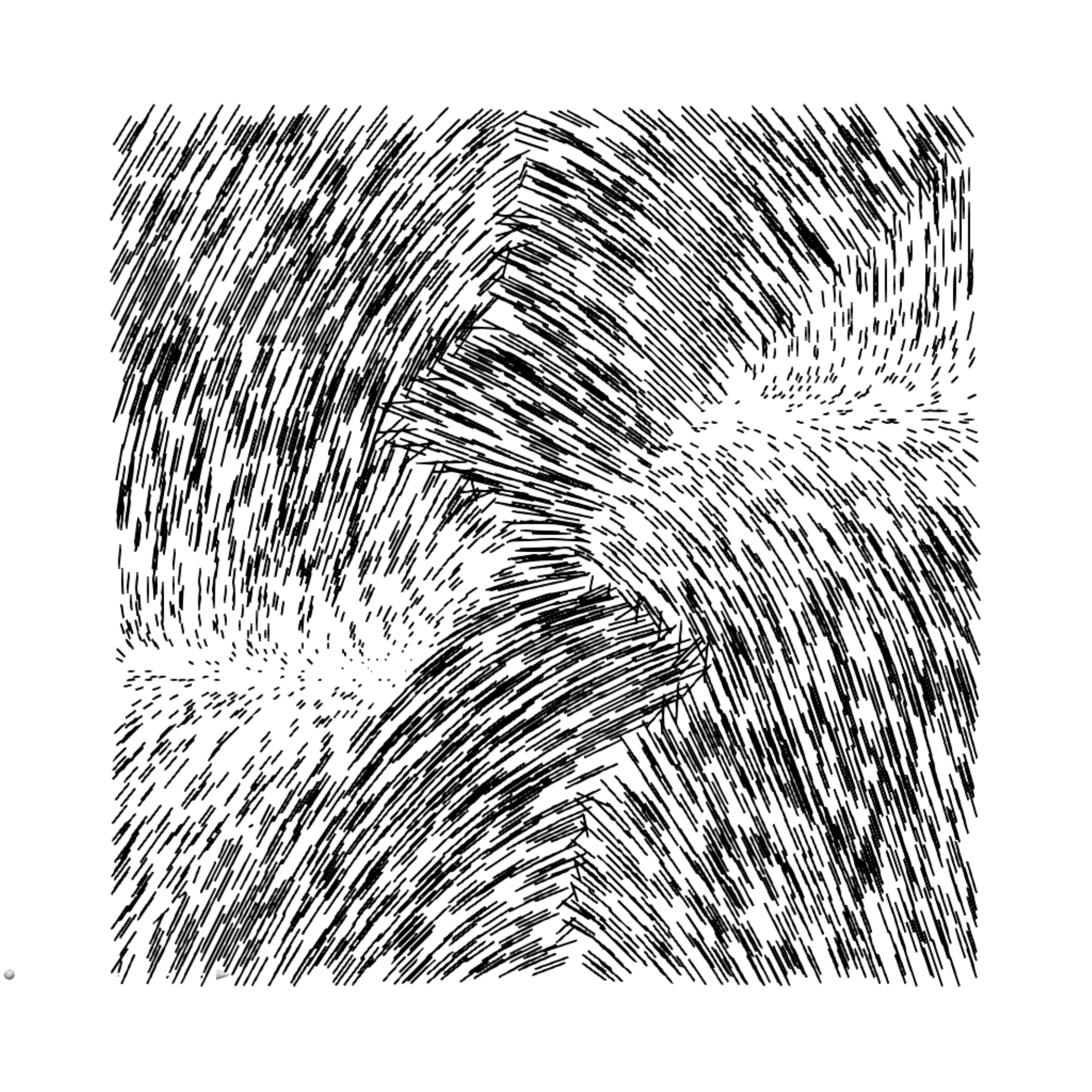}
\includegraphics[height=0.23\textwidth]{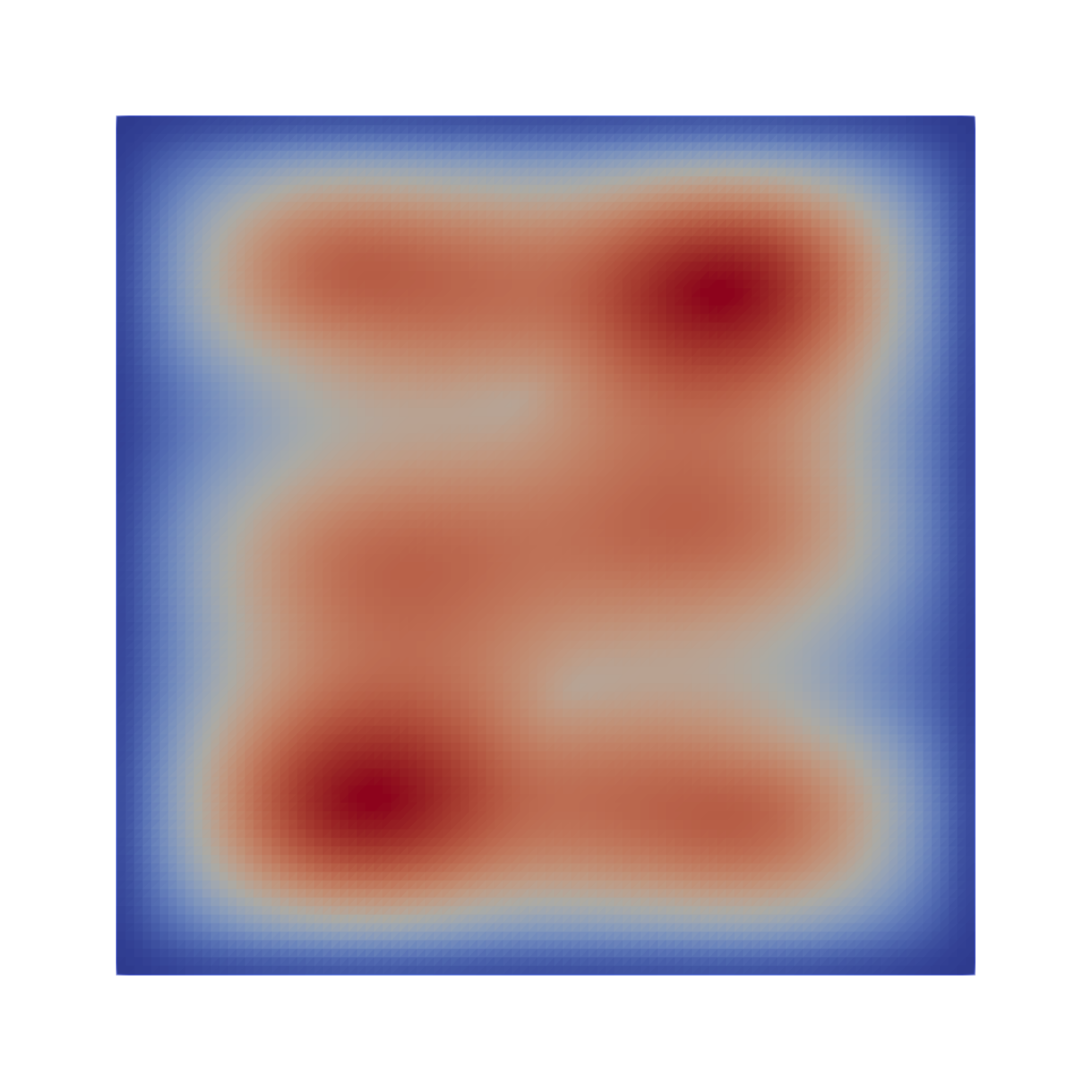}
\includegraphics[height = 0.23\textwidth]{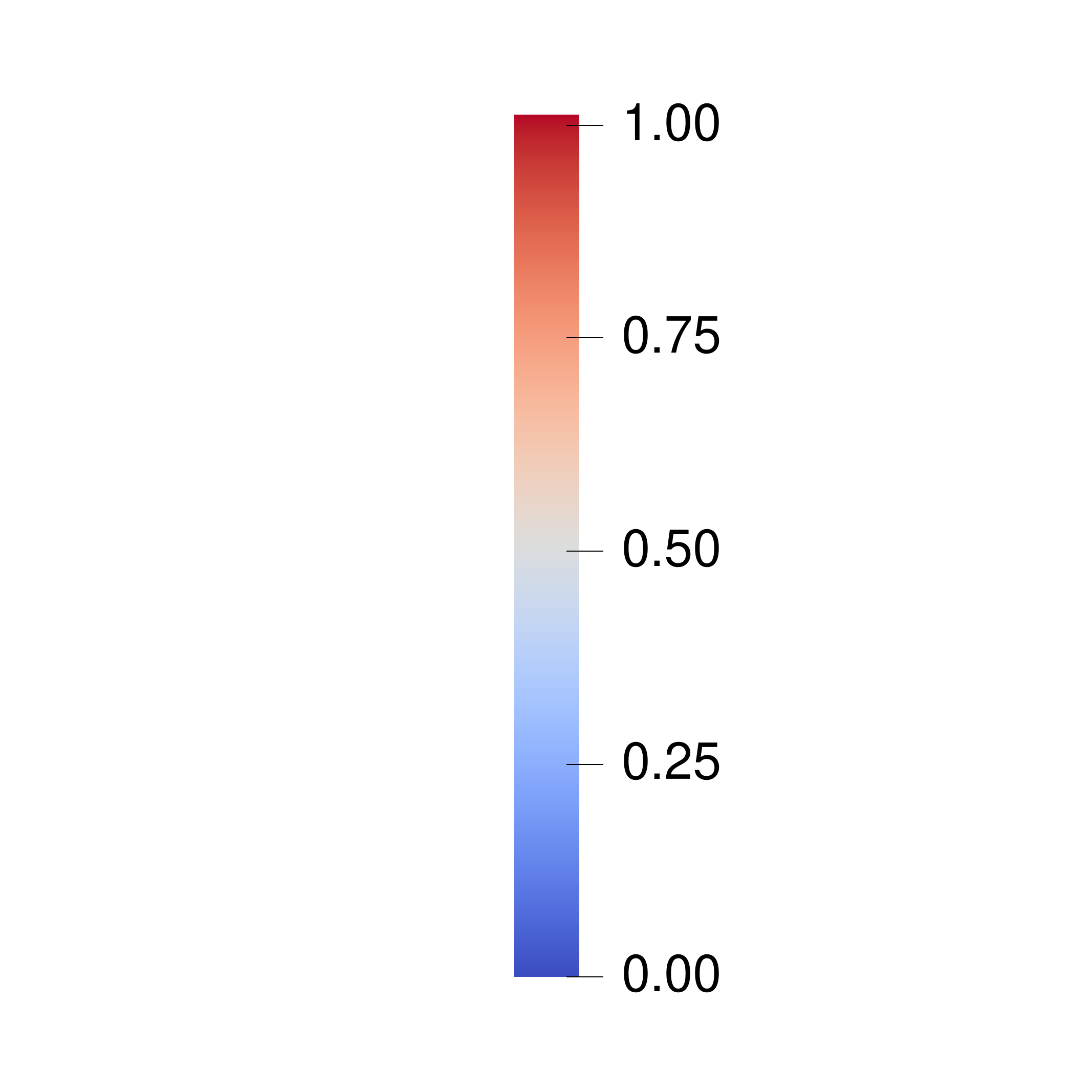}
\caption{\label{fig:convRate} Convergence rate reference solution $\Q^{\mbox{exact}}$ obtained using scheme OD1D. Colors indicate alignment with the dominant eigenvector. Dominant eigenvector is shown with black lines. The color bar for the remaining figures is given on the right.}
\end{figure}

\subsection{Numerical Dissipation}
In this example we study the evolution of the energy and numerical dissipation introduced by each scheme. We consider the domain $\Om = [0,4]^2$ discretized into $50\times 50$ triangular elements, and final time $T=1.0$. For these simulations we use $\eps = 10^{-3}$. The initial condition is 
\beq\label{eq:simulation2}
\ba{rcl} \dis
\mathbf{d}_0 
&=& \dis
\left( \cos\left( 4\mbox{atan}2(x - 2, y - 2 ) \right), \sin\left( 4\mbox{atan}2(x - 2, y - 2 ) \right), 0  \right)^T \,, \\
\hueco \dis
\Q_0
&=&\dis
\frac{\mathbf{d}_0 \mathbf{d}_0^T}{| \mathbf{d}_0 |^2} - \frac13\I \,.
\ea
\eeq
Here, $\mbox{atan2}(x,y)$ is the two argument tangent function. We will impose Neumann boundary conditions $\partial_\mathbf{n} \Q = \bm{0}$. \\
For each numerical scheme, a sequence of time steps $\dt = \left\{\mbox{4e-4,2e-4,1e-4}\right\}$ is considered and the discrete energy and numerical dissipation is computed at each time step. The dynamics computed with the finest time step using scheme OD1D can be seen in \Cref{fig:defectsNeumann}. \\
\Crefrange{fig:dynamicsUES1D}{fig:dynamicsOD1D} show the time evolution of the energy (top row) and the numerical dissipation (bottom row) for each numerical scheme. We can observe the energy is decreasing over the whole time interval, and in \Cref{fig:dynamicsOD2C} and \Cref{fig:dynamicsOD1D} a sharp decrease happens in the energy following the annihilation of all defects around time $t=0.35$ (see \Cref{fig:defectsNeumann}). The numerical dissipation (bottom row) is always positive for the energy stable scheme UES1D, however, schemes OD2C and OD1D introduce much less dissipation. In fact, the dissipation introduced by scheme UES1D is so large, that the dynamics have been slowed down. We note that the time axis in \Cref{fig:dynamicsUES1D} is 20 times longer than in the case of the other two schemes. It is only the case of the finest time step $\Delta t = 1e-4$ that the solution gets close to annihilating the defects which corresponds with the increased numerical dissipation after time $t=15$.\\
In \Cref{fig:dynamicsUES1D_S} we compare the dynamics obtained using scheme UES1D with different values of the stabilization constants $S_1$ and $S_3$. Specifically, we try the values given in \Cref{rem:stabilizationValues} where $S_1 = 848$, and $S_3=208$, and compare to $S_1 = 100,$ $S_3 = 20$, and $S_1 = 10$, $S_3 = 2$. In the figure we can see that larger values of the stabilization parameters will slow the dynamics, and it is only in the third case that we observe the same sharp decrease in the energy corresponding to defect annihilation as was present in \Cref{fig:dynamicsOD2C,fig:dynamicsOD1D}. The numerical dissipation introduced at the same time can bee seen to rise to over 600 which is much larger that what was introduce by scheme OD2C ($\leq 30$) and scheme OD1D ( $\leq 150$). In \Cref{fig:dynamicsCompare} we show the results obtained with each scheme on the same plot. The values shown for scheme UES1D in \Cref{fig:dynamicsCompare} correspond to the case of the smallest stabilization constants in the previous figure. \\
In \Cref{tab:compCost} we compare the computational time needed for each scheme to complete $10,000$ iterations using the same standard desktop computer. In this case, the decoupling of the unknowns in scheme OD1D saves nearly two thirds of the computational time from scheme OD2C. Furthermore, the truncation procedure for scheme UES1D is computationally inefficient resulting in the longest time to compute 10,000 iterations. It also should be noted that the slowed dynamics caused by the high numerical dissipation of scheme UES1D necessitated running the simulation over a larger time interval which increases the required number of iterations, hence the computational cost of the energy stable scheme is even greater.
\begin{table}[h]
\begin{center}
\begin{tabular}{r|c|c|c}
Scheme			&	UES1D	&	OD2C	&	OD1D	\\
\hline
Run Time (seconds) 	&	209,959	&	34,846	&	13,490	
\end{tabular}
\caption{\label{tab:compCost} The time for each scheme to compute 10,000 iterations for the experiment given by \eqref{eq:simulation2}}
\end{center}
\end{table}

\begin{figure}[h]
\begin{center}
\includegraphics[width = 0.23\textwidth]{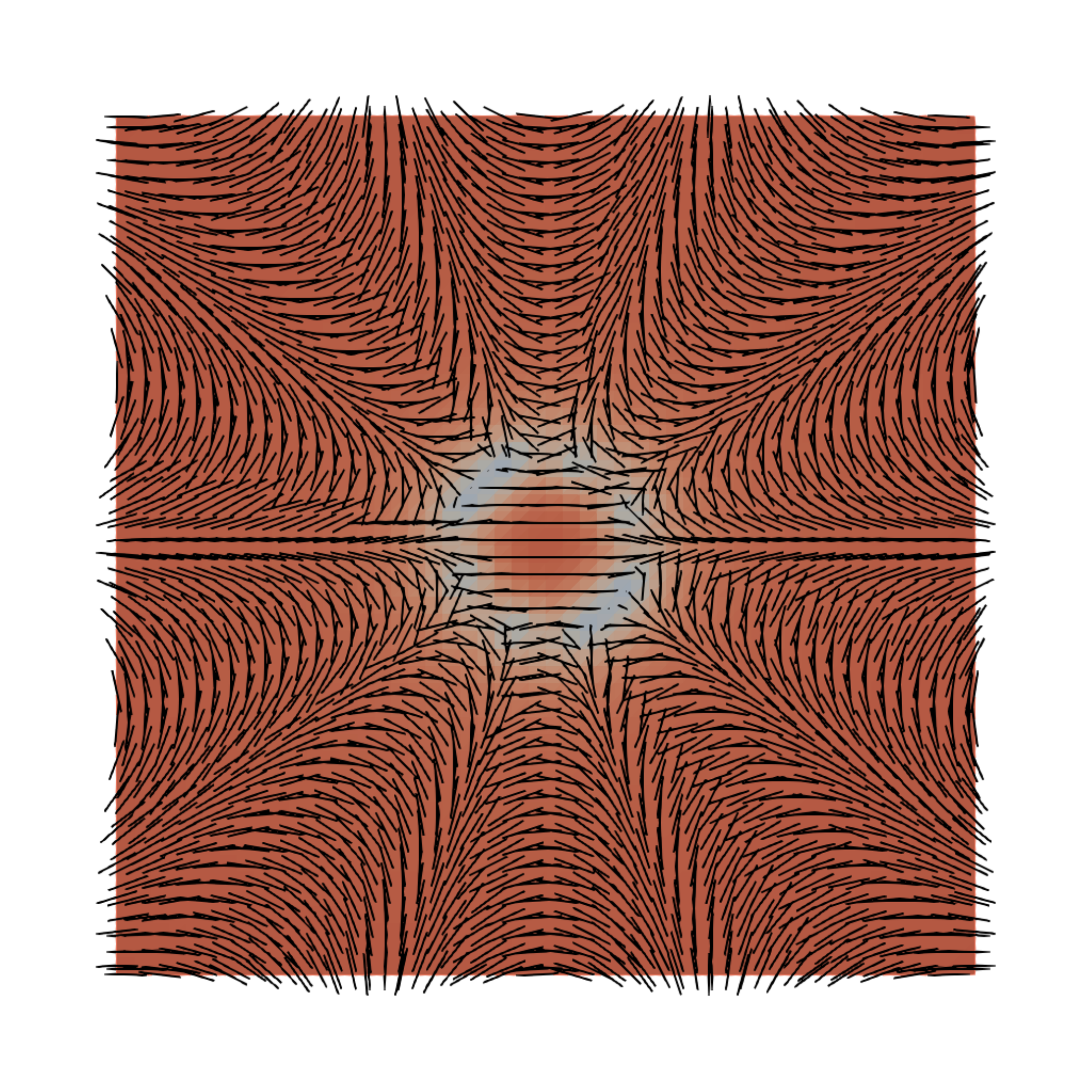}
\includegraphics[width = 0.23\textwidth]{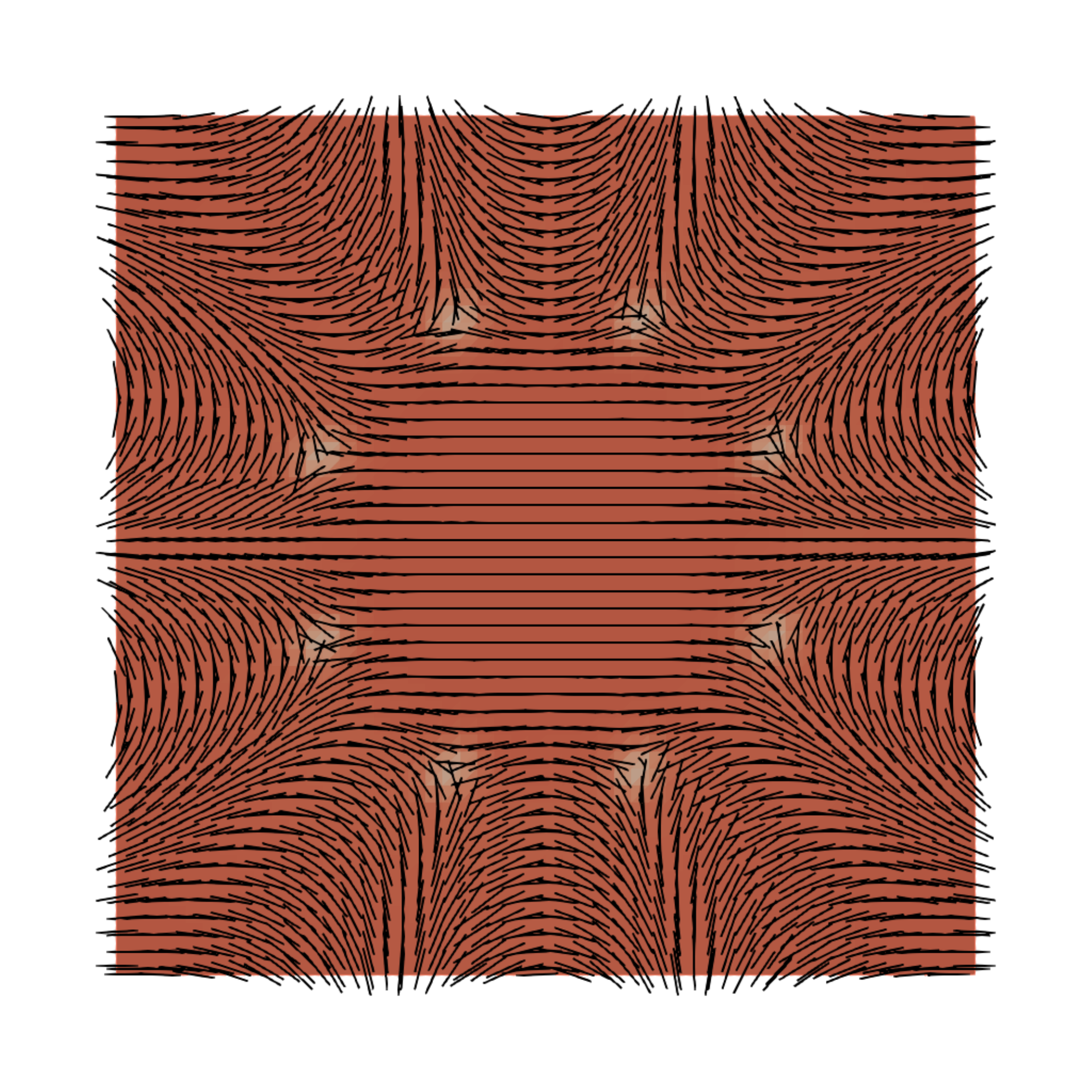}
\includegraphics[width = 0.23\textwidth]{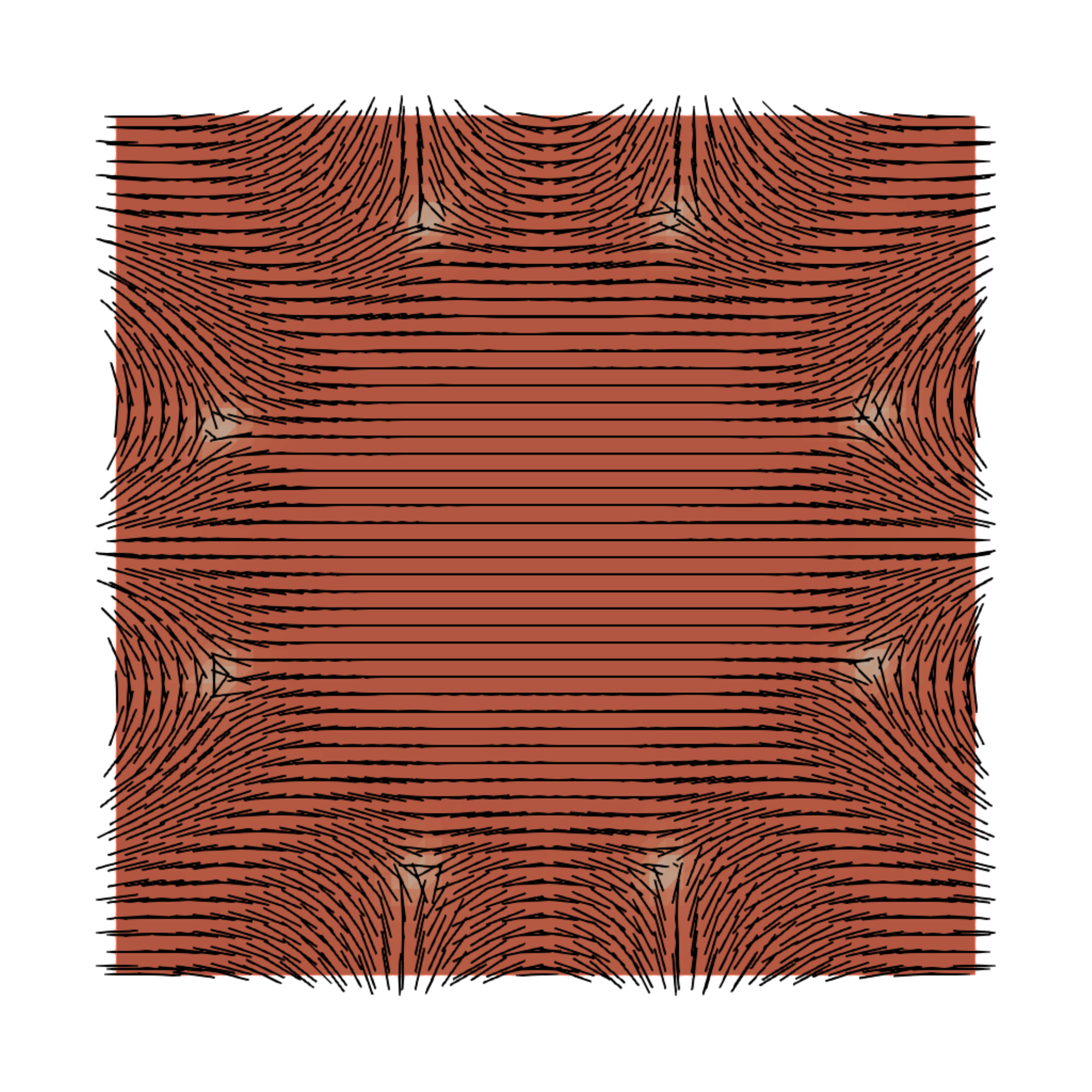} \\
\includegraphics[width = 0.23\textwidth]{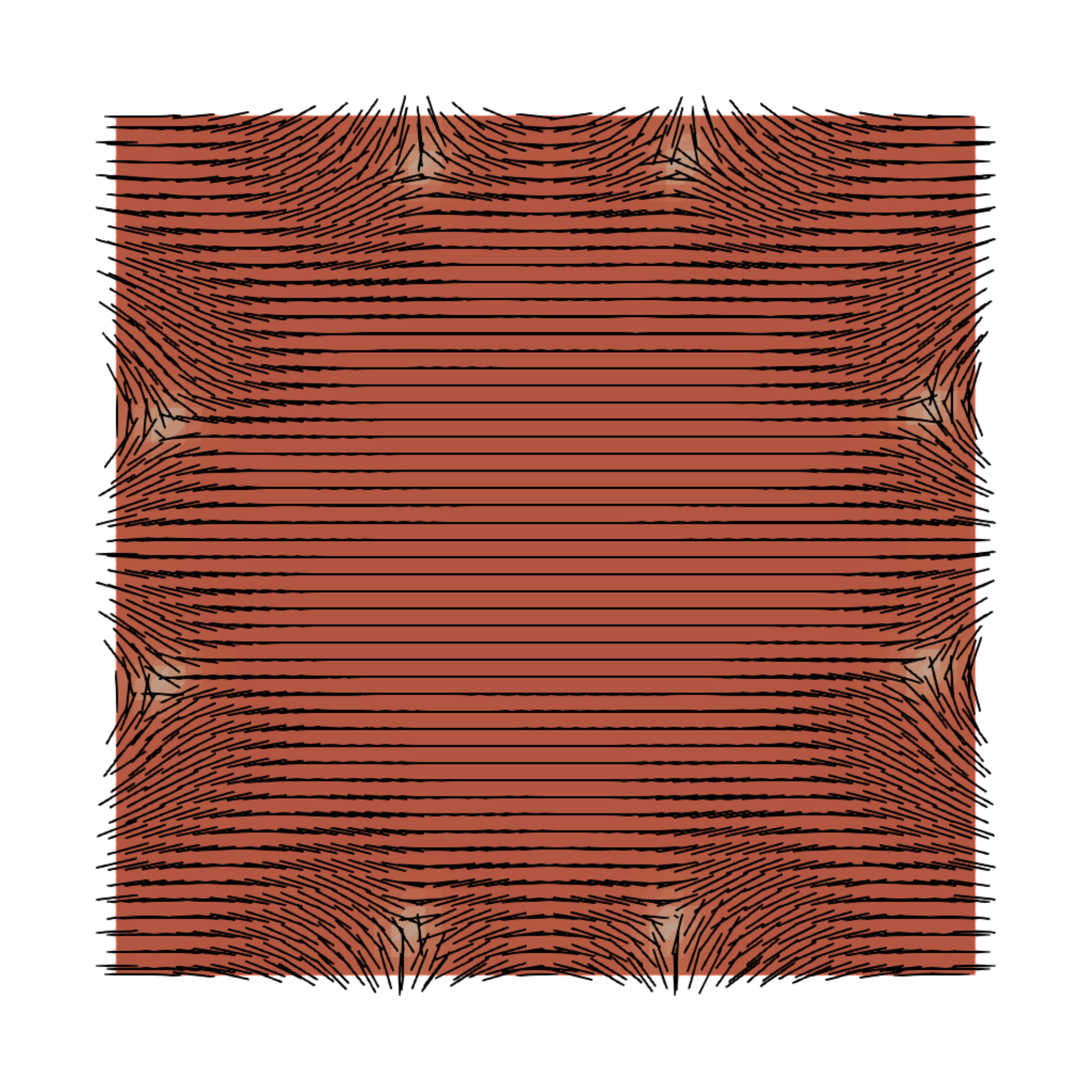}
\includegraphics[width = 0.23\textwidth]{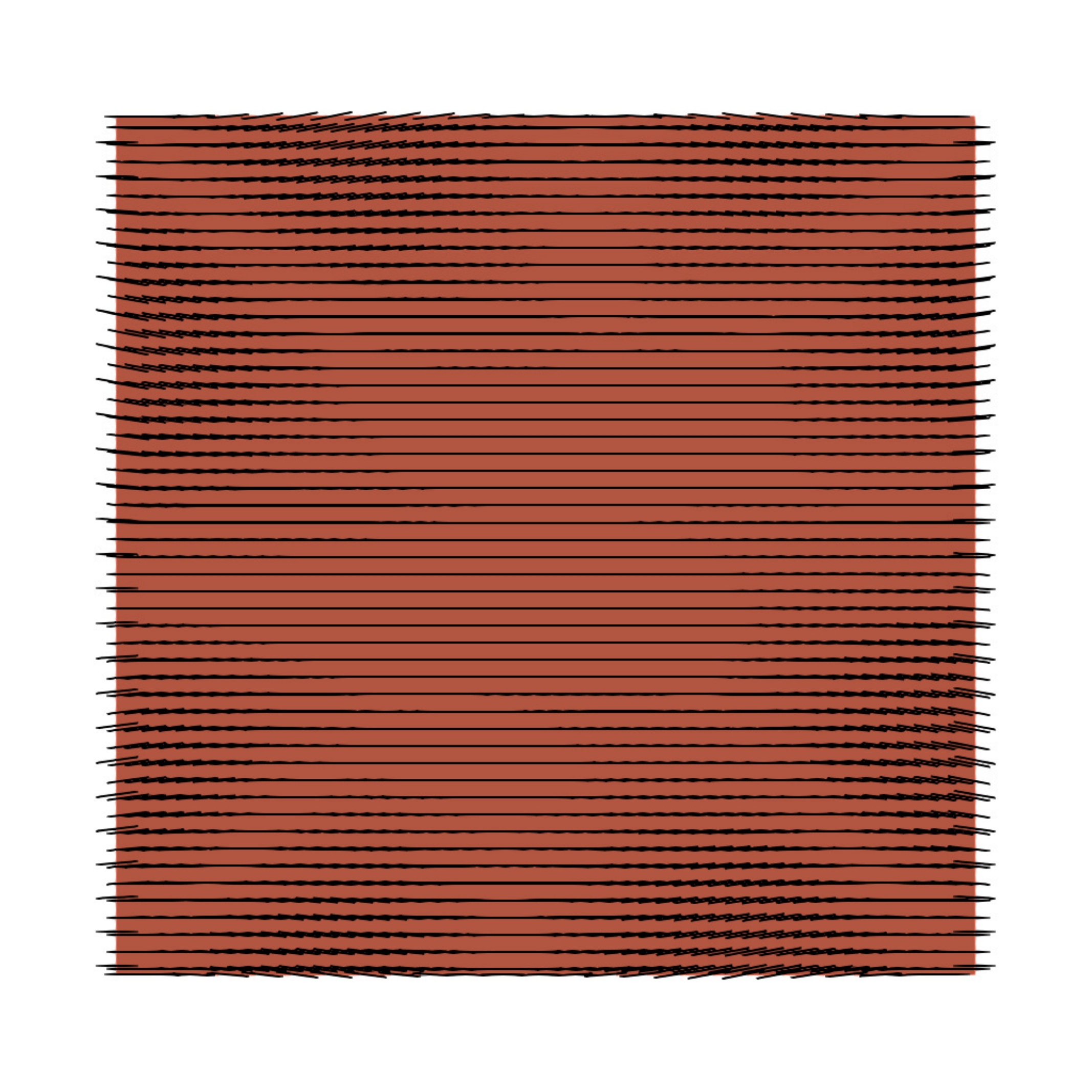}
\includegraphics[width = 0.23\textwidth]{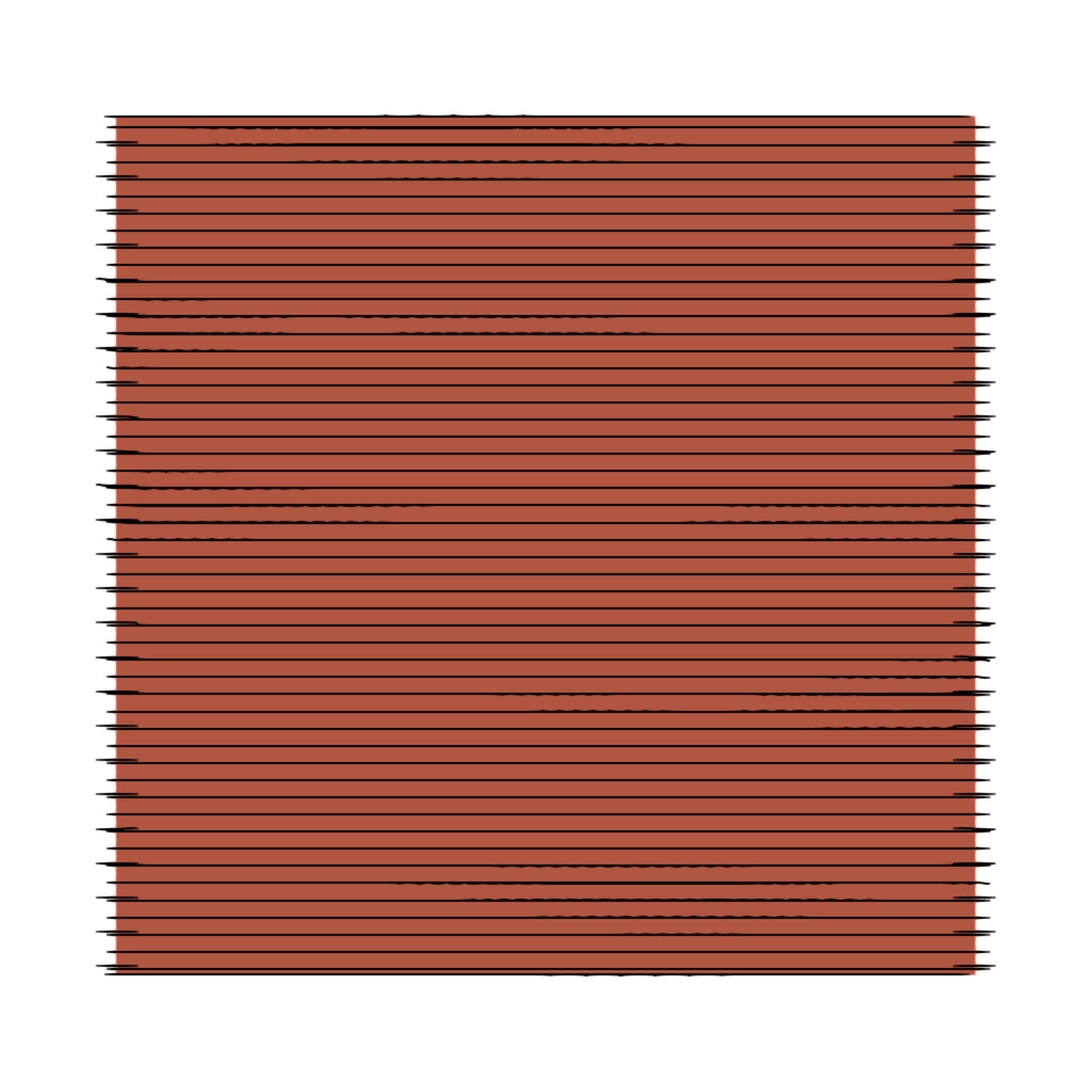}
\caption{\label{fig:defectsNeumann} Defect dynamics using scheme OD1D in 2D with Neumann boundary conditions at times $t=0.01,0.1,0.25,0.35,0.5,1.0$. Color represents the difference of the two largest eigenvalues of $\Q$ and indicates the alignment of the nematic with the dominant eigenvector shown as black lines. }
\end{center}
\end{figure}

\begin{figure}[h]
\begin{center}
\includegraphics[height = 4.75cm]{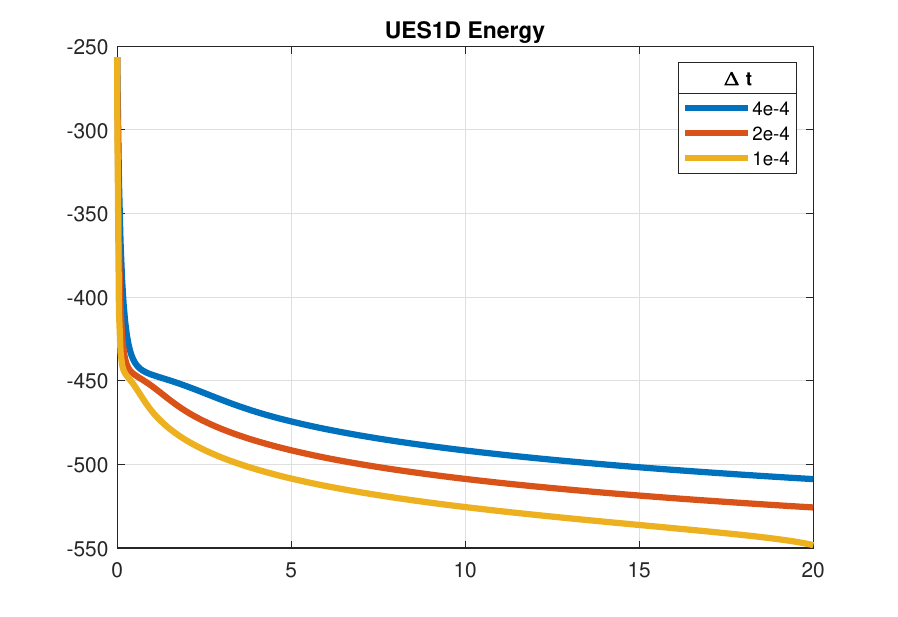}
\includegraphics[height = 4.75cm]{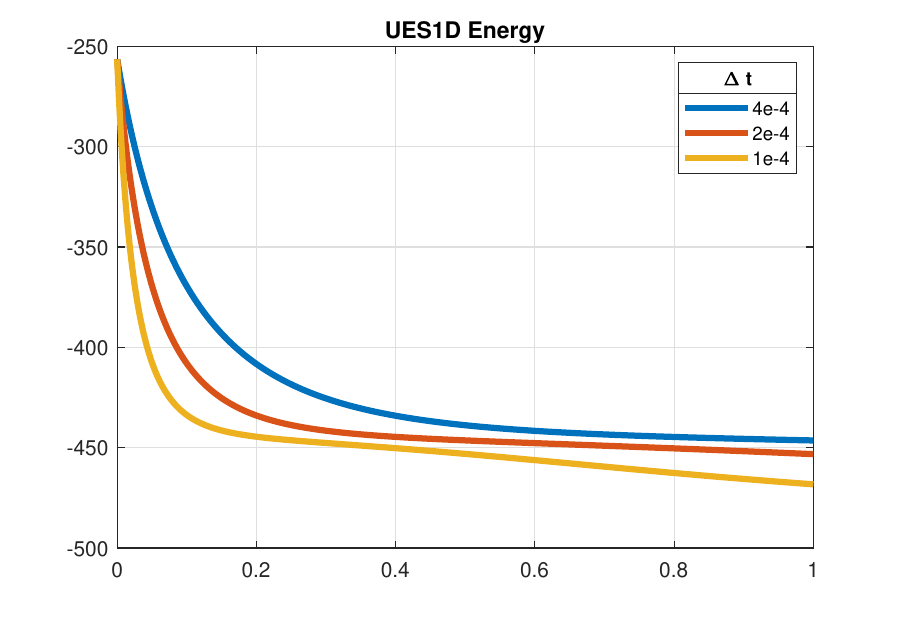} 
\includegraphics[height = 4.75cm]{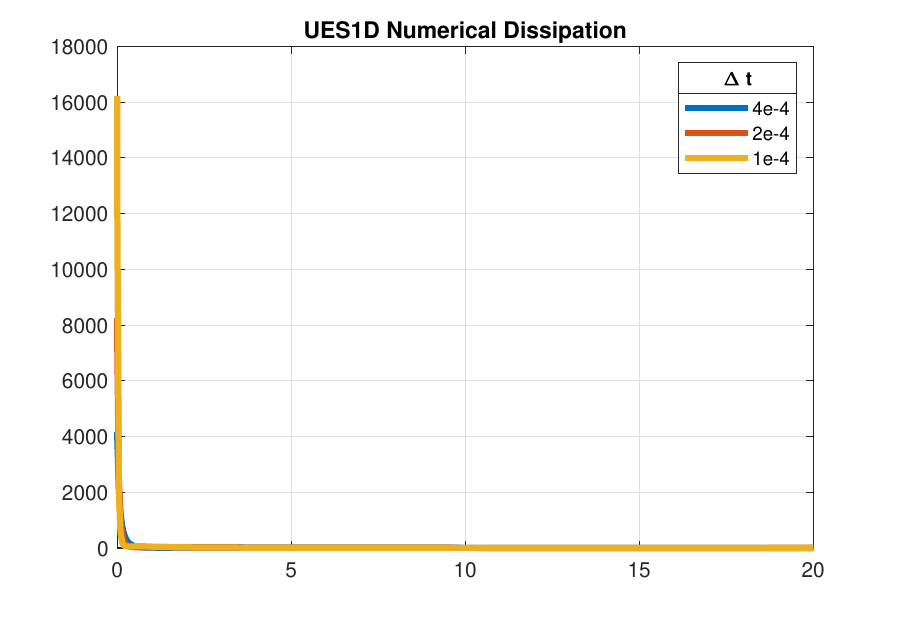}
\includegraphics[height = 4.75cm]{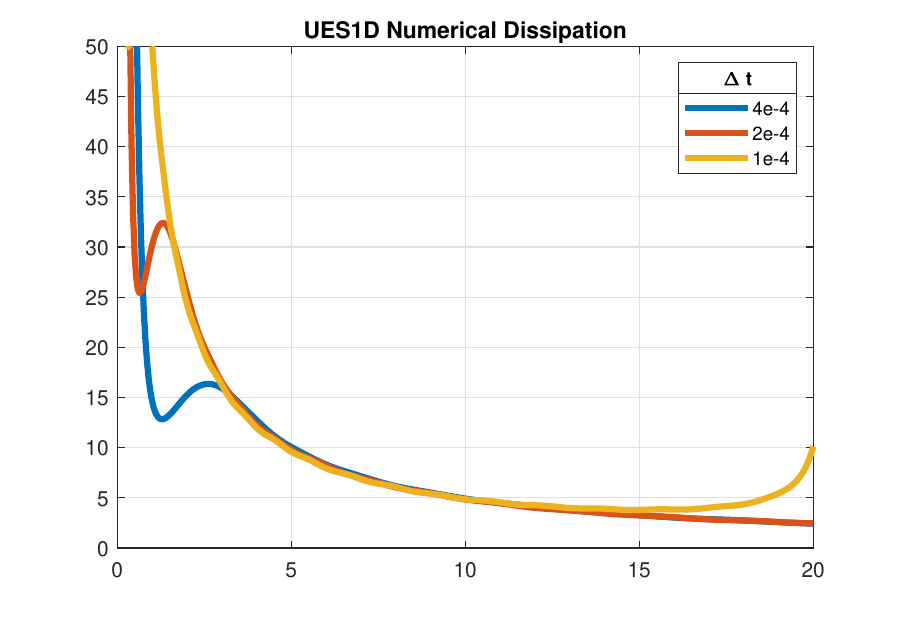}
\caption{\label{fig:dynamicsUES1D} \textit{Top row}: the energy of the system computed with scheme UES1D for different time steps. \textit{Left} shows the energy over the whole time interval, and \textit{right} a zoomed in view of the energy on the time interval $[0.3, 0.5]$. \textit{Bottom row}: the numerical dissipation for different time steps. \textit{Left} shows the numerical dissipation over the whole time interval, and \textit{right} shows a zoomed in view over the time interval $[0.3, 0.5]$.}
\end{center}
\end{figure}

\begin{figure}[h]
\begin{center}
\includegraphics[height = 4.75cm]{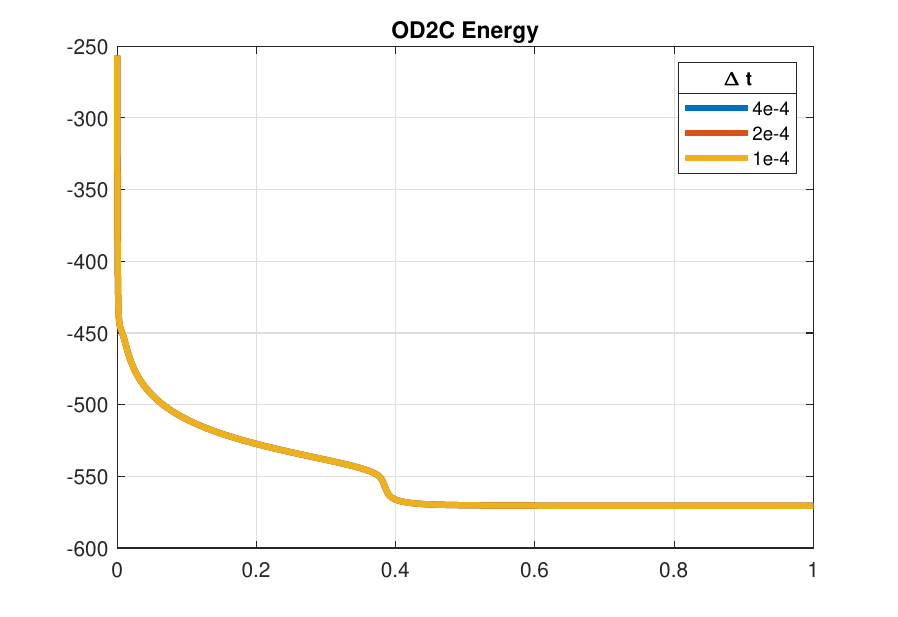}
\includegraphics[height = 4.75cm]{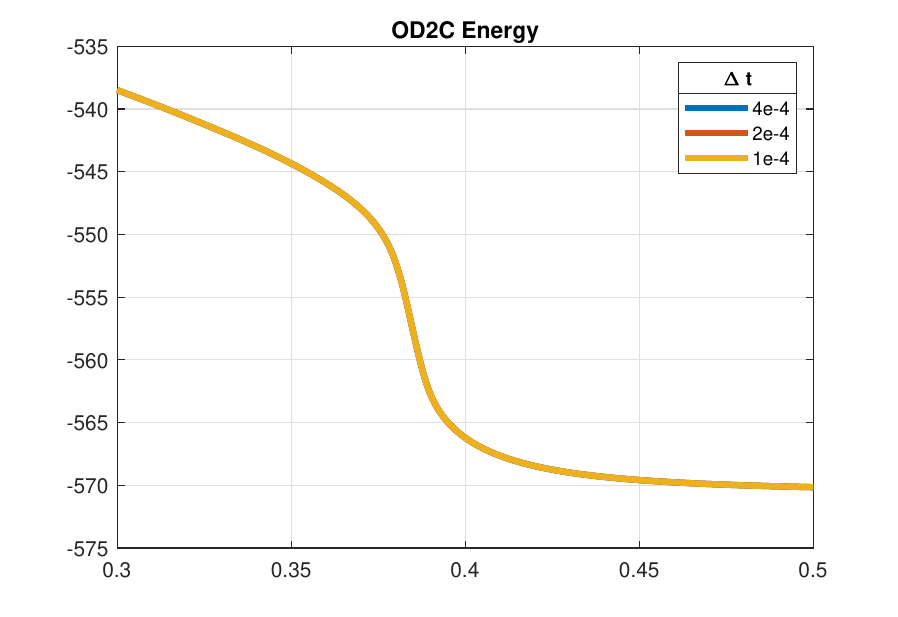}
\includegraphics[height = 4.75cm]{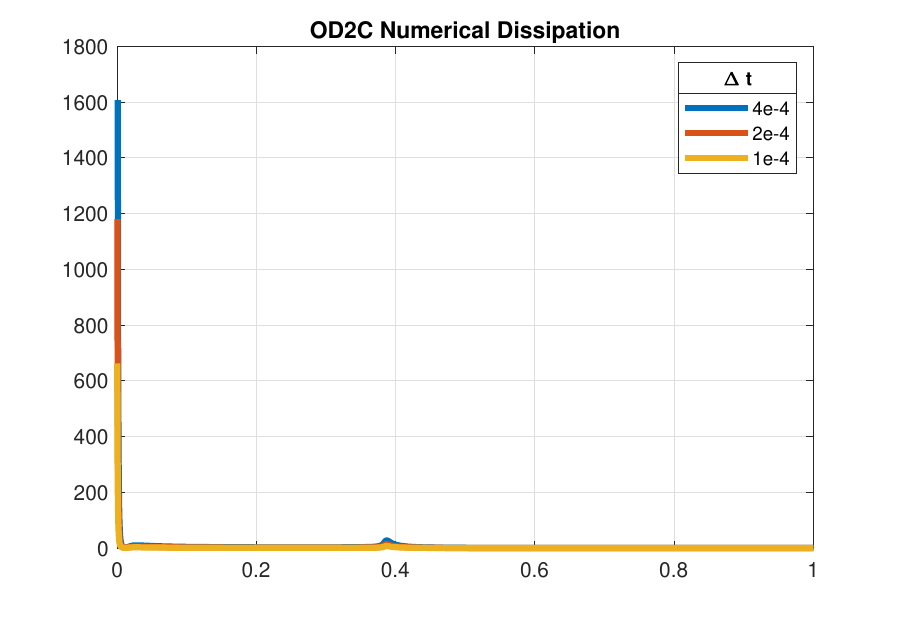}
\includegraphics[height = 4.75cm]{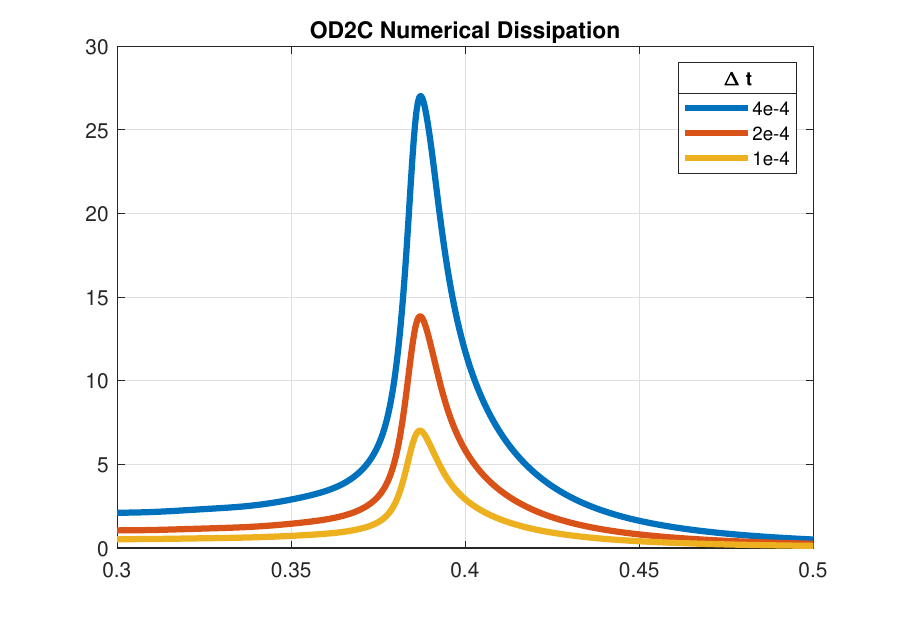}
\caption{\label{fig:dynamicsOD2C} \textit{Top row}: the energy of the system computed with scheme OD2C for different time steps. \textit{Left} shows the energy over the whole time interval, and \textit{right} a zoomed in view of the energy on the time interval $[0.3, 0.5]$. \textit{Bottom row}: the numerical dissipation for different time steps. \textit{Left} shows the numerical dissipation over the whole time interval, and \textit{right} shows a zoomed in view over the time interval $[0.3, 0.5]$.}
\end{center}
\end{figure}

\begin{figure}[h]
\begin{center}
\includegraphics[height = 4.75cm]{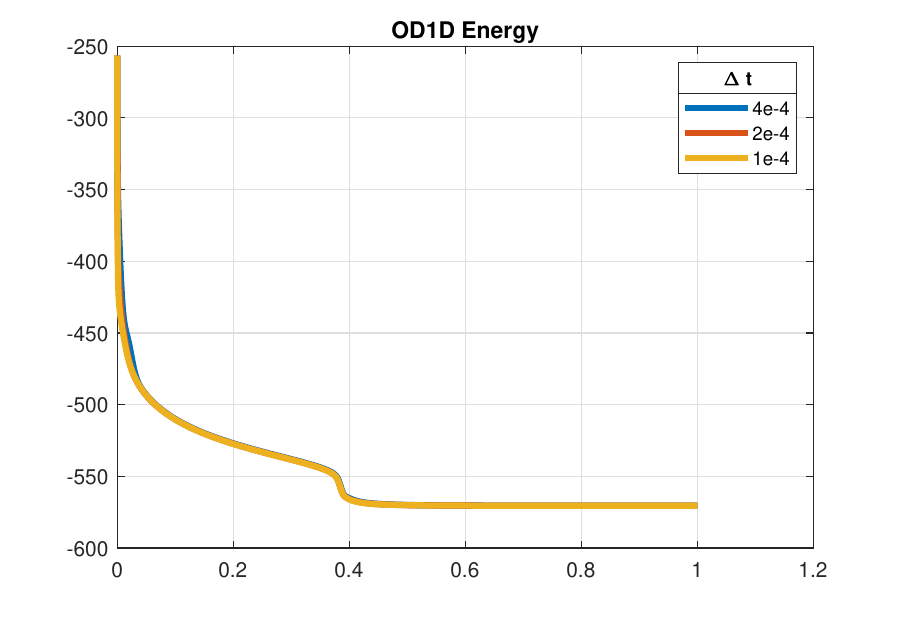}
\includegraphics[height = 4.75cm]{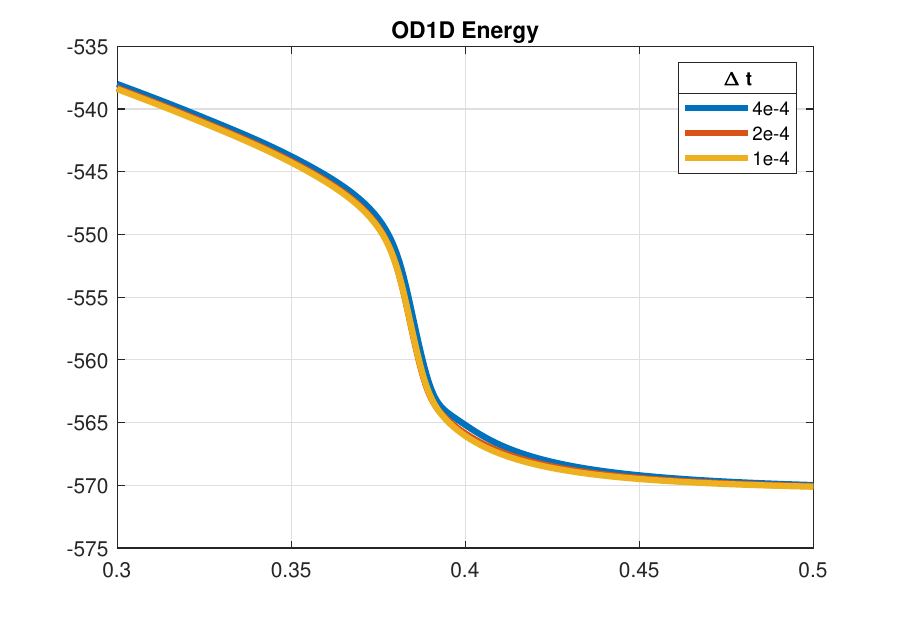}
\includegraphics[height = 4.75cm]{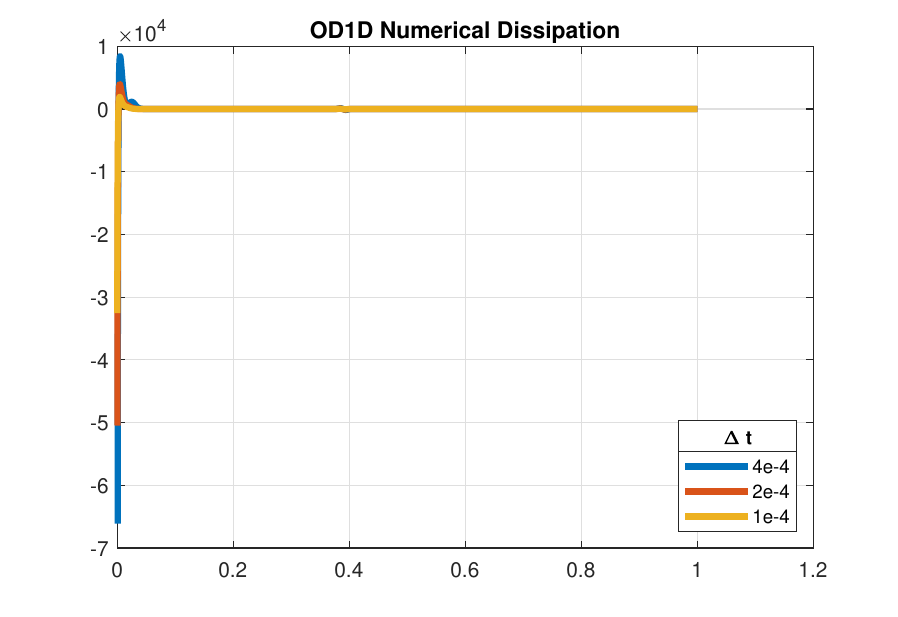}
\includegraphics[height = 4.75cm]{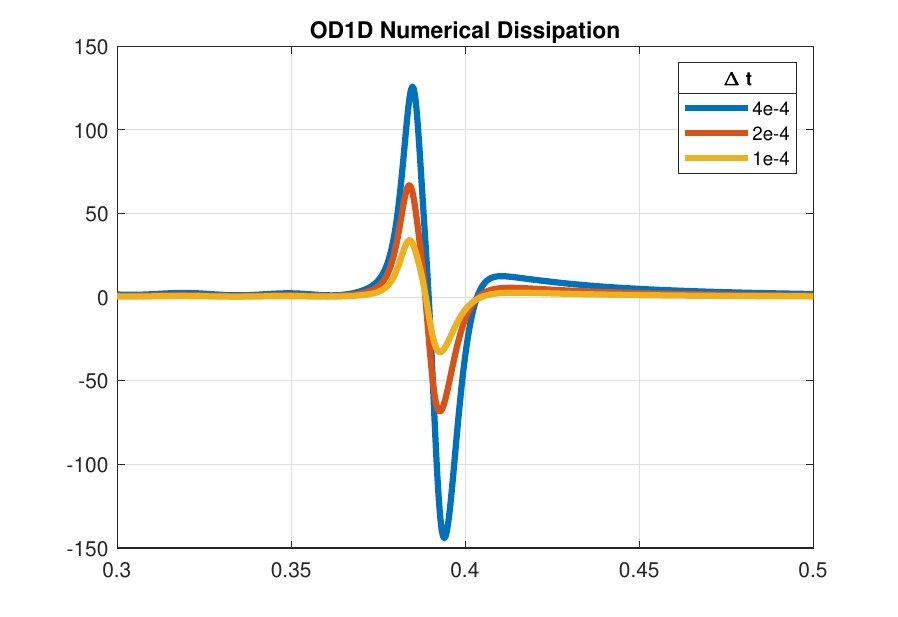}
\caption{\label{fig:dynamicsOD1D} \textit{Top row}: the energy of the system computed with scheme OD1D for different time steps. \textit{Left} shows the energy over the whole time interval, and \textit{right} a zoomed in view of the energy on the time interval $[0.3, 0.5]$. \textit{Bottom row}: the numerical dissipation for different time steps. \textit{Left} shows the numerical dissipation over the whole time interval, and \textit{right} shows a zoomed in view over the time interval $[0.3, 0.5]$.}
\end{center}
\end{figure}

\begin{figure}[h]
\begin{center}
\includegraphics[width = 0.32\textwidth]{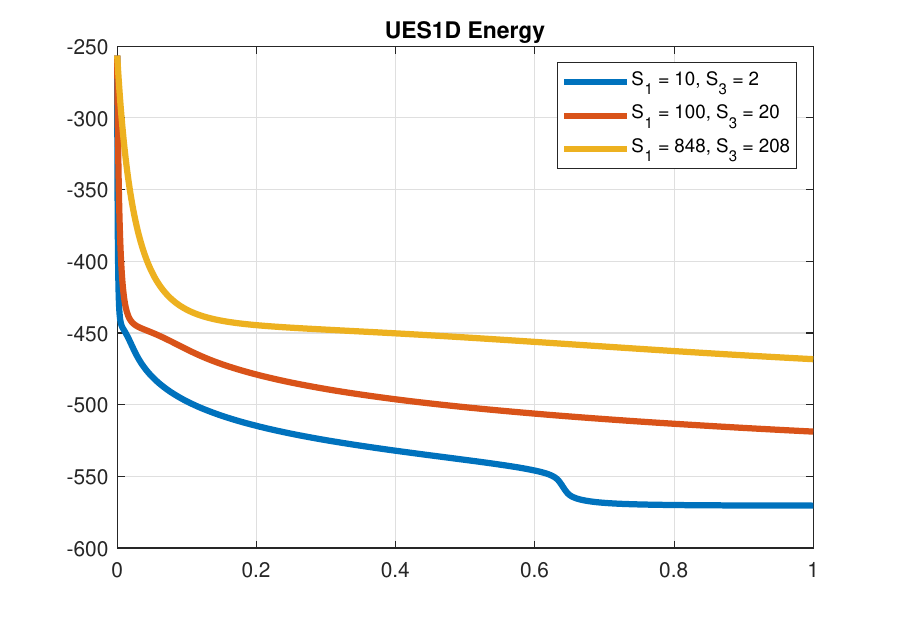}
\includegraphics[width = 0.32\textwidth]{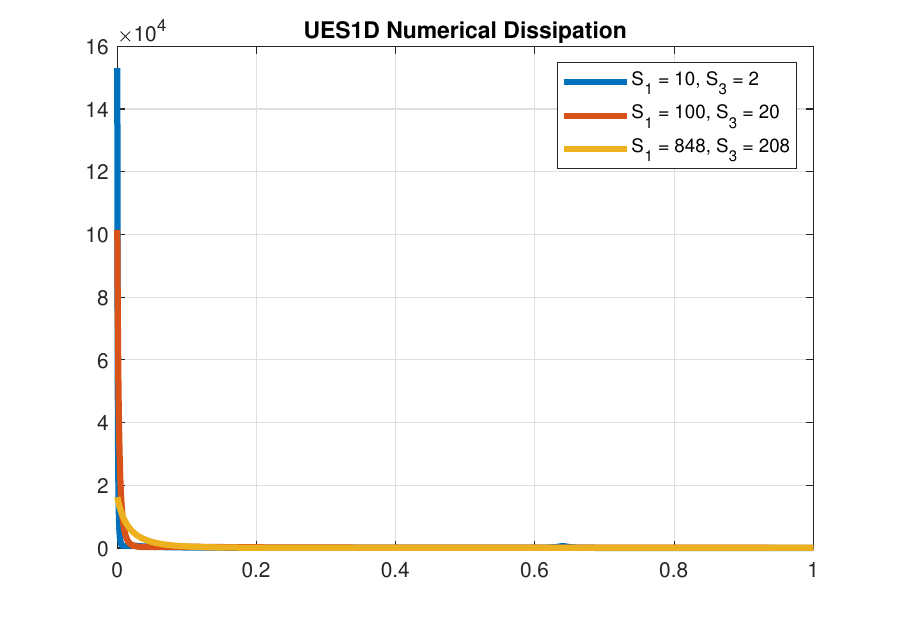}
\includegraphics[width = 0.32\textwidth]{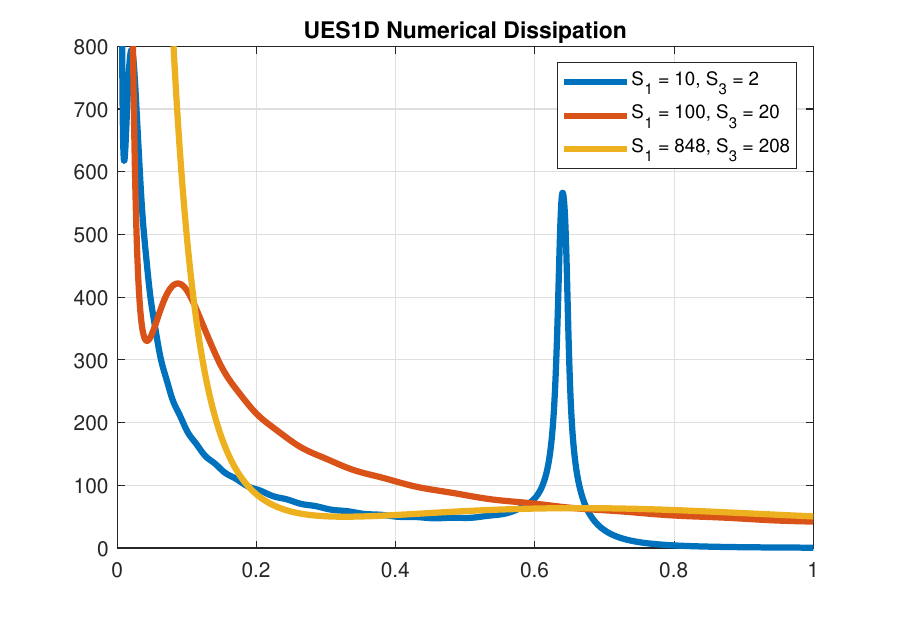}
\caption{\label{fig:dynamicsUES1D_S} Energy and numerical dissipation of a solution computed using scheme UES1D with different values for stabilization constants $S_1$ and $S_3$.}
\end{center}
\end{figure}

\begin{figure}[h]
\begin{center}
\includegraphics[height = 4.75cm]{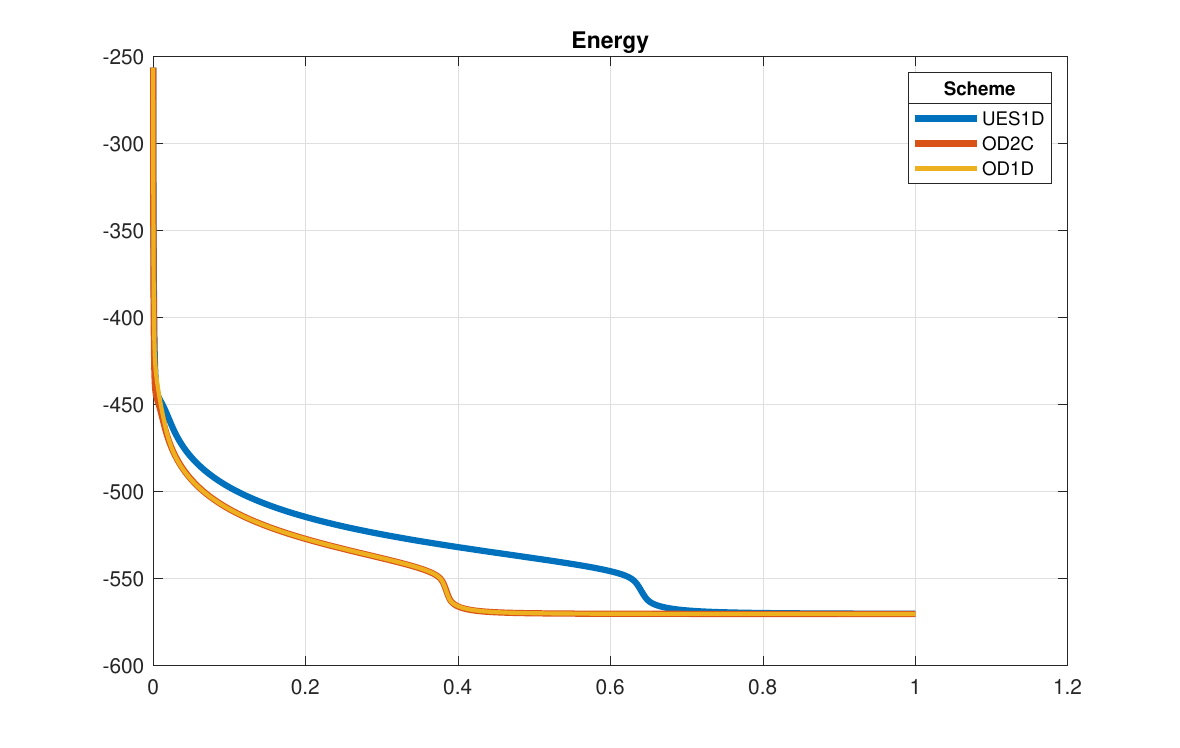}
\includegraphics[height = 4.75cm]{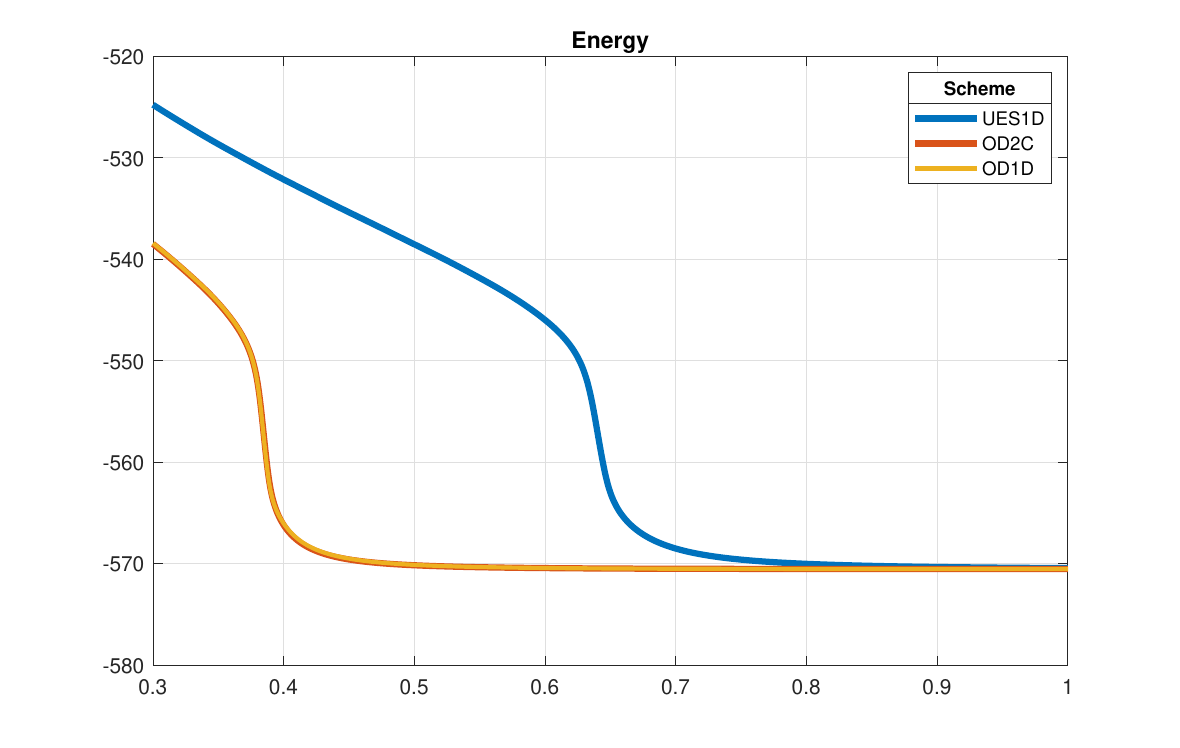} \\
\includegraphics[height = 4.75cm]{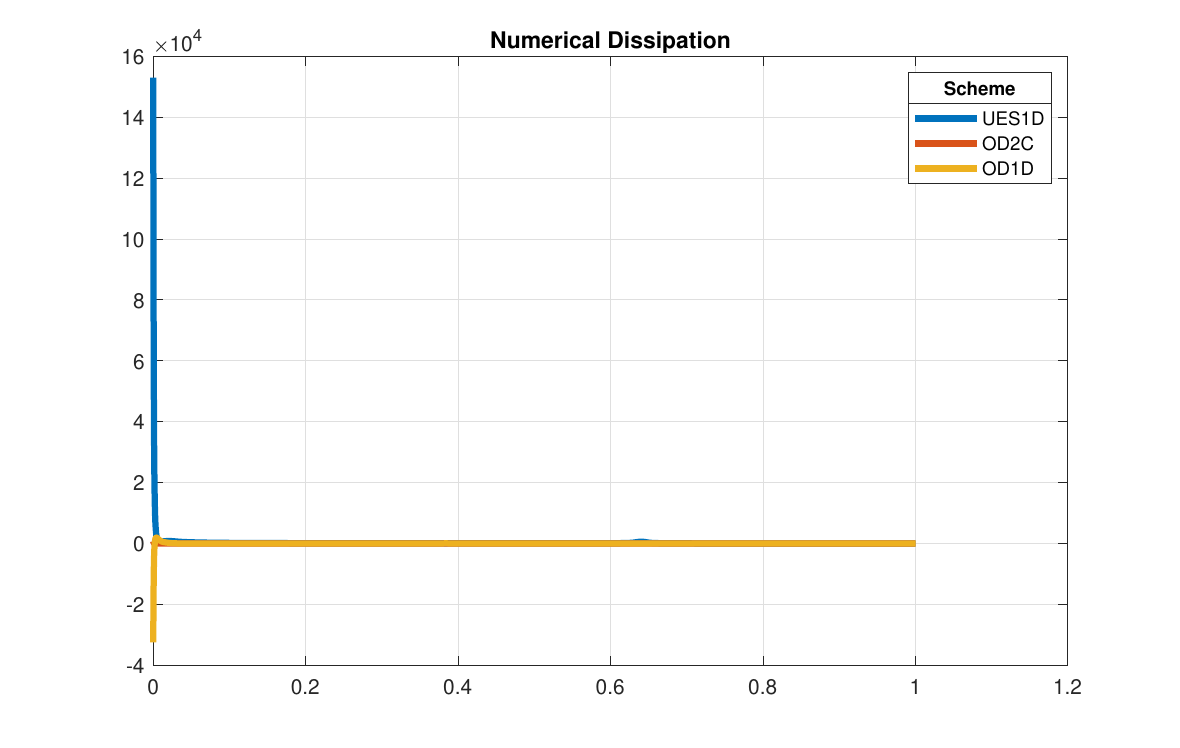}
\includegraphics[height = 4.75cm]{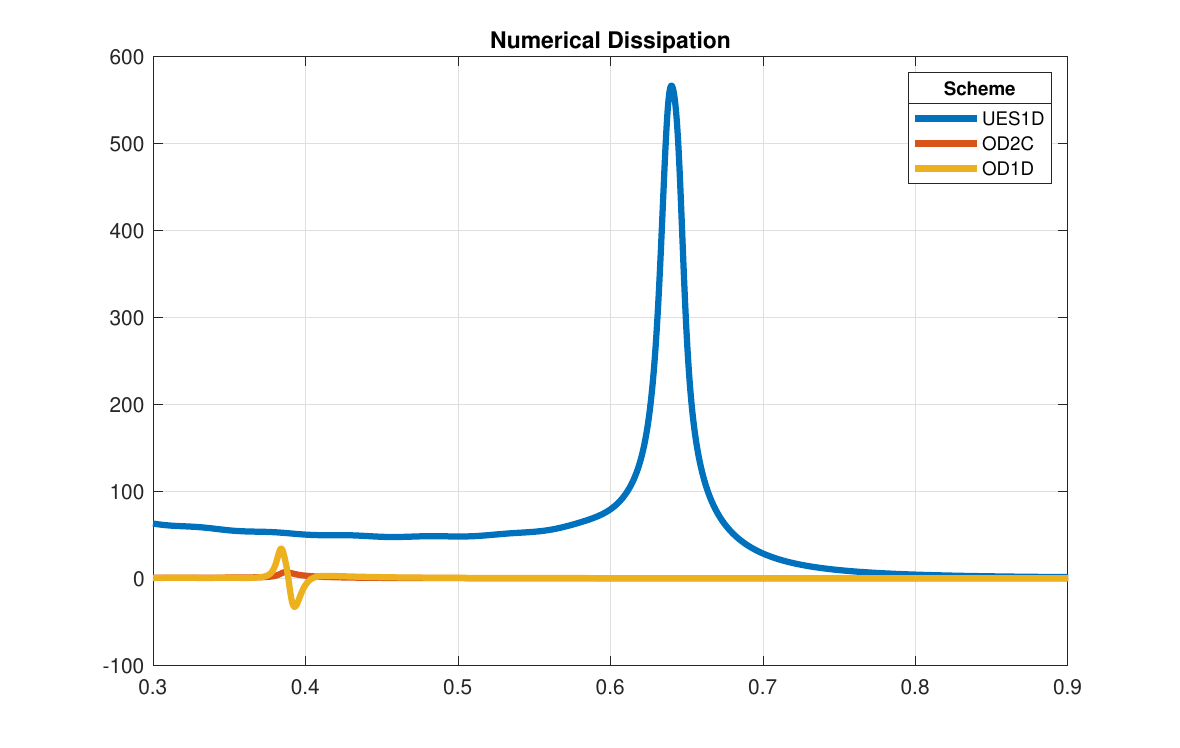}
\caption{\label{fig:dynamicsCompare} Comparing energy and numerical dissipation as a result of each scheme. Scheme UES1D corresponds to the case of the smallest stabilization constants in \Cref{fig:dynamicsUES1D_S}. \textit{Left} shows the curves on the whole time interval. \textit{Right} shows the same curves in in the time interval $[0.3, 0.5]$.}
\end{center}
\end{figure}

\begin{obs}
In the previous two examples we showed that scheme OD1D has equal or better convergence as scheme UES1D while also possessing the higher order numerical dissipation of scheme OD2C. Moreover, scheme OD1D is the most computationally efficient. Therefore, for the numerical experiments moving forward we will be using only scheme OD1D.
\end{obs}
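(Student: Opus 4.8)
Because this final statement is an empirical conclusion distilled from the two preceding experiments rather than a standalone analytic proposition, the plan is to justify it by assembling three independent strands of evidence — two numerical and one analytic — and then observing that OD1D is the unique scheme favorable in all three simultaneously. I would be explicit that the convergence portion rests on the tabulated data, while the dissipation portion is genuinely proved.

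First I would settle the convergence claim by direct comparison of the tables. From \Cref{tab:ues1dConvergenceL2,tab:ues1dConvergenceH1} the errors of UES1D decay at first order in both the $L^2$ and $H^1$ norms, whereas \Cref{tab:od1dConvergenceL2,tab:od1dConvergenceH1} show OD1D to be at least first order in every component, with absolute errors roughly two orders of magnitude smaller than UES1D at the same $\dt$. Thus ``equal or better convergence'' should be read as: the asymptotic rate of OD1D is no worse than first order (matching UES1D), while the realized errors are uniformly smaller. For completeness I would contrast this with \Cref{tab:od2cConvergenceL2,tab:od2cConvergenceH1}, which exhibit clean second order convergence and the smallest errors overall for OD2C, at the price of a coupled solve.

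Next I would invoke the analytic part, which is the only rigorous ingredient. The numerical dissipation $\mathbf{ND}(\Q^{n+1},\Q^n)$ of OD2C was shown in \Cref{rem:scheme2ND} to be $\mathcal{O}((\dt)^2)$ by Taylor expanding $\Psi$ about $\Q^n$ and matching the quadratic terms. The key point is that the lower-triangular identity \eqref{eq:identityLT}, $\Q:\bm{A}:\Q = \Q:\bm{A}^{LT}:\Q$ for $\Q\in\Lambda$, leaves the quadratic form $(\Q^{n+1}-\Q^n):\nabla\bm{\psi}_i(\Q^n):(\Q^{n+1}-\Q^n)$ unchanged when $\nabla\bm{\psi}_i$ is replaced by $\nabla\bm{\psi}_i^{LT}$ in \eqref{eq:od1d}. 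Hence the same Taylor argument applies verbatim, which is precisely \Cref{rem:OD1D_orderND}, so OD1D inherits the $\mathcal{O}((\dt)^2)$ dissipation of OD2C rather than the large, merely sign-definite dissipation of UES1D; the empirical confirmation is read off \Cref{fig:dynamicsCompare}, where the OD1D and OD2C curves stay small and comparable while UES1D is far larger and visibly retards the dynamics.

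Finally the efficiency claim is immediate from \Cref{tab:compCost}: the decoupling in \eqref{eq:od1d} brings the run time of OD1D to roughly one third of OD2C and well under a tenth of UES1D. The conclusion follows by elimination — OD2C shares OD1D's small dissipation but is coupled and markedly slower, whereas UES1D is both less accurate and over-dissipative, so OD1D alone combines low dissipation, at-least first-order convergence with the smallest accuracy penalty among the decoupled options, and the lowest cost. I expect the main subtlety to lie in the convergence comparison: one must note that the observed OD1D rate degrades from nearly second order toward first order as $\dt\to 0$, so the defensible statement is parity of asymptotic rate with UES1D together with strictly smaller errors, not superiority of rate. The dissipation identity \eqref{eq:identityLT} is what rigorously underwrites the remaining, non-empirical portion of the claim.
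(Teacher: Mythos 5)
Your justification is correct and matches the paper's own reasoning exactly: the remark is not proved analytically but rests on the convergence tables (\Cref{tab:ues1dConvergenceL2}--\Cref{tab:od1dConvergenceH1}), the identity \eqref{eq:identityLT} together with the Taylor argument of \Cref{rem:scheme2ND} and \Cref{rem:OD1D_orderND} for the $\mathcal{O}((\dt)^2)$ dissipation, and \Cref{tab:compCost} for efficiency, which are precisely the three strands you assemble. Your added caveat — that the observed OD1D rates drift from near second order toward first order as $\dt\to 0$, so the defensible claim is parity of rate with strictly smaller errors — is consistent with the paper's phrasing of ``at least first order convergence.''
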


\subsection{Defect Dynamics in 2D}
This example is to show the evolution of defects in 2D subject to different boundary conditions. We use the same initial conditions \eqref{eq:simulation2} from the previous example and change the final time $T=3.5$. The case of Neumann boundary conditions is shown in \Cref{fig:defectsNeumann}. Here, we see the presence of eight $\pm \left. 1 \right/ \left. 2 \right.$ defects which move to and escape through the boundary. \\
Next, we impose the following Dirichlet boundary conditions
\beq\label{eq:dirichlet1}
\mathbf{d}_D 
\, = \, \dis
(0,1,0)^T\, , \quad \quad
\Q_D
\, = \,
\mathbf{d}_D \mathbf{d}_D^T - \frac13\I\, , \quad \quad
\Q \mid_{\partial\Om} 
\, = \,
\Q_D\, .
\eeq
The time evolution of the solution is shown in \Cref{fig:defectsUniform}. We note for this case that the boundary conditions induce the presence of more defects, which move towards the center and annihilate with the original eight defects. The final configuration shows uniform orientation coinciding with the boundary conditions. \\
\begin{figure}[h]
\begin{center}
\includegraphics[width = 0.23\textwidth]{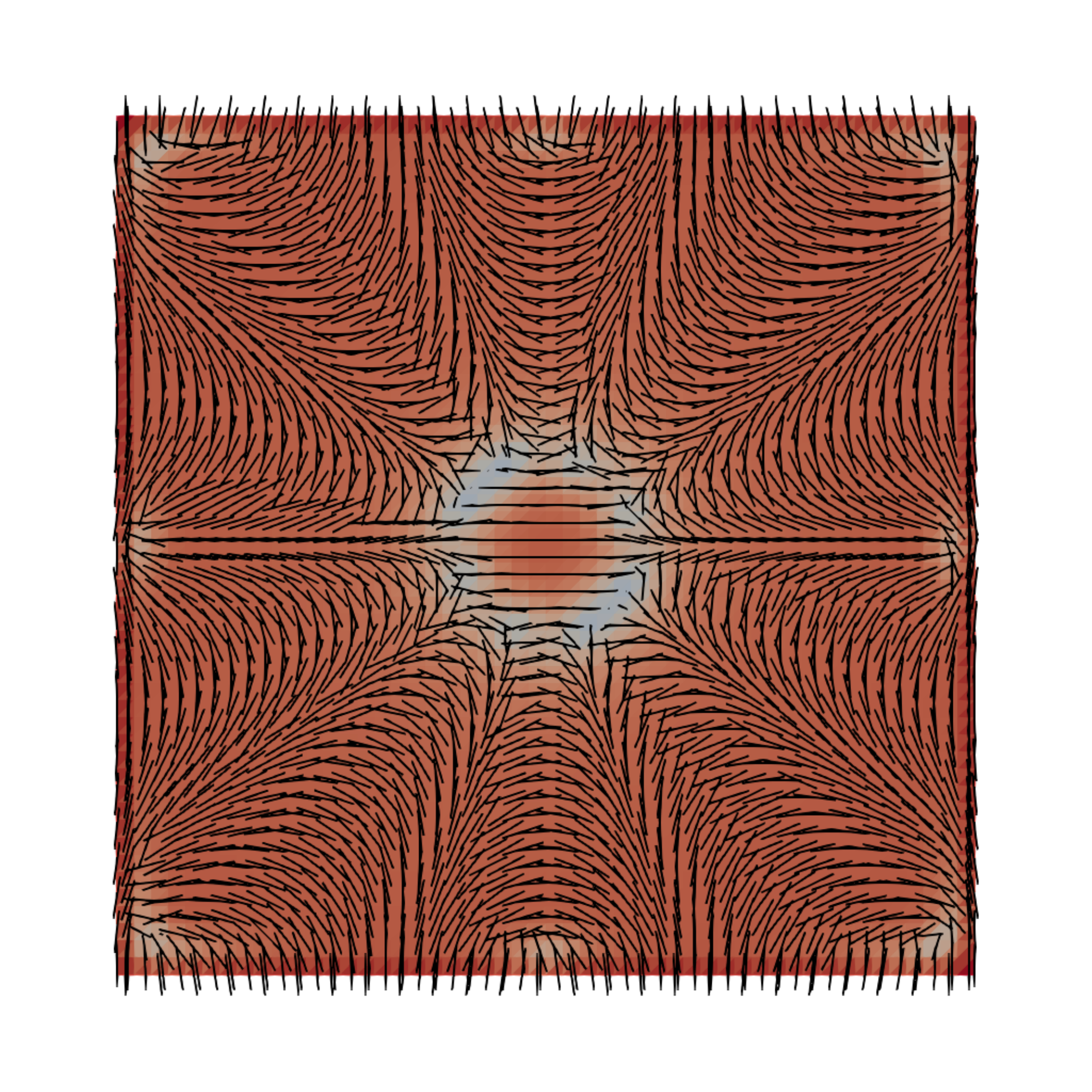}
\includegraphics[width = 0.23\textwidth]{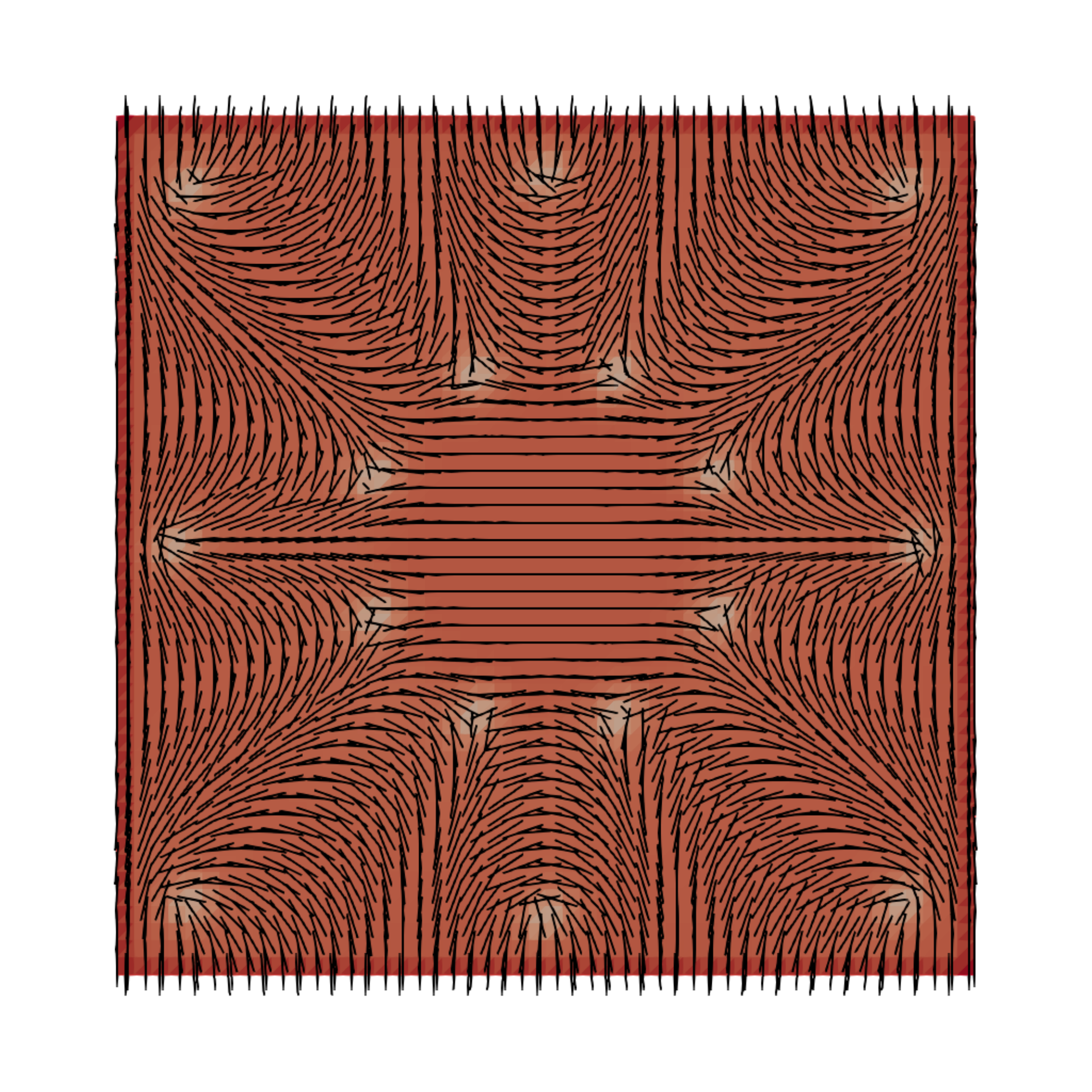}
\includegraphics[width = 0.23\textwidth]{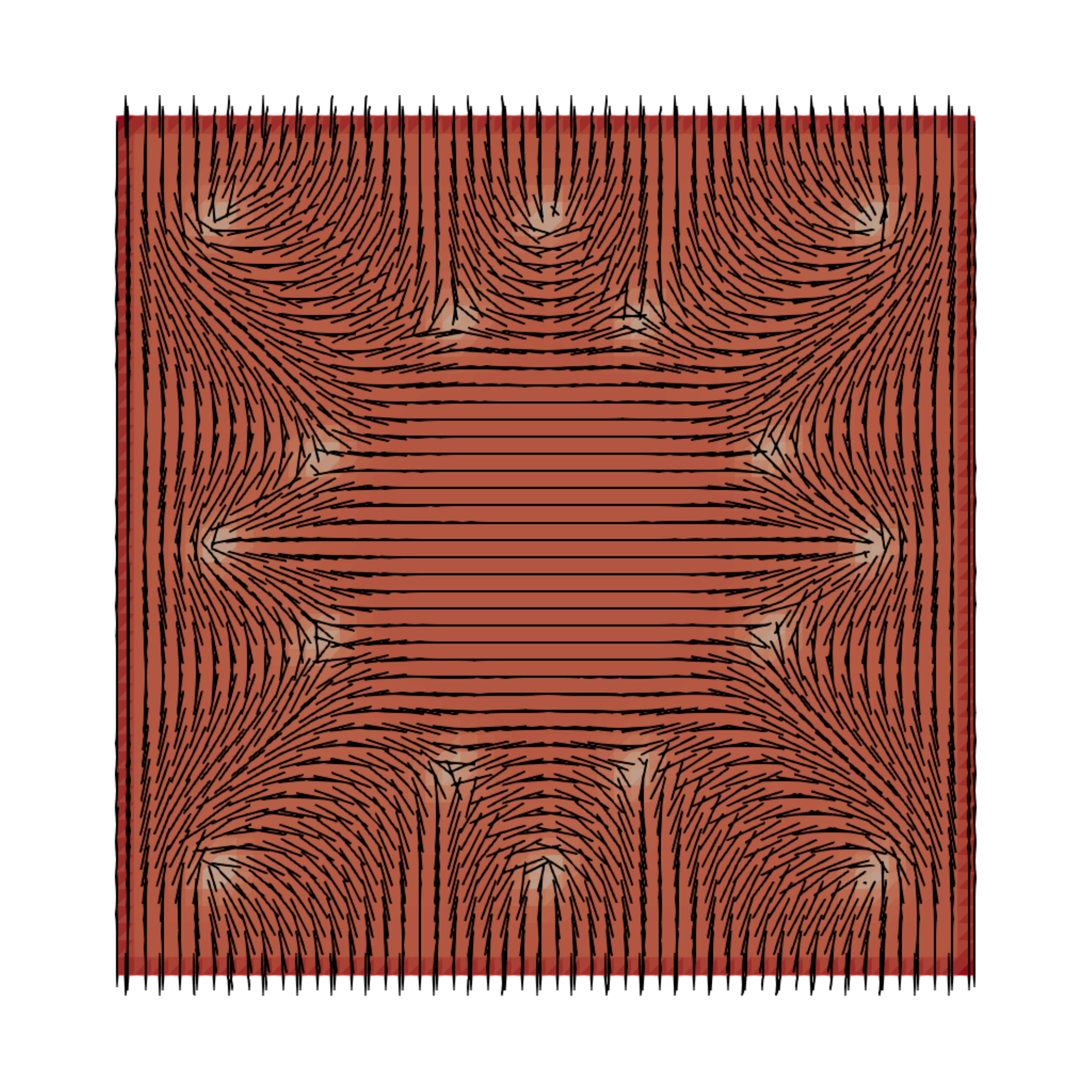}
\includegraphics[width = 0.23\textwidth]{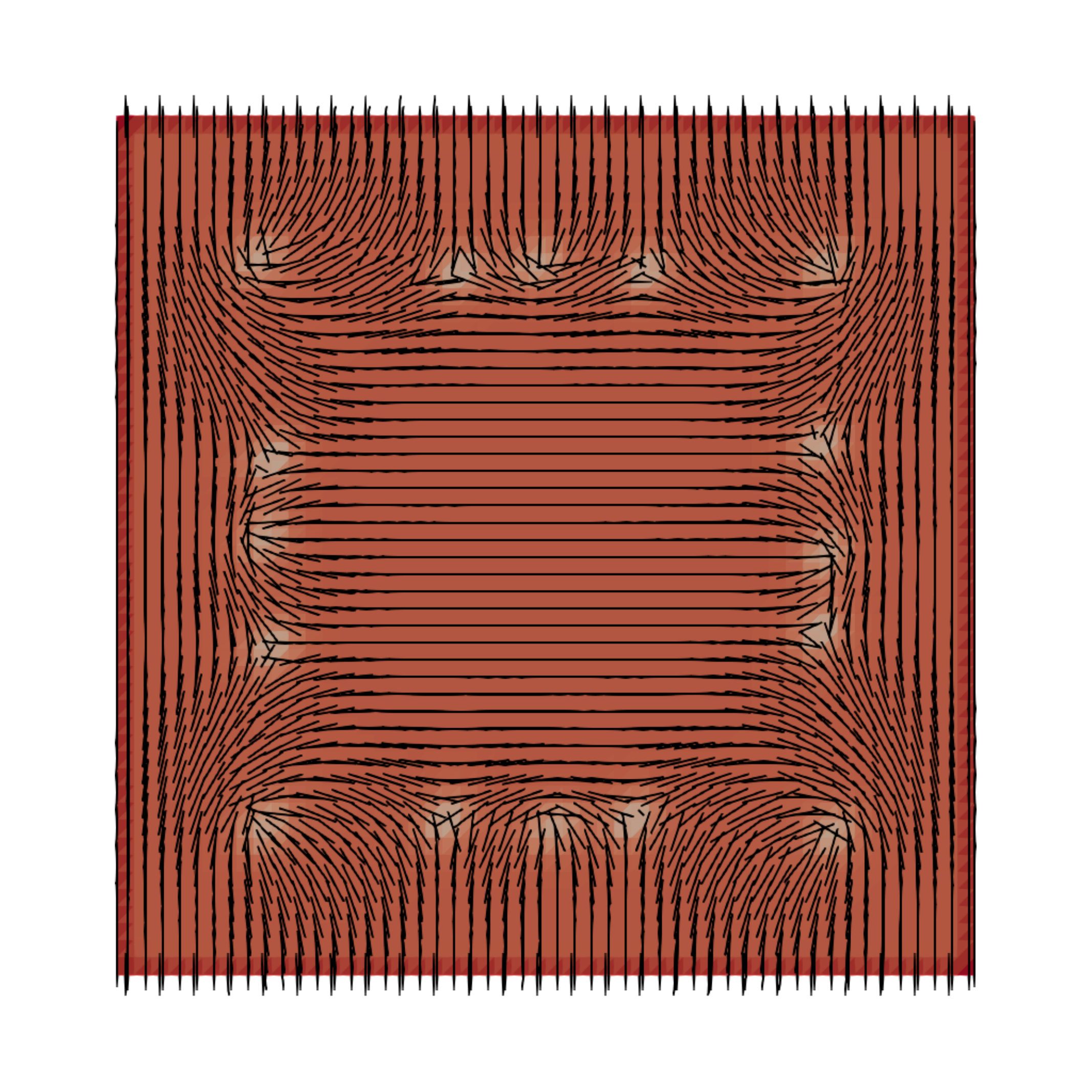}
\includegraphics[width = 0.23\textwidth]{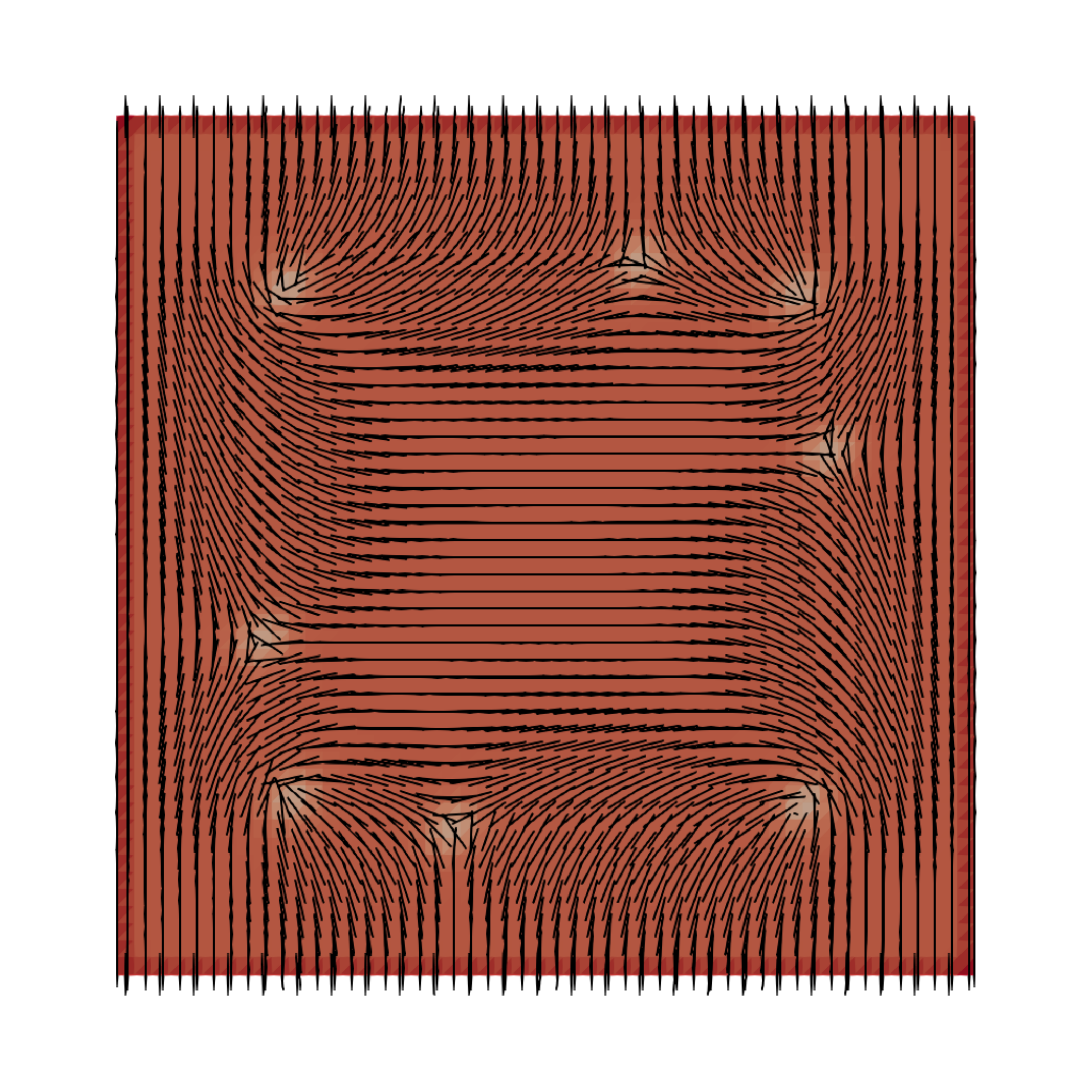}
\includegraphics[width = 0.23\textwidth]{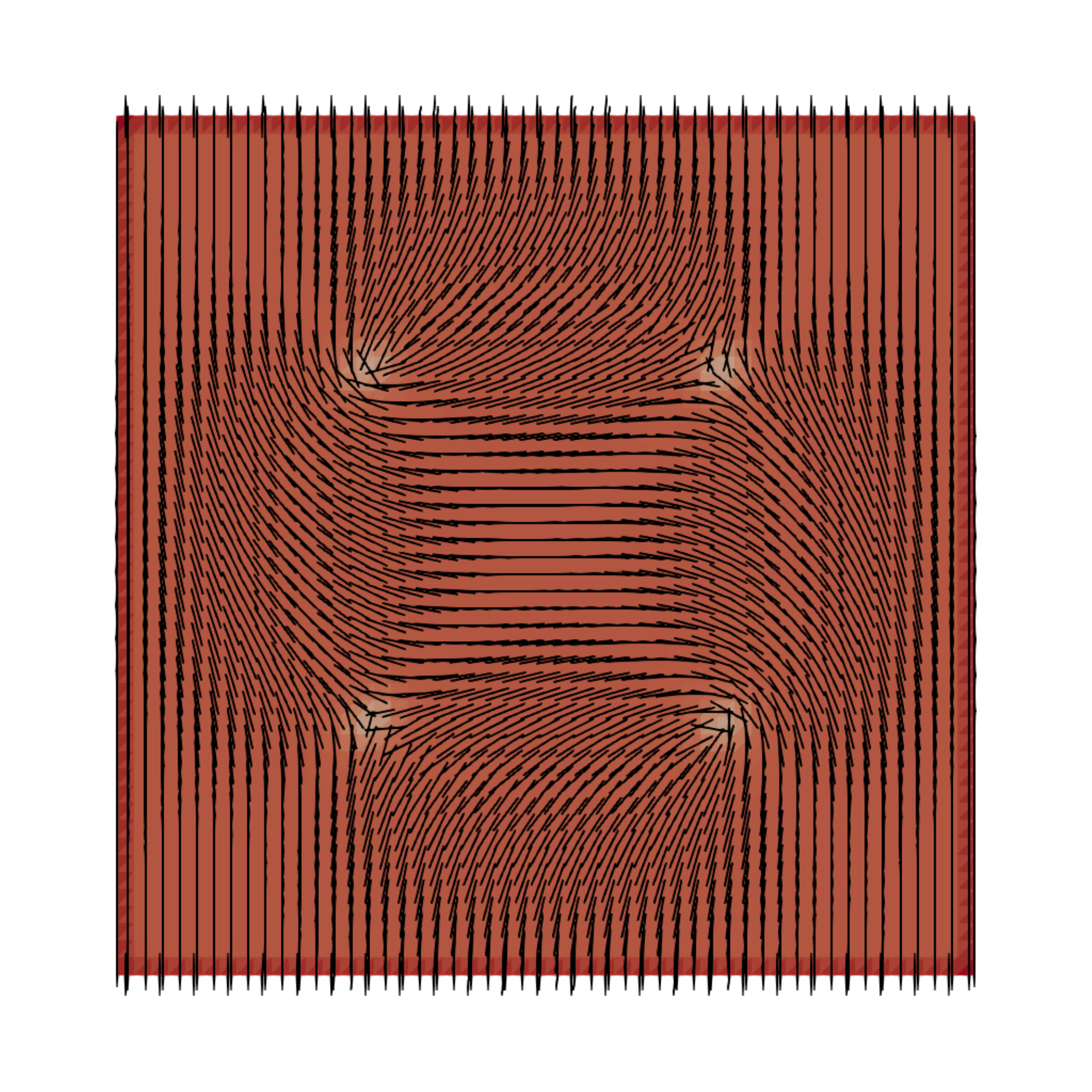}
\includegraphics[width = 0.23\textwidth]{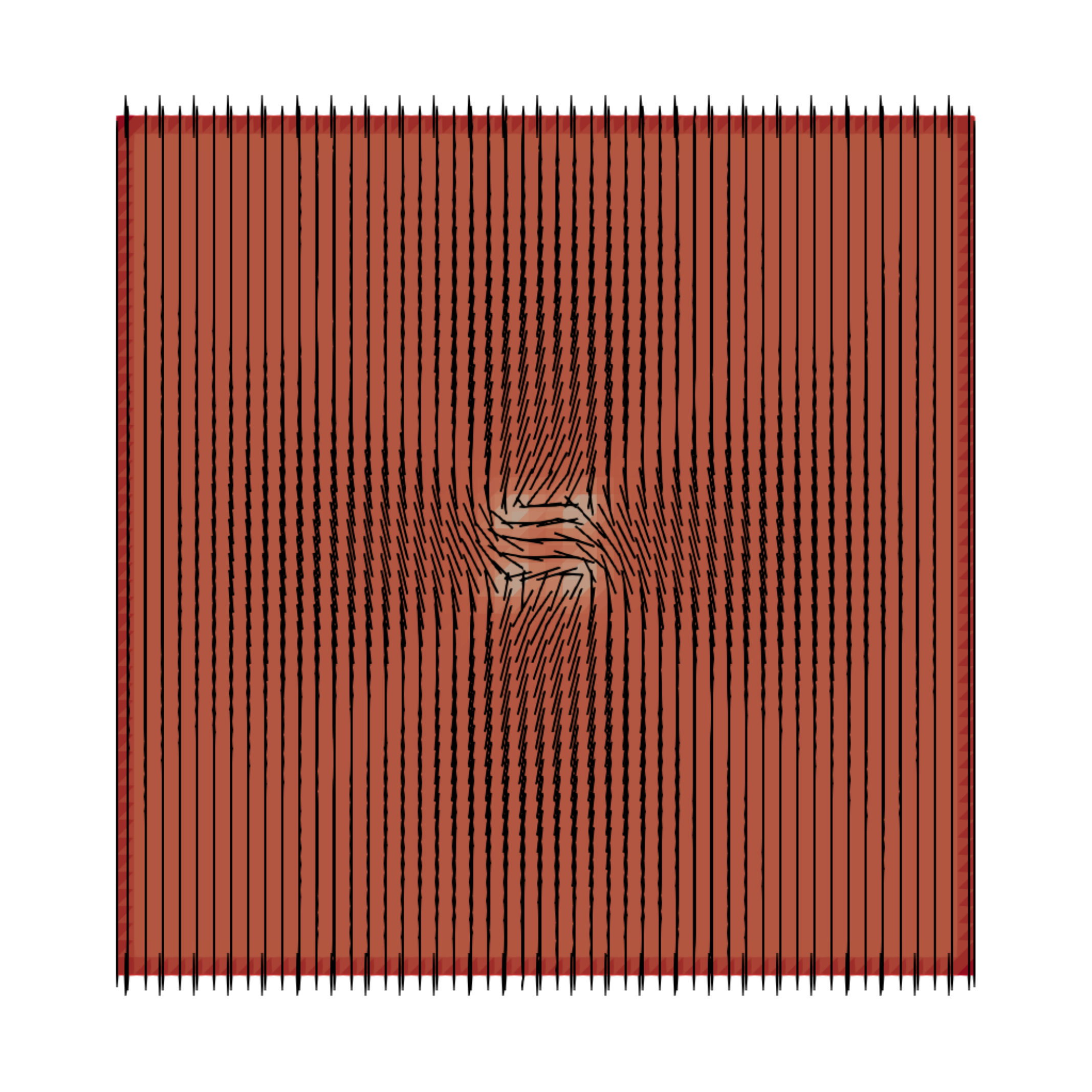}
\includegraphics[width = 0.23\textwidth]{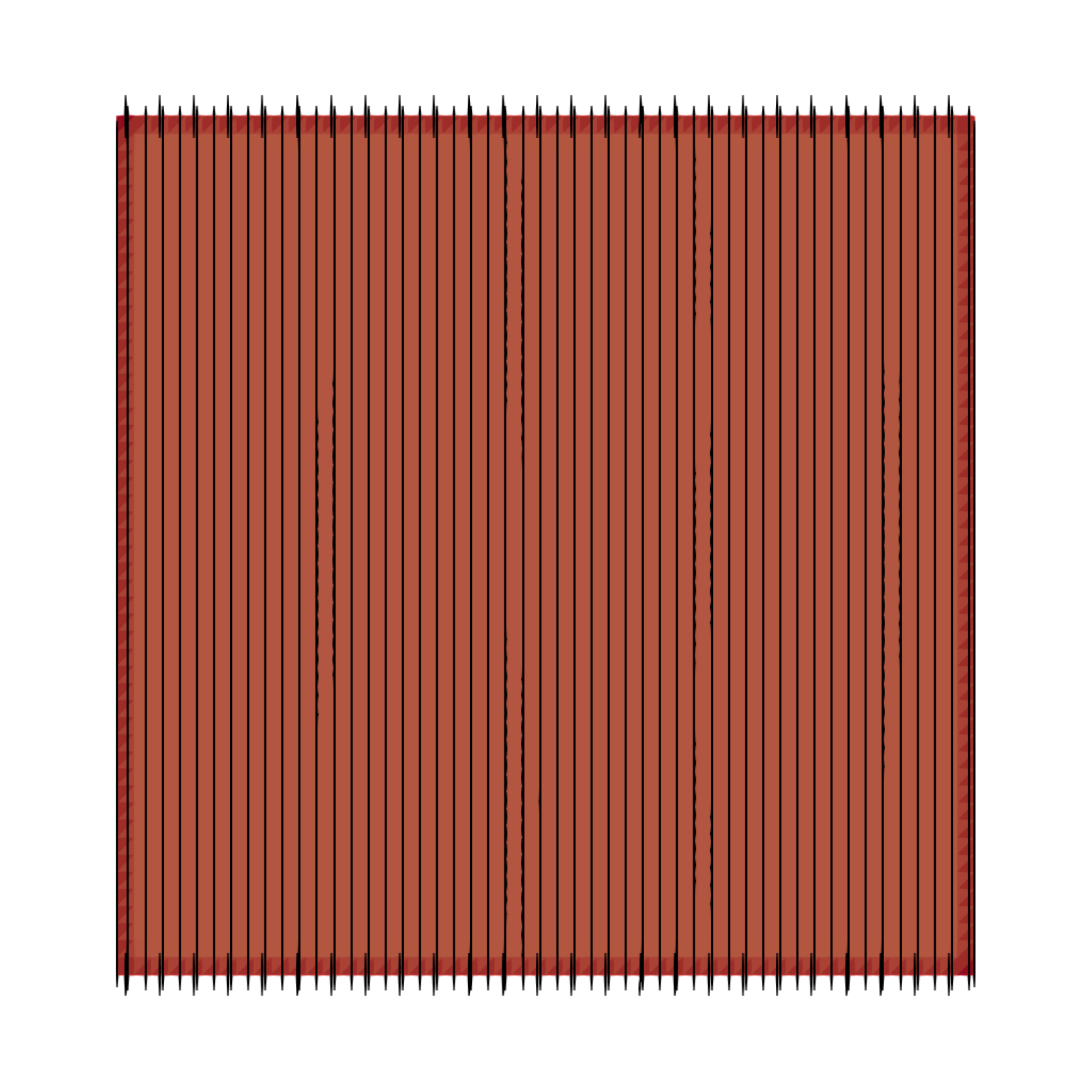} 
\caption{\label{fig:defectsUniform} Defect dynamics in 2D with Dirichlet boundary conditions at times $t=0.01,0.05,0.1,0.2,0.3,1.0,2.5,3.0$. The color represents the difference of the two largest eigenvalues of $\Q$ and indicates the alignment of the nematic with the dominant eigenvector shown as black lines.}
\end{center}
\end{figure}
Finally, we impose a different set of Dirichlet boundary conditions as follows
\beq\label{eq:dirichlet2}
\widetilde{\mathbf{d}_D}
\, = \, \dis
\left( \frac12(x - 2), \frac12 (y - 2) ,0\right)^T\, , \quad \quad
\widetilde{\Q_D}
\, = \,
\widetilde{\mathbf{d}_D} \widetilde{\mathbf{d}_D}^T - \frac{| \widetilde{\mathbf{d}_D} |^2 }3\I\, , \quad \quad
\Q \mid_{\partial\Om} 
\, = \,
\widetilde{\Q_D}\, .
\eeq
The time evolution of the solution for this simulation can be seen in \Cref{fig:defectsRadial}. Here, we also note the production of defects from the boundary conditions. However, in contrast to the last simulation, the last two remaining defects are stabilized by the boundary conditions. These defects move slowly relative the motion of defects earlier in the simulation. \\
{In \Cref{fig:dynamicsCompareBC} we compare the dynamics of the solutions obtained using the three different boundary conditions. We see that the final configuration of the case of Neumann boundary conditions and $\Q_D$ is a uniform configuration of the director vectors, and thus the energy of these solutions are similar at the final time. In the case of the boundary condition $\Q_{\tilde{D}}$ we see that the final configuration contains two defects, hence the energy of this solution is higher than the previous two. }

\begin{figure}[h]
\begin{center}
\includegraphics[width = 0.23\textwidth]{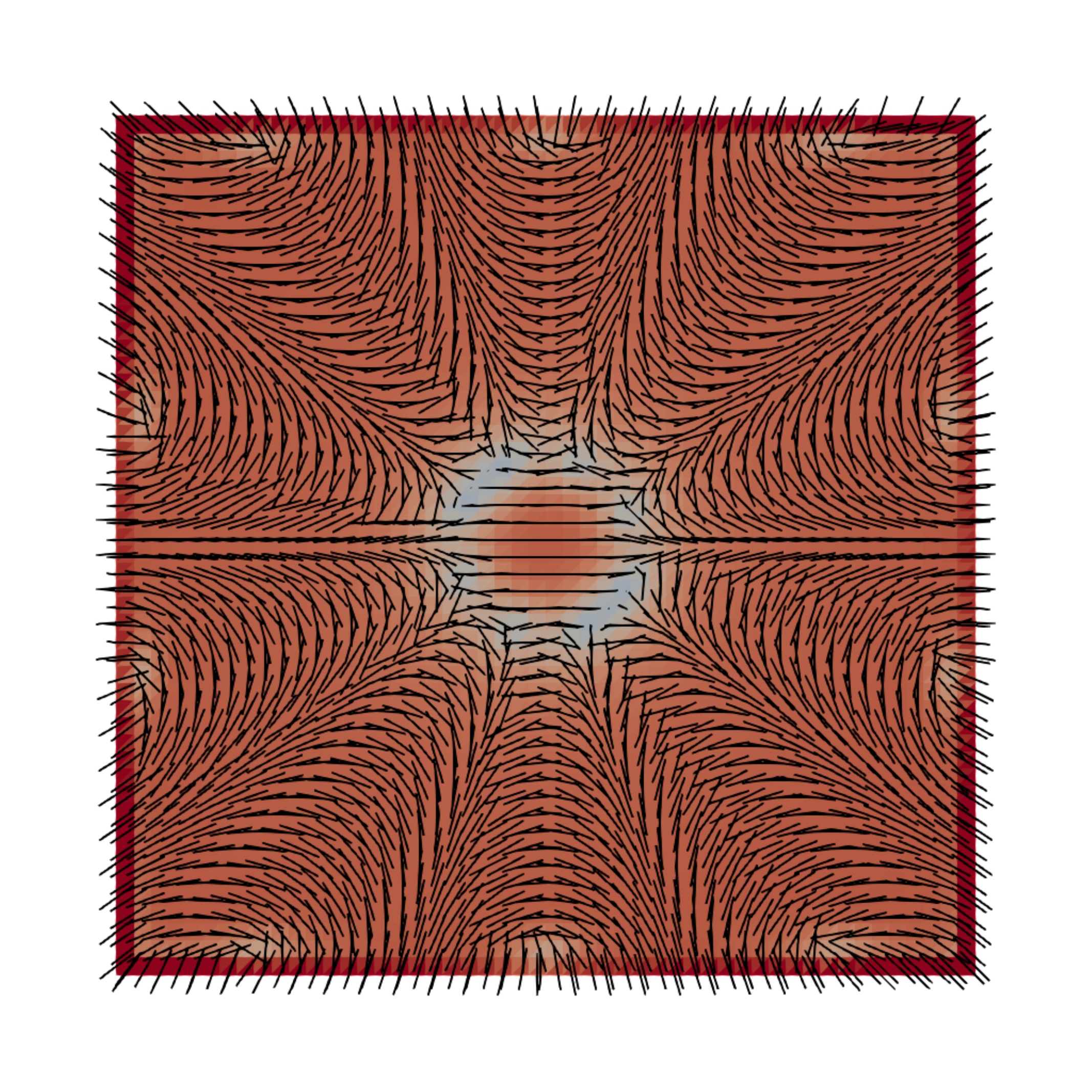}
\includegraphics[width = 0.23\textwidth]{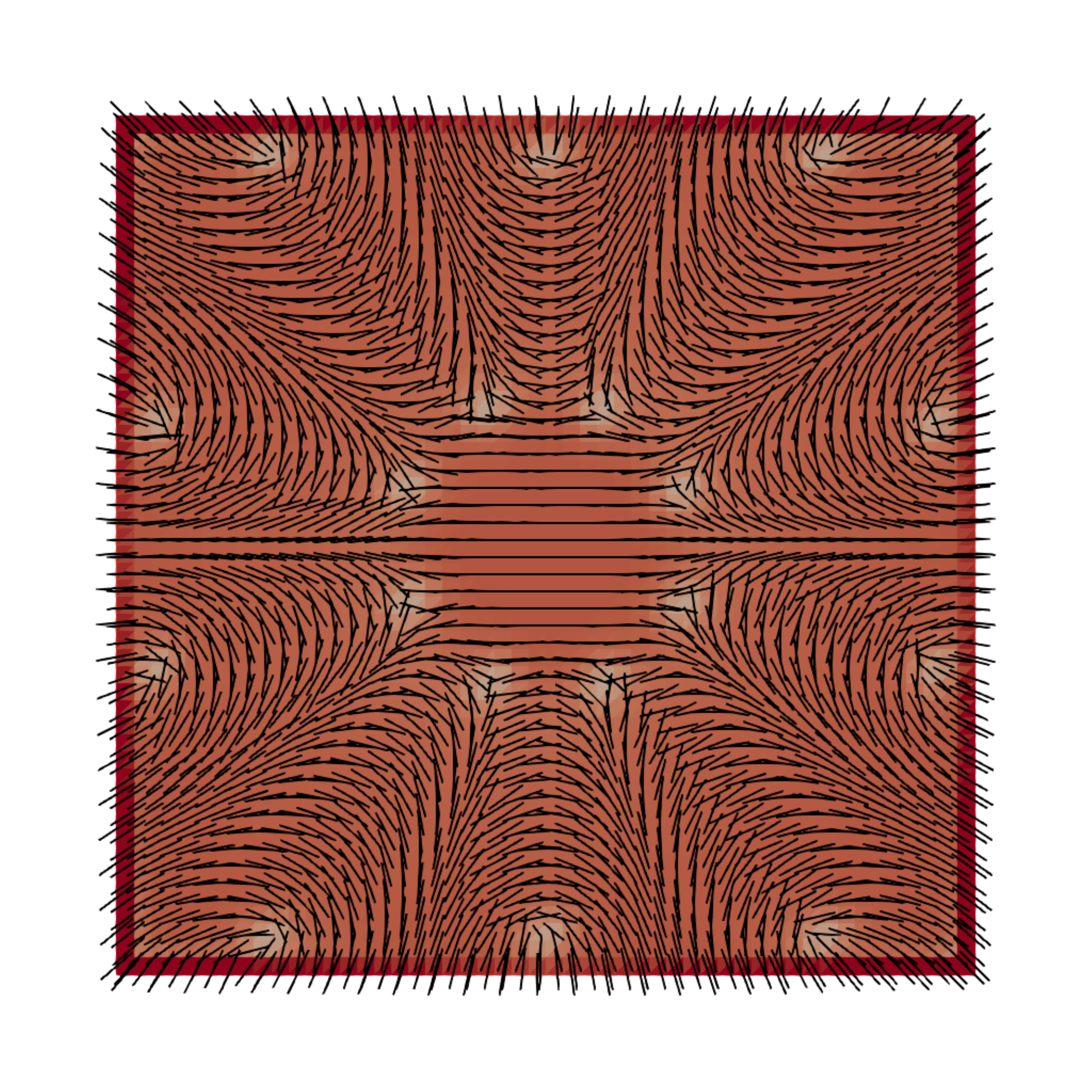}
\includegraphics[width = 0.23\textwidth]{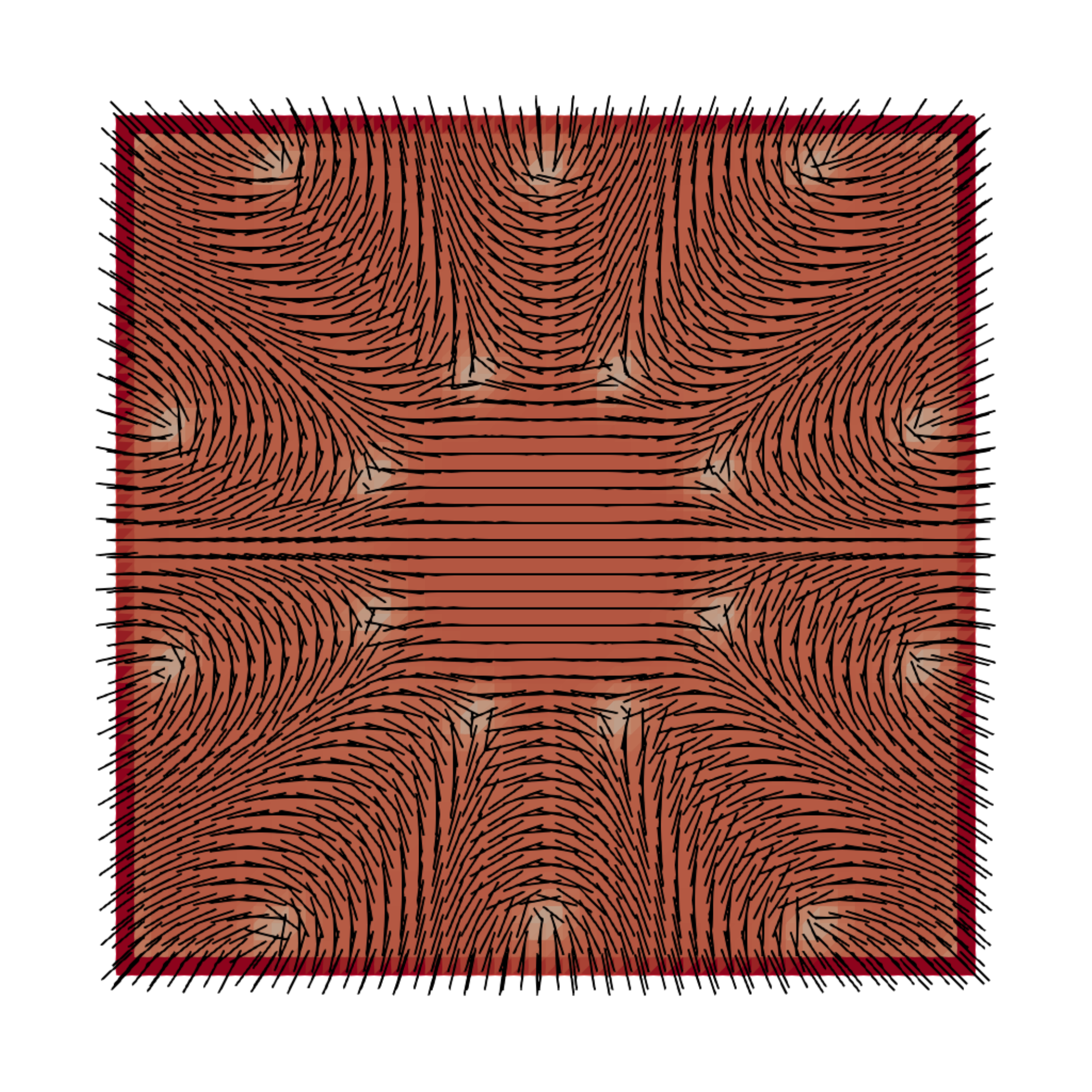} 
\includegraphics[width = 0.23\textwidth]{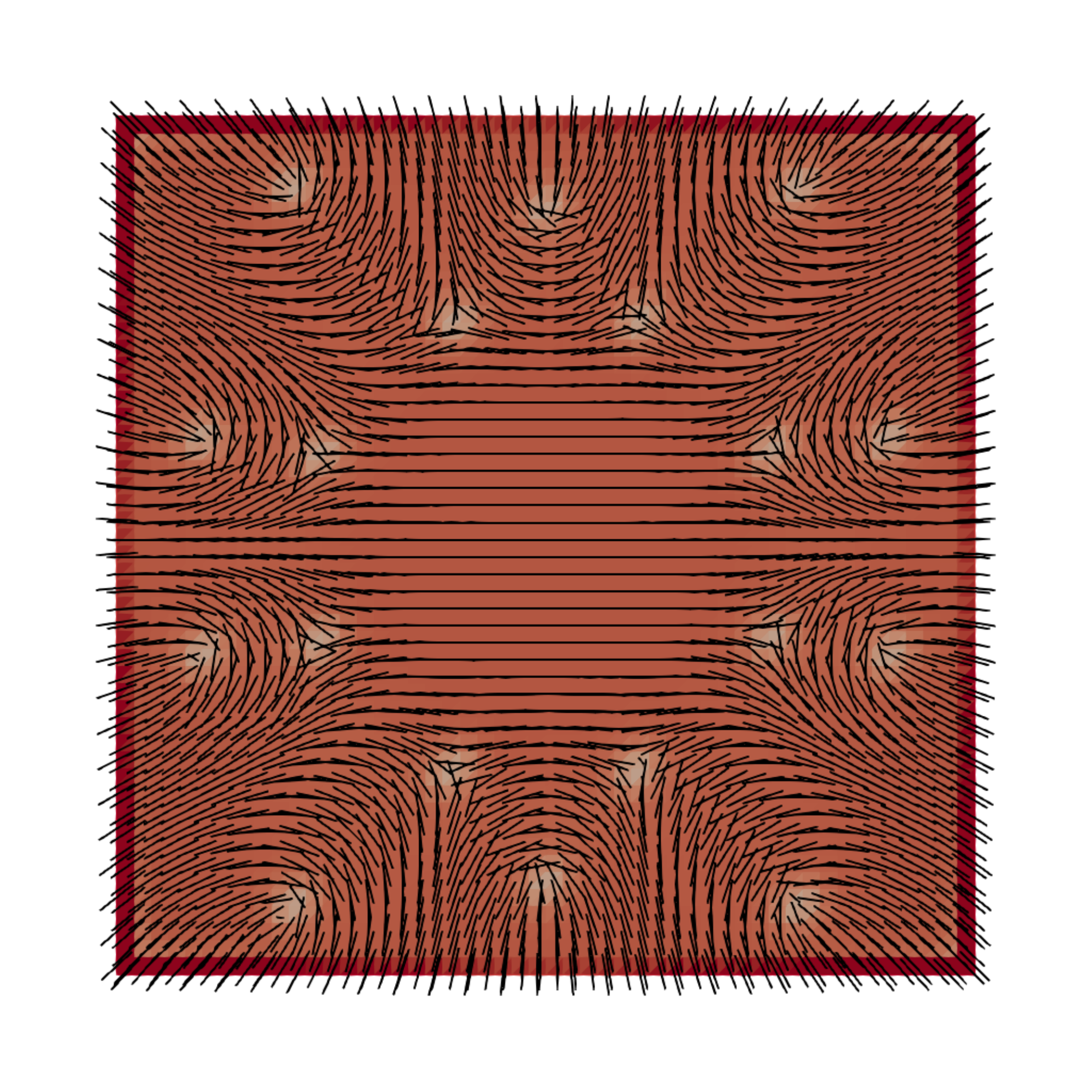}
\includegraphics[width = 0.23\textwidth]{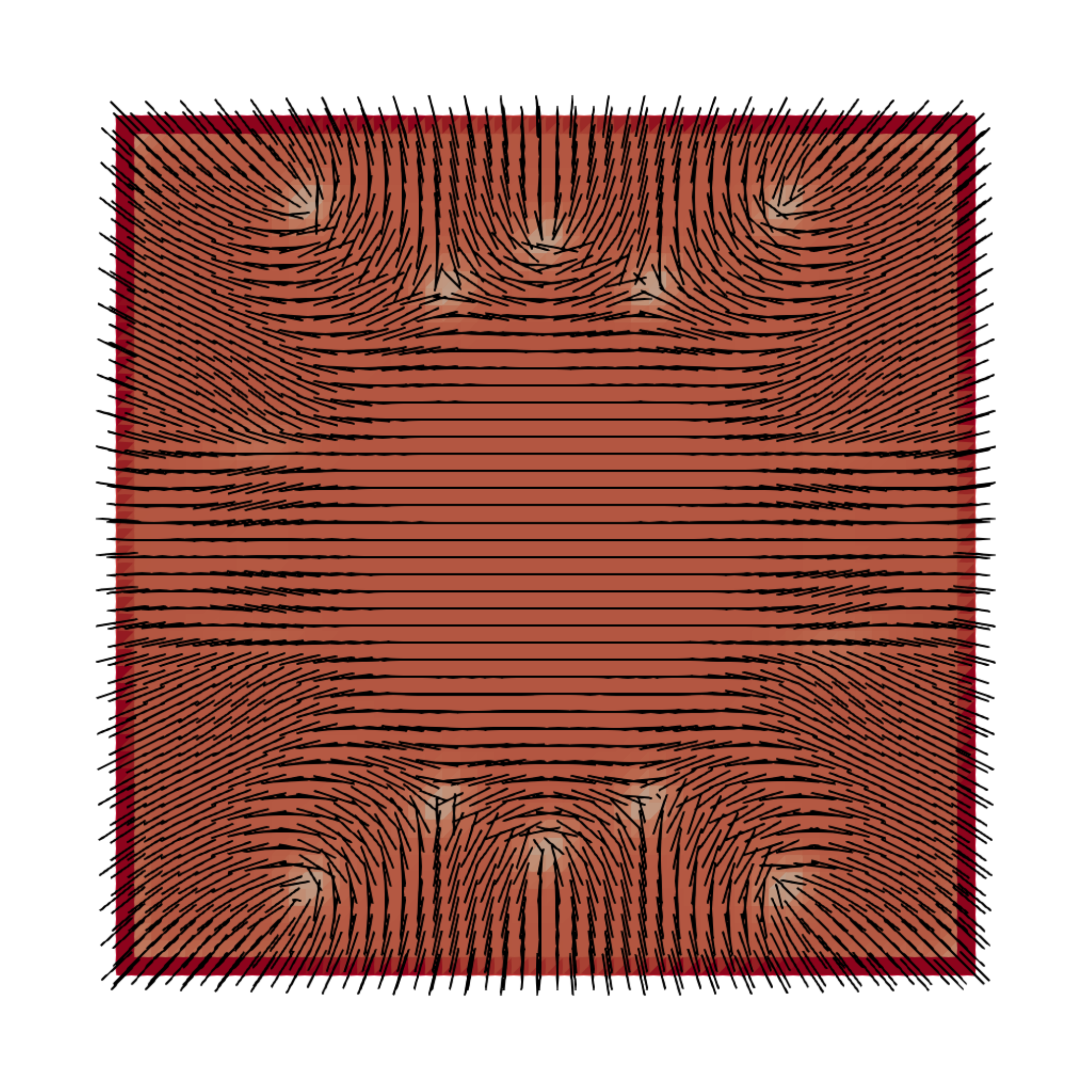}
\includegraphics[width = 0.23\textwidth]{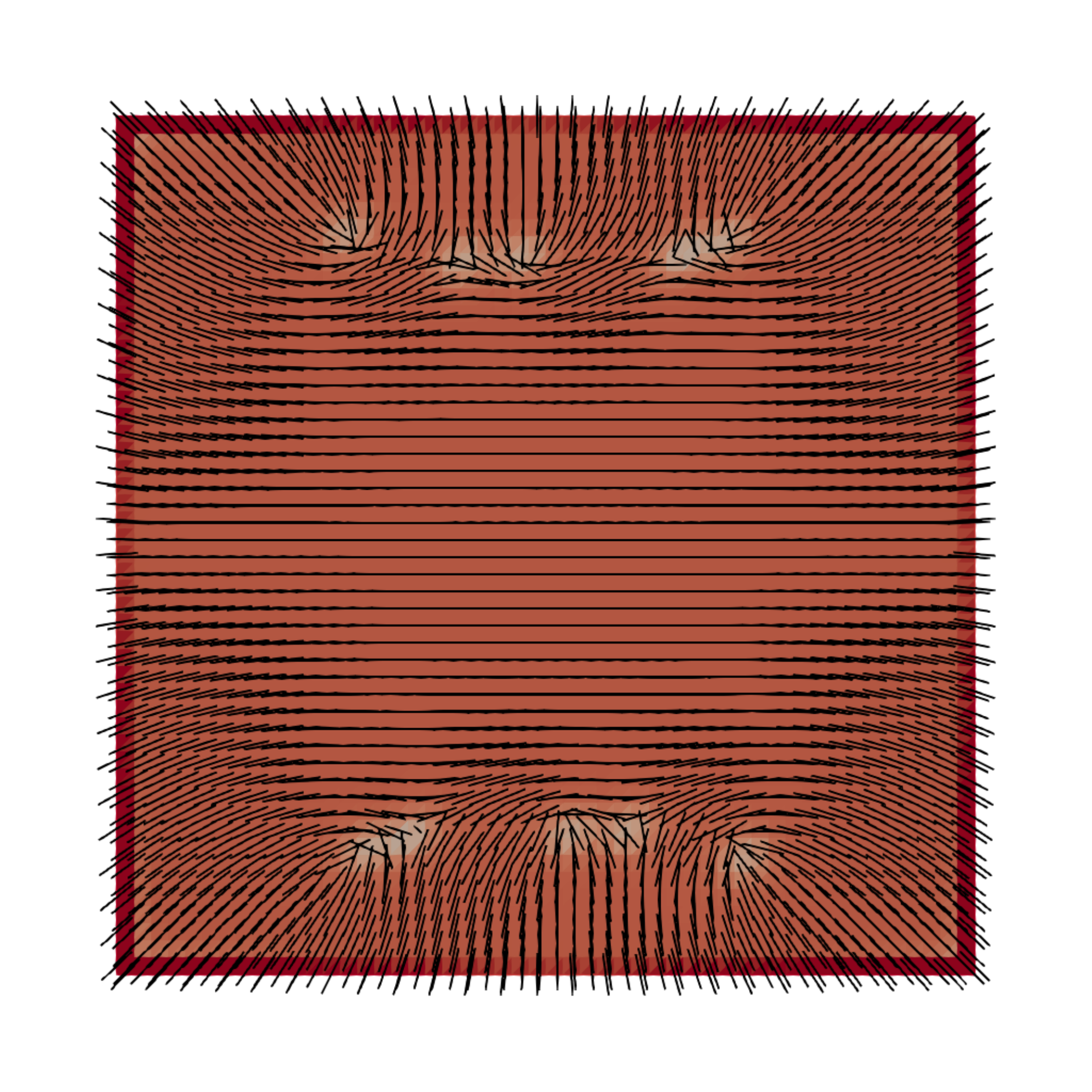}
\includegraphics[width = 0.23\textwidth]{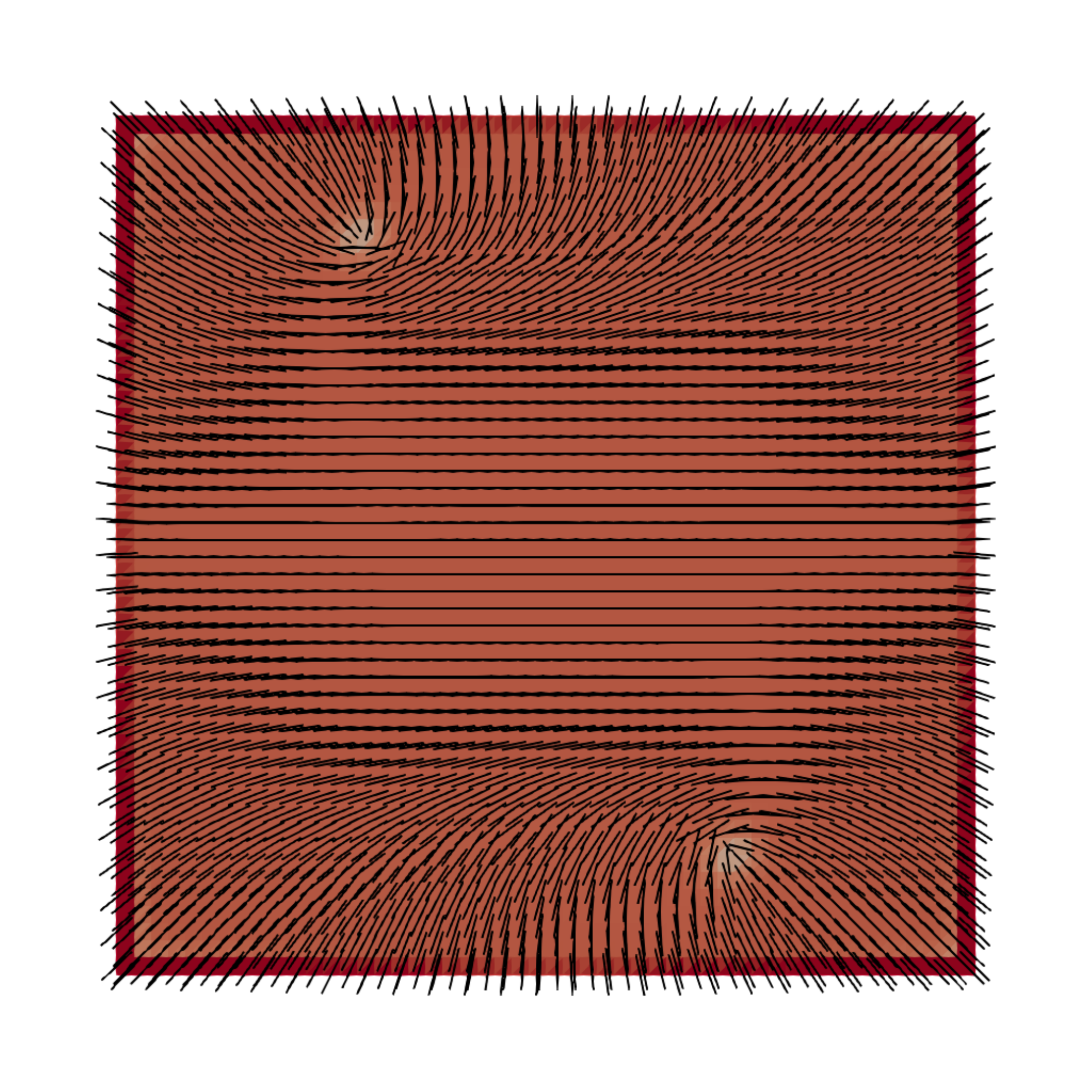}
\includegraphics[width = 0.23\textwidth]{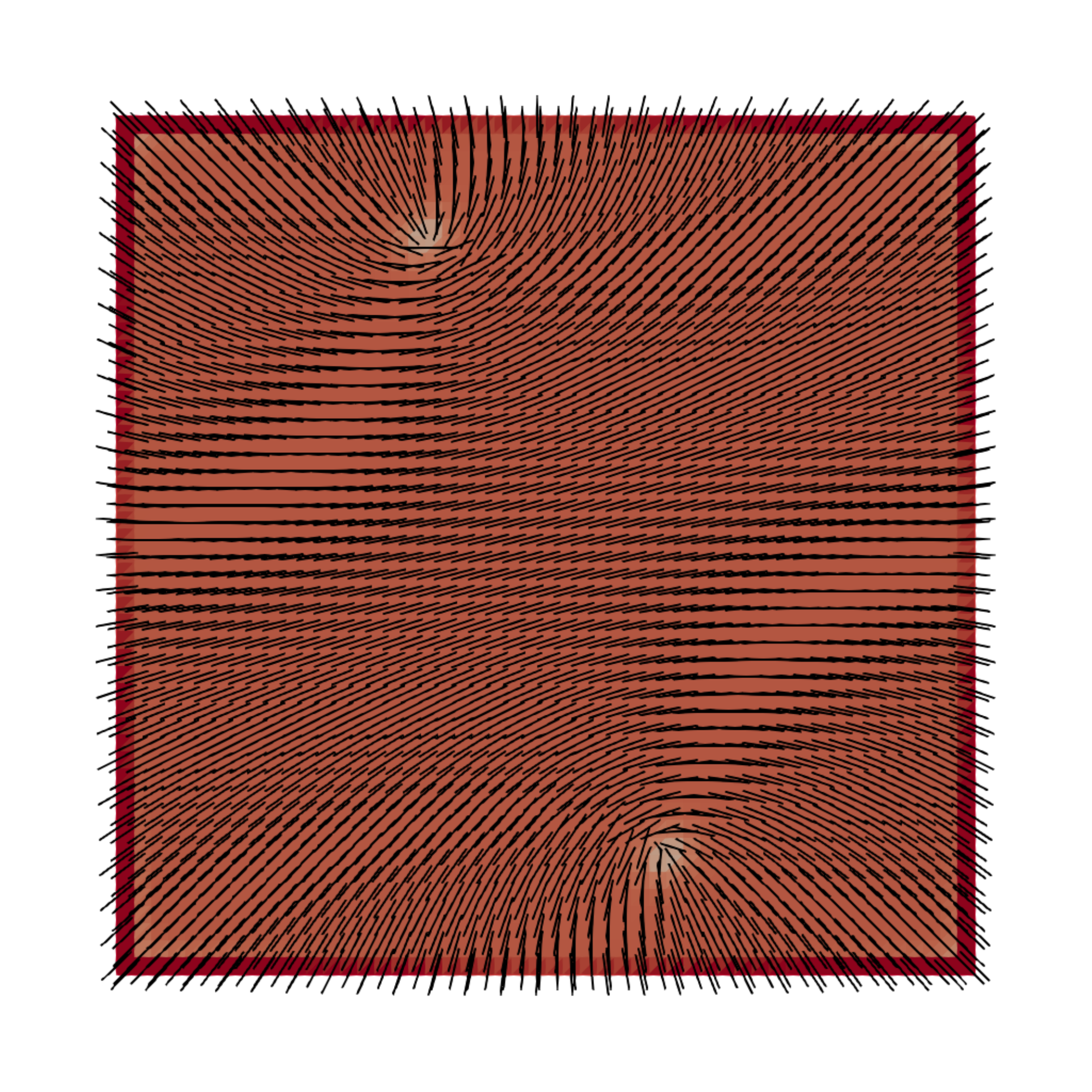} 
\caption{\label{fig:defectsRadial} Defect dynamics in 2D with Dirichlet boundary conditions at times $t=0.01,0.03,0.05,0.1,0.15,0.3,0.35,1.0,3.0$. The color represents the difference of the two largest eigenvalues of $\Q$ and indicates the alignment of the nematic with the dominant eigenvector shown as black lines.}
\end{center}
\end{figure}

\begin{figure}[h]
\begin{center}
\includegraphics[width = 0.32\textwidth]{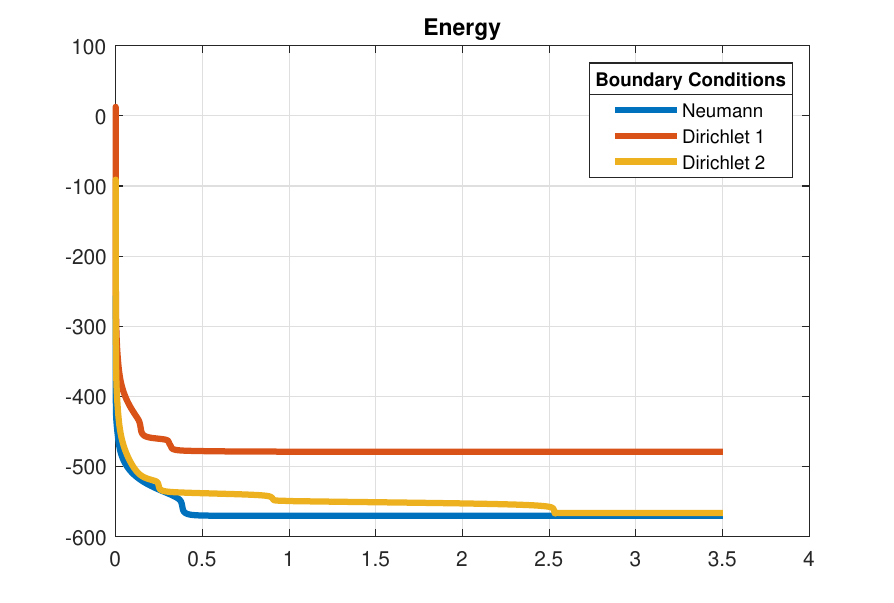}
\includegraphics[width = 0.32\textwidth]{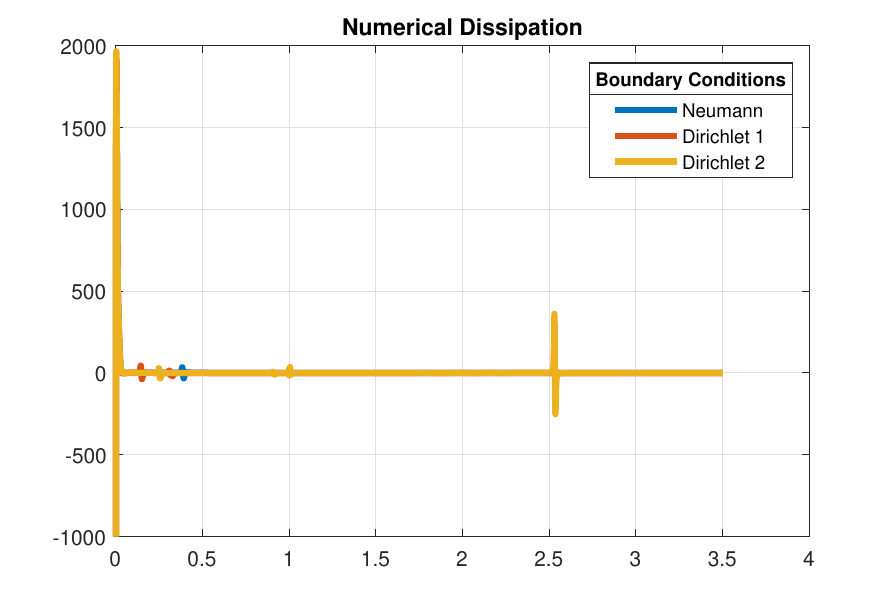}
\includegraphics[width = 0.32\textwidth]{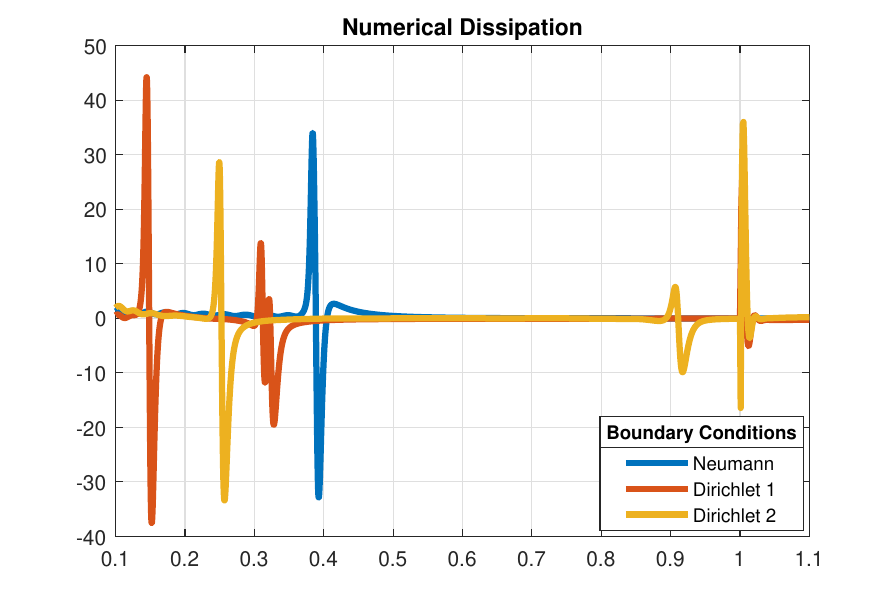}
\caption{\label{fig:dynamicsCompareBC} Comparing energy and numerical dissipation as a result of each case of boundary conditions. Here, Dirichlet 1 corresponds to the boundary conditions given by \eqref{eq:dirichlet1} and Dirichlet 2 corresponds to \eqref{eq:dirichlet2}. \textit{Right} shows a view of the curve shown in the \textit{middle} restricted to the interval $[0.1, 1.1]$.}
\end{center}
\end{figure}

\subsection{Dynamics in 3D}
The purpose of this example is to show dynamics in three dimensions to provide an example of the efficiency of scheme OD1D. We choose as computational domain $\Om = [0,2]^3$ which is discretized into a $50\times50\times50$ triangular mesh. The final time is $T= 0.2$, and the time step is $\dt = 10^{-4}$. In this case $\eps = 1.0$.\\ 
We use Neumann boundary conditions $\partial_\mathbf{n}\Q = \bm{0}$ and an initial state chosen so that each component of the vector $\mathbf{d}_0$ is randomly distributed uniformly in $[-1,1]$ with $|\mathbf{d}_0|=1$, and 
\beq
\Q_0 \, = \, \mathbf{d}_0 \mathbf{d}_0^T - \frac{| \mathbf{d}_0 |^2}3 \I \, .
\eeq
\Crefrange{fig:random3D}{fig:energy3D} show the result of this simulation using scheme OD1D. Initially, a random orientation of the nematic throughout the domain, and as time progresses the system moves towards a uniform orientation. In \Cref{fig:random3D} we note the appearance, movement, and annihilation of defects on the surface of the domain. \Cref{fig:energy3D} shows the time evolution of the energy, 
where we can observe that the energy is decreasing at all times.
\begin{figure}[h]
\begin{center}
\includegraphics[width = 0.23\textwidth]{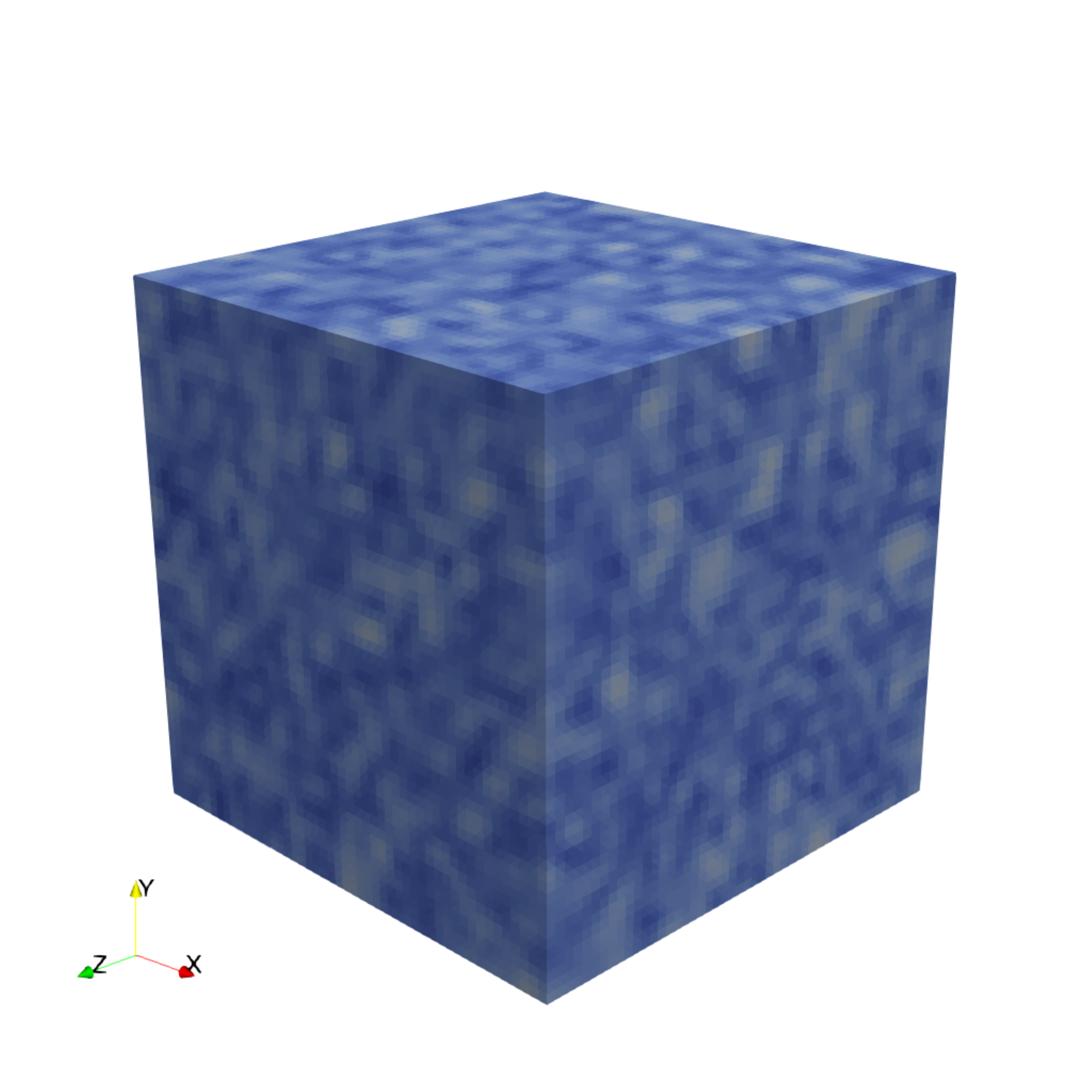}
\includegraphics[width = 0.23\textwidth]{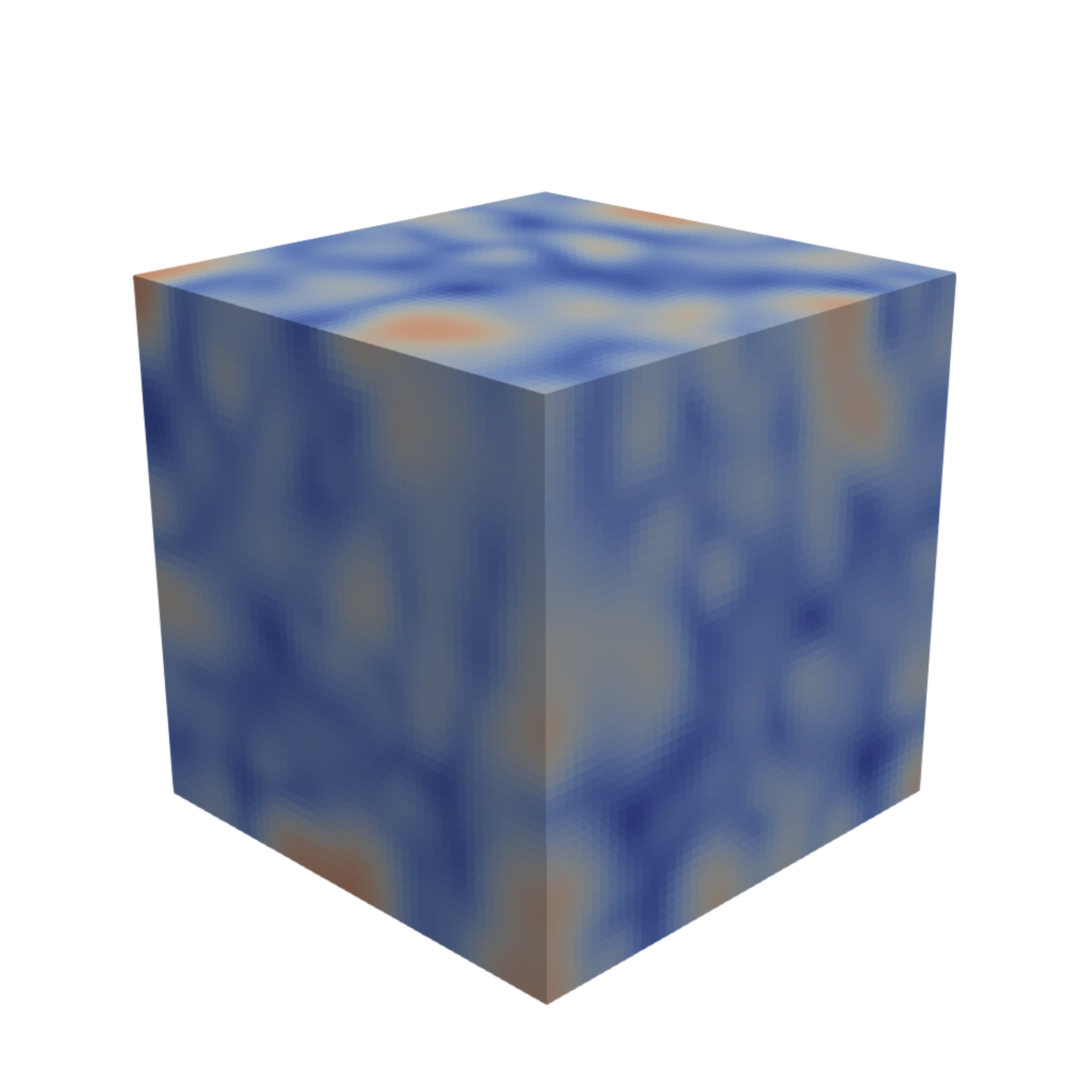}
\includegraphics[width = 0.23\textwidth]{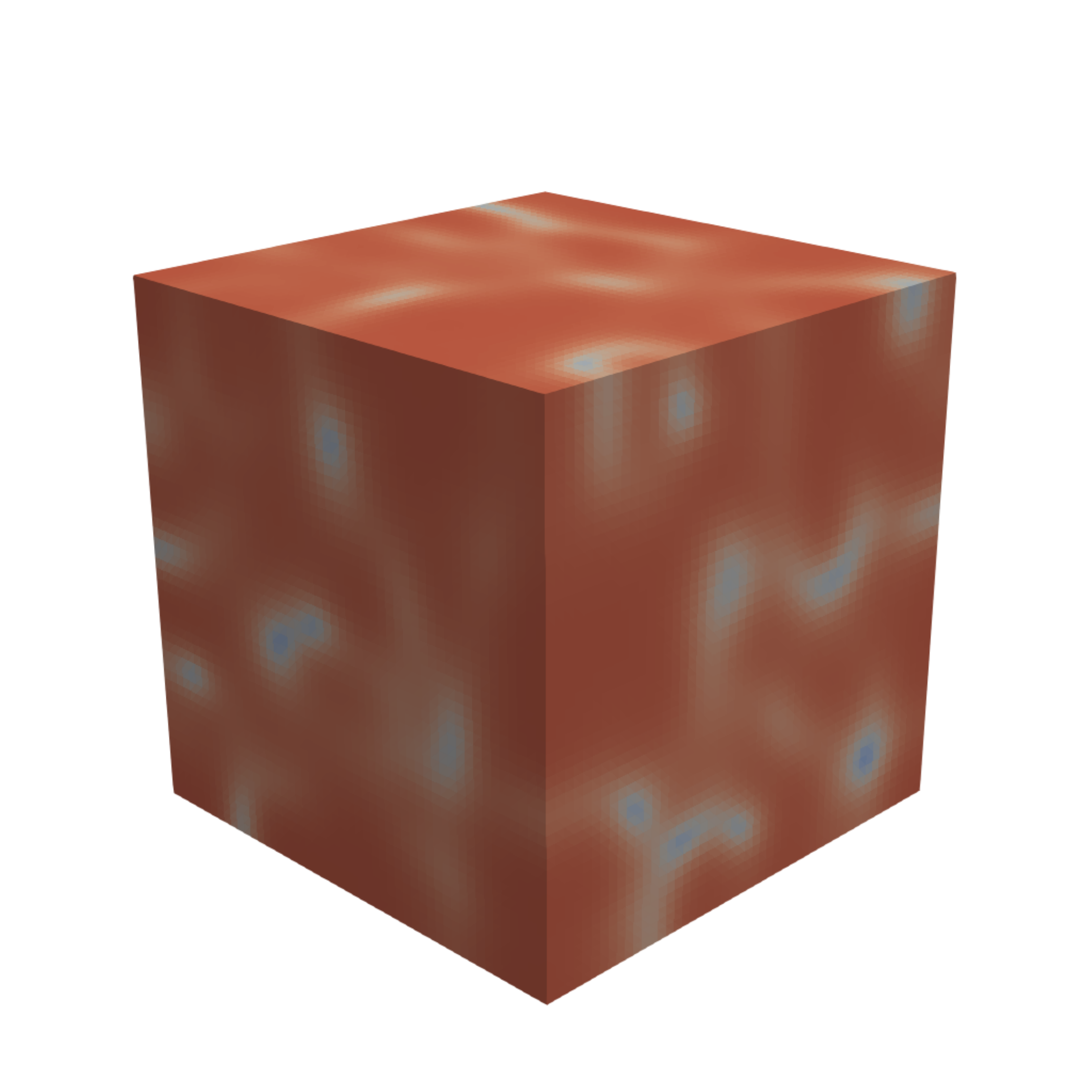}
\includegraphics[width = 0.23\textwidth]{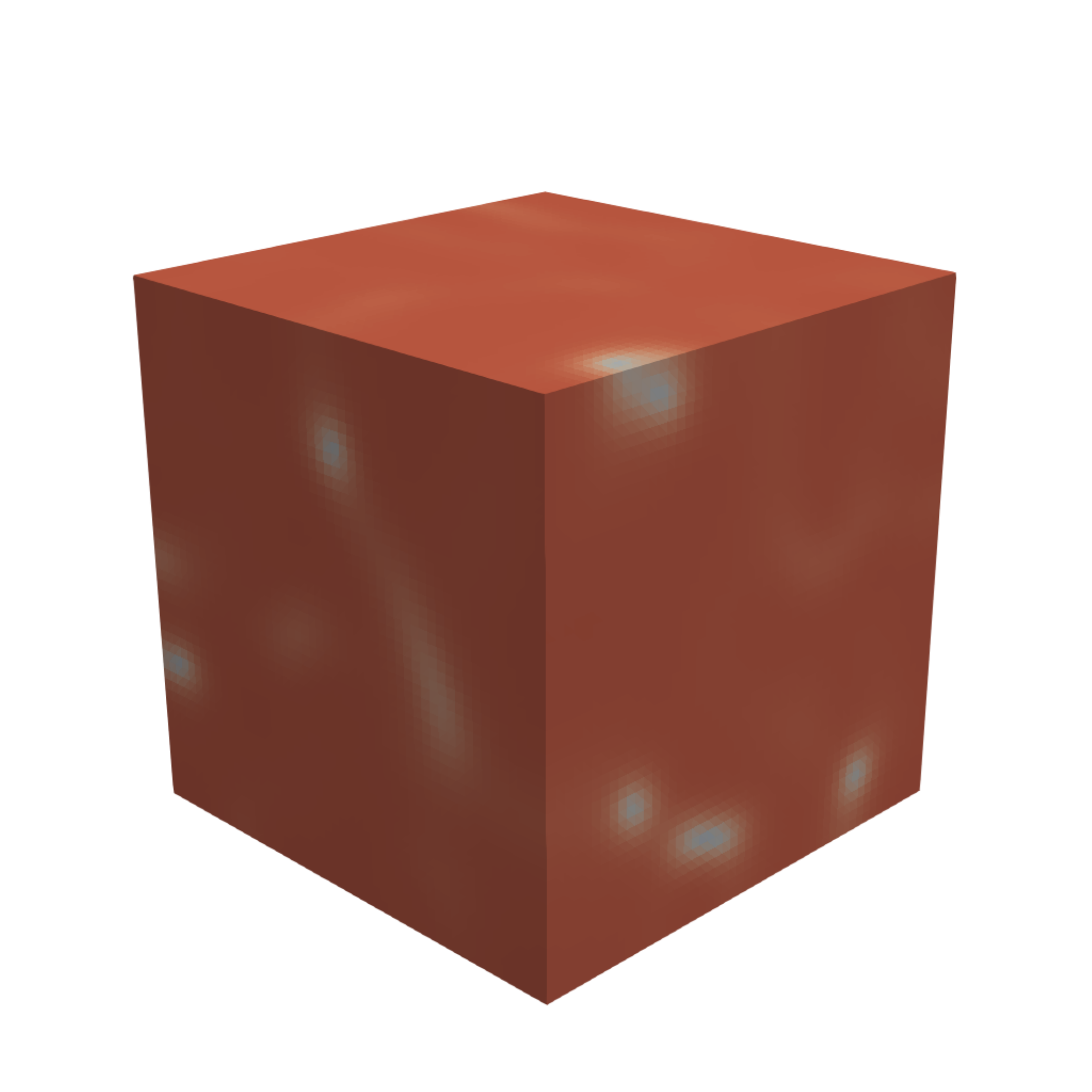}
\includegraphics[width = 0.23\textwidth]{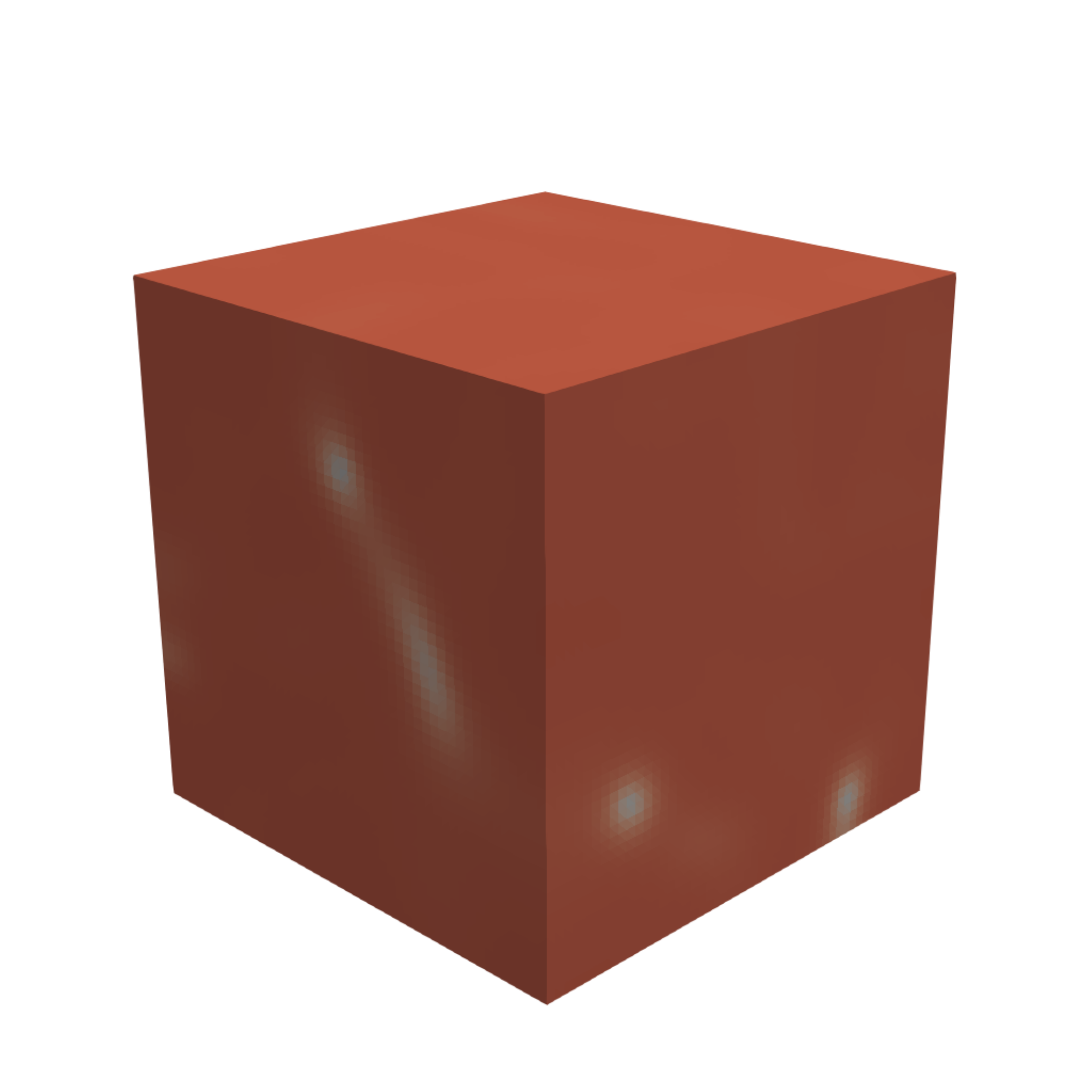}
\includegraphics[width = 0.23\textwidth]{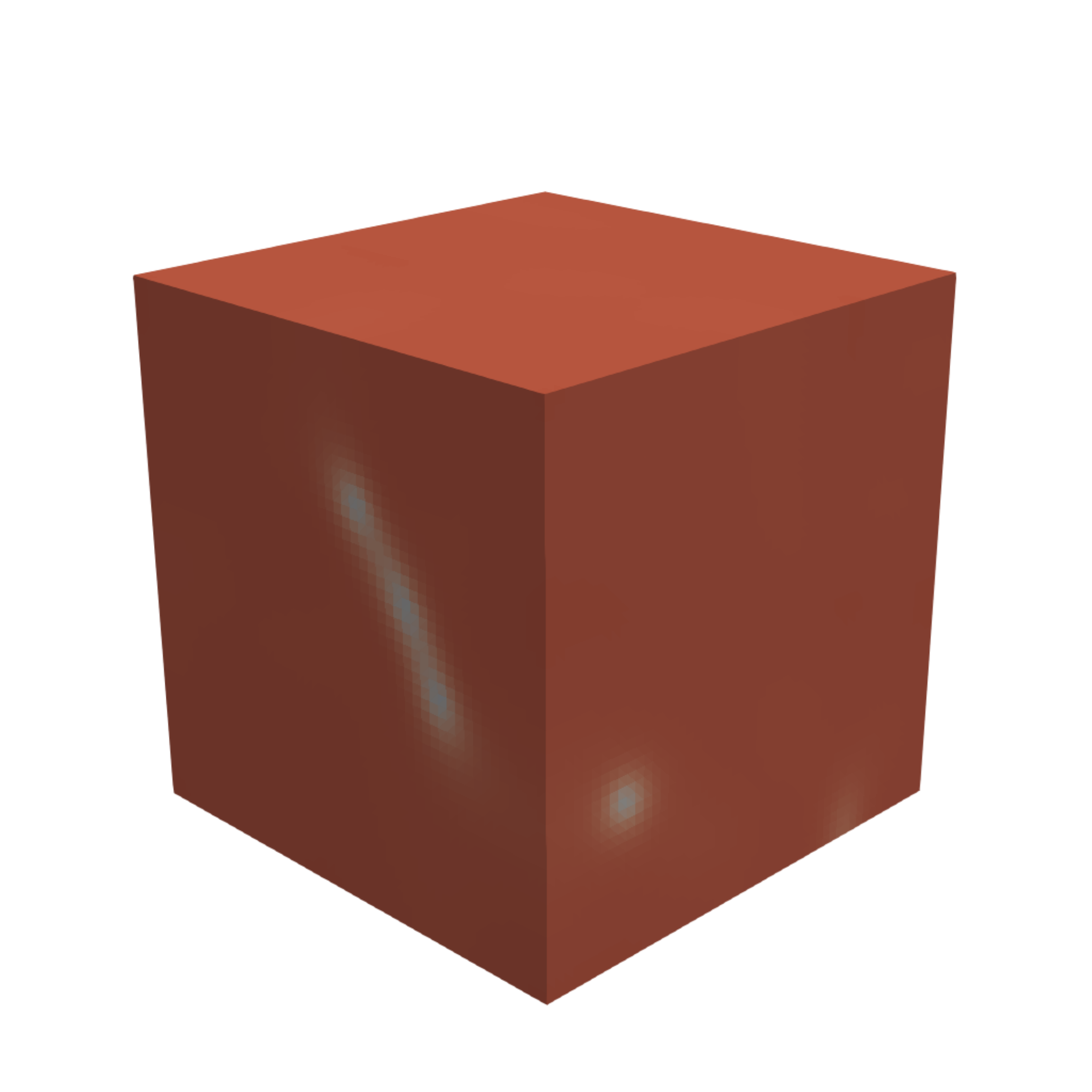}
\includegraphics[width = 0.23\textwidth]{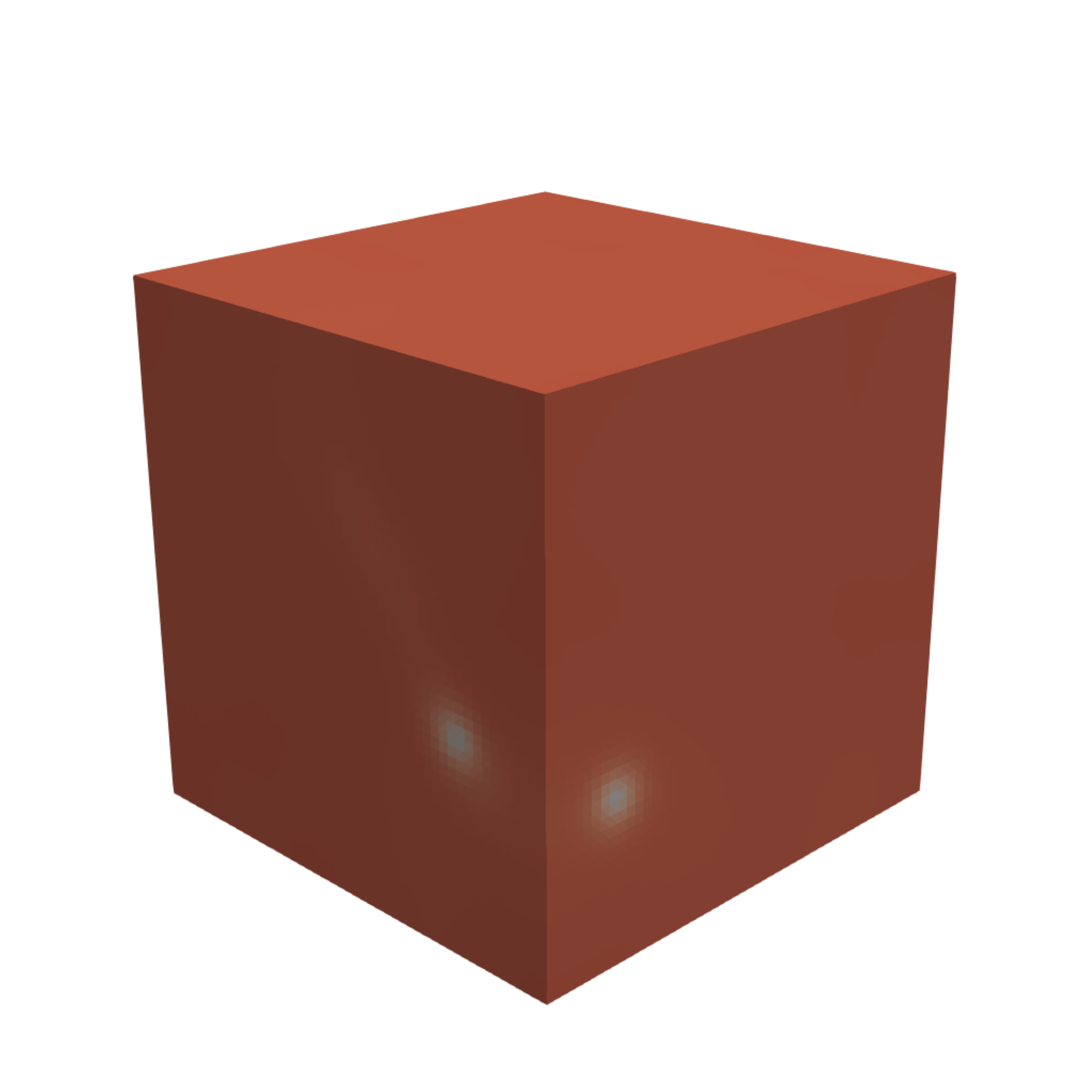}
\includegraphics[width = 0.23\textwidth]{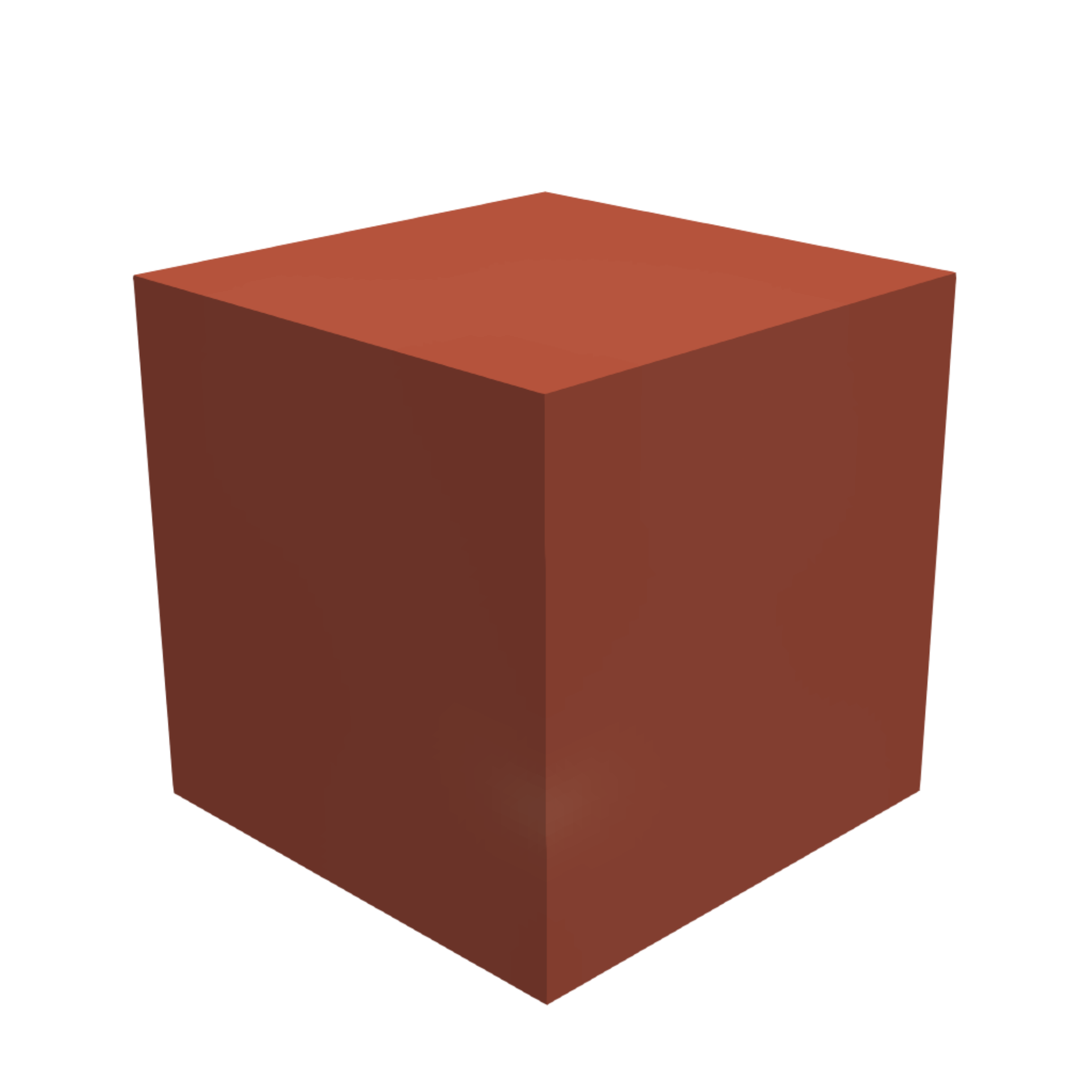}
\caption{\label{fig:random3D} Dynamics in 3D with random initial condition at $t=0.0001,0.01,0.02,0.03,0.04,0.05,0.065,0.1,$ and $0.125$. The color represents the difference of the two largest eigenvalues of $\Q$ and indicates the alignment of the nematic with the dominant eigenvector shown as black lines in \Cref{fig:randomvector3D}.}
\end{center}
\end{figure}

\begin{figure}
\begin{center}
\includegraphics[width = 0.23\textwidth]{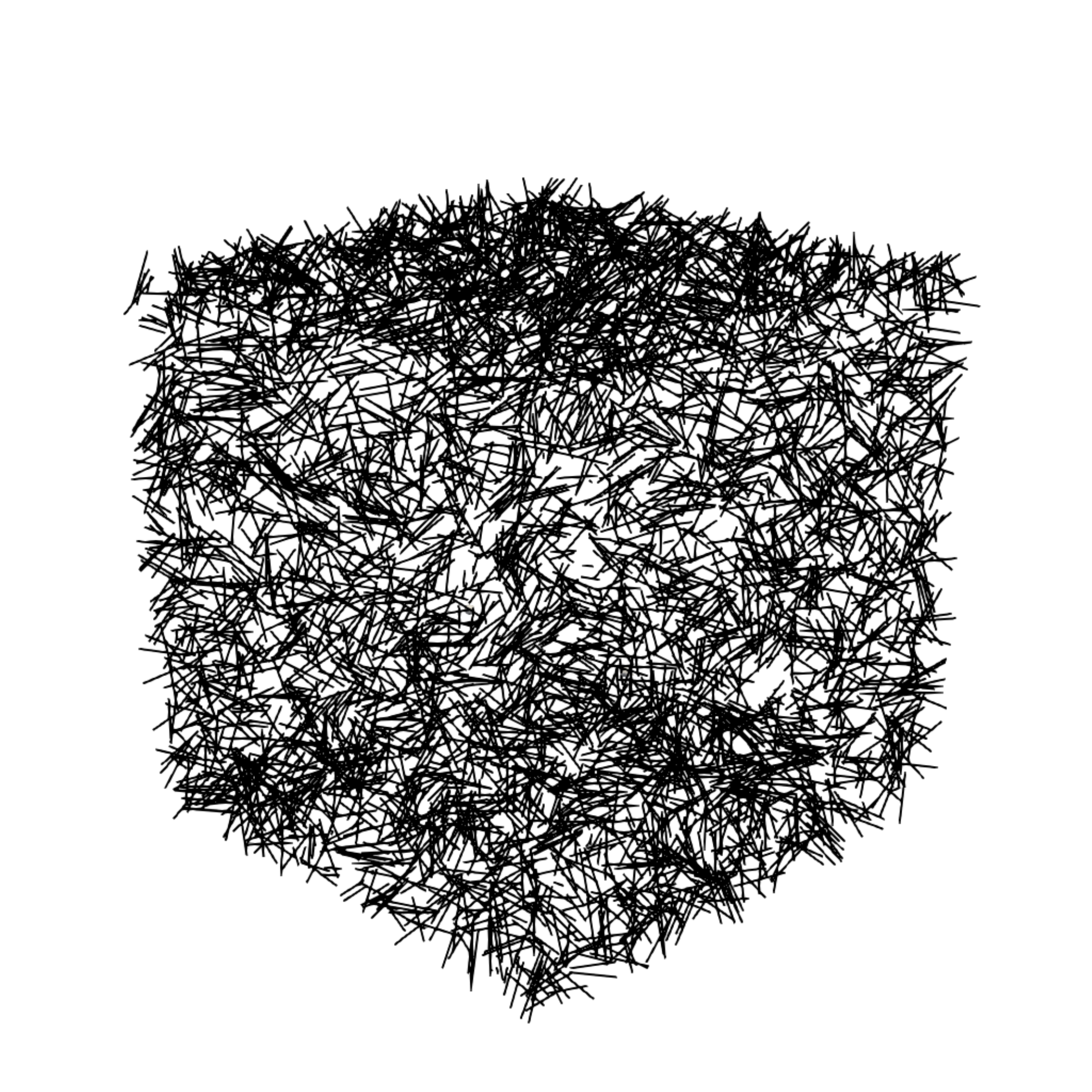}
\includegraphics[width = 0.23\textwidth]{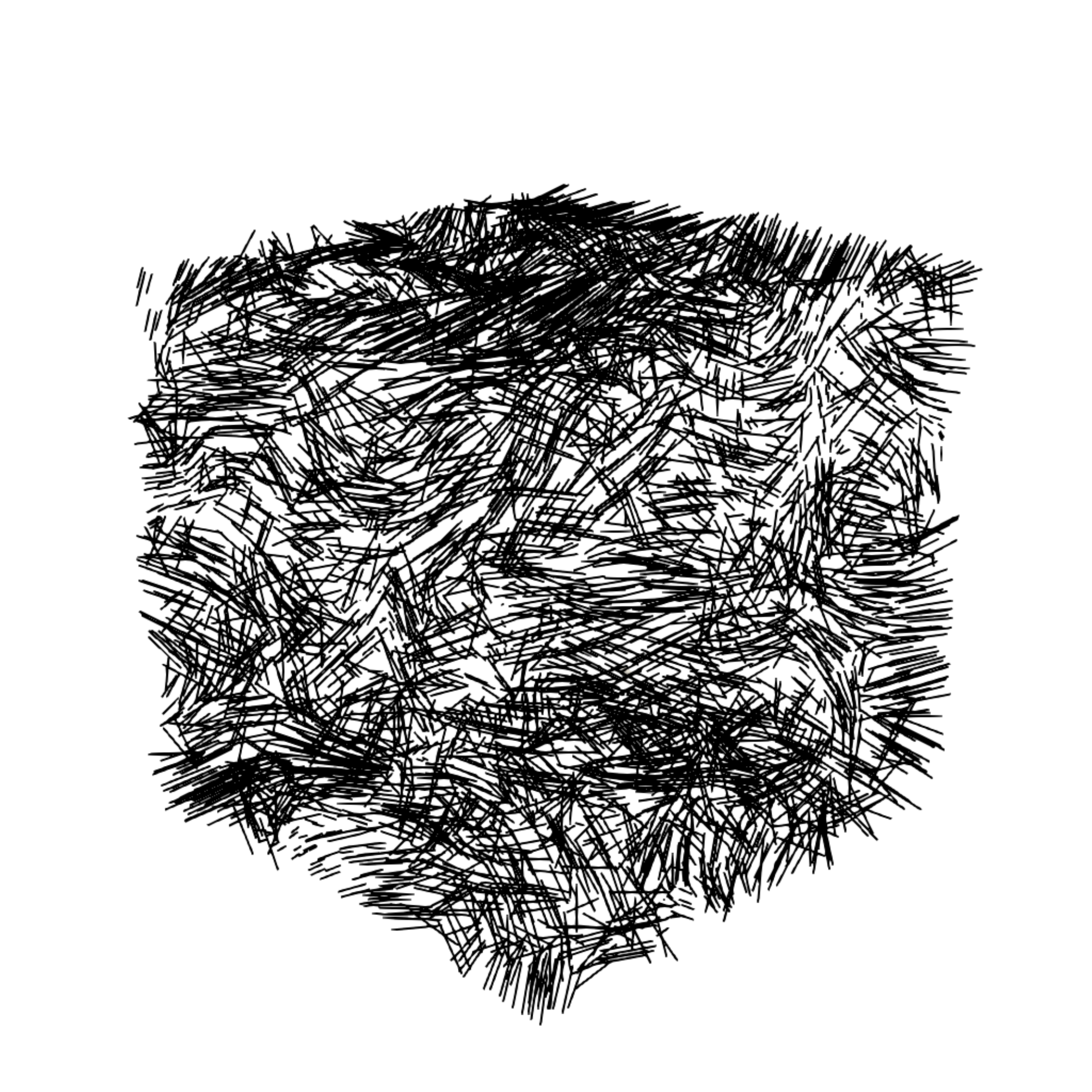}
\includegraphics[width = 0.23\textwidth]{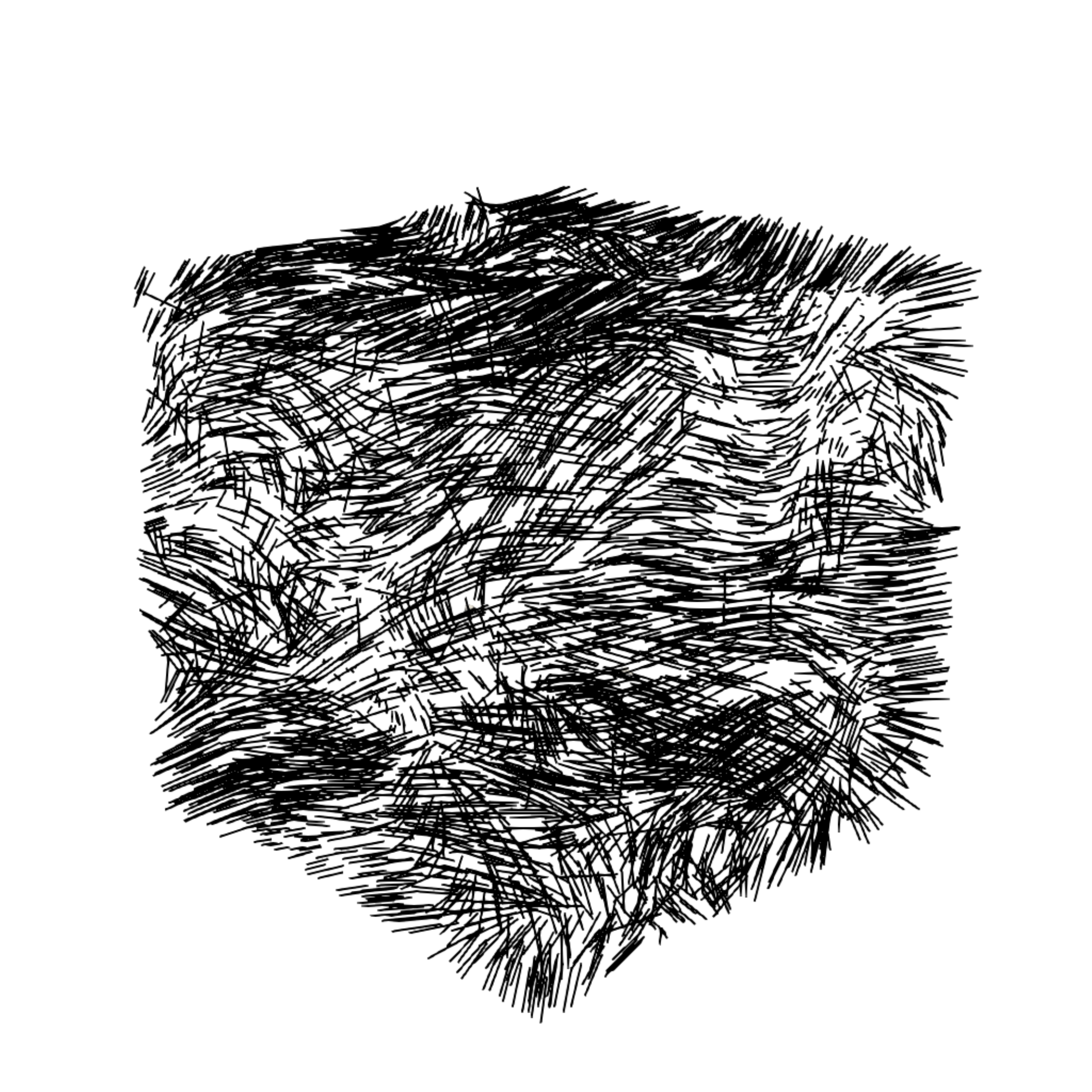}
\includegraphics[width = 0.23\textwidth]{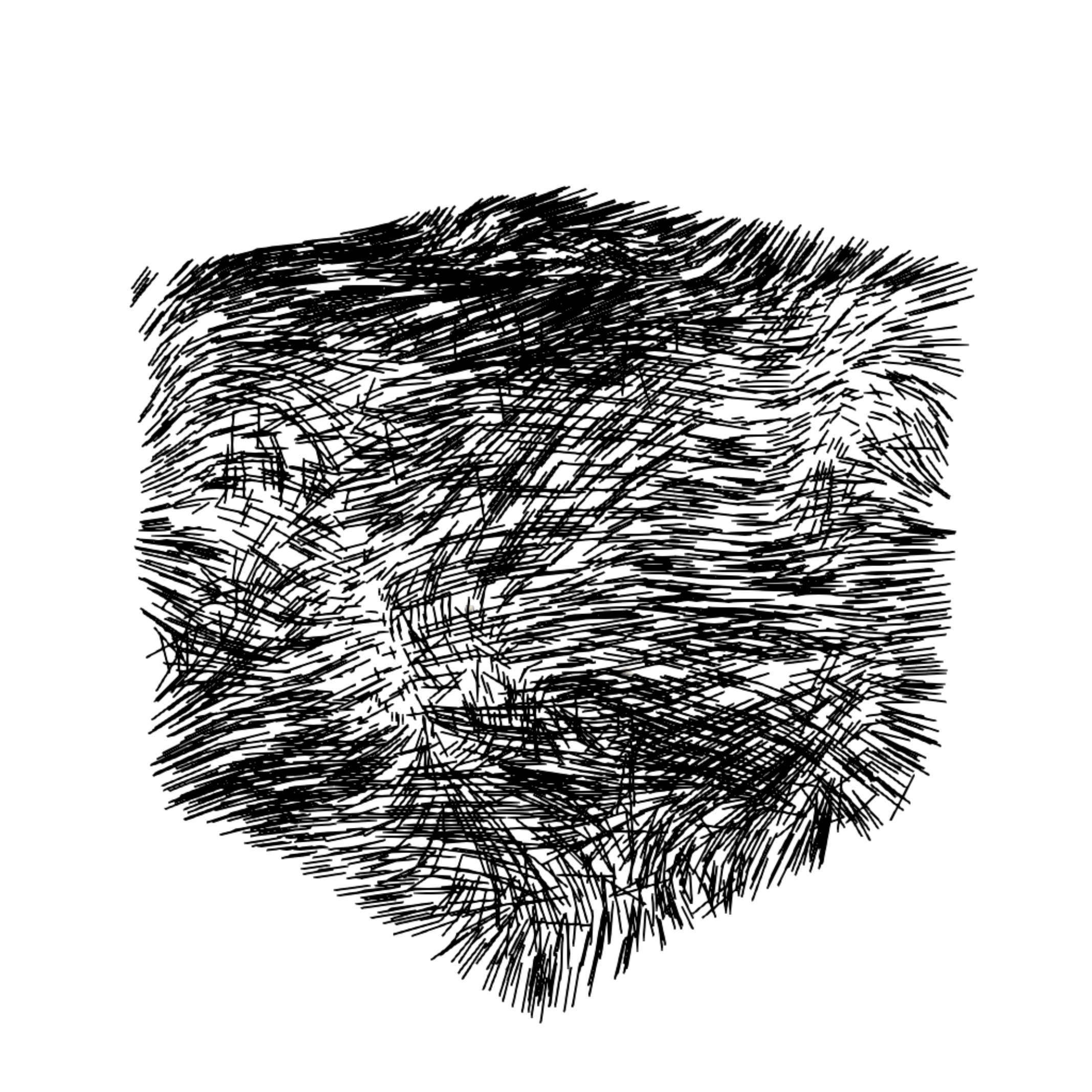}
\includegraphics[width = 0.23\textwidth]{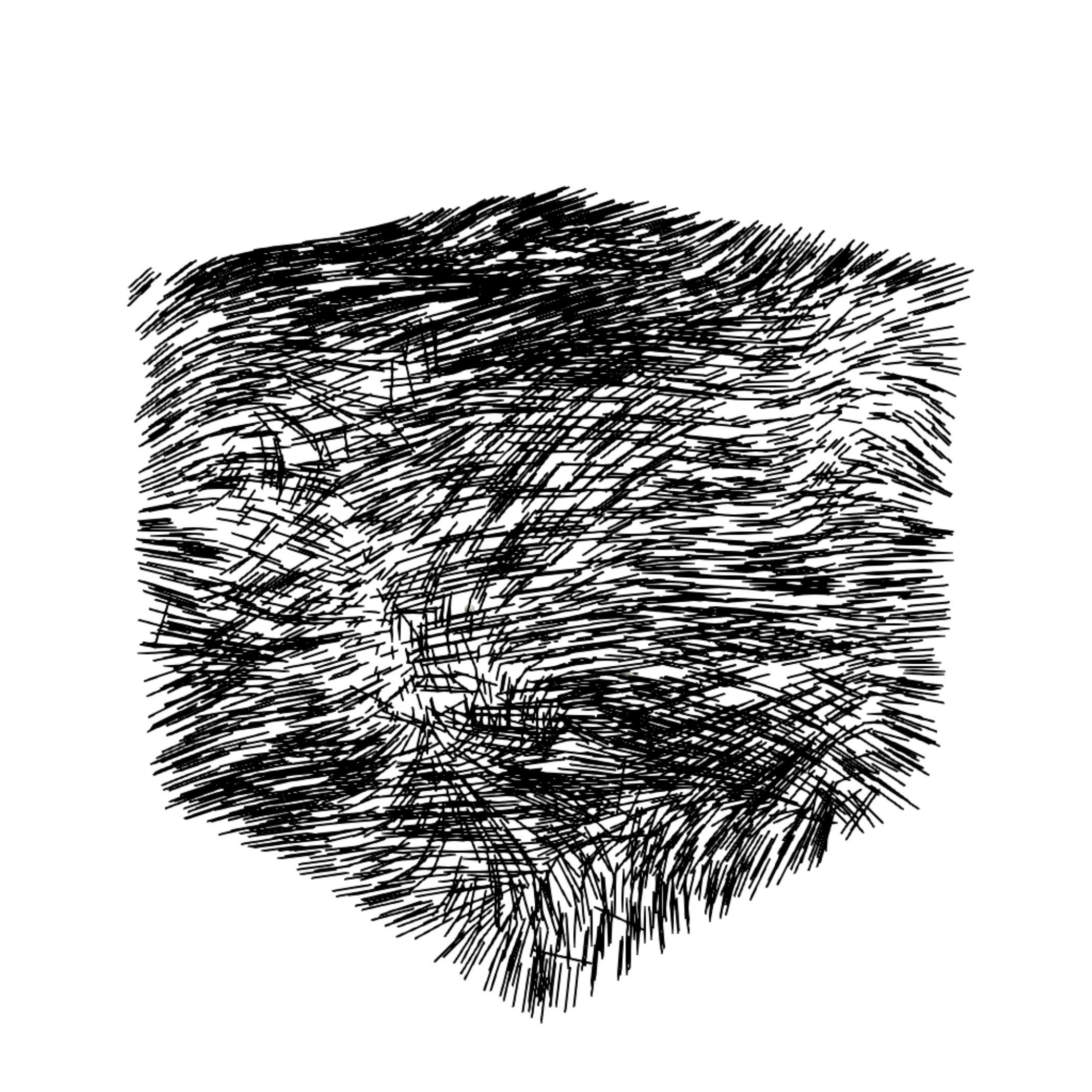}
\includegraphics[width = 0.23\textwidth]{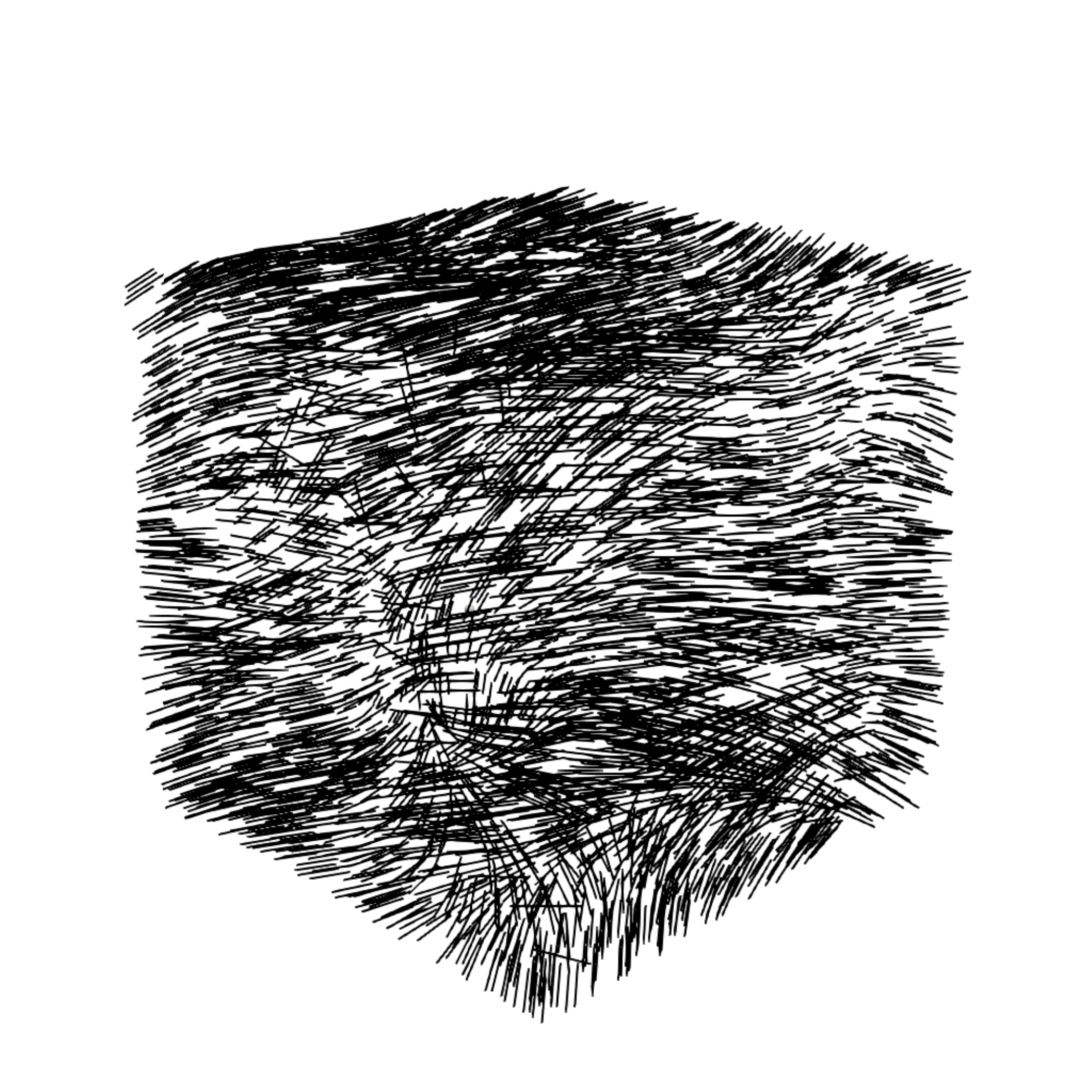}
\includegraphics[width = 0.23\textwidth]{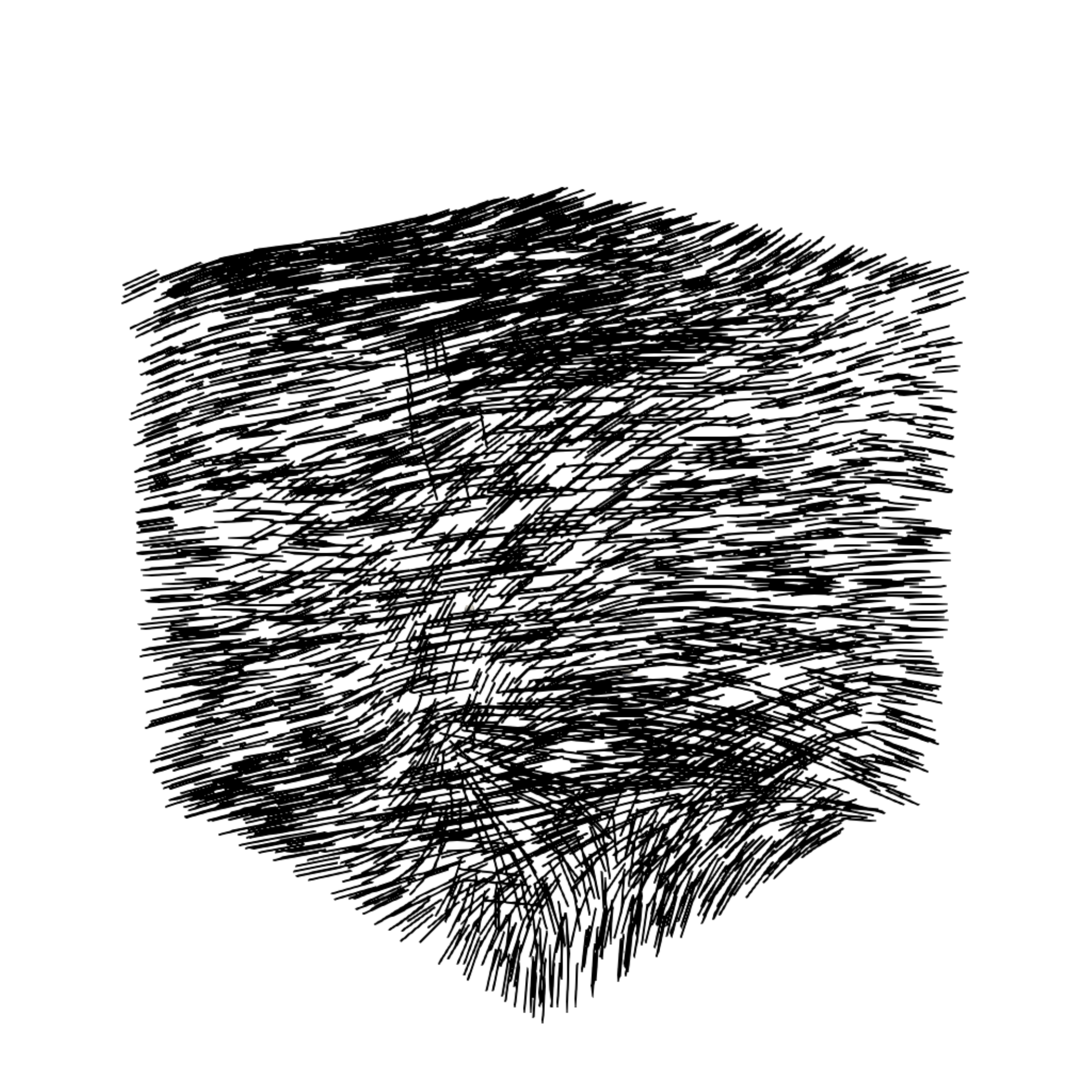}
\includegraphics[width = 0.23\textwidth]{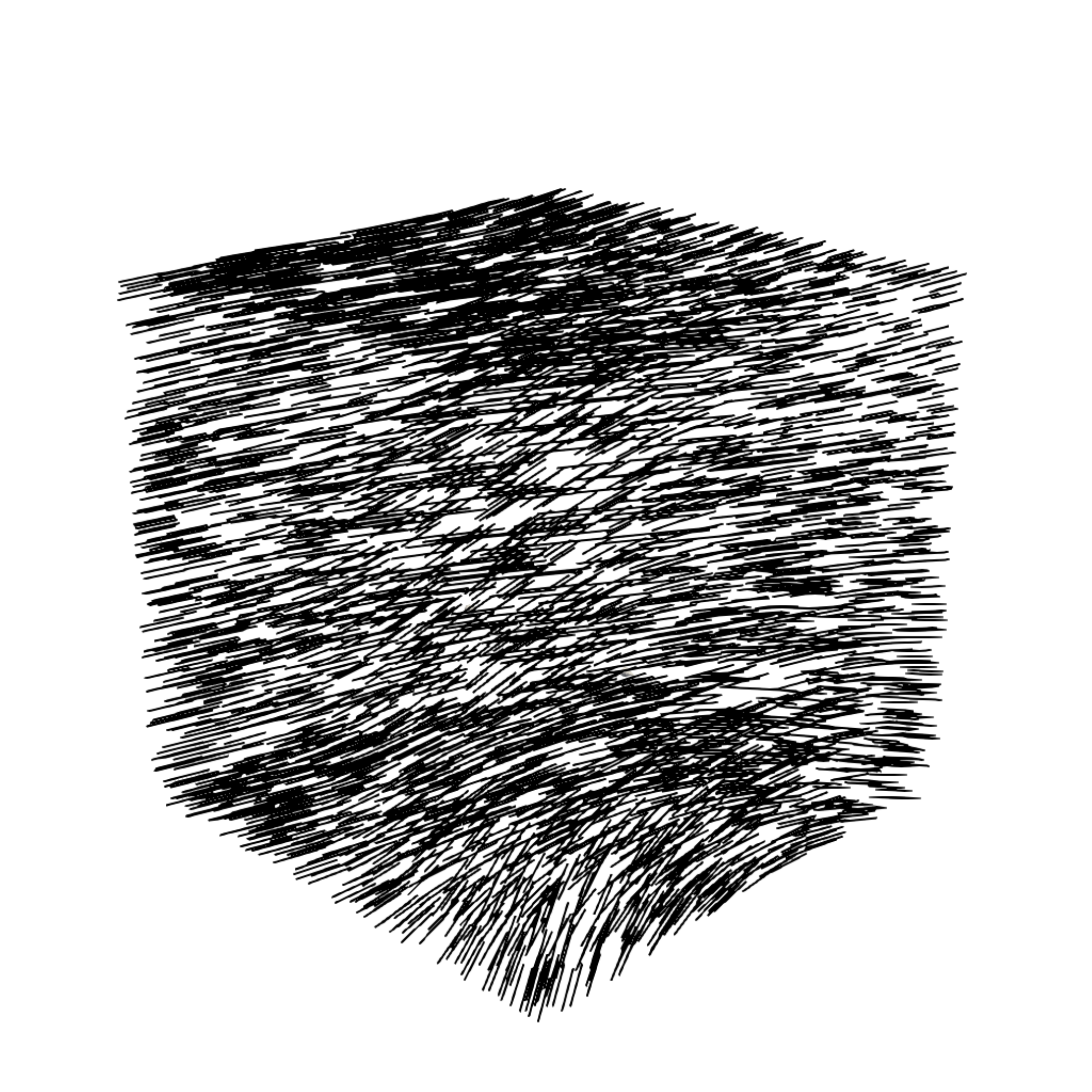}
\caption{\label{fig:randomvector3D} The eigenvector of $\Q$ corresponding to the largest eigenvalue at $t=0.0001,0.01,0.02,0.03,0.04,0.05,0.065,0.1,$ and $0.125$.}
\end{center}
\end{figure}

\begin{figure}[h]
\begin{center}
\includegraphics[height=4.75cm]{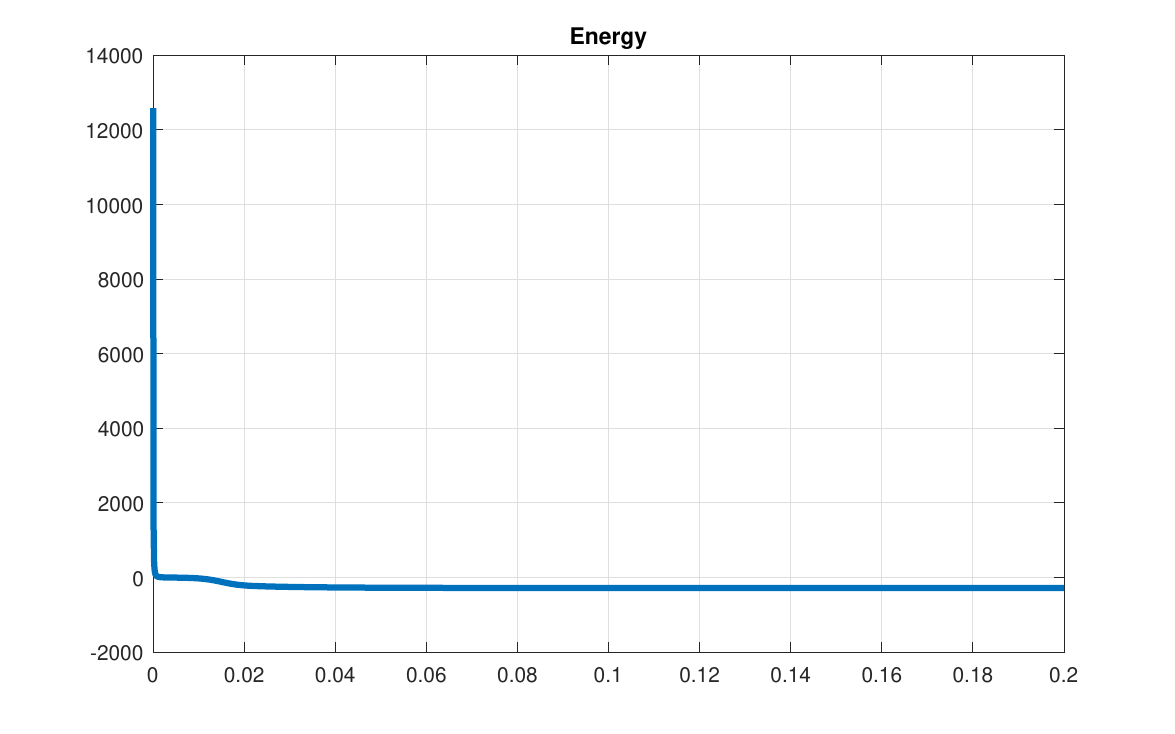}
\includegraphics[height=4.75cm]{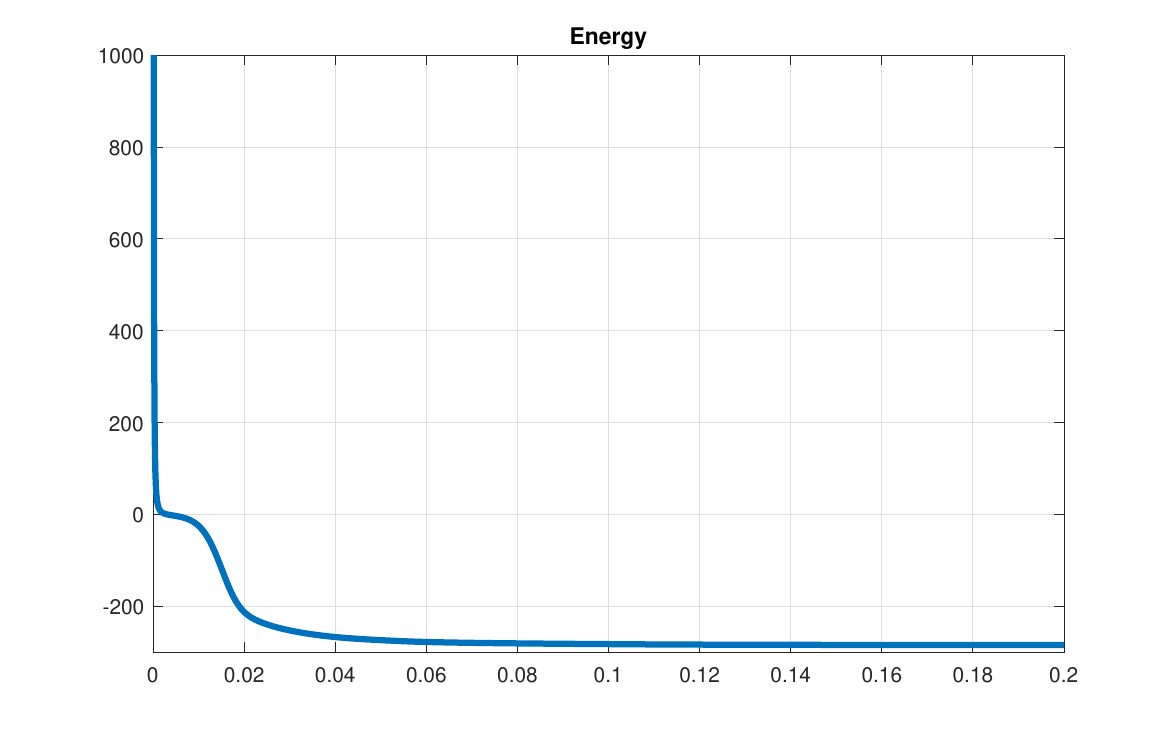}
\caption{\label{fig:energy3D}Time evolution of the energy 
for the three dimensional simulation computed using scheme OD1D. 
Right plot is the same curve in a limited y-axis window to better show the dynamics.
}
\end{center}
\end{figure}

\section{Conclusion}\label{sec:conclusion}
In this paper we have presented three different linear numerical schemes for a $\Q$-tensor problem which can be used to study the dynamics of nematic liquid crystals efficiently in two and three dimensions. To summarize (see \Cref{tab:schemeSummary}), scheme UES1D is an unconditionally energy stable decoupled scheme which is first order accurate in time. Next, we proposed scheme OD2C which is a second order accurate scheme that introduces considerably less numerical dissipation than scheme UES1D which allows for simulations more closely resembling the true dynamics of the system. Finally, scheme OD1D is a first order accurate decoupled scheme which retains the same second order numerical dissipation as in scheme OD2C while also being the most computationally efficient. \\
It is important to note that scheme UES1D is energy stable, however, from the simulations presented we see that this scheme is not ideal. The high numerical dissipation introduced by this scheme causes the simulated dynamics to be relatively slow which increases the computational cost. In addition, the truncation procedure further decreases the efficiency. Instead, it is scheme OD1D which we have shown to strike the best balance of accuracy and efficiency by presenting several simulations in two and three dimensions. This scheme provides researchers with a method for simulating nematic liquid, and a starting point for developing accurate and efficient numerical schemes for the larger Beris-Edwards system which incorporates hydrodynamic effects.
\begin{table}
\begin{center}
\begin{tabular}{|c|c|c|c|c|}
\hline
Scheme & Decoupled & Numerical Dissipation & Accuracy & Energy Stable \\
\hline 
\hline
1 & \checkmark & $\mathcal{O}(\dt)$ & $\mathcal{O}(\dt)$ & \checkmark \\
\hline
2 & & $\mathcal{O}(\dt^2)$ & $\mathcal{O}(\dt^2)$ & \\
\hline
3 & \checkmark & $\mathcal{O}(\dt^2)$ & $\mathcal{O}(\dt)$ & \\
\hline
\end{tabular}
\caption{\label{tab:schemeSummary} Summary of the properties of the numerical schemes presented in this work. }
\end{center}
\end{table}

\printbibliography

\section*{Appendix}\label{sec:appendix}
Here we write a few facts about derivatives that help to derive the terms involved in the numerical schemes. 
It is helpful to use repeated index notation so that 
$$
Q_{ij}Q_{jk} = \sum_j Q_{ij}Q_{jk}\,.
$$
In addition, we use the Kronecker delta function
$$
\delta_{ij}
=\left\{
\ba{lr}
1
& \mbox{ if }
i=j\,,
\\ \hueco
0
& \mbox{ if }
i\neq j\,.
\ea
\right.
$$
Below we make use of the following relations for second order tensors
$$
\dis 
A_{ij}\delta_{jk} = A_{ik} \, ,\quad \quad
A_{ij}\delta_{ik} = A_{kj} \quad \quad \mbox{and} \quad \quad 
\frac{\partial A_{ij}}{\partial A_{kl}} = \delta_{ik}\delta_{jl} \, .
$$
\subsection*{Derivatives of the potential function $\Psi(\Q)$}
Let $F:\R^{3\times 3}\rightarrow \R$ be a scalar valued function of a second order tensor $\Q$. Then $\bm{f}=\nabla_{\Q}F$ is a second order tensor valued function and the second derivative $\nabla \bm{f}$ is a fourth order tensor valued function with the following indexing
$$
\left[ \bm{f} \right]_{ij}
\,=\,
\dis \frac{\partial}{\partial Q_{ij}} f
\quad
\mbox{ and }
\quad
\left[ \nabla\bm{f} \right]_{ijkl}
\,=\,
\dis \frac{\partial}{\partial Q_{kl}} \left[\bm{f} \right]_{ij} \, .
$$
First, let $F(\Q)=\trace(\Q^2)$ as in the first term of the LdG potential function.
Then we can write $F$ as
$$
F(\Q) 
\,=\,
\trace(\Q^2)
\,=\,
Q_{ij}Q_{ij}
$$
and so the derivatives become
$$
\dis 
\left[\bm{f}\right]_{pq}
\,=\,
\frac{\partial}{\partial Q_{pq}} Q_{ij}Q_{ij} 
\,=\,
2 Q_{ij} \frac{\partial Q_{ij}}{\partial Q_{pq}} 
\,=\,
2 Q_{ij} \delta_{ip}\delta_{jq} 
\,=\,
2 Q_{iq}\delta_{ip} 
\,=\,
2 Q_{pq} \Rightarrow
\bm{f} \,=\, 2\Q\,.
$$
Then the second derivative is
$$
\dis 
\left[\nabla\bm{f}\right]_{ijkl}
\,=\,
2 \frac{\partial}{\partial Q_{kl}} Q_{ij}
\,=\,
2 \delta_{ik}\delta_{jl}\,. 
$$
Next, we let $F(\Q)=\trace(\Q^3)$. In repeated index notation this is
$$
F(\Q) 
\,=\,
\trace(\Q^3)
\,=\,
Q_{ij}Q_{jk}Q_{ki}\,.
$$
The first and second derivatives are
$$\dis 
\left[\bm{f}\right]_{pq}
\,=\,
\frac{\partial }{\partial Q_{pq}} Q_{ij}Q_{jk}Q_{ki}
\,=\,
Q_{pk}Q_{kq} + Q_{pi}Q_{iq} + Q_{pj}Q_{jq}
\,=\,
3 \left[\Q^2\right]_{pq} \Rightarrow
\bm{f} (\Q)
\,=\, 3\Q^2\,,
$$
$$
\left[\nabla\bm{f}\right]_{ijkl}
\,=\,
3 \frac{\partial }{\partial Q_{kl}} Q_{ip}Q_{pj}
\,=\,
3\left( \delta_{ik}\delta_{lp}Q_{pj} + Q_{ip}\delta_{pk}\delta_{jl} \right)
\,=\,
3\left( \delta_{ik}Q_{lj} + Q_{ik}\delta_{jl} \right)\,.
$$
Finally, let $F(\Q)=\left(\trace(\Q^2)\right)^2$. The first and second derivatives are
$$\dis 
\left[\bm{f} \right]_{pq}
\,=\,
\frac{\partial }{\partial Q_{pq}} \left(Q_{ij}Q_{ij}\right)^2 
\,=\,
2 \left(Q_{ij}Q_{ij}\right) \frac{d}{dQ_{pq}} \left(Q_{ij}Q_{ij}\right) 
\,=\,
4 Q_{pq}\left(Q_{ij}Q_{ij}\right) \Rightarrow 
\bm{f}(\Q) 
\,=\, 
4\Q\trace(\Q^2)\,. 
$$
$$
\left[ \nabla\bm{f}\right]_{ijkl}
\,=\,
4 \frac{\partial }{\partial Q_{kl}} \left[ Q_{ij}\left(Q_{pq}Q_{pq}\right) \right]
\,=\,
4\left( \left(Q_{pq}Q_{pq}\right)\delta_{ik}\delta_{jl} + 2Q_{ij}Q_{kl} \right)
\,=\,
4\left( \trace(\Q^2)\delta_{ik}\delta_{jl} + 2Q_{ij}Q_{kl} \right)\,.
$$
For $\bm{p}(\Q) = \frac13 \trace(\Q^2)\I$, we have the fourth order tensor $\nabla \bm{p}(\Q)$ which can be written as
$$
\ba{rcl}
\nabla \bm{p}(\Q) &=& \I \otimes \Q
\ea
$$
and
$$
\left[\nabla \bm{p}(\Q) \right]_{ijkl} =
\left\{ \ba{ll}
2Q_{kl} & \mbox{ if } i=j\,, \\
0 & \mbox{ otherwise}\,.
\ea \right.
$$
Hence when $|\Q|\leq\alpha$ we have
$$
\left|\left| \left[ \nabla\bm{p}(\Q) \right]_{ijkl} \right|\right|_{L^\infty} 
\,\leq\,
 2 \alpha \,, \quad \quad
\left|\left|\nabla\bm{p}(\Q)\right|\right|_F 
\,\leq\, 
108\alpha^2\,.
$$

\subsection*{From Cubic to Quadratic Growth}
Here we derive the bounds for the truncated potential function \eqref{eq:barPsiThree}. Let $\beta = (\alpha_2 - \alpha_1)^{-1}$. The first and second derivatives are
$$
\dis \left[\nabla \rho(\trace(\Q^2) \right]_{ij}
\,=\,
-12 \beta^2  \Q_{ij} \left(1 - \left( |Q|^2 - \alpha_1\right) \right) \left(|Q|^2 - \alpha_1 \right)
$$
and 
$$
\ba{rcl}
\dis \left[\bm{H}_\rho(\trace(\Q^2) \right]_{ijkl}
&=&
-12\beta^2\left[ 
-\beta\delta_{ik}\delta_{jl}\left(|Q|^2\right)^2
-\beta|Q|^2 Q_{ij}Q_{kl} \right.\\
\hueco
& & \quad \dis \left.
+(1+2\alpha_1\beta)\left( |Q|^2\delta_{ik}\delta_{jl} + Q_{ij}Q_{kl} \right)
-\alpha_1\beta(1+\alpha_1)\delta_{ik}\delta_{jl} \right] \,.
\ea
$$
Hence when $\alpha_1<|\Q|<\alpha_2$, we have $Q_{ij}\leq \alpha_2$ for all $i,j=1,2,3$, so
$$\dis
\left| \left[\nabla\rho(\Q) \right]_{ij} \right| \leq 12\beta^2\left( \alpha_2^5 + (1+2\alpha_1)\alpha_2^2 + \alpha_1^2 + \alpha_1\right)
$$
and
$$\dis
\left| \left[H_\rho \right]_{ijkl} \right| \leq 24\alpha_2^4\beta^3 + 3\alpha_2^2+6\alpha_1\alpha_2^2\beta+\alpha_1^2\beta+\alpha_1\beta\,.
$$
Therefore, we see that $\left| \left[H_\rho \right]_{ijkl} \right|$ grows without bound as $\alpha_2 \rightarrow \infty$ or $\beta \rightarrow \infty$.
Now we detail the derivatives of $\widehat\psi(\Q)$:
$$
\ba{rcl}\dis
\left[\bm{\widehat\psi}_3(\Q)\right]_{ij}
&=&
\left[\bm{\widehat\psi}_3(\Q)\right]_{ij} \rho(\trace(\Q^2)) + \widehat\Psi_3(\Q)\left[\nabla\rho(\trace(\Q^2))\right]_{ij} \\
\hueco \dis
&+& 2C^2Q_{ij}(1 - \rho(\trace(\Q^2)) - C^2Q_{pq}Q_{pq}\left[\nabla\rho(\trace(\Q^2))\right]_{ij}
\ea
$$
and 
$$
\ba{rcl}\dis
\left[ \nabla\bm{\widehat\psi}_3 (\Q) \right]_{ijkl}
&=&
\left[\bm{\widehat\psi}_3(\Q)\right]_{ij}\left[\nabla\rho(\trace(\Q^2))\right]_{kl} + \left[\bm{\widehat\psi}_3(\Q)\right]_{kl}\left[\nabla\rho(\trace(\Q^2))\right]_{ij} \\
\hueco \dis
&+& \rho(\trace(\Q^2)) \left[ \nabla\bm{\widehat\psi}_3(\Q) \right]_{ijkl} + \widehat\Psi_3(\Q) \left[ \bm{H}_{\rho} (\trace(\Q^2)) \right]_{ijkl} \\
\hueco \dis
&+& 2C^2\delta_{ik}\delta_{jl}(1 - \rho(\trace(\Q^2)) + 2C^2Q_{ij}\left[\bm{\widehat\psi}_3(\Q)\right]_{kl} \\
\hueco \dis
&-& 2C^2Q_{kl}\left[\bm{\widehat\psi}_3(\Q)\right]_{ij} - C^2Q_{pq}Q_{pq}\left[ \bm{H}_{\rho} (\trace(\Q^2)) \right]_{ijkl} \,.
\ea
$$
Since we have the following bounds for $\widehat\Psi_3$ when $\alpha_1 \leq |\Q|\leq \alpha_2$
$$\dis
\left| \left[\bm{\widehat\psi}_3(\Q) \right]_{ij} \right| \leq B\alpha_2^2
$$
and
$$\dis
\left| \left[ \nabla\bm{\widehat\psi}_3 \right]_{ijkl} \right| \leq 2B\alpha_2\,,
$$
we have the following bound for $\widehat\psi(\Q)$
$$\ba{rcl}\dis
\left| \left[ \nabla\bm{\widehat\psi}_3 \right]_{ijkl} \right| &\leq& 24B\alpha_2\beta \left( \alpha_2^5 + \alpha_2^2(1+2\alpha_1) + \alpha_1^2 + \alpha_1 \right) \\
\hueco
& & \dis
+ 2B\alpha_2\left( 3\beta\alpha_2^2 + 3\beta\alpha_1 + 2\alpha_2^4\beta^2 + 4\alpha_1\alpha_2^2\beta^2 + 2\alpha_1^2\beta^2 + 1\right) \\
\hueco 
& & \dis
+ 648B\alpha_2^7\beta^3 + 81B\alpha_2^5 + 162B\alpha_2^5\beta\alpha_1 + 27B\alpha_2^3\beta\alpha_1^2+27B\alpha_2^3\beta\alpha_1 \\
\hueco
& & \dis
+ 2C^2\left( 3\beta\alpha_2^2 + 3\beta\alpha_1 + 2\beta^2\alpha_2^4 + 2\beta^2\alpha_1^2+4\beta^2\alpha_2^2\alpha_1+2\right) \\
\hueco
& & \dis
+ C^2\alpha_2^2\left( 4B\alpha_2 + 24\alpha_2^4 + 3\alpha_2^2+6\alpha_2^2\alpha_1\beta+\alpha_1^2\beta+\beta\alpha_1\right)\,.
\ea
$$

In particular, if $\alpha = 1.18 < \alpha_1=1.19 < \alpha_2 = 1.2$, then 
$$
\left\| \left[ \nabla\bm{\widehat\psi}_3 \right]_{ijkl} \right\|_{L^\infty} \leq 3619 
\quad
\mbox{ and }
\quad
\| \nabla\bm{\widehat\psi}_3 \|_F \leq 32571\,.
$$

\end{document}